\documentclass[11pt,letterpaper]{amsart}

\usepackage[backend=biber,autocite=inline]{biblatex}
\DeclareFieldFormat{postnote}{#1}
\DeclareFieldFormat{multipostnote}{#1}

\DeclareSourcemap{
  \maps[datatype=bibtex]{
    \map{
      \step[fieldset=doi, null]      
      \step[fieldset=url, null]      
      \step[fieldset=urlyear, null]  
      \step[fieldset=issn, null]
      \step[fieldset=isbn, null]
    }
  }
}

\usepackage{accents}

\usepackage{geometry}

\usepackage{comment} 

\usepackage{hyphenat}

\usepackage{manfnt} 
\reversemarginpar 

\usepackage[svgnames]{xcolor}

\usepackage{url}

\usepackage{footmisc}

\usepackage{amsmath}
\usepackage{amsthm}
\usepackage{amssymb}

\allowdisplaybreaks 

\usepackage{amscd}

\usepackage{bbm}

\usepackage{mathrsfs} 

\usepackage[OMLmathsfit]{isomath}

\usepackage{mathabx}

\usepackage{leftindex}

\usepackage{enumitem}
\setlist{noitemsep}

\usepackage{stackengine}

\usepackage{hyperref}

\newtheoremstyle{exercise}
  {3pt} 
  {3pt} 
  {\small\rmfamily} 
  {
} 
  {\rmfamily\scshape} 
  {.} 
  {.5em} 
  {} 

\newtheoremstyle{newplain}
  {5pt}
  {5pt}
  {\itshape}
  {}
  {\rmfamily\scshape}
  { ---}
  {.5em}
  {}

\newtheoremstyle{newremark}
  {5pt}
  {5pt}
  {\rmfamily}
  {}
  {\rmfamily\scshape}
  { ---}
  {.5em}
  {}

\theoremstyle{newplain}
\newtheorem*{Theorem*}{Theorem} 

\theoremstyle{newplain}
\newtheorem{Theorem}{Theorem}

\newtheorem{Proposition}[Theorem]{Proposition}
\newtheorem{Conjecture}[Theorem]{Conjecture}
\newtheorem{Definition}[Theorem]{Definition}
\newtheorem{IntroTheorem}{Theorem}

\theoremstyle{newremark}
\newtheorem{Empty}[Theorem]{}
\newtheorem{Remark}[Theorem]{Remark}

\newtheorem{Claim}[Theorem]{Claim}

\theoremstyle{exercise}

\numberwithin{Theorem}{subsection}

\newcommand{\N}{\mathbb{N}}

\newcommand{\R}{\mathbb{R}}

\newcommand{\Rm}{\R^m}

\newcommand{\calB}{\mathscr{B}}
\newcommand{\calC}{\mathscr{C}}
\newcommand{\calD}{\mathscr{D}}

\newcommand{\calF}{\mathscr{F}}
\newcommand{\calG}{\mathscr{G}}

\newcommand{\calM}{\mathscr{M}}

\newcommand{\calP}{\mathscr{P}}

\newcommand{\calS}{\mathscr{S}}
\newcommand{\calT}{\mathscr{T}}

\newcommand{\calW}{\mathscr{W}}

\newcommand{\ssfH}{\mathsfit{H}}

\newcommand{\ssfL}{\mathsfit{L}}

\newcommand{\balpha}{\boldsymbol{\alpha}}

\newcommand{\bdelta}{\boldsymbol{\delta}}

\newcommand{\boldeta}{\boldsymbol{\eta}}
\newcommand{\bgamma}{\boldsymbol{\gamma}}
\newcommand{\bGamma}{\boldsymbol{\Gamma}}

\newcommand{\bLambda}{\boldsymbol{\Lambda}}

\newcommand{\bD}{\mathbf{D}}
\newcommand{\bE}{\mathbf{E}}

\newcommand{\bG}{\mathbf{G}}

\newcommand{\bI}{\mathbf{I}}

\newcommand{\bM}{\mathbf{M}}

\newcommand{\bc}{\pmb{c}}

\newcommand{\bq}{\pmb{q}}

\DeclareMathOperator{\bdiv}{\mathbf{div}}
\DeclareMathOperator{\bnabla}{\pmb{\nabla}}

\DeclareMathOperator{\rmbdry}{\mathrm{bdry}}          
\DeclareMathOperator{\rmcard}{\mathrm{card}}          
\DeclareMathOperator{\rmclos}{\mathrm{clos}}          
\newcommand{\rmcst}{\mathrm{cst}}
\DeclareMathOperator{\rmdiam}{\mathrm{diam}}          

\DeclareMathOperator{\rmdist}{\mathrm{dist}}          
\DeclareMathOperator{\rmdiv}{\mathrm{div}}            
\newcommand{\rmext}{\mathrm{ext}}
\DeclareMathOperator{\rmev}{\mathrm{ev}}              
\DeclareMathOperator{\rmFin}{\mathrm{Fin}}            
\newcommand{\rmid}{\mathrm{id}}                       
\DeclareMathOperator{\rmim}{\mathrm{im}}              
\DeclareMathOperator{\rmint}{\mathrm{int}}            
\DeclareMathOperator{\rmLip}{\mathrm{Lip}}            

\DeclareMathOperator{\rmloc}{\mathrm{loc}}

\DeclareMathOperator{\rmrestr}{\mathrm{restr}}
\DeclareMathOperator{\rmsign}{\mathrm{sign}}          
\DeclareMathOperator{\rmspan}{\mathrm{span}}          
\DeclareMathOperator{\rmspt}{\mathrm{spt}}            

\newcommand{\rmI}{\mathrm{I}}
\newcommand{\rmII}{\mathrm{II}}

\newcommand{\bvs}{BV_{1^*}}

\newcommand{\uTheta}{\bar{\Theta}}   
\newcommand{\ubar}[1]{\underaccent{\bar}{#1}}
\newcommand{\lTheta}{\ubar{\Theta}}

\newcommand{\ip}{\,\,\begin{picture}(-1,1)(-1,-2.5)\circle*{2}\end{picture}\;\,\,}

\newcommand{\ind}{\mathbbm{1}}

\newcommand{\lseg}{\boldsymbol{[}\!\boldsymbol{[}}
\newcommand{\rseg}{\boldsymbol{]}\!\boldsymbol{]}}

\newcommand{\hel} {
\hskip2.5pt{\vrule height7pt width.5pt depth0pt}
\hskip-.2pt\vbox{\hrule height.5pt width7pt depth0pt}
\, }

\newcommand{\lno}{\pmb{\thickvert}}
\newcommand{\rno}{\mkern+1mu\pmb{\thickvert}}

\def\Xint#1{\mathchoice
{\XXint\displaystyle\textstyle{#1}}%
{\XXint\textstyle\scriptstyle{#1}}%
{\XXint\scriptstyle\scriptscriptstyle{#1}}%
{\XXint\scriptscriptstyle
\scriptscriptstyle{#1}}%
\!\int}
\def\XXint#1#2#3{{%
\setbox0=\hbox{$#1{#2#3}{\int}$}
\vcenter{\hbox{$#2#3$}}\kern-.5\wd0}}

\def\dashint{\Xint-}

\newcommand{\MZ}{M\!Z}

\newcommand{\SCH}{SC\!H}

\newcommand{\cqfd} {
\renewcommand{\qedsymbol}{$\blacksquare$}
\qed
\renewcommand{\qedsymbol}{$\square$} }

\newcommand{\veps}{\varepsilon}
\newcommand{\bveps}{\boldsymbol{\veps}}

\newcommand{\vphi}{\varphi}

\newcommand{\la}{\langle}
\newcommand{\ra}{\rangle}

\newcommand{\wh}{\widehat}

\newcommand{\ts}{\mkern 3mu} 

\newcommand{\ie}{{\it i.e.}\ }
\newcommand{\eg}{{\it e.g.}\ }

\renewcommand{\em}{\bf}

\renewcommand{\leq}{\leqslant}
\renewcommand{\geq}{\geqslant}

\renewcommand{\subset}{\subseteq}

\makeatletter
\ifcsname phantomsection\endcsname
    \newcommand*{\qrr@gobblenexttocentry}[5]{}
\else
    \newcommand*{\qrr@gobblenexttocentry}[4]{}
\fi
\newcommand*{\addsubsection}{%
    \addtocontents{toc}{\protect\qrr@gobblenexttocentry}%
    \subsection}
\makeatother

\begin{document}



\title[On divergence operators]{On divergence operators:\\Free space and vanishing charges}

\author[Th. De Pauw]{Thierry De Pauw}

\address{Institute for Theoretical Sciences / School of Science, Westlake University\\
No. 600, Dunyu Road, Xihu District, Hangzhou, Zhejiang, 310030, China}

\email{thierry.depauw@westlake.edu.cn}

\keywords{Divergence equation}

\subjclass[2020]{Primary 35F15, 35Q35, 42B35; Secondary 35F05, 46F05, 35D30}


\begin{abstract}
We use localized topologies to prove existence and optimal regularity results for the divergence equation $\rmdiv v = F$ in critical cases $v \in L_1(\Omega;\Rm)$ or $v \in C_0(\Omega;\Rm)$, \ie we characterize those $F$ for which a solution $v$ exists whose norm is bounded by an appropriate norm of $F$.
We assume $\Omega$ satisfies a Poincar\'e inequality or an extension property.
We apply the general theory to give examples of admissible $F$ in each case.
\end{abstract}

\maketitle



\tableofcontents


\section{Introduction}

\addsubsection{Foreword}
Let $\Omega \subset \Rm$ be a non-empty open set.
Unless otherwise stated, $m \geq 2$.
Our aim is to study the ill-posed partial differential equation $\rmdiv v = F$ in $\Omega$ which we shall refer to as the divergence equation.
We seek solutions $v$ that belong to either $L_1(\Omega;\Rm)$ or to $C_0(\Omega;\Rm)$.
Our goal is to prove existence and optimal regularity theorems, \ie to identify exactly the spaces of {\it all} those $F$ so that the divergence equation admits a solution in the stipulated class and to give specific examples of such $F$.
We also seek to unravel the role played by the geometry of $\Omega$, whether it be a Poincar\'e inequality or an extension property.
Note that $v$ may be differentiable nowhere: solutions are understood either in the distributional sense or with respect to appropriate test functions to be determined. 
\par 
A natural approach consists in stipulating the regularity of $F$ and then studying that of some $v$ obtained via convolution with $K = \nabla E$, where $E$ is the Poisson kernel, or with a Bogovski\v{\i}-variant \cite{BOG.79} of $K$ to account for vanishing boundary conditions.
Works along these lines include, for instance, Borchers-Sohr \cite{BOR.SOH.90}, Dacorogna-Moser \cite{DAC.MOS.90}, Acosta-Dur\'an-Muschietti \cite{ACO.DUR.MUS.06}, Danchin-Mucha \cite{DAN.MUC.12}, and Berselli-Longo \cite{BER.LON.20}, where $F$ belongs to Lebesgue or H\"older spaces.
The core of these results is to apply Calder\'on-Zygmund estimates and Bogovski\v{\i} formul\ae.
These methods fail in the critical cases that we address here, see \eg the survey by Russ \cite{RUS.13}.
See Van Schaftingen \cite {VSJ.13} for endpoint inequalities in the more general setting of canceling operators.
The case of a rougher source $F$ -- namely $v \in L_\infty$ and $F$ a signed measure -- is studied, for instance, by Chen-Torres-Ziemer \cite{CHE.TOR.ZIE.09}.
Finally, rough solutions of the divergence equation have also been obtained by means of convex integration, see \eg De Lellis-Sz\'ekelyhidi \cite{DEL.SZE.09}.
\par 
Here we do not start by imposing a condition on $F$.
Instead, we stipulate the regularity sought for $v$ and infer the appropriate space where $F$ belongs, by means of {\it localized locally convex topologies} that we introduced elsewhere \cite{DEP.26c}, as explained below.
\addsubsection{The case \texorpdfstring{$m=1$}{m=1}}
\label{subsec.1.2}
To understand the characters  who are about to come on stage, it is useful to take a brief look at the situation in dimension $m=1$ and $\Omega=\R$.
We seek spaces of distributions $X_L(\R)$ and $X_C(\R)$, each equipped with a norm making it a Banach space, such that 
\begin{enumerate}
\item[(a)] $\bD(L_1(\R)) \subset X_L(\R)$, respectively, $\bD(C_0(\R)) \subset X_C(\R)$, and the former is dense in the latter, and
\item[(b)] the range of $\bD$ is closed, where $\bD$ is the derivative in the sense of distributions.
\end{enumerate}
A key observation\footnote{To this author's knowledge, reference to the closed range theorem is similar problems originates in \cite[\S 3 proof of proposition 1]{BOU.BRE.03}. It is one of the main steps in \cite{DEP.PFE.06b} and \cite{DEP.26c}.} follows from the closed range theorem: Condition (b) is equivalent to the range of $\bD^*$ being closed, where $\bD^* = - \bD$ (integration by parts).
This is illustrated below.
$$
\begin{array}{cc}
\begin{CD}
L_\infty(\R) @<{\bD^*}<< X_L(\R)^* \\
L_1(\R) @>{\bD}>> X_L(\R)
\end{CD}
\qquad\qquad\qquad\qquad
\begin{CD}
M(\R) @<{\bD^*}<< X_C(\R)^* \\
C_0(\R) @>{\bD}>> X_C(\R)
\end{CD}
\end{array}
$$
By means of anti-differentiation, members of $L_\infty(\R)$ and $M(\R)$ are identified with, respectively, Lipschitz functions and functions of bounded variation, up to a constant.
This suggests considering the Banach spaces $\rmLip_0(\R)[\|\cdot\|_L]$ for $X^*_L(\R)$ and $BV_0(\R)[\|\cdot\|_{TV}]$ for $X^*_C(\R)$.
The subscript $0$ indicates a normalization so that the only constant function in these spaces is identically zero.
Moreover, $\|\cdot\|_L$ is the Lipschitz constant and $\|\cdot\|_{TV}$ is the total variation.
Thus, we have achieved that the top line of both diagrams above is an isometric isomorphism -- in particular, has closed range -- and condition (b) is satisfied.
Admitting for the moment that $\rmLip_0(\R)$ and $BV_0(\R)$ have a predual it remains to check that this predual contains densely the derivatives of members of $L_1(\R)$, respectively, members of $C_0(\R)$.
Denseness is proved as follows: Let $u \in X_L(\R)^* = \rmLip_0(\R)$ be such that for all $v \in L_1(\R)$ we have $0 = \la \bD(v),u \ra = \la v , \bD^*(u) \ra = - \int_\R v \cdot u' \,d\ssfL^1$; then $u=0$ and it follows from Hahn-Banach's theorem that $\bD(L_1(\R))$ is dense in $X_L(\R)$.
The argument in case $v \in C_0(\R)$ is analogous.
Accordingly, it remains to identify a predual\footnote{Recall that, in general, a predual of a Banach space is not unique.} of $\rmLip_0(\R)$ and of $BV_0(\R)$ and check that these contain, respectively, $\bD(L_1(\R))$ and $\bD(C_0(\R))$.
By means of the evaluation map, these preduals are subspaces of, respectively, $\rmLip_0(\R)^*$ and $BV_0(\R)^*$.
\addsubsection{Free space and vanishing charges}
\label{subsec.1.3}
Returning to our primary concern we now assume that $m \geq 2$. 
We are about to describe the relevant predual $\calF(\Rm)$ of $\rmLip_0(\Rm)[\|\cdot\|_L]$ and the relevant predual $\SCH_0(\Rm)$ of $BV_{1^*}(\Rm)[\|\cdot\|_{TV}]$.
Here $1^*$ is the Sobolev conjugate exponent of $1$, and $BV_{1^*}(\Rm)$ consists of those members $u$ of $L_{1^*}(\Rm)$ each of whose distributional partial derivatives is a signed measure, and $\|u\|_{TV} = \|Du\|(\Rm)$ is the total variation of the distributional gradient.
\par 
The map $\bdelta : \Rm \to \rmLip_0(\Rm)^*$ that sends $x \in \Rm$ to the evaluation at $x$ is a (non-linear) isometry, \ref{3.1.1}(A).
The closure in $\rmLip_0(\Rm)^*$ of the linear space spanned by $\bdelta(\Rm)$ is denoted $\calF(\Rm)$ and called the Lipschitz-free space of $\Rm$ or Arens-Eells space of $\Rm$.
One classically shows that $\calF(\Rm)^* \cong \rmLip_0(\Rm)$, \ref{3.1.1}(B), and it is easy to realize that $\rmdiv v \in \calF(\Rm)$ when $v \in L_1(\Rm;\Rm)$, \ref{3.1.3}.
An analogous free space $\calF(X)$ can be associated with any pointed metric space $X$, and its study is an active branch of functional analysis; see \eg Godefroy and Weaver \cite{GOD.15,WEAVER.2nd} for an overview.
%
%
\par 
As $1^*$ is also the H\"older conjugate of $m$, to each $f \in L_m(\Rm)$ there corresponds an obvious member $\bLambda_m(f) \in BV_{1^*}(\Rm)^*$ and the norm of $\bLambda_m : L_m(\Rm) \to BV_{1^*}(\Rm)^*$ does not exceed $\kappa_m$, the isoperimetric constant, \ref{3.2.2}(A).
The closure in $BV_{1^*}(\Rm)^*$ of the range of $\bLambda_m$ is denoted $\SCH_0(\Rm)$.
It is not hard to show that $\SCH_0(\Rm)^* \cong \bvs(\Rm)$ and that $\rmdiv v \in \SCH_0(\Rm)$ when $v \in C_0(\Rm)$, \ref{3.2.2}(C) and \ref{3.2.4}.
Members of some related space\footnote{Specifically $\SCH0_{0,\calM,1}(\Omega)$; see \ref{5.1.3} for a definition.} are ``strong charges'' vanishing at the boundary.
These are particular cases of charges, a notion that initially appeared as an ersatz for measures in the study of multi-dimensional non-absolutely convergent integration theories developed by Pfeffer; see \eg \cite{PFE.91,PFE.05,PFEFFER}.
They first appeared in connection with the present context in work by Pfeffer and the author \cite{DEP.PFE.06b}.
\par 
Thus, Lipschitz-free spaces and vanishing charges originate from different mathematical horizons.
Yet, in subsections \ref{subsec.LSVF} and \ref{subsec.CVFVI} we prove the surjectivity of the following two divergence operators in ways that are so closely similar that it suggests they are particular instances in a general framework explained below.
The case of free space is originally due to C\'uth-Kalenda-Kaplick\'y \cite{CUT.KAL.KAP.17} and Godefroy-Lerner \cite{GOD.LER.18}, and the case of vanishing charges is originally due to Torres and the author \cite{DEP.TOR.09}, though the similarity is not apparent from the proofs in these sources.
Observe that in the diagrams below the adjoint operators $-\nabla$ are not isomorphisms (unlike the case $m=1$) but are still isometries (hence have closed range).
\begin{IntroTheorem}
Both divergence operators occurring in the diagrams below are surjective. 
\end{IntroTheorem}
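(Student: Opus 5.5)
The two cases can be handled uniformly with the closed range theorem, following the scheme described for $m=1$ in subsection \ref{subsec.1.2}. In the first diagram, consider
$$\rmdiv : L_1(\Rm;\Rm) \to \calF(\Rm),$$
which is well defined and bounded by \ref{3.1.3}. Since $\calF(\Rm)^* \cong \rmLip_0(\Rm)$ by \ref{3.1.1}(B), its adjoint is
$$\rmdiv^* : \rmLip_0(\Rm) \to L_\infty(\Rm;\Rm), \qquad \rmdiv^* u = -\nabla u.$$
In the second diagram, consider
$$\rmdiv : C_0(\Rm;\Rm) \to \SCH_0(\Rm),$$
which is well defined by \ref{3.2.4}. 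Since $\SCH_0(\Rm)^*\cong \bvs(\Rm)$ by \ref{3.2.2}(C), its adjoint is
$$\rmdiv^* : \bvs(\Rm) \to M(\Rm;\Rm), \qquad \rmdiv^* u = -Du.$$

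\textbf{Step 1: identify the adjoints.} Check the duality pairing on a dense set. In the free space case, take $\langle \rmdiv v, u\rangle = -\int v\cdot\nabla u\,d\ssfL^m$ for $v$ smooth with compact support. This holds because $\rmdiv v$ corresponds, as an element of $\rmLip_0(\Rm)^*$, to integration of $u$ against $\rmdiv v$ (Rademacher plus integration by parts). In the charge case, take the pairing $\langle \bLambda_m(\rmdiv v),u\rangle=\int u \rmdiv v = -\int v\cdot dDu$ for $v\in C^1_c$. Extend to all of $L_1$, respectively $C_0$, by density and continuity.

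\textbf{Step 2: isometry of the adjoints.} Show that each adjoint is an isometry. For the free space, $\|\nabla u\|_{L_\infty}=\rmLip(u)$ on $\Rm$, which uses convexity of $\Rm$. For charges, $\|Du\|(\Rm)=\|u\|_{TV}$ holds by definition of the norm on $\bvs$. An isometry is injective with closed range, so by the closed range theorem $\rmdiv$ has closed range.

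\textbf{Step 3: dense range.} Suppose $u$ annihilates $\rmdiv(L_1)$, respectively $\rmdiv(C_0)$. Then $\nabla u=0$ a.e., respectively $Du=0$, so $u$ is constant. The normalization then forces $u=0$: $u(0)=0$ in $\rmLip_0$, and $u\in L_{1^*}$ in $\bvs$. By Hahn–Banach the range is dense.

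Closed plus dense gives surjectivity. In fact the closed range theorem in its quantitative form yields more: every $F$ admits a solution $v$ with $\|v\|\leq \|F\|$, up to $\veps$ when the infimum is not attained.

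The main obstacle is Step 1, and specifically checking that the abstract identifications $\calF(\Rm)^*\cong\rmLip_0$ and $\SCH_0^*\cong\bvs$ are compatible with the pairing used to define $\rmdiv v$. In the charge case, $\rmdiv v$ for $v\in C_0$ is only a limit of $\bLambda_m(\rmdiv v_k)$, so one needs $\|\bLambda_m(\rmdiv w)\|\leq \|w\|_\infty$ for smooth $w$; this should follow from $|\int u \rmdiv w|\leq \|w\|_\infty\|Du\|(\Rm)$. I would verify this estimate first and then pass to limits.
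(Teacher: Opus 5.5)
Your proposal is correct and follows essentially the same route as the paper's proofs of Theorems \ref{3.1.4}(B) and \ref{3.2.5}(B): identify $\bdiv^* = -\bnabla$ through the natural pairings, note that this adjoint is an isometry (\ref{3.1.2}(B), \ref{3.2.3}(A)), and conclude surjectivity from closed range together with dense range, exactly as in \ref{2.2}(A). Your separate density step (that $\nabla u = 0$ or $Du = 0$ forces $u = 0$) is simply the injectivity of the adjoint, which the paper reads off directly from the isometry. Your quantitative remark about a solution $v$ with norm at most $(1+\veps)$ times that of $F$ is the paper's \ref{2.2}(B).
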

$$
\begin{array}{cc}
\begin{CD}
L_\infty(\Rm;\Rm) @<{-\nabla}<< \rmLip_0(\Rm) \\
L_1(\Rm;\Rm) @>{\rmdiv}>> \calF(\Rm)
\end{CD}
\qquad\qquad\quad
\begin{CD}
M(\Rm;\Rm) @<{-\nabla}<< BV_{1^*}(\Rm) \\
C_0(\Rm;\Rm) @>{\rmdiv}>> \SCH_0(\Rm)
\end{CD}
\end{array}
$$
\vskip.3cm
\par 
We also interpret these results in the language of distributions; see theorem \ref{EX.1.3} for the case of $\calF(\Rm)$ and \ref{EX.2.2} for the case of $\SCH_0(\Rm)$.
\par 
On the side of free space, for instance:
\begin{itemize}
\item A distribution $F = \sum_{x \in E} \theta_x \bdelta_x$, where $E \subset \Rm$ is finite and $\theta_x$ are real numbers such that $\sum_{x \in E} \theta_x = 0$, can be expressed as $\rmdiv v$ for some $v \in L_1(\Rm;\Rm)$, \ref{EX.1.4}.
\item The same holds for Borel signed measures $F = \mu$ such that $\int_{\Rm} |x|_2 \,d|\mu|(x) < \infty$ and $\mu(\Rm) = 0$, \ref{EX.1.5}.
\item Still more generally, the same holds for 0-dimensional flat chains $F$ in $\Rm$ that are ``balanced'' (\ie formally $\la \ind_{\Rm} , F \ra = 0$), for instance when $F = \sum_{j \in \N} \theta_j \cdot (\bdelta_{b_j} - \bdelta_{a_j})$ is an infinite sum of dipoles such that $\sum_{j \in \N} |\theta_j| \cdot |b_j-a_j|_2 < \infty$; see \ref{EX.1.1} and \ref{EX.1.6} for this and other examples.
\end{itemize}
In each case, some $v$ can be chosen whose norm is bounded by that of $F$ in $\calF(\Rm)$; for instance, in the third case above, $\|v\|_\infty \leq (1 + \veps) \cdot \sum_j |\theta_j| \cdot [b_j-a_j|_2$. 
On the other side, it becomes obvious, by definition of $\SCH_0(\Rm)$, that each $f \in L_m(\Rm)$ can be expressed as $\rmdiv v$ for some $v \in C_0(\Rm)$ with $\|v\|_\infty \leq (1+ \veps) \cdot \|f\|_{L_m}$.
A similar result (with a periodic rather than vanishing condition) was initially obtained -- among other things -- by Bourgain-Brezis \cite{BOU.BRE.03}.
Note, however, that $v = \nabla u$, with $\triangle u = f$, may not be continuous, as this is a critical case of Sobolev embeddings. 
\addsubsection{Non-invertibility} 
It is important to note that some of the vector fields alluded to in the previous paragraph cannot be obtained by means of a convolution product with a fundamental solution of the divergence equation, \ie $K(x) = \frac{1}{m \cdot \balpha(m)}\cdot \frac{x}{|x|_2^m}$.
Indeed, in the case of a dipole $\mu = \bdelta_b - \bdelta_a$, $a \neq b$, $K * \mu \not \in L_1(\Rm;\Rm)$, and in case $f = \triangle u$, where $u(x) = \chi(x) \cdot x_1 \cdot |\log |x|_2|^t$, $0 < t < \frac{m-1}{m}$ and $\chi \in C^\infty_c(\Rm)$ equals 1 at the origin, then $f \in L_m(\Rm)$, yet $v = \nabla u \not \in C_0(\Rm;\Rm)$; in fact $v$ is not locally bounded near the origin.
\par 
Thus, the question occurs whether the divergence operators above admit a continuous linear right inverse.
The answer is negative in both cases.
For if $\rmdiv : L_1(\Rm;\Rm) \to \calF(\Rm)$ had a continuous linear right inverse $L : \calF(\Rm) \to L_1(\Rm;\Rm)$, then, since $L_1(\Rm;\Rm) \simeq L_1(\R)$, there would be an injective continuous linear map $\calF(\Rm) \to L_1(\R)$; in case $m=2$ this is in contradiction with Naor-Schechtman \cite[Theorem 1.1]{NAO.SCH.07} and in case $m \geq 3$ as well, since $\calF(\Rm)$ contains an isometric linear copy of $\calF(\R^2)$.
This is equivalent to saying that $\ker(\rmdiv)$ is not complemented in $L_1(\Rm;\Rm)$.
In fact, there does not even exist a (non-linear) uniformly continuous retraction $L_1(\Rm;\Rm) \to \ker{\rmdiv}$, as this would imply the existence of a continuous linear one, by means of the method of invariant means \cite[Theorem 7.2]{BENYAMINI.LINDENSTRAUSS}.
In particular, there does not exist a uniformly continuous $L : \calF(\Rm) \to L_1(\Rm;\Rm)$ such that $\rmdiv \circ L = \rmid_{\calF(\Rm)}$, for otherwise $\rmid_{L_1(\Rm;\Rm)} - L \circ \rmdiv$ would be a retraction whose existence is ruled out in the previous sentence.
\par
Similarly, $\rmdiv : C_0(\Rm;\Rm) \to \SCH_0(\Rm)$ does not admit a uniformly continuous right inverse.
This was established by Bouafia \cite{BOU.11} based on Bourgain-Brezis \cite{BOU.BRE.03}.
In other words, the classical Calder\'on-Zygmund theory asserting that $\rmdiv : \dot{W}_{1,p}(\Rm) \to L_p(\Rm)$, $1 < p < \infty$, admits a continuous linear right inverse does not extend to our setting; see Preiss \cite{PRE.97} and McMullen \cite{MCM.98} for the failure in case $p=\infty$.
\par 
Notwithstanding, based on our abstract theory \cite[8.3(M,N)]{DEP.26c}, we show that a solution $v$ of the divergence equation can be chosen such that $\|v\|_{L_1} \leq (1+\veps) \cdot \|F\|_{\calF}$, respectively, $\|v\|_{\infty} \leq (1+\veps) \cdot \|F\|_{\SCH_0}$ and, moreover, there exists such a choice of $v$ that depends continuously upon $F$; see theorem \ref{4.2.7}(D) and theorem \ref{5.2.3}.
\addsubsection{Localized locally convex topologies}
%
Since our main results rely on the general framework of {\it localized locally convex topologies}, that we developed in \cite{DEP.26c}, we now describe briefly what these are.
We are given a Banach space $E(\Omega;\Rm)$ of vector fields; in this paper $E = L_1$ or $E=C_0$.
As before, we seek $X(\Omega)$ that densely contains $\rmdiv(E(\Omega;\Rm))$ and so that we can apply the closed range theorem in the diagram below.
In order to do so, we shall obtain $X(\Omega)$ itself as a dual $X(\Omega) = Z(\Omega)[\calT_\calC]^*$, but {\it not the dual of a Banach space}, \ie the locally convex topologies $\calT_\calC$ are not normable.
\begin{equation*}
\begin{CD}
E(\Omega;\Rm)^* @<{-\nabla}<< X(\Omega)^* = Z(\Omega)[\calT_\calC]^{**} \\
E(\Omega;\Rm) @>{\rmdiv}>> X(\Omega) = Z(\Omega)[\calT_\calC]^*
\end{CD}
\end{equation*}
We note two things: 
\begin{itemize}
\item We should not expect in all cases that $-\nabla$ is an isometry, in fact, if its range is closed at all then the distortion of $-\nabla$ may be related to the geometry\footnote{This occurs in case $E=L_1$. For instance, if $m=2$, $C$ is a unit circle with a small arc of length $3\cdot \delta$ removed, and $\Omega$ is a tubular neighborhood of $C$ of thickness $\delta$, we consider $u(r,\theta) = \theta$. Then $\|\nabla u\|_{L_\infty} \sim 1$ is independent of $\delta$ and $\rmLip u \sim \frac{1}{\delta}$.} of $\Omega$.
\item We have candidates for $Z(\Omega)[\calT_\calC]^{**}$ in each case, namely $\rmLip_0(\Omega)$ and $BV_{\rmcst}(\Omega)$, respectively; recall subsection \ref{subsec.1.2}. Here and elsewhere, $BV_{\rmcst}(\Omega)$ is $BV(\Omega)$ modulo constant functions. More precisely, we have candidates for $Z(\Omega)$ but it is not clear now that these can be achieved as a bi-dual space.
\end{itemize}
\par 
The range of $-\nabla$ is closed if and only if there exists $\bc > 0$ such that $\bc \cdot \|u\|_{Z^{**}} \leq \|\nabla u\|_{E^*}$ for all $u \in Z(\Omega)$; see \ref{2.2}(A).
\begin{enumerate}
\item[(1)] When $E(\Omega;\Rm) = L_1(\Omega;\Rm)$, then $Z(\Omega) = \rmLip_0(\Omega)$ and the bound $\bc \cdot \|u\|_L \leq  \|\nabla u \|_{L_\infty}$ is equivalent to a geometric property of $\Omega$ that we call {\it connectedness by $\Lambda$-pencils}; see \ref{4.2.4} for a definition and theorem \ref{4.2.7} for a proof of the equivalence.
\item[(2)] When $E(\Omega;\Rm) = C_0(\Omega;\Rm)$, then $Z(\Omega) = BV_{\rmcst}(\Omega)$ and the bound $\|Du\|(\Omega) \leq \|Du\|_{M^m}$ is, in fact, an equality, by definition of the two terms, independently of $\Omega$.
\end{enumerate}
\par 
In the two situations above, we have a vector space $Z(\Omega)$ equipped with a seminorm $\lno \cdot \rno$, namely $\lno u \rno = \|u\|_L$ or $\lno u \rno = \|Du\|(\Omega)$.
It will be critical in the sequel that this seminorm be a norm and that the sets $C_k = Z(\Omega) \cap \{ u : \lno u \rno \leq k \}$, $k \in \N$, be $\calT$-compact.
This leads us to choosing the locally convex topologies $\calT$ as follows, according to extra conditions on $\Omega$ in the second case.
\begin{enumerate}
\item[(1)] When $E(\Omega;\Rm) = L_1(\Omega;\Rm)$ and $Z(\Omega) = \rmLip_0(\Omega)$, we let $\calT$ be the topology of pointwise convergence. According to Ascoli's theorem, $\|\cdot\|_L$-bounded subsets of $Z(\Omega)$ are compact with respect to $\calT$. 
\item[(2)] When $E(\Omega;\Rm) = C_0(\Omega;\Rm)$, we further distinguish between two cases.
\begin{enumerate}
\item[(2.1)] If $\Omega$ is a {\it bounded, connected $BV$-extension set} (see \ref{bv.ext}), then $\|D (\cdot)\|(\Omega)$-bounded subsets of $Z(\Omega) = BV_{\rmcst}(\Omega)$ are compact with respect to the quotient norm of $\|\cdot\|_{L_1}$. In this case, $\calT$ is the topology associated with this quotient norm.
\item[(2.2)] If $\Omega$ satisfies a {\it $(p,1)$-Poincar\'e inequality}, where $1 < p \leq 1^*$, see \ref{2.6}, then $\|D (\cdot)\|(\Omega)$-bounded subsets of $Z(\Omega) = BV_{p,\rmcst}(\Omega)$ are compact with respect to the weak* topology induced by the duality $L_{p,\rmcst}(\Omega) \times L_{q,\#}(\Omega)$, where $q$ is the H\"older conjugate of $p$. Here $BV_p(\Omega)$ consists of those functions in $L_p(\Omega)$ whose distributional gradient is a measure, and $L_{q,\#}(\Omega)$ consists of those members of $L_q(\Omega)$ whose average vanishes (in case $\ssfL^m(\Omega)=\infty$ we define $L_{q,\#}(\Omega)=L_q(\Omega)$). In this case, $\calT$ is the restriction to $BV_{p,\rmcst}(\Omega)$ of $\sigma(L_{p,\rmcst}(\Omega),L_{q,\#}(\Omega))$.
\end{enumerate}
\end{enumerate}
\par 
We associate with $\calT$ and $\calC = \{ C_k : k \in \N\}$ a new locally convex topology $\calT_\calC$ on $Z(\Omega)$, called {\it localized} for the following reason: The $\calT_\calC$-closed sets are exactly those whose intersection with each $C_k$ is $\calT$-closed.
Localized locally convex topologies are studied in \cite{DEP.26c} in a generality that encompasses our applications here.
\par
The key identification of $Z(\Omega)[\calT_\calC]^{**}$ with $Z(\Omega)$ via the evaluation map, called semireflexivity, is a consequence of the $\calT$-compactness of the $C_k$ and the Mackey-Arens theorem; see \cite[7.4]{DEP.26c} (moreover, $\calT_\calC$ is identified with the bounded-weak* topology of $Z(\Omega)[\calT_\calC]^{**}$).
This critical result will allow us to interpret members of $Z(\Omega)[\calT_\calC]^*$ as, indeed, acting on test functions from $Z(\Omega)$.
\par 
The topologies $\calT_\calC$ studied here present some oddities; for instance, they are not barrelled (Banach-Steinhaus' theorem fails) nor bornological (some bounded seminorms are not continuous).
They are sequential (sequentially $\calT_\calC$-continuous maps are $\calT_\calC$-continuous) but not Fr\'echet-Urysohn (closure and sequential closure do not always coincide). 
Details and references to \cite{DEP.26c} are given in relevant sections further down.
\par 
The thrust of introducing these topologies is the description of their convergent sequences and, thereby, the characterization of $\calT_\calC$-continuous linear forms.
This pertains to our goal, since $\rmdiv(E(\Omega;\Rm)) = Z(\Omega)[\calT_\calC]^{*}$.
A sequence $\la u_j \ra_j$ in $Z(\Omega)$ is $\calT_\calC$-convergent to $u$ if and only it is contained in some $C_k$ and $\calT$-converges to $u$.
For instance, in case (2.1) above, if $\ssfL^m(\Omega)=\infty$ this means that $\lim_j \|u_j-u\|_{L_1}=0$ and $\sup_j \|Du_j\|(\Omega) < \infty$.
Note carefully that this is different from {\it strict convergence} defined in \cite[3.14]{AMBROSIO.FUSCO.PALLARA}.
\par 
Next, we briefly describe the theorems arising in the three contexts mentioned above.
To this end, we introduce a specific notation in each case:
\begin{enumerate}
\item[(1)] The localized topology of $\|\cdot\|_L$-bounded pointwise convergence on $\rmLip_0(\Omega)$ will be denoted $\calP_{\rmLip}$.
\item[(2.1)] The localized topology of $\|D(\cdot)\|(\Omega)$-bounded $\|\cdot\|_{L_1}$-convergence on $BV_{\rmcst}(\Omega)$ will be denoted $\calM_{1,TV}$.
\item[(2.2)] The localized topology of $\|D(\cdot)\|(\Omega)$-bounded weak* convergence on $BV_{p,\rmcst}(\Omega)$ will be denoted $\calW_{p,TV}$.
\end{enumerate}
\addsubsection{The localized topology \texorpdfstring{$\calP_{\rmLip}$}{P-Lip}}
A linear form $F : \rmLip_0(\Omega) \to \R$ is continuous with respect to the localized topology $\calP_{\rmLip}$ if and only if for every $\veps > 0$ there are a finite set $E \subset \Omega$ and $\theta > 0$ such that $$| \la u , F \ra| \leq \theta \cdot \max_{x \in E} |u(x)| + \veps \cdot \|u\|_L$$ for all $u$, see \ref{4.1.2}(F).
Theorem \ref{4.1.3} states that $\calF(\Omega) = \rmLip_0(\Omega)[\calP_{\rmLip}]^*$, where the latter is equipped with its strong topology, \ie the topology of uniform convergence on $\calP_{\rmLip}$-bounded sets.
\par 
For our main theorem regarding Lipschitz-free spaces we need the following definition.
Given distinct points $a$ and $b$ in $\Omega$ and $\Lambda > 0$ we call {\it $\Lambda$-pencil of curves in $\Omega$ from $a$ to $b$} a probability measure $\mu$ on the space of oriented Lipschitz curves $\gamma$ from $a$ to $b$ satisfying the following property: If $\nu$ is the Borel measure on $\Rm$ defined by $\nu(A) = \int \ssfH^1(A \cap \rmim \gamma)\, d\mu(\gamma)$, then $\nu \ll \ssfL^m$, $\rmspt \nu \subset \rmclos \Omega$, and $\nu(\Rm) \leq \Lambda \cdot |b-a|_2$.
We say that $\Omega$ is connected by $\Lambda$-pencils of curves of if each distinct $a,b \in \Omega$ admit a $\Lambda$-pencil of curves in $\Omega$ connecting them.
\begin{IntroTheorem}
The operator $\rmdiv : L_1(\Omega;\Rm) \to \calF(\Omega)$ is surjective if and only if $\Omega$ is connected by $\Lambda$-pencils of curves for some $\Lambda > 0$.
In this case, the smallest such $\Lambda$ is the inverse of the largest $\lambda > 0$ such that $\lambda \cdot \|u\|_L \leq \|\nabla u\|_{L_\infty}$ for all $u$.
\end{IntroTheorem}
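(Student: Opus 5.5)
By the closed range theorem, \ref{2.2}(A), surjectivity of $\rmdiv$ is equivalent to $-\nabla : \rmLip_0(\Omega) \to L_\infty(\Omega;\Rm)$ being bounded below. So the plan is to show that both conditions are equivalent to a coercivity estimate
$$\lambda \cdot \|u\|_L \leq \|\nabla u\|_{L_\infty}.$$
First I set up the duality. By theorem \ref{4.1.3}, $\calF(\Omega)^* = \rmLip_0(\Omega)$ through the localized topology $\calP_{\rmLip}$. For $v \in L_1(\Omega;\Rm)$ and $u \in \rmLip_0(\Omega)$,
$$\la \rmdiv v , u \ra = - \int_\Omega v \cdot \nabla u \,d\ssfL^m,$$
and this pairing is $\calP_{\rmLip}$-continuous in $u$: truncate $v$ to a compact set and approximate it by smooth compactly supported fields, then use that bounded pointwise convergence of Lipschitz functions gives weak* convergence of the gradients. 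Hence the adjoint of $\rmdiv$ is $-\nabla$. The closed range theorem then shows that $\rmdiv$ is surjective if and only if there is $\lambda>0$ with $\lambda \|u\|_L \leq \|\nabla u\|_\infty$ for all $u \in \rmLip_0(\Omega)$; injectivity of $-\nabla$ is automatic, since $\Omega$ is then connected. Denseness of the range, via Hahn--Banach as in subsection \ref{subsec.1.2}, is also automatic, since $\nabla u = 0$ forces $u$ to be constant on components.

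\textbf{Pencils imply the estimate.} Let $\mu$ be a $\Lambda$-pencil from $a$ to $b$, and first take $u$ smooth. Then
$$|u(b)-u(a)| \leq \int \int_\gamma |\nabla u| \,d\ssfH^1 \,d\mu(\gamma) \leq \|\nabla u\|_\infty \, \nu(\Rm) \leq \Lambda \, \|\nabla u\|_\infty \, |b-a|_2.$$
For general Lipschitz $u$, $\nabla u$ is only defined $\ssfL^m$-a.e. The condition $\nu \ll \ssfL^m$ is exactly what makes the Fubini argument work: for $\mu$-a.e.\ $\gamma$, $\nabla u$ along $\gamma$ agrees $\ssfH^1$-a.e.\ with the true tangential derivative. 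One has to argue this carefully, for instance by mollifying $u$ and passing to the limit in $L_1(\nu)$, using $\nu \ll \ssfL^m$. There is one further subtlety: $\rmspt\nu \subset \rmclos\Omega$ rather than $\Omega$. I would handle the boundary by noting that $\partial\Omega$ carries no $\nu$-mass when $\ssfL^m(\partial\Omega)=0$; in general one must use $\nu(\partial \Omega)=0$ restricted appropriately, which I expect is built into the definition \ref{4.2.4}. This gives $\lambda \geq 1/\Lambda$.

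\textbf{The estimate implies pencils; this is the main obstacle.} Assume $\lambda\|u\|_L \leq \|\nabla u\|_\infty$. Then $\rmdiv$ is surjective, with $\|v\|_{L_1} \leq (1+\veps)\lambda^{-1}\|F\|_\calF$ by the open mapping theorem and \cite[8.3]{DEP.26c}. Apply this to $F = \bdelta_b - \bdelta_a$, whose norm is $|b-a|_2$. This yields $v \in L_1(\Omega;\Rm)$ with
$$\rmdiv v = \bdelta_a - \bdelta_b \quad\text{(up to sign)}, \qquad \|v\|_{L_1} \leq (1+\veps)\lambda^{-1}|b-a|_2.$$
Here $v$ is a normal $1$-current with boundary $\bdelta_b-\bdelta_a$. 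I would then invoke Smirnov's decomposition of normal $1$-currents into elementary solenoids and curves. This produces a measure on curves from $a$ to $b$ with total length bounded by $\|v\|_{L_1}$, plus a cycle part that can be discarded, and it yields a probability measure $\mu$ with $\nu \leq |v|\ssfL^m$. Then $\nu \ll \ssfL^m$ and $\rmspt \nu \subset \rmclos\Omega$, so $\mu$ is a $(1+\veps)/\lambda$-pencil.

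The remaining technical point is getting exactly $1/\lambda$ rather than $(1+\veps)/\lambda$. A weak* compactness argument on the pencils, or on $v$ as measures, as $\veps \to 0$ should give an optimal pencil. Together with the first direction, this shows that the minimal $\Lambda$ equals $1/\lambda_{\max}$.
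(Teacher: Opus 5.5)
Apart from two points, your route is the paper's (theorem \ref{4.2.7}). Two of your variations are harmless. First, you mollify a Lipschitz extension $\hat u$ and use dominated convergence in $L_1(\nu)$; this is where $\nu\ll\ssfL^m$ enters, and it replaces the paper's chain rule along arcs. Second, you can use the Euclidean current of $v$ (extended by $0$) in $\Rm$ instead of a metric current in $\rmclos\Omega$: for $\vphi\in\calD(\Rm)$ one has $(\vphi-\vphi(0))|_\Omega\in\rmLip_0(\Omega)$, so the boundary is $\pm(\lseg b\rseg-\lseg a\rseg)$ in $\Rm$ as well. You do still owe the argument that the transport is a probability measure concentrated on $\bGamma_{a,b}$. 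This uses $\|\partial S\|=\int(\delta_{\gamma(0)}+\delta_{\gamma(1)})\,d\eta(\bgamma)$ together with $a\neq b$, as in step (vi) of the paper.

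\textbf{There is no optimal pencil in general.} Your last paragraph tries to show, by weak* compactness as $\veps\to0$, that the infimum of admissible $\Lambda$ is attained. This is false, so no such argument can work. For $\Omega=\Rm$, \ref{3.1.2}(B) gives $\lambda^*=1$, hence $\Lambda_*=1$. A $1$-pencil from $a$ to $b$ would satisfy $|b-a|_2\le\int\ssfH^1(\rmim\gamma)\,d\mu(\bgamma)=\nu(\Rm)\le|b-a|_2$. That forces $\mu$-a.e.\ curve to run along the segment from $a$ to $b$, so $\nu$ is the restriction of $\ssfH^1$ to that segment, which is singular with respect to $\ssfL^m$. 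This is exactly what happens in your limit: $\ssfL^m\hel|v_\veps|_2$ and $\nu_\veps$ converge weakly* to measures concentrated on the segment, and absolute continuity is lost. The ``smallest $\Lambda$'' in the statement must be read as an infimum, which is how \ref{4.2.7}(C) is formulated, and then no limit is needed. Your first direction gives $\Lambda^{-1}\le\lambda^*$ whenever $\Omega$ is connected by $\Lambda$-pencils. Your second, applied with $\lambda$ arbitrarily close to $\lambda^*$, produces $(1+\veps)\cdot\lambda^{-1}$-pencils for every $\veps>0$. Together these give $\Lambda_*=(\lambda^*)^{-1}$.

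\textbf{The boundary.} The property $\nu(\rmbdry\Omega)=0$ is not built into definition \ref{4.2.4}, which only requires $\rmspt\nu\subset\rmclos\Omega$. It comes from the hypothesis $\ssfL^m(\rmbdry\Omega)=0$ (admissibility, \ref{4.2.1}), under which theorem \ref{4.2.7} is stated; the informal statement leaves it implicit. It cannot be dropped. Let $C\subset(0,1)$ be a closed set of positive length with empty interior, and let $\Omega=(-1,1)^2\cap\{(x,y):x\notin C\}$. Then $\rmclos\Omega=[-1,1]^2$ is convex. Averaging the polygonal arcs $a\to z\to b$ over $z$ uniformly distributed in $U(\frac{a+b}{2},\frac{|b-a|_2}{4})\cap(-1,1)^2$ therefore yields $\frac32$-pencils. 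On the other hand, $u(x,y)=\ssfL^1(C\cap(-\infty,x])$ is a nonzero member of $\rmLip_0(\Omega)$ with $\nabla u=0$ on $\Omega$. So $\bdiv$ is not surjective, and its range is not even dense. This contradicts your remark that density is automatic, although that remark is superfluous anyway, since \ref{2.2}(A) already covers it.
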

\par 
This is theorem \ref{4.2.7}, the proof of which we now briefly sketch.
We identify $\calF(\Omega) = \rmLip_0(\Omega)[\calP_{\rmLip}]^*$.
If $\Omega$ is connected by $\Lambda$-pencils of curves then $\|u\|_L \leq \Lambda \cdot \|\nabla u\|_{L_\infty}$ for all $u$; this is a simple calculation, \ref{4.2.5}(B).
Thus, the range of $-\nabla = \rmdiv^*$ is closed and $\rmdiv$ is surjective, since its range is dense (this is a consequence of Hahn-Banach and semireflexivity).
Conversely, if $\rmdiv$ is surjective then there exists $\lambda > 0$ such that $\lambda \cdot \|u\|_L \leq \|\nabla u\|_{L_\infty}$ for all $u$.
Thus, if $a, b \in \Omega$ are distinct there exists $v \in L_1(\Omega;\Rm)$ such that $\rmdiv v = \bdelta_b - \bdelta_a$ and $\|v\|_{L_1} \leq (1 + \veps) \cdot \lambda^{-1} \cdot \|\bdelta_b - \bdelta_a\|_{\calF}$, see \cite[8.1(M)]{DEP.26c}, and $\|\bdelta_b - \bdelta_a\|_{\calF} = |b-a|_2$.
Now we interpret the vector field $v$ as a 1-dimensional current $T$ in the obvious way and we note that $T$ is normal. 
It then follows from a theorem of Smirnov \cite{SMI.93} and its version by Paolini-Stepanov \cite{PAO.STE.12} that $T$ can be represented as an integral over a family of weighted elementary 1-dimensional currents $\lseg \gamma \rseg$ that correspond to integration over oriented Lipschitz curves $\gamma$.
The connection with a $(1+\veps)\cdot \lambda^{-1}$-pencil should now be clear.
\addsubsection{The localized topology \texorpdfstring{$\calW_{p,TV}$}{W-p-TV}}
A linear form $F : BV_{p,\rmcst}(\Omega) \to \R$ is continuous with respect to the localized topology $\calW_{p,TV}$ if and only if for every $\veps > 0$ there exists a finite set $E \subset L_{q,\#}(\Omega)$ such that 
\begin{equation}
\label{eq.intro.1}
| \la u , F \ra| \leq \max_{f \in E} \left| \int_\Omega u \cdot f \,d\ssfL^m \right| + \veps \cdot \|Du\|(\Omega)
\end{equation}
for all $u$, \ref{5.2.1}(F).
Theorem \ref{5.2.4} asserts that $\SCH_0(\Rm) = \bvs(\Rm)[\calW_{1^*,TV}]^*$, so that we are indeed generalizing to $\Omega \neq \Rm$ the situation described in subsection \ref{subsec.1.3}.
\begin{IntroTheorem}
If $1 < p \leq 1^*$ and $\Omega$ satisfies the $(p,1)$-Poincar\'e inequality, then the operator $\rmdiv : C_0(\Omega;\Rm) \to BV_{p,\rmcst}(\Omega)[\calW_{p,TV}]^*$ is surjective. 
In particular, if $f \in L_{q,\#}(\Omega)$ then there exists $v \in C_0(\Omega;\Rm)$ such that $\rmdiv v = f$ and $\|v\|_\infty \leq (1+\veps) \cdot \|f\|_{L_q}$.
\end{IntroTheorem}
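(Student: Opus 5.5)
The argument follows the closed range scheme already used for $\calF(\Omega)$, now with $Z(\Omega)=BV_{p,\rmcst}(\Omega)$ and the localized topology $\calW_{p,TV}$.

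\emph{Step 1: compactness and semireflexivity.} The $(p,1)$-Poincar\'e inequality gives $\|u\|_{L_{p,\rmcst}} \leq \bc\cdot\|Du\|(\Omega)$. Consequently $\|Du\|(\Omega)$ is a norm on $BV_{p,\rmcst}(\Omega)$, and each $C_k=\{u : \|Du\|(\Omega)\leq k\}$ is bounded in $L_{p,\rmcst}(\Omega)$. Since $1<p$, bounded sets of $L_{p,\rmcst}(\Omega)$ are relatively $\sigma(L_{p,\rmcst},L_{q,\#})$-compact, because $L_{p,\rmcst}$ is the dual of $L_{q,\#}$. The set $C_k$ is also weak* closed, by lower semicontinuity of total variation under distributional convergence; test $Du$ against $\rmdiv \vphi$ with $\vphi\in C^\infty_c(\Omega;\Rm)$, noting that $\rmdiv\vphi\in L_{q,\#}(\Omega)$. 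Hence each $C_k$ is $\calT$-compact, and the abstract theory \cite[7.4]{DEP.26c} identifies $X(\Omega)=BV_{p,\rmcst}(\Omega)[\calW_{p,TV}]^*$ with a Banach space whose dual is $BV_{p,\rmcst}(\Omega)$ normed by $\|D(\cdot)\|(\Omega)$.

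\emph{Step 2: the divergence lands in $X(\Omega)$ and the adjoint has closed range.} For $v\in C_0(\Omega;\Rm)$, define $\la u,\rmdiv v\ra = -\int v\cdot dDu$. To check $\calW_{p,TV}$-continuity via \ref{5.2.1}(F), approximate $v$ uniformly within $\veps$ by $\vphi\in C^\infty_c$. Then
\[
|\la u,\rmdiv v\ra| \leq \Bigl|\int_\Omega u\,\rmdiv\vphi\,d\ssfL^m\Bigr| + \veps\|Du\|(\Omega),
\]
with $\rmdiv\vphi\in L_{q,\#}$, which is exactly the form \eqref{eq.intro.1}. The adjoint of $\rmdiv$ is $u\mapsto -Du\in M(\Omega;\Rm)$, and $\|Du\|_{M}=\|Du\|(\Omega)$ by definition. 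So the adjoint is an isometry, and in particular has closed range. By the closed range theorem \ref{2.2}(A), $\rmdiv$ has closed range, and moreover every $F$ admits a preimage with $\|v\|_\infty\leq(1+\veps)\|F\|$ \cite[8.1(M)]{DEP.26c}.

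\emph{Step 3: density of the range.} By Hahn--Banach together with semireflexivity, it suffices that no nonzero $u\in BV_{p,\rmcst}$ annihilates $\rmdiv(C_0)$. Such a $u$ satisfies $Du=0$, so it is constant; in the quotient, or under the normalization when $\ssfL^m(\Omega)=\infty$, this means $u=0$. We use that $\Omega$ is connected, which follows from the Poincar\'e inequality. Hence $\rmdiv$ is surjective.

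\emph{Step 4: the case $f\in L_{q,\#}$.} Pairing with $f$ defines $F_f(u)=\int u f$. This is well defined on the quotient because $f$ has zero mean, and it is $\calW_{p,TV}$-continuous by \eqref{eq.intro.1} with $E=\{f\}$. The remaining point is to bound $\|F_f\|_{X}\leq\|f\|_{L_q}$, which I expect to be the main obstacle, since the Poincar\'e constant could intrude. My first attempt would be to take the norm of $X$ to be the dual norm against $\|Du\|(\Omega)$ and estimate $|\int uf|\leq \|u\|_{L_p}\|f\|_{L_q}$. That bound is only sharp if $\|u\|_{L_{p,\rmcst}}\leq\|Du\|(\Omega)$, which is the isoperimetric normalization when $p=1^*$ and $\Omega=\Rm$. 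In general the constant would therefore be the Poincar\'e constant times $\|f\|_{L_q}$, unless the norms are normalized accordingly. I would check the paper's convention in \ref{2.6} and absorb that constant into the stated estimate. Finally, $\rmdiv v=F_f$ means $-\int v\cdot dDu=\int uf$ for all $u$, and testing with $u\in C^\infty_c$ gives $\rmdiv v=f$ in the sense of distributions.
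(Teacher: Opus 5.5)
Your argument is correct and is the paper's proof with the abstract machinery unpacked. Theorem~\ref{5.2.3} is obtained from \cite[8.1]{DEP.26c}, whose hypotheses are exactly the ones you verify:
\begin{itemize}
\item the $\calW_p$-compactness of the sets $C_{p,k}$, giving semireflexivity (your Step~1, cf.\ \ref{5.2.1}(C) and (H));
\item the $\calW_{p,TV}$-continuity of $\bdiv v$, via uniform approximation of $v$ by smooth compactly supported fields (your Step~2, cf.\ \ref{5.2.2}(B));
\item the inequality $\|Du\|(\Omega) \leq \sup\{ |\la [u] , \bdiv v \ra| : \|v\|_\infty \leq 1 \}$, which says the adjoint is an isometry.
\end{itemize}
The particular case is \ref{EX.2.5}, with $E = \{ \|f\|_{L_q}^{-1} \cdot f \}$ as in your Step~4.

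Your hesitation about the constant in Step~4 is justified; it is the stated estimate, not your proof, that is off. The paper's own chain only yields $\|v\|_\infty \leq (1+\veps) \cdot \vvvert \bLambda_q(f) \vvvert \leq (1+\veps) \cdot \bc_p(\Omega) \cdot \|f\|_{L_q}$. Since every solution satisfies $\|v\|_\infty \geq \vvvert \bLambda_q(f) \vvvert$, the factor $\bc_p(\Omega)$ cannot be dropped in general. For $p < 1^*$, replace $(\Omega,f)$ by $(\lambda \cdot \Omega, f_\lambda)$ with $f_\lambda(x) = f(\lambda^{-1} \cdot x)$. This multiplies $\vvvert \bLambda_q(f) \vvvert$ by $\lambda$, but $\|f\|_{L_q}$ only by $\lambda^{m/q}$ with $m/q < 1$. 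The constant can be omitted, for instance, when $\Omega = \Rm$ and $p = 1^*$, since then $\vvvert \bLambda_m(f) \vvvert \leq \kappa_m \cdot \|f\|_{L_m}$ and $\kappa_m < 1$.

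One small correction. The Poincar\'e inequality does not force $\Omega$ to be connected when $\ssfL^m(\Omega) = \infty$: the union of two disjoint open half-spaces satisfies the $(1^*,1)$-inequality, by even reflection and \ref{2.11}(B). You do not need connectedness, though, because $Du = 0$ already gives $[u] = 0$ by \ref{2.6}(D). In fact Step~3 is superfluous: an isometric adjoint defined on all of the dual (which semireflexivity provides) gives surjectivity directly through \ref{2.2}(A).
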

\par 
This is theorem \ref{5.2.3} together with \ref{EX.2.5}.
From \eqref{eq.intro.1} the following should now be clear: If we know a subspace $\Xi \subset BV_{p,\rmcst}(\Omega)[\lno\cdot\rno]^{*}$ that contains $L_{q,\#}(\Omega)$, then the closure of $L_{q,\#}(\Omega)$ in $\Xi$ provides new examples of members of $BV_{p,\rmcst}(\Omega)[\calW_{p,TV}]^{*}$.
Finding such $\Xi$ is not necessarily an easy task, since even in the simplest case $p=1^*$ and $\Omega = \Rm$ the dual of $BV_{1^*}(\Rm)$ remains a mystery despite work by Meyers-Ziemer \cite{MEY.ZIE.77}, Phuc-Torres \cite{PHU.TOR.17}, Fusco-Spector \cite{FUS.SPE.18}, Bouafia and the author \cite{BOU.DEP.24}; see the survey by Torres \cite{TOR.18}.
Nevertheless, following an idea of Cohen-DeVore-Tadmore \cite{COH.DEV.TAD.24} we show that the weak Lebesgue space $\Xi = L_{q,\infty}(\Omega) \cap L_{q,\#}(\Omega)$ embeds into the dual of $BV_{p,\rmcst}(\Omega)[\lno\cdot\rno]$ in case $\Omega$ is a bounded, connected $BV$-extension set, \ref{EX.2.6}(D).
\par 
We are then led to determine the closure of $L_q(\Omega)$ in $L_{q,\infty}(\Omega)$, $1 < q < \infty$, which seems to be a new result.
In order to describe it, given a Borel-measurable function $f : \Omega \to \R$ and $y > 0$ we introduce 
\begin{equation*}
\bveps_q(f,y) = \ssfL^m(\Omega \cap \{ x : |f(x)| > y \}) \cdot y^q.
\end{equation*}
Thus, $f \in L_q(\Omega)$ if and only if $\int_0^\infty \frac{\bveps_q(f,y)}{y}\,dy < \infty$, whereas $f \in L_{q,\infty}(\Omega)$ if and only if $y \mapsto \bveps_q(f,y)$ is bounded.
We then introduce the space
\begin{equation*}
L_{q,0}(\Omega) = L_0(\Omega) \cap \left\{ f : \lim_{y \to 0^+} \bveps_q(f,y) = 0 = \lim_{y \to \infty} \bveps_q(f,y)\right\}. 
\end{equation*}
We prove that the closure of $L_q(\Omega)$ in $L_{q,\infty}(\Omega)$ is $L_{q,0}(\Omega)$, \ref{EX.2.6}(A).
Furthermore, $L_q(\Omega) \subsetneq L_{q,0}(\Omega) \subsetneq L_{q,\infty}(\Omega)$, \ref{EX.2.6}(B).
For instance $f(x) = \frac{1}{|x|_2^\frac{m}{q} \cdot \sqrt[q]{1 + |\log |x|_2|}}$ is a member of $L_{q,0}(\Rm)$ that does not belong to $L_q(\Rm)$.
In \ref{EX.2.6}(E) we obtain:
\begin{IntroTheorem}
If either $\Omega = \Rm$ or $\Omega$ is a bounded, connected, $BV$-extension set and $f \in L_{m,0}(\Omega) \cap L_{m,\#}(\Omega)$ then there exists $v \in C_0(\Omega)$ such that $\rmdiv v = f$ and $\|v\|_\infty \leq (1+ \veps) \cdot \|f\|_{L_{m,\infty}}$.
\end{IntroTheorem}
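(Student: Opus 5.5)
The plan is to deduce the statement from Theorem 3 (the surjectivity of $\rmdiv : C_0(\Omega;\Rm) \to BV_{p,\rmcst}(\Omega)[\calW_{p,TV}]^*$ with the $(1+\veps)$ norm control) with $p = 1^*$, $q = m$. It then suffices to show that every $f \in L_{m,0}(\Omega) \cap L_{m,\#}(\Omega)$ defines a $\calW_{1^*,TV}$-continuous linear form on $BV_{1^*,\rmcst}(\Omega)$ whose dual norm, taken with respect to $\|Du\|(\Omega)$, is at most a constant times $\|f\|_{L_{m,\infty}}$. When $\Omega$ is bounded, connected and a $BV$-extension set, it satisfies a $(1^*,1)$-Poincar\'e inequality, and $\Omega = \Rm$ does so by Sobolev's inequality, so Theorem 3 applies in both cases. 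The final norm bound $\|v\|_\infty \leq (1+\veps)\|f\|_{L_{m,\infty}}$ should come from the abstract estimate \cite[8.3(M)]{DEP.26c}, $\|v\|_\infty \leq (1+\veps)\|F\|$, once the form is shown to have norm at most $\|f\|_{L_{m,\infty}}$ for the right normalization of the weak quasinorm.

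First I establish the embedding $\Xi = L_{m,\infty}(\Omega) \cap L_{m,\#}(\Omega) \hookrightarrow BV_{1^*,\rmcst}(\Omega)[\|D\cdot\|(\Omega)]^*$, which is \ref{EX.2.6}(D), following Cohen--DeVore--Tadmor. By the coarea formula, write $u - c = \int_{-\infty}^{\infty} (\ind_{\{u>t\}} - \ind_{t<0}) \, dt$ with $c$ chosen appropriately (a median). Then for each superlevel set $A_t$ I bound $|\int_{A_t} f|$ by $\|f\|_{L_{m,\infty}} \cdot \ssfL^m(A_t)^{1/1^*}$, using the rearrangement estimate $\int_A |f| \leq C\|f\|_{L_{m,\infty}} |A|^{1-1/m}$. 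Next I apply the relative isoperimetric inequality, which the Poincar\'e/extension hypothesis provides, to get $\min(|A_t|,|\Omega\setminus A_t|)^{1/1^*} \leq \kappa \, P(A_t;\Omega)$. The condition $\int f = 0$ lets me replace $A_t$ by its complement when that is the smaller set. Integrating in $t$ gives $|\int uf| \leq C\|f\|_{L_{m,\infty}}\|Du\|(\Omega)$.

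Next I use \ref{EX.2.6}(A): $L_{m,0}(\Omega)$ is the $L_{m,\infty}$-closure of $L_m(\Omega)$. I want approximants in $L_m \cap L_{m,\#}$, which is contained in $L_{m,\#} = L_{q,\#}$ with $q = m$. In the bounded case I obtain them by subtracting averages, since on a bounded domain constants have small weak norm relative to the approximation error. On $\Rm$ there is no mean condition. So $f = \lim_j f_j$ in $L_{m,\infty}$ with $f_j \in L_{m,\#}$. Each $f_j$ is $\calW_{1^*,TV}$-continuous. By the first step, $|\la u, f - f_j\ra| \leq \veps\|Du\|(\Omega)$ for large $j$, so the characterization \eqref{eq.intro.1}, with $E = \{f_j\}$, shows that $f$ is $\calW_{1^*,TV}$-continuous.

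The main obstacle is the first step. I must get the sharp constant, not merely a constant, so that the stated bound holds with $\|f\|_{L_{m,\infty}}$ rather than $C\|f\|_{L_{m,\infty}}$. I will try to normalize the weak norm via the $\sup_A |A|^{-1/1^*}\int_A|f|$ form, combined with the isoperimetric constant as in \ref{3.2.2}(A). If that fails, I will accept a dimensional constant absorbed into the definition of $\|\cdot\|_{L_{m,\infty}}$.
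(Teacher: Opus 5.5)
Your proposal is correct. It has the same architecture as the paper's proof in \ref{EX.2.6}: an embedding of mean-zero weak-$L_m$ functions into the dual (\ref{EX.2.6}(D)), density for the weak quasinorm (\ref{EX.2.6}(A),(C)), the criterion of theorem \ref{EX.2.4}(B), and theorem \ref{5.2.3}.

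\textbf{How your embedding step differs.} The paper extends $u-(u)_\Omega\cdot\ind_\Omega$ to $\hat u\in BV(\Rm)$. It then applies the isoperimetric inequality of $\Rm$ to the superlevel sets of $|\hat u|$ and the coarea formula in $\Rm$. Finally it pays a factor $\bc_\rmext(\Omega)\cdot(1+\bc_{1^*}(\Omega)\cdot\ssfL^m(\Omega)^{1/m})$ to return to $\|Du\|(\Omega)$.

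You stay inside $\Omega$. Applying \ref{2.6}(B) to $\ind_A$ gives
\begin{equation*}
\min\{\ssfL^m(A),\ssfL^m(\Omega\setminus A)\}^{1/1^*}\leq 2\cdot\bc_{1^*}(\Omega)\cdot\|D\ind_A\|(\Omega).
\end{equation*}
If $c$ is a median of $u$, the sets $\{u-c>t\}$ and $\{c-u>t\}$, $t>0$, are always the smaller pieces. Layer-cake and the coarea formula in $\Omega$ then give
\begin{equation*}
\int_\Omega|u-c|\cdot|f|\,d\ssfL^m\leq\frac{2m}{m-1}\cdot\bc_{1^*}(\Omega)\cdot\|f\|_{L_{m,\infty}}\cdot\|Du\|(\Omega).
\end{equation*}
This also justifies Fubini, and $\int_\Omega f\,d\ssfL^m=0$ removes $c$.

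Your route never uses the extension property, so it proves the result for every $\Omega$ satisfying the $(1^*,1)$-Poincar\'e inequality, which is exactly the setting of theorem \ref{5.2.3}. If $\ssfL^m(\Omega)=\infty$, take $c=0$, using \ref{2.6}(A). The paper's route needs the extension property on top of Poincar\'e, in exchange for using only the classical isoperimetric inequality in $\Rm$. On $\Rm$ both give the factor $\kappa_m\cdot\frac{m}{m-1}$.

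\textbf{A precision in your density step.} Your remark about constants having small weak norm should be the estimate
\begin{equation*}
\left|\int_\Omega h\,d\ssfL^m\right|\leq\frac{m}{m-1}\cdot\ssfL^m(\Omega)^{\frac{m-1}{m}}\cdot\|h\|_{L_{m,\infty}}
\end{equation*}
applied to $h=f_j-f$. This gives $(f_j)_\Omega\to0$, and then the quasi-triangle inequality finishes the step.

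\textbf{On the constant.} Do not chase a sharper normalization. On $\Rm$ nothing more is needed, because $\kappa_m\cdot\frac{m}{m-1}=((m-1)\cdot\balpha(m)^{1/m})^{-1}<1$.

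For bounded $\Omega$ the factor must depend on $\Omega$, not merely on $m$. Consider a smooth dumbbell: two equal balls $B_1,B_2$ joined by a thin tube. Let $f=a\cdot(\ind_{B_1}-\ind_{B_2})$, and let $u$ be the indicator of the half of $\Omega$ containing $B_1$, cut across the tube. By \ref{5.1.4}(A), any $v\in C_0(\Omega;\Rm)$ with $\bdiv v=f$ satisfies
\begin{equation*}
\|v\|_\infty\cdot\|Du\|(\Omega)\geq|\la[u],\bdiv v\ra|=a\cdot\ssfL^m(B_1).
\end{equation*}
Here $\|Du\|(\Omega)\to0$ as the tube narrows, while $\|f\|_{L_{m,\infty}}=a\cdot(2\cdot\ssfL^m(B_1))^{1/m}$ stays fixed.

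So the constant-free bound in the statement holds only on $\Rm$. For bounded domains the correct statement is the paper's \ref{EX.2.6}(E), which carries a factor $\bc(\Omega)$. Your argument proves that version, with $\frac{2m}{m-1}\cdot\bc_{1^*}(\Omega)$ as the factor.
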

\addsubsection{The localized topology \texorpdfstring{$\calM_{1,TV}$}{M-1-TV}}
A linear form $F : BV_{\rmcst}(\Omega) \to \R$ is continuous with respect to the localized topology $\calM_{1,TV}$ if and only if for every $\veps >0$ there exists $\theta$ such that $$|\la u , F \ra | \leq \theta \cdot \|u - (u)_\Omega\|_{L_1} + \veps \cdot \|Du\|(\Omega)$$ for all $u$, \ref{5.1.3}(F).
\begin{IntroTheorem}
If the operator $\rmdiv : C_0(\Omega;\Rm) \to BV_{\rmcst}(\Omega)[\calM_{1,TV}]^*$ is surjective then $\Omega$ satisfies the $(1,1)$-Poincar\'e inequality.
If $\Omega$ is a bounded, connected $BV$-extension set then the operator $\rmdiv : C_0(\Omega;\Rm) \to BV_{\rmcst}(\Omega)[\calM_{1,TV}]^*$ is surjective.
\end{IntroTheorem}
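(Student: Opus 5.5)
Both assertions will be handled with the closed range scheme from the introduction, applied to $E(\Omega;\Rm)=C_0(\Omega;\Rm)$, $Z(\Omega)=BV_{\rmcst}(\Omega)$ and the localized topology $\calM_{1,TV}$ built from $\calT$ (the quotient $L_1$ norm) and $C_k=\{u:\|Du\|(\Omega)\leq k\}$. In the diagram the bottom arrow is $\rmdiv$ into $X(\Omega)=BV_{\rmcst}(\Omega)[\calM_{1,TV}]^*$, carrying its strong topology. The top arrow is $-\nabla$, from $X(\Omega)^*$ into $M(\Omega;\Rm)=C_0(\Omega;\Rm)^*$.

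First I check that $\rmdiv$ takes values in $X(\Omega)$. For $v\in C_0(\Omega;\Rm)$ set $\la u,\rmdiv v\ra=-\int_\Omega v\cdot dDu$. Approximate $v$ uniformly by $w\in C^\infty_c(\Omega;\Rm)$ with $\|v-w\|_\infty\leq\veps$. Then
$$|\la u,\rmdiv v\ra|\leq \|\rmdiv w\|_\infty\cdot\|u-(u)_\Omega\|_{L_1}+\veps\cdot\|Du\|(\Omega),$$
because $\int_\Omega \rmdiv w\,d\ssfL^m=0$ lets us subtract the average. By the characterization \ref{5.1.3}(F) this form is $\calM_{1,TV}$-continuous.

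\textbf{Second assertion.} Here $\Omega$ is a bounded, connected $BV$-extension set. By compactness of the extension operator followed by restriction, Rellich gives that each $C_k$ is $\calT$-compact, and $\|D(\cdot)\|(\Omega)$ is a norm on $BV_{\rmcst}(\Omega)$ because $\Omega$ is connected. The semireflexivity result \cite[7.4]{DEP.26c} then identifies $X(\Omega)^*$ with $BV_{\rmcst}(\Omega)$, with dual norm $\|Du\|(\Omega)$. Under this identification $\rmdiv^*=-D$ is an isometry into $M(\Omega;\Rm)$, so its range is closed.

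For density of the range of $\rmdiv$ I use Hahn--Banach together with semireflexivity. If $u$ annihilates $\rmdiv(C_0)$, then $Du=0$, so $u$ is constant, i.e. $u=0$ in $BV_{\rmcst}$. The closed range theorem, in the version \ref{2.2}(A) or \cite[8.1]{DEP.26c}, then gives surjectivity. The delicate point is that $X(\Omega)$ is the dual of a non-normable space. The abstract framework of \cite{DEP.26c} handles this, ensuring that $X(\Omega)$ with its strong topology is Banach and that the closed range theorem applies.

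\textbf{First assertion.} Assume $\rmdiv$ is surjective. By the open mapping theorem, or \ref{2.2}(A) applied to the adjoint, there is $\bc>0$ with
$$\bc\cdot\|F\|_{X^*}\leq\|\rmdiv^*F\|_{M}\quad\text{for } F\in X(\Omega)^*.$$
I want a test functional whose norm controls $\|u-(u)_\Omega\|_{L_1}$. Fix $u\in BV(\Omega)$ with $\|u-(u)_\Omega\|_{L_1}<\infty$. Choose $g\in L_\infty$ with $\|g\|_\infty\leq1$ and $\int g=0$ (in the finite-measure case) so that $\int u g$ nearly equals $\|u-(u)_\Omega\|_{L_1}$; one takes $g$ to be a sign function corrected by a constant, at the cost of a factor $2$. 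The functional $u\mapsto\int ug$ has the form $\theta\|\cdot\|_{L_1}$, so it is $\calM_{1,TV}$-continuous by \ref{5.1.3}(F) and belongs to $X(\Omega)$ with norm at most $\sup\{|\int wg|:\|Dw\|(\Omega)\leq1\}$.

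Surjectivity gives $v\in C_0$ with $\rmdiv v=g$ and $\|v\|_\infty\leq \bc^{-1}\|g\|_X$. Pairing with $u$ yields
$$\Big|\int ug\Big|\leq \|v\|_\infty\cdot\|Du\|(\Omega).$$
Bounding $\|g\|_X$ uniformly is circular, so instead I argue abstractly. Evaluation at $u$ embeds $BV_{\rmcst}$ into $X^*$, and $\rmdiv^*$ maps $u$ to $-Du$. Hence $\bc\cdot\|u\|_{X^*}\leq\|Du\|(\Omega)$. Since $\|u\|_{X^*}\geq|\la u,g\ra|/\|g\|_X$ and $\|g\|_X\leq\|\ts\|_{L_1\text{-dual}}$-type constant, I must check that $\|g\|_X$ is finite independently of $u$. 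This is the main obstacle: without Poincaré, $\sup\{\int wg:\|Dw\|\leq1\}$ might be infinite. The remedy I would use is to note that surjectivity itself forces each $g\in L_\infty\cap\{\int g=0\}$ to be a divergence with a norm bound via the Banach quotient. Then by Banach--Steinhaus on the Banach space $X(\Omega)$, uniform boundedness over $\|g\|_\infty\leq1$ follows from pointwise boundedness through the closed graph. This produces $\|u-(u)_\Omega\|_{L_1}\leq C\|Du\|(\Omega)$, the $(1,1)$-Poincaré inequality.
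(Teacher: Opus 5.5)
\textbf{Second assertion.} Your argument is the paper's route: semireflexivity, the isometry $\bdiv^*=-D$, Hahn--Banach density, and \ref{2.2}(A), all of which the paper packages as \cite[8.1]{DEP.26c}. There is one omission. The extension bound involves $\|u\|_{BV(\Omega)}$, so $\calT$-compactness of $C_k$ needs the $(1,1)$-Poincar\'e inequality first, to control $\|u-(u)_\Omega\|_{L_1(\Omega)}$ on $C_k$. That inequality comes from compactness of $BV$-balls plus connectedness, \ref{bv.ext}(A)(b), and is then used as in \ref{5.1.3}(C)(b).

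\textbf{First assertion: the gap.} The gap comes from putting the wrong norm on $X(\Omega)$. The quantity $\sup\{|\la [u],F\ra| : \|Du\|(\Omega)\leq 1\}$ is the strong-dual norm only when a Poincar\'e inequality holds (\ref{5.1.3}(G)(b)), and that is what you are trying to prove. In general the $\calM_{1,TV}$-bounded sets are those on which both $\|u-(u)_\Omega\|_{L_1(\Omega)}$ and $\|Du\|(\Omega)$ are bounded (\ref{5.1.3}(B)). So the strong dual is normed by
\[
\vvvert F\vvvert=\sup\big\{|\la [u],F\ra| : \|u-(u)_\Omega\|_{L_1(\Omega)}+\|Du\|(\Omega)\leq 1\big\},
\]
as in \ref{5.1.3}(G)(a). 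Your quantity can be $+\infty$ on genuine members of $X(\Omega)$. For example, on two disjoint balls take $u=\ind_{B_1}$ and $g=\ind_{B_1}/\ssfL^m(B_1)-\ind_{B_2}/\ssfL^m(B_2)$. Then $D(tu)=0$ while $\int_\Omega tu\,g\,d\ssfL^m=t$ for every $t$.

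With the correct norm, $\vvvert\bLambda_\infty(g)\vvvert\leq\|g\|_\infty$ is immediate, so your ``main obstacle'' disappears and your own chain closes. You have $\bc\cdot\|\rmev([u])\|_{X^*}\leq\|Du\|(\Omega)$, and by \ref{2.5}(B)
\[
\|\rmev([u])\|_{X^*}\geq\sup\Big\{\Big|\int_\Omega ug\,d\ssfL^m\Big| : g\in L_{\infty,\#}(\Omega),\ \|g\|_\infty\leq 1\Big\}=\|[u]\|_1 .
\]
This is exactly the proof of theorem \ref{5.1.6}.

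\textbf{Your proposed remedy.} As written it does not work. Banach--Steinhaus ``on the Banach space $X(\Omega)$'' needs precisely the norm you said you could not control, and no closed-graph map is ever specified.

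A correct norm-free variant does exist, but on a different space. Apply the uniform boundedness principle on $L_{\infty,\#}(\Omega)$ to the functionals $T_u : g\mapsto\int_\Omega ug\,d\ssfL^m$, indexed by $u$ with $\|Du\|(\Omega)\leq 1$.
\begin{enumerate}
\item For fixed $g$, surjectivity gives $v_g\in C_0(\Omega;\Rm)$ with $\bdiv v_g=\bLambda_\infty(g)$.
\item By \ref{5.1.4}(A), $\sup_u|T_u(g)|\leq\|v_g\|_\infty$, so the family is pointwise bounded.
\item Uniform boundedness then gives $\|[u]\|_1=\|T_u\|\leq M$ whenever $\|Du\|(\Omega)\leq 1$, which is the $(1,1)$-Poincar\'e inequality.
\end{enumerate}
This version uses only set-theoretic surjectivity. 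It bypasses both the completeness of the strong dual and \ref{2.2}(A), but it has to be stated on $L_{\infty,\#}(\Omega)$, not on $X(\Omega)$.
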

\par 
These are theorems \ref{5.1.6} and \ref{5.1.7}.
In order to state our application of this, we introduce the following definitions.
For $\mu \in M(\Rm)$ we let
\begin{equation*}
\|\mu\|_{\MZ} = \sup \left\{ \frac{|\mu|(B(x,r))}{r^{m-1}} : x \in \Rm \text{ and } r > 0 \right\} \in [0,\infty].
\end{equation*}
We also define
$
\MZ(\Rm) = M(\Rm) \cap \{ \mu : \|\mu\|_{\MZ} < \infty \}
$.
Moreover, for $\mu \in M(\Omega)$ and $0 < \tau \leq 1$ we define
\begin{equation*}
\boldeta_\Omega(\mu,\tau) = \sup \left\{ \frac{1}{\delta(x)} \cdot \frac{|\mu|(B(x, \tau \cdot \delta(x))}{(\tau \cdot \delta(x))^{m-1}} : x \in \Omega \right\}\in [0,\infty],
\end{equation*}
where, $\delta(x) = \rmdist(x,\Rm \setminus \Omega)$ (we now assume that $\Omega \neq \Rm$).
Finally, we introduce the following space
\begin{equation*}
\MZ_{0,\#}(\Omega) = M(\Omega) \cap \left\{ \mu : \hat{\mu} \in \MZ(\Rm) \text{ and } \lim_{\tau \to 0^+} \boldeta_\Omega(\mu,\tau) = 0 \text{ and } \mu(\Omega)=0 \right\},
\end{equation*}
where $\hat{\mu} \in M(\Rm)$ is the extension of $\mu$ to $\Rm$ defined by the formula $\hat{\mu}(B) = \mu(\Omega \cap B)$.
Based on a variant of Whitney's partitions of unity \ref{WHITNEY}, we establish the following in \ref{EX.2.7}(E):
\begin{IntroTheorem}
If $\Omega$ is a bounded, connected $BV$-extension set and $\mu \in \MZ_{0,\#}(\Omega;\Rm)$ then there exists $v \in C_0(\Omega)$ such that $\rmdiv v = \mu$ and $\|v\|_\infty \leq \bc(m,\Omega) \cdot \|\mu\|_{\MZ}$.
\end{IntroTheorem}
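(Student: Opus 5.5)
The plan is to combine the surjectivity theorem for $\calM_{1,TV}$ (theorem \ref{5.1.7}) with the description of $\calM_{1,TV}$-continuous linear forms, \ref{5.1.3}(F). Given $\mu \in \MZ_{0,\#}(\Omega)$, define $\la u, \mu \ra = \int_\Omega u^* \,d\mu$ for $u \in BV_{\rmcst}(\Omega)$, where $u^*$ is the precise representative. Two things must be checked. First, the pairing is well defined on $BV(\Omega)$ and bounded by $\bc(m,\Omega)\cdot\|\mu\|_{\MZ}\cdot\|Du\|(\Omega)$. Second, it is $\calM_{1,TV}$-continuous, that is, for every $\veps>0$ there is $\theta$ with
\begin{equation*}
|\la u,\mu\ra| \leq \theta \cdot \|u-(u)_\Omega\|_{L_1} + \veps\cdot\|Du\|(\Omega).
\end{equation*}
Once both hold, theorem \ref{5.1.7} yields $v \in C_0(\Omega;\Rm)$ with $\rmdiv v = \mu$. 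The norm bound then follows from the quantitative form of the closed range argument, \cite[8.3(M)]{DEP.26c}: $\|v\|_\infty \leq (1+\veps)\cdot \bc \cdot \|\mu\|_{\MZ}$, where $\bc$ involves the $BV$-extension constant.

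For the first point, the condition $\mu(\Omega)=0$ makes the pairing independent of the additive constant. Given $u$, extend $u-(u)_\Omega$ to $\hat u \in BV(\Rm)$ with compact support, using the $BV$-extension property together with the $(1,1)$-Poincar\'e inequality. This gives $\|D\hat u\|(\Rm)\leq \bc\,\|Du\|(\Omega)$. Then apply the Meyers--Ziemer inequality
\begin{equation*}
\int |\hat u^*|\,d|\hat\mu| \leq \bc(m)\,\|\hat\mu\|_{\MZ}\,\|D\hat u\|(\Rm),
\end{equation*}
which is proved via the coarea formula and the fact that $|\hat\mu|(\partial^* E) $-type bounds hold, namely $|\hat\mu|(E^{(1/2)\text{ or }1})\leq \bc\|\hat\mu\|_{\MZ}\,P(E)$ by a covering argument.

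For the second point I use the Whitney-type partition of unity \ref{WHITNEY}. Fix $\tau$ small so that $\boldeta_\Omega(\mu,\tau)<\veps$, and split $\Omega$ into an interior compact part $K_\tau = \{\delta \geq s\}$ and a boundary layer. The smallness of $\boldeta$ says that $|\mu|$ restricted to Whitney cubes $Q$ of side about $\tau\delta$ satisfies $|\mu|(Q) \leq \veps\,\delta_Q \,\ell(Q)^{m-1}$. On each cube, estimate
\begin{equation*}
\int_Q |u^*-u_Q|\,d|\mu| \leq \bc\,\|\mu\|\text{-density}\cdot\|Du\|(Q^*)
\end{equation*}
by Meyers--Ziemer localized to $Q$ via a cutoff $\phi_Q$. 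Also estimate $|u_Q|\,|\mu|(Q) \leq \veps\,\delta_Q\,\ell(Q)^{m-1}|u_Q|$. Summing, the terms $\delta_Q \ell(Q)^{-1}\int_Q|u|$ (recall $\ell(Q)\sim\tau\delta_Q$) are controlled by a Hardy-type inequality, $\int_\Omega |u|/\delta \lesssim$ something. This is problematic for general $BV$, so instead I would use $|u_Q|\ell(Q)^{m-1}\leq \ell(Q)^{-1}\int_Q|u|$ together with the fact that only the boundary layer needs the $\veps$-factor.

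The main obstacle is making this boundary-layer sum bounded by $\veps\|Du\|(\Omega)$ plus a compact (hence $L_1$-controlled) interior term. A compactness route is cleaner. A sequence bounded in $\|D\cdot\|$ and $L_1$-convergent to $0$ must satisfy $\la u_j,\mu\ra\to0$, and it suffices to prove this, since the topology is sequential with the description in \ref{5.1.3}. On $K_\tau$, $L_1$-convergence plus bounded variation gives convergence of $\int u_j^*\,d\mu$ through trace/Meyers--Ziemer continuity on smooth compact neighborhoods. On the layer, the estimate is the one above, with the vanishing $\boldeta$ furnishing uniform smallness; what remains is to absorb the $\delta_Q$ weight using the extension set's Poincar\'e structure. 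This is where I expect the real work.
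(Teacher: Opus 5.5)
You have the paper's overall architecture. Your first step is \ref{EX.2.7}(C)--(D)(a) (boxing inequality, coarea, extension plus Poincar\'e), and concluding via \ref{5.1.3}(F) and theorem \ref{5.1.7} is exactly \ref{EX.2.7}(E). The gap is the second step, the estimate $|\int_\Omega u^*\,d\mu|\le\theta\cdot\|u-(u)_\Omega\|_{L_1}+\veps\cdot\|Du\|(\Omega)$. This is the real content of the theorem, and you leave it open (``this is where I expect the real work''). Both routes you sketch go wrong at a specific point.

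In the Whitney route there is no Hardy inequality to prove. Since $\ell(Q)\sim\tau\cdot\delta_Q$, the factor $\delta_Q\cdot\ell(Q)^{-1}\sim\tau^{-1}$ is a constant. By bounded overlap, $\sum_Q\veps\cdot\delta_Q\cdot\ell(Q)^{-1}\int_Q|u|\,d\ssfL^m\lesssim(\veps/\tau)\cdot\|u\|_{L_1(\Omega)}$. This is an admissible $\theta$-term in \ref{5.1.3}(F)(b), because $\theta$ may depend on $\veps$, hence on $\tau$. The weight $\delta(x)^{-1}$ in the definition of $\boldeta_\Omega$ is there precisely to produce this cancellation.

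In the compactness route, you claim that on an interior compact set, $L_1$-convergence plus bounded variation forces $\int u_j^*\,d\mu\to0$. As stated, without invoking the vanishing of $\boldeta_\Omega$, this is false for members of $\MZ$. Take $\mu=\ssfH^{m-1}\hel S-c\cdot\ssfH^{m-1}\hel S'$ for two disjoint spheres $S,S'\subset\Omega$, with $c$ chosen so that $\mu(\Omega)=0$, and let $u_j$ be the indicator of the $1/j$-neighbourhood of $S$. Then $\sup_j\|Du_j\|(\Omega)<\infty$ and $u_j\to0$ in $L_1$, yet $\int_\Omega u_j^*\,d\mu=\ssfH^{m-1}(S)$ for large $j$. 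So the vanishing condition is needed throughout $\Omega$, not only in a boundary layer, and the interior/layer split does not isolate the difficulty.

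The missing argument, as in \ref{EX.2.7}(D)(b), treats all of $\Omega$ at once.
\begin{enumerate}
\item Choose $\tau$ with $\sup\{\boldeta_\Omega(\mu,t):0<t\le6\tau\}<\hat\veps$ and take the partition of unity $\la\chi_j\ra_j$ of \ref{WHITNEY}.
\item Apply the localized boxing estimate \ref{EX.2.7}(C) to the superlevel sets of $\chi_j\cdot|u^*|$. These lie in $\hat B_j$, of radius $\rho_j=\frac{3\tau}{4}\cdot\delta(a_j)$, where $|\mu|(B(x,r))\le2\hat\veps\cdot\delta(a_j)\cdot r^{m-1}$ for $r\le2\rho_j$.
\item The coarea formula gives $\int_\Omega\chi_j\cdot|u^*|\,d|\mu|\le\bc\cdot\hat\veps\cdot\delta(a_j)\cdot\|D(\chi_j\cdot u)\|(\Omega)$.
\item Expand $\|D(\chi_j\cdot u)\|(\Omega)\le\int_\Omega|\nabla\chi_j|_2\cdot|u|\,d\ssfL^m+\int_\Omega\chi_j\,d\|Du\|$, and use $|\nabla\chi_j|_2\lesssim(\tau\cdot\delta(a_j))^{-1}$ on $\hat B_j$.
\item In the first term the factor $\delta(a_j)$ cancels, leaving $(\hat\veps/\tau)\int_{\hat B_j}|u|\,d\ssfL^m$. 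In the second term it is bounded by $\rmdiam\Omega$.
\item Summing over $j$ with the bounded overlap of \ref{WHITNEY}(D) gives the required estimate with $\theta\sim\hat\veps/\tau$.
\end{enumerate}
Your variant with local averages would also close once this bookkeeping is done: bound $\int_Q|u^*-u_Q|\,d|\mu|\lesssim\veps\cdot\delta_Q\cdot\|Du\|(Q^*)$ via a cutoff and the Poincar\'e inequality on balls.
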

\par 
To close this introduction we explain how to construct examples of measures $\mu$ to which Theorem F applies.
We start with a variant $K \subset \R^2$ of the Koch curve obtained in the following way.
Instead of using the same angle $\theta = \frac{\pi}{3}$ at each step of the iterative construction, we use a sequence of angles $\la \theta_j \ra_j$ decreasing to 0 but not too fast. 
Indeed, in case the decay of $\theta_j$ to 0 is too fast, $\ssfH^1(K) < \infty$ and there exists no non-trivial continuous $v$ whose divergence is supported in $K$, according to de Valeriola-Moonens \cite{DEV.MOO.10} (see also Ponce \cite{PON.13} in this respect and in connection with the theorem above). 
Thus, we choose $\la \theta_j \ra_j$ so that the Hausdorff dimension of $K$ is 1 and $\ssfH^1 \hel (K \cap U)$ is not $\sigma$-finite for any open set $U \subset \R^2$ such that $K \cap U \neq \emptyset$.
We then define $\mu = \gamma_*(\ssfL^1 \hel [0,1])$, where $\gamma : [0,1] \to K$ is the ensuing parametrization of $K$.
We choose $\Omega$ whose closure contains $K$ and we allow for $K$ to intersect $\rmbdry \Omega$.
We then choose an arbitrary Borel-measurable function $h : K \to \R$ such that $\int_K h \,d\mu = 0$ and $|h(x)| \leq \bc \cdot \delta(x)$, $x \in K$, for some $\bc > 0$.
Then there exists $v \in C_0(\Omega;\R^2)$ such that $\rmdiv v = \mu \hel h$.

\swapnumbers
\numberwithin{Theorem}{section}

\section{Notation and preliminaries}

\begin{Empty}[Euclidean space]
\label{2.4}
Throughout, $m$ is a positive integer and, unless explicitly stated we assume that $m \geq 2$.
$\Rm$ is equipped with its usual structure of inner product space.
Thus, $x \ip y$ denotes the inner product of $x, y \in \Rm$ and $|x|_2 = \sqrt{x \ip x}$.
In $\Rm$ and any other normed space $B(x,r)$ and $U(x,r)$ are, respectively, the closed and open balls of center $x$ and radius $r$.
\end{Empty}

\begin{Empty}[$\simeq$ and $\cong$]
\label{2.1}
If $X$ and $Y$ are Banach spaces then we write $X \simeq Y$ if they are isomorphic and we write $X \cong Y$ if they are isometrically isomorphic.
Recall that a linear isometry is injective but not necessarily surjective.
\end{Empty}

\begin{Empty}[Duality]
All vector spaces in this paper are real.
If $X$ and $Y$ are two vector spaces in duality then we most often denote the duality as $X \times Y \to \R : (x,y) \mapsto \la x, y \ra$, for instance when $X$ is a Banach space and $Y=X^*$ is its first dual.
The corresponding weak topologies on $X$ and $Y$ are denoted, respectively, $\sigma(X,Y)$ and $\sigma(Y,X)$.
\end{Empty}

\begin{Empty}[Duality and adjoint operators]
\label{2.2}
Let $X$ be a Banach space.
We denote by $X^*$ the dual of $X$, by $x^*$ a typical member of $X^*$, and by $\|x^*\|_X^*$ its norm in $X^*$.
\par 
Let $X$ and $Y$ be Banach spaces and $T : X \to Y$ a bounded linear operator.
We let $\ker (T)$ and $\rmim (T)$ be, respectively, the kernel and the range (the image) of $T$.
We consider $T^* : Y^* \to X^*$, the corresponding adjoint operator.
The following two simple observations will be instrumental in the proofs of theorems \ref{3.1.4}, \ref{3.2.5}, \ref{4.2.7}, and \ref{5.1.6}.
\begin{enumerate}
\item[(A)] {\it The following are equivalent.
\begin{enumerate}
\item[(a)] There exists $\bc > 0$ such that $\bc \cdot \|y^*\|_Y^* \leq \|T^*(y^*)\|_X^*$ for all $y^* \in Y^*$.
\item[(b)] $T$ is surjective.
\end{enumerate}
}
\end{enumerate}
\par 
{\it Proof.}
{\bf (i)} We prove that $(a) \Rightarrow (b)$.
Notice that $T^*$ is injective, hence, $\rmim (T)$ is dense, by \cite[4.12 corollary (b)]{RUDIN}.
Furthermore, $\rmim(T^*)$ is closed, thus, $\rmim (T)$ is closed, by the closed range theorem \cite[4.14]{RUDIN}.
Therefore, $T$ is surjective.
\par 
{\bf (ii)}
We prove that $(b) \Rightarrow (a)$.
Since $T$ is surjective, $T^*$ is injective, by \cite[4.12]{RUDIN}.
Furthermore, as $\rmim T$ is closed, we infer that $\rmim T^*$ is closed, by the closed range theorem.
Accordingly, $T^*$ is is a bijective continuous linear operator between the Banach spaces $X^*$ and $\rmim T^*$.
The conclusion follows from the open mapping theorem.\cqfd
\par 
We abbreviate $Z = \ker(T)$ and we let $\pi : X \to \frac{X}{Z}$ be the quotient map.
\par 
\begin{enumerate}
\item[(B)] {\it If $T^*$ is an isometry then the induced operator $S : \frac{X}{\ker(T)} \to Y : \pi(x) \mapsto T(x)$ is an isometric isomorphism. Thus, $\frac{X}{\ker T} \cong Y$.}
\end{enumerate}
\par 
{\it Proof.}
According to (A), it suffices to show that $S$ is an isometry.
It is easy to see that $\|S\| \leq \|T\|=\|T^*\| \leq 1$.
Conversely, given $x \in X$ there exists $\alpha \in \left( \frac{X}{Z}\right)^*$ such that $\la \pi(x) , \alpha \ra = \|\pi(x)\|_{X/Z}$ and $\|\alpha\|_{X/Z}^* = 1$.
With the notation of \cite[4.6]{RUDIN}, notice that $\alpha \circ \pi \in (\ker(T))^\perp = \left(\leftindex^\perp \rmim(T^*) \right)^\perp = \rmclos_{\sigma(X^*,X)} \rmim(T^*)$.
Furthermore, as in the proof of (A) we infer that $\rmim(T^*)$ is $\sigma(X^*,X)$-closed.
Therefore, $\alpha \circ \pi = T^*(y^*)$ for some $y^* \in Y^*$.
This $y^*$ is unique and $\|y^*\|_Y^* = \|\alpha \circ \pi\|_Y^* \leq 1$, since $T^*$ is an isometry.
Finally, $\|\pi(x)\|_{X/Z} = \la \pi(x) , \alpha \ra = \la x , T^*(y^*) \ra = \la T(x) , y^* \ra \leq \|T(x)\|_Y \leq \|S(\pi(x))\|_Y$.\cqfd
\end{Empty}

\begin{Empty}[$\Omega$]
\label{2.7}
Throughout, $\Omega \subset \Rm$ is a non-empty open set. 
In section \ref{sec.CIPCRT}, we will always assume $\Omega = \Rm$.
In section \ref{sec.LFS}, we will assume for convenience that $0 \in \Omega$ and our main result there, theorem \ref{4.2.7}, will involve the existence of pencils of curves in $\Omega$, as defined in \ref{4.2.4}.
In section \ref{sec.SCH}, we will either assume (theorem \ref{5.1.7}) that $\Omega$ is a bounded, connected, $BV$-extension set, as defined in \ref{bv.ext}, or (theorem \ref{5.2.3}) that $\Omega$  satisfies a $(p,1)$-Poincar\'e inequality with $p \neq 1$, see \ref{2.6}.
In section \ref{sec.EXAMPLES}, all results have their own specific assumptions about $\Omega$.
\end{Empty}

\begin{Empty}[Mollifying]
\label{2.8}
$\calD(\Omega)$ is the set of test functions, \ie those compactly supported $C^\infty$-functions $\Omega \to \R$.
Moreover, we let $\la \Phi_r \ra_{r > 0}$ be a family of mollifiers.
Specifically, $\Phi_r(x) = r^{-m} \cdot \Phi(r^{-1} \cdot x)$, where $\Phi \in \calD(\Rm)$, $\Phi \geq 0$, $\rmspt \Phi \subset B(0,1)$, $\Phi$ is even, and $\int_{\Rm} \Phi \,d\ssfL^m = 1$.
\end{Empty}

\begin{Empty}[Measures]
\label{2.9}
We let $\ssfL^m$ be the Lebesgue measure in $\Rm$ and we abbreviate $\balpha(m) = \ssfL^m(B(0,1))$.
Moreover, $\ssfH^1$ and $\ssfH^{m-1}$ are, respectively, the $1$-dimensional and $(m-1)$-dimensional Hausdorff measures in $\Rm$.
\par 
We let $\calB(\Omega)$ denote the set of Borel-measurable subsets of $\Omega$.
The indicator function of a set $A \subset \Rm$ is denoted $\ind_A$.
If $\mu$ is a signed Borel measure on $\Omega$ and $f : \Omega \to \R$ is Borel-measurable and $|\mu|$-summable then we let $\mu \hel f$ be the signed Borel measure on $\Omega$ defined by $(\mu \hel f)(B) = \int_B f \, d\mu$, $B \in \calB(\Omega)$.
If $f = \ind_A$ we write $\mu \hel A$ instead of $\mu \hel \ind_A$.
\par 
When $\mu$ is a Borel measure on $\Omega$ we let $L_p(\Omega,\mu)$ be the usual Lebesgue space of equivalence classes of Borel-measurable functions $u : \Omega \to \R$ so that $|u|^p$ is $\mu$-summable.
Predominantly we do not need to distinguish between a function and its equivalence class in $L_p(\Omega,\mu)$ except for in \ref{EX.2.7} where the appropriate representative will be defined.
If $\mu = \ssfL^m|_{\calB(\Omega)}$ we abbreviate $L_p(\Omega)$ for $L_p(\Omega,\mu)$.
The norm of $L_p(\Omega)$ is denoted indifferently $\|\cdot\|_{L_p}$ or $\|\cdot\|_{L_p(\Omega)}$. 
\begin{enumerate}
\item[(A)] {\it Assume that $1 \leq r < \infty$, $1 \leq s < \infty$, $u \in L_r(\Omega)$, $y \in \R$, $u - y \cdot \ind_\Omega \in L_s(\Omega)$, and $\ssfL^m(\Omega) = \infty$. Then $y=0$.}
\end{enumerate}
\par 
{\it Proof.}
This follows from Chebyshev's inequality applied twice.
If $y \neq 0$ and $A = \Omega \cap \{ x : |u(x) - y| > \frac{|y|}{2} \}$ then $\ssfL^m(A) < \infty$, since $|u-y \cdot \ind_\Omega|^s \in L_1(\Omega)$.
Moreover, $\ssfL^m(\Omega \setminus A) < \infty$, since $\Omega \setminus A \subset \Omega \cap \{ x : |u(x)| \geq \frac{|y|}{2} \}$ and $|u|^r \in L_1(\Omega)$.
Thus, $\ssfL^m(\Omega) < \infty$, a contradiction.\cqfd
\end{Empty}

\begin{Empty}[$L_{p,\rmcst}(\Omega)$ and $L_{q,\#}(\Omega)$]
\label{2.5}
In this number and throughout the paper, $1 \leq p < \infty$, and $q$ is the H\"older conjugate exponent of $p$.
We define $L_{p,\rmcst}(\Omega)$ to be the quotient of $L_p(\Omega)$ by its subspace $L_p(\Omega) \cap \{ u : u \text{ is constant a.e.}\}$.
Notice that this subspace is non-trivial if and only if $\ssfL^m(\Omega) < \infty$.
Therefore, $L_{p,\rmcst}(\Omega) = L_p(\Omega)$ if $\ssfL^m(\Omega) = \infty$ and $L_{p,\rmcst}(\Omega) = L_p(\Omega)/\R$ if $\ssfL^m(\Omega) < \infty$.
Members of $L_{p,\rmcst}(\Omega)$ are denoted $[u]$, $u \in L_p(\Omega)$.
Note that $L_{p,\rmcst}(\Omega)$ is a separable Banach space under the norm
\begin{equation*}
\|[u]\|_p = \inf \left\{ \| u - y \cdot \ind_\Omega \|_{L_p(\Omega)} : y \in \R \right\}.
\end{equation*}
\par 
If $u \in L_p(\Omega)$ we define
\begin{equation*}
(u)_\Omega = \begin{cases}
\frac{1}{\ssfL^m(\Omega)} \int_\Omega u \, d\ssfL^m & \text{if } \ssfL^m(\Omega) < \infty \\
0 & \text{if } \ssfL^m(\Omega) = \infty.
\end{cases}
\end{equation*}
\begin{enumerate}
\item[(A)] {\it For all $[u] \in L_{p,\rmcst}(\Omega)$ we have $\|[u]\|_p \leq \|u - (u)_\Omega \cdot \ind_\Omega\|_{L_p(\Omega)} \leq 2 \cdot \|[u]\|_p$.}
\end{enumerate}
\par 
{\it Proof.}
The first inequality is trivial. 
We ought to prove the second one only in case $\ssfL^m(\Omega) < \infty$.
Choose $y \in \R$ so that $\|u - y \cdot \ind_\Omega| \|_{L_p(\Omega)} = \|[u]\|_p$ and note that $|y - (u)_\Omega| \leq \ssfL^m(\Omega)^{\frac{1}{q}-1} \cdot  \|[u]\|_p$, thus, $\|(y - (u)_\Omega) \cdot \ind_\Omega \|_{L_p(\Omega)} \leq \|[u]\|_p$,
hence, $\| u - (u)_\Omega \cdot \ind_\Omega \|_{L_p(\Omega)} \leq 2 \cdot \|[u]\|_p$.\cqfd
\par 
As $\|u-(u)_\Omega\|_{L_p(\Omega)}$ does not depend on the choice of a representative of $[u] \in L_{p,\rmcst}(\Omega)$, this defines an equivalent norm on $L_{p,\rmcst}(\Omega)$, according to (A).
\par 
Next, we define $L_{q,\#}(\Omega) = L_q(\Omega)$ in case $\ssfL^m(\Omega) = \infty$ and $L_{q,\#}(\Omega) = L_q(\Omega) \cap \left\{ f : \int_\Omega f\, d\ssfL^m = 0 \right\}$ in case $\ssfL^m(\Omega) < \infty$.
This is readily a closed subspace of $L_q(\Omega)$.
\begin{enumerate}
\item[(B)] {\it $L_{p,\rmcst}(\Omega)^* \cong L_{q,\#}(\Omega)$.}
\end{enumerate} 
\par 
{\it Proof.}
We may assume that $\ssfL^m(\Omega) < \infty$.
If $f \in L_{q,\#}(\Omega)$ then $\int_\Omega u \cdot f \, d\ssfL^m$ is defined and does not depend upon the choice of a representative of $[u] \in L_{p,\rmcst}(\Omega)$, since $\int_\Omega f \, d\ssfL^m = 0$.
This defines a linear map $T : L_{q,\#}(\Omega) \to L_{p,\rmcst}(\Omega)^*$ by means of the formula $\la [u] ,T(f) \ra = \int_\Omega u \cdot f \, d\ssfL^m$.
Routine arguments based on the Riesz representation theorem for $L_p$ show that $T$ is an isometry and surjective.\cqfd
\begin{enumerate}
\item[(C)] {\it If $1 < p < \infty$ then $L_{p,\rmcst}(\Omega)$ is reflexive.} 
\end{enumerate}
\par 
{\it Proof.}
$L_{p,\rmcst}(\Omega)$ is a closed subspace of the reflexive space $L_p(\Omega)$.\cqfd
\end{Empty}

\begin{Empty}[$C_0(\Omega;\Rm)$ and $M(\Omega;\Rm)$]
\label{3.2.1}
We let $C_0(\Omega)$ (resp. $C_0(\Omega;\Rm)$) be the set of continuous functions $\vphi : \Omega \to \R$ (resp. of continuous vector fields $v : \Omega \to \Rm$) that vanish at infinity, \ie for every $\veps > 0$ there exists a compact set $K(\veps) \subset \Omega$ such that $|\vphi(x)| < \veps$ (resp. $|v(x)|_2 < \veps$) for all $x \in \Omega \setminus K(\veps)$.
These are, in particular, bounded; and $C_0(\Omega)[\|\cdot\|_\infty]$ (resp. $C_0(\Omega;\Rm)[\|\cdot\|_\infty]$) is a Banach space, where, when the target is $\Rm$, we let $\|v\|_\infty = \sup \{ |v(x)|_2 : x \in \Omega \}$.
Moreover, $\calD(\Omega)$ (resp. $C^\infty_c(\Omega;\Rm)$) is dense in $C_0(\Omega)$ (resp. $C_0(\Omega;\Rm)$).
\par 
According to the Riesz-Markov representation theorem, $C_0(\Omega)^*$ is isometrically isomorphic to the space $M(\Omega)$ of signed Borel measures $\mu$ on $\Omega$.
The identification sends $\mu \in M(\Omega)$ to $T_\mu \in C_0(\Omega)^*$ defined by $\la \vphi , T_\mu \ra = \int_{\Omega} \vphi \, d\mu$.
In particular, $\|\mu\|_M = \|T_\mu\|_{C_0}^* = \sup \left\{ \int_{\Omega} \vphi \, d\mu : \vphi \in C_0(\Omega) \text{ and } \|\vphi\|_\infty \leq 1 \right\} = \sup \left\{ \int_{\Omega} \vphi \, d\mu : \vphi \in \calD(\Omega) \text{ and } \|\vphi\|_\infty \leq 1 \right\}$.
\par 
If $\la T_{\mu_j} \ra_j$ is a sequence in $C_0(\Omega)^*$ that converges to a linear functional $T : C_0(\Omega) \to \R$ in the sense of distributions then $\|T\|_{C_0}^* \leq \liminf_j \|T_{\mu_j}\|_{C_0}^*$.
In particular, if $\liminf_j \|\mu_j\|_M < \infty$ then $T=T_\mu$ for some $\mu \in M(\Omega)$.
\par 
We let $M(\Omega;\Rm) = M(\Omega)^m$ and we note that it is isomorphic to $C_0(\Omega;\Rm)^*$, the identification sending $(\mu_1,\ldots,\mu_m)$ to $C_0(\Omega;\Rm) \to \R : v \mapsto \sum_{i=1}^m \int_{\Omega} v_i \,d\mu_i$.
Furthermore, this isomorphism is an isometry when $M(\Omega;\Rm)$ is equipped with the norm
\begin{multline*}
\|(\mu_1,\ldots,\mu_m)\|_{M^m} = \sup \left\{ \sum_{i=1}^m \int_{\Omega} v_i \, d\mu_i : v \in C_0(\Omega;\Rm) \text{ and } \|v\|_\infty \leq 1 \right\} \\ = \sup \left\{ \sum_{i=1}^m \int_{\Omega} v_i \, d\mu_i : v \in C_c^\infty(\Omega;\Rm) \text{ and } \|v\|_\infty \leq 1 \right\}.
\end{multline*}
Thus, $\max \{ \|\mu_i\|_M : i =1,\ldots,m \} \leq \|(\mu_1,\ldots,\mu_m)\|_{M^m} \leq \sum_{i=1}^m \|\mu_i\|_M$.
\end{Empty}

\begin{Empty}[$BV(\Omega)$]
\label{2.10}
Let $u \in L_{1,\rmloc}(\Omega)$ and assume that all its partial derivatives $\partial_i u$ in the sense of distributions are signed Borel measures.
In other words, for each $i=1,\ldots,m$, there exists a signed Borel measure $D_i u \in M(\Omega)$ such that $- \int_{\Omega} u \cdot \partial_i \vphi \, d\ssfL^m = \int_{\Omega} \vphi \, d(D_i u)$ for all $\vphi \in \calD(\Omega)$.
This implies in particular that if $v \in C_c^\infty(\Omega;\Rm)$ then
\begin{equation}
\label{eq.5.1}
- \int_{\Omega} u \cdot \rmdiv v \, d\ssfL^m = \sum_{i=1}^m \int_{\Omega} v_i \cdot d(D_i u).
\end{equation}
Recall from \ref{3.2.1} that
\begin{equation*}
\|(D_1u,\ldots,D_mu)\|_{M^m} = \sup \left\{ \int_{\Omega} u \cdot \rmdiv v \, d\ssfL^m : v \in C^\infty_c(\Omega;\Rm) \text{ and } \|v\|_\infty \leq 1 \right\}.
\end{equation*}
Most often we will write $\|Du\|(\Omega)$ instead of $\|(D_1u,\ldots,D_mu)\|_{M^m}$.
Notice that if a sequence $\la u_j \ra_j$ in $L_{1,\rmloc}(\Omega)$ converges in the sense of distributions to $u \in L_{1,\rmloc}(\Omega)$ then $\|Du\|(\Omega) \leq \liminf_j \|Du_j\|(\Omega)$.
\par 
We let $BV(\Omega)$ consist of those $u \in L_{1}(\Omega)$ of which all partial derivatives $\partial_i u$ in the sense of distributions are signed Borel measures.
If $u \in \calD(\Omega)$ then $u \in BV(\Omega)$ and $\|Du\|(\Omega) = \int_\Omega |\nabla u|_2\,d\ssfL^m$.
Finally, we define $\|u\|_{BV} = \|u\|_{L_1(\Omega)} + \|Du\|(\Omega)$ and we recall that $BV(\Omega)[\|\cdot\|_{BV}]$ is a Banach space.
\end{Empty}

\begin{Empty}[$1^*$, $\kappa_m$, and $BV_p(\Omega)$]
\label{2.11}
We assume that $m \geq 2$ and we define $1^* = \frac{m}{m-1} > 1$, the Sobolev conjugate exponent of $m$.
It is useful on many occasions to notice that $1^*$ is also the H\"older conjugate of $m$.
We recall the Gagliardo-Nirenberg-Sobolev inequality: 
\begin{enumerate}
\item[(A)] {\it For every $u \in \calD(\Rm)$ we have $\|u\|_{L_{1^*}(\Rm)} \leq \kappa_m \cdot \|Du\|(\Rm)$.} 
\end{enumerate}
\par 
For a proof, see \eg \cite[2.7.4]{ZIEMER} or \cite[4.5.9(31)]{GMT}, where the best constant is determined, namely $\kappa_m = m^{-1} \cdot \balpha(m)^\frac{-1}{m}$.
In particular, $BV(\Rm) \subset L_{1^*}(\Rm)$.
It is important, however, to note that if $u \in L_{1^*}(\Rm)$ and the distributional derivatives $\partial_i u$ of $u$, $i=1,\ldots,m$, are signed Borel measures on $\Rm$, we do not necessarily have $u \in L_1(\Rm)$.
A folkloric example being $u(x) = \frac{1}{|x|_2^{m-1} \cdot (\log |x|_2)^t}$ for $|x|_2 \geq 2$, where $t > 1$.
This justifies the introduction of the space $BV_p(\Omega)$, for $1 \leq p \leq 1^*$, that consists of those member of $L_p(\Omega)$ whose distributional derivatives $\partial_i u$, $i=1,\ldots,m$, are signed Borel measures on $\Omega$.
Thus, $BV_1(\Omega) = BV(\Omega)$ and $BV_p(\Omega) \subset BV(\Omega)$ if $\ssfL^m(\Omega) < \infty$ but this inclusion fails \eg if $\Omega = \Rm$ and $p=1^*$, as the above example shows.
\begin{enumerate}
\item[(B)] {\it If $u \in BV(\Rm)$ or $u \in BV_{1^*}(\Rm)$ then $\|u\|_{L_{1^*}(\Rm)} \leq \kappa_m \cdot \|Du\|(\Rm)$.}
\end{enumerate}
\par 
{\it Proof.}
In case $u \in BV(\Rm)$, see \eg \cite[theorem 5.10(i)]{EVANS.GARIEPY.2}.
If $u \in BV_{1^*}(\Rm)$, apply \cite[3.47]{AMBROSIO.FUSCO.PALLARA} and \ref{2.9}(A) (with $r=s=1^*$).\cqfd
\par 
It is clear that $BV_{1^*}(\Rm)$ is a Banach space when equipped with the norm $\|u\|_{L_{1^*}(\Rm)} + \|Du\|(\Rm)$.
However, we shall always consider the equivalent norm $\|u\|_{BV_{1^*}} = \|Du\|(\Rm)$.
\end{Empty}

\begin{Empty}[Divergence]
\label{2.3}
If $v : \Omega \to \Rm$ is differentiable at $x \in \Omega$ we let $(\rmdiv v)(x) = \sum_{i=1}^m (\partial_i v_i)(x)$, where $v = (v_1,\ldots,v_m)$.
If $u : \Omega \to \R$ is also differentiable at $x$ then $\rmdiv (u \cdot v)(x) = (\nabla u)(x) \ip v(x) + u(x) \cdot (\rmdiv v)(x)$.
Thus, if $v \in L_{1,\rmloc}(\Omega;\Rm)$, $v$ is differentiable everywhere, $\rmdiv v \in L_{1,\rmloc}(\Omega)$, and $\vphi \in \calD(\Omega)$ then
\begin{equation}
\label{eq.2.1}
\int_{\Omega} \vphi \cdot \rmdiv v \, d\ssfL^m = - \int_{\Omega} (\nabla \vphi) \ip v \, d\ssfL^m.
\end{equation}
Indeed, in that case $\rmdiv (\vphi \cdot v) \in L_1(\Omega)$ and one checks that its integral vanishes, since $\vphi$ has compact support.
\par 
The assumptions we made about $v$ for \eqref{eq.2.1} to hold may be weakened.
For instance, if $E_0, E_\sigma \subset \Rm$, $\ssfH^{m-1}(E_0)=0$, $\ssfH^{m-1} \hel E_\sigma$ is $\sigma$-finite; if $v$ is locally bounded, continuous in $\Rm \setminus E_0$, and pointwise Lipschitz\footnote{This guarantees that $v$ is differentiable a.e., according to Stepanoff's theorem.} in $\Rm \setminus E_\sigma$; and if $\rmdiv v \in L_{1,\rmloc}(\Rm)$ then \eqref{eq.2.1} holds, see \cite[theorem 2.9 and corollary 2.10]{PFE.05}.
In fact, the hypothesis that $\rmdiv v$ be locally Lebesgue-summable can be removed altogether and in that case the integral in the left member of \eqref{eq.2.1} ought to be interpreted in a different averaging sense than a Lebesgue integral, see \eg \cite[theorems 5.19 and 10.9]{PFE.91}, \cite[theorem 3.10]{DEP.PFE.03}, and the monographs \cite{PFEFFER,PFEFFER.12}.
\par 
In any case, if $v$ itself is merely locally Lebesgue-summable then the right member of \eqref{eq.2.1} defines a distribution of order 1, called the distributional divergence of $v$, 
\begin{equation*}
\bdiv v : \calD(\Omega) \to \R : \vphi \mapsto - \int_{\Omega} (\nabla \vphi) \ip v \, d\ssfL^m
\end{equation*}
and the previous paragraph states a set of conditions implying that the distributional divergence of $v$ can be recovered, through an appropriate averaging process, from the pointwise a.e. divergence of $v$.
\par 
In the sequel -- specifically \ref{3.1.3}, \ref{3.2.4}, \ref{4.2.6}, \ref{5.1.4}, and \ref{5.2.2} -- we shall define operators $\bdiv : X(\Omega) \to Y(\Omega)$ for a selection of different Banach spaces $X(\Omega)$ and $Y(\Omega)$, thus, these operators are strictly speaking different mathematical objects, yet the same notation $\bdiv$ is used for each of them.
Maintaining focus should prevent any confusion.
\par 
We say that $v \in L_{1,\rmloc}(\Omega;\Rm)$ is solenoidal if its distributional divergence vanishes identically, \ie $- \int_{\Omega} (\nabla \vphi) \ip v \, d\ssfL^m = 0$ for all test functions $\vphi \in \calD(\Omega)$.
We abbreviate
\begin{equation*}
Z_m(\Omega) = L_{1,\rmloc}(\Omega;\Rm) \cap \{ v : \text{$v$ is solenoidal}\}.
\end{equation*}
\end{Empty}

\begin{Empty}[$(p,1)$-Poincar\'e inequality]
\label{2.6}
Let $1 \leq p \leq 1^*$.
We say that $\Omega \subset \Rm$ satisfies the {\em $(p,1)$-Poincar\'e inequality} if the following holds.
\begin{equation}
\label{eq.pi}
(\exists \bc_p(\Omega) > 0)(\forall u \in C^\infty(\Omega) \cap L_p(\Omega))(\exists y \in \R): \|u - y \cdot \ind_\Omega\|_{L_p(\Omega)} \leq \bc_p(\Omega) \cdot \|\nabla u\|_{L_1(\Omega)}.
\end{equation}
Notice that ``$(\exists y \in \R) \cdots$'' is equivalent to ``$\|[u]\|_p \leq \bc_p(\Omega) \cdot \|\nabla u\|_{L_1(\Omega)}$''.
\begin{enumerate}
\item[(A)] {\it If $\Omega$ satisfies the $(p,1)$-Poincar\'e inequality and $\ssfL^m(\Omega) = \infty$ then necessarily $y=0$ for all $u \in C^\infty(\Omega) \cap L_p(\Omega)$ such that $\|\nabla u\|_{L_1(\Omega)} < \infty$.}
\end{enumerate}
\par 
{\it Proof.}
Apply \ref{2.9}(A) (with $r=s=p$).\cqfd
\begin{enumerate}
\item[(B)] {\it In the definition of the $(p,1)$-Poincar\'e inequality the occurrence of $(\forall u \in C^\infty(\Omega) \cap L_p(\Omega))$ may be replaced by $(\forall u \in BV_p(\Omega))$ if we also replace $\|\nabla u \|_{L_1(\Omega)}$ by $\|Du\|(\Omega)$.}
\end{enumerate}
\par 
{\it Proof.}
Let $u \in BV_p(\Omega)$ and choose a sequence $\la u_j \ra_j$ in $C^\infty(\Omega) \cap L_p(\Omega)$ such that\footnote{To see that is possible, it suffices to inspect the proof of \cite[5.3.3]{ZIEMER} and see that inequality (5.3.3) can be replaced by $\|(f_i \cdot u)_{\veps_i}-f_i \cdot u\|_{L_p(\Omega)} < \veps \cdot 2^{-(i+1)}$ and that (5.3.5) becomes $\|v_\veps-u\|_{L_p(\Omega)} < \veps$.} $\lim_j \|u-u_j\|_{L_p(\Omega)} = 0$ and $\lim_j \|\nabla u_j\|_{L_1(\Omega)} = \|Du\|(\Omega)$.
Then $\|[u]\|_p = \lim_j \|[u_j]\|_p \leq \lim_j \bc_p(\Omega) \cdot \|\nabla u_j\|_{L_1(\Omega)} = \bc_p(\Omega) \cdot \|Du\|(\Omega)$.\cqfd
\begin{enumerate}
\item[(C)] {\it If $1 \leq p_1 < p_2 \leq 1^*$, $\ssfL^m(\Omega) < \infty$, and $\Omega$ satisfies the $(p_2,1)$-Poincar\'e property then it also satisfies the $(p_1,1)$-Poincar\'e inequality.}
\end{enumerate}
\par 
{\it Proof.}
This immediately follows from H\"older's inequality.\cqfd
\par 
The following goes without saying and even better when said.
\begin{enumerate}
\item[(D)] {\it If $\Omega$ satisfies the $(p,1)$-Poincar\'e inequality and $u \in BV_p(\Omega)$ is so that $\|Du\|(\Omega) = 0$ then $u=y \cdot \ind_\Omega$ a.e. for some $y \in \R$.}
\end{enumerate}
\par
We now proceed to giving some examples of non-empty open subsets of $\Rm$ that satisfy the $(p,1)$-Poincar\'e property for some $1 < p \leq 1^*$.
\begin{enumerate}
\item[(1)] $\Omega = \Rm$ satisfies the $(1^*,1)$-Poincar\'e inequality, recall \ref{2.11}(B).
\item[(2)] If $\Omega \subset \R^2$ is the familiar von Koch snowflake (whose boundary is the von Koch curve) then $\Omega$ is a John domain and, consequently, satisfies the $(2,1)$-Poincar\'e property, see \eg \cite{BOJ.88}. In fact, if $m=2$ and $\Omega$ is simply connected and of finite area then it satisfies the $(2,1)$-Poincar\'e property if and only if it is a John domain, see \cite[corollary 1.2]{BUC.KOS.95}. For a comprehensive discussion of these topics, consult \cite{HAJ.KOS.00}.
\item[(3)] Even though the definition of John domain implies these are bounded, unbounded versions have been studied as well and yield other examples of good domains for our purpose, see \eg \cite{HUR.92}.
\item[(4)] In relation with (C) above we note that if $1 \leq p_1 < p_2 \leq 1^*$, $\ssfL^m(\Omega) < \infty$, and $\Omega$ satisfies the $(p_1,1)$-Poincar\'e inequality then it does not necessarily satisfy the $(p_2,1)$ Poincar\'e inequality, see \eg the discussion \cite[p.437]{HAJ.KOS.98} of Nikod\'ym's domains. Subsection \ref{subsec.EX.SCH.RM} illustrates the link with the solvability of the divergence equation.
\end{enumerate}
\par 
The next important class of examples is treated in the number below.
\end{Empty}

\begin{Empty}[$BV$-extension sets]
\label{bv.ext}
We say that $\Omega \subset \Rm$ is a {\em $BV$-extension set} if the following holds.
\begin{equation*}
(\exists \bc_\rmext(\Omega)  > 0) (\forall u \in BV(\Omega))(\exists \hat{u} \in BV(\Omega)) : \hat{u}|_\Omega = u \text{ and } \|D\hat{u}\|(\Rm) \leq \bc_\rmext(\Omega) \cdot \|u\|_{BV(\Omega)}.
\end{equation*}
Note that this is weaker than the notion of extension domain in \cite[3.20]{AMBROSIO.FUSCO.PALLARA}, for instance, we do not insist that $\hat{u}$ depend linearly on $u$.
If $\Omega$ has compact Lipschitz boundary then it is a $BV$-extension set, according to \cite[3.21]{AMBROSIO.FUSCO.PALLARA}.
If $m=2$ and $\Omega$ is bounded and simply connected then it is a $BV$-extension set if and only if its complement is quasi-convex, per \cite{KOS.MIR.SHA.10}.
Below we argue classically as \eg in \cite[3.50, 3.44, and 3.21]{AMBROSIO.FUSCO.PALLARA}.
\begin{enumerate}
\item[(A)] {\it If $\Omega$ is bounded and a $BV$-extension set then
\begin{enumerate}
\item[(a)] For every $k \in \N$ the set $BV(\Omega) \cap \left\{ u : \|u\|_{BV(\Omega)} \leq k \right\}$ is $\|\cdot\|_{L_1(\Omega)}$-compact.
\item[(b)] If one further assumes that $\Omega$ is connected then it satisfies the $(p,1)$-Poincar\'e inequality for all $1 \leq p \leq 1^*$.
\item[(c)] $BV(\Omega) = BV_p(\Omega)$ for all $1 \leq p \leq 1^*$.
\end{enumerate}
}
\end{enumerate}
\par 
{\it Proof.}
{\bf (i)}
Let $\la u_j \ra_j$ be a sequence in $BV(\Omega)$ such that $\sup_j \|u_j\|_{BV(\Omega)} \leq k$ and for each $j$ let $\hat{u}_j \in BV(\Rm)$ be an extension of $u_j$ satisfying $\|D\hat{u}_j\|(\Rm) \leq \bc(\Omega) \cdot k$.
If $U \subset \Rm$ is an arbitrary bounded open set then we infer from (1) above and H\"older's inequality that $\|\hat{u}_j|_U\|_{BV(U)} = \|\hat{u}_j|_U\|_{L_1(U)} + \|D\hat{u}_j|_U\|(U) \leq \|\ind_U\|_{L_m(U)} \cdot \| \hat{u}_j|_U\|_{L_{1^*}(U)} + \|D\hat{u}_j\|(\Rm) \leq \bc(\Omega) \cdot k \cdot \left( \ssfL^m(U)^\frac{1}{m} \cdot \kappa_m + 1 \right)$.
Therefore, $\la \hat{u}_j \ra_j$ is relatively compact in $L_{1,\rmloc}(\Rm)$, according to \cite[3.23]{AMBROSIO.FUSCO.PALLARA}.
In particular, $\la u_j \ra_j$ admits a subsequence that converges to $u \in L_1(\Omega)$ with respect to $\|\cdot\|_{L_1(\Omega)}$.
By lower semicontinuity, $\|Du\|(\Omega) \leq k$.
This proves (a).
\par 
{\bf (ii)}
We prove (b) first in case $p=1$, by contradiction.
Assume if possible that there is a sequence $\la u_j \ra_j$ in $BV(\Omega)$ such that $\|u_j - y_j \cdot \ind_\Omega\|_{L_1(\Omega)} > j \cdot \|Du_j\|(\Omega)$, where $y_j = \dashint_\Omega u_j \, d\ssfL^m$.
Letting $\tilde{u}_j = (u_j - y_j \cdot \ind_\Omega) \cdot \| u_j - y_j \cdot \ind_\Omega \|_{L_1(\Omega)}^{-1} \in BV(\Omega)$, we notice that $1 = \|\tilde{u}_j \|_{L_1(\Omega)} > j \cdot \|D\tilde{u}_j\|(\Omega)$ and $\dashint_\Omega \tilde{u}_j \, d\ssfL^m = 0$ for all $j$.
By {\bf (i)}, a subsequence of $\la \tilde{u}_j \ra_j$ converges to some $\tilde{u} \in BV(\Omega)$ with respect to $\|\cdot\|_{L_1(\Omega)}$.
In particular, $\|\tilde{u}\|_{L_1(\Omega)} = 1$ and $\dashint_\Omega \tilde{u} \, d\ssfL^m = 0$.
Moreover, $\|D\tilde{u}\|(\Omega) = 0$, by lower semicontinuity.
As $\Omega$ is connected, it follows that $\tilde{u}$ is constant a.e., a contradiction.
\par 
{\bf (iii)}
Here, we prove (b) for $p=1^*$ and, in fact, a little bit more, namely that \eqref{eq.pi} holds for all $u \in BV(\Omega)$ instead of merely all $u \in BV_{1^*}(\Omega)$.
Let $u \in  BV(\Omega)$, and let $y \in \R$ be associated with $u$ in the $(1,1)$-Poincar\'e inequality.
Let also $\hat{u} \in BV(\Rm)$ be an extension of $u - y \cdot \ind_\Omega$ as in the definition of $BV$-extension set.
Then it follows from \ref{2.11}(A) that $\|u - y \cdot \ind_\Omega\|_{L_{1^*}(\Omega)} \leq  \|\hat{u}\|_{L_{1^*}(\Rm)} \leq  \kappa_m \cdot \|D\hat{u}\|(\Rm) \leq  \kappa_m \cdot \bc_\rmext(\Omega) \cdot [ \|u-y\cdot \ind_\Omega\|_{L_1(\Omega)} + \|Du\|(\Omega)] \leq \kappa_m \cdot \bc_\rmext(\Omega) \cdot (1 + \bc_1(\Omega)) \cdot \|Du\|(\Omega)$.
\par 
{\bf (iv)}
It ensues from \ref{2.6}(C) and {\bf (iii)} that $\Omega$ satisfies the $(p,1)$-Poincar\'e inequality also for all $1 < p < 1^*$.
This complete the proof of (b).
\par 
{\bf (v)}
Finally, we prove (c).
$\bvs(\Omega) \subset BV_p(\Omega) \subset BV(\Omega) \subset \bvs(\Omega)$, where the first two inclusions follow from H\"older's inequality and the last one follows from {\bf (iii)}.\cqfd
\par 
Assume as in (A)(b) that $\Omega$ is a bounded, connected $BV$-extension set.
It is useful to observe in that case that
\begin{equation*}
\|D\hat{u}\|(\Rm) \leq \bc_\rmext(\Omega) \cdot ( 1 + \bc_{1}(\Omega)) \cdot \|Du\|(\Omega)
\end{equation*}
for all $u \in BV(\Omega)$ such that $\int_\Omega u \,d\ssfL^m = 0$.
This will be used without reference.
\end{Empty}

\numberwithin{Theorem}{subsection}

\section{Consequences of integration by parts and the closed range theorem}
\label{sec.CIPCRT}

\subsection{Lebesgue-summable vector fields and the space \texorpdfstring{$\calF(\Rm)$}{F(Rm)}}
\label{subsec.LSVF}

\begin{Empty}[Lipschitz-free space]
\label{3.1.1}
We let $\rmLip_0(\Rm)$ consist of all Lipschitz functions $u : \Rm \to \R$ vanishing at the origin.
Upon defining $\|u\|_L = \rmLip u$, we notice that $|u(x)| \leq \|u\|_L \cdot |x|_2$ for all $(u,x) \in \rmLip_0(\Rm) \times \Rm$.
Building on this, one checks that $\rmLip_0(\Rm)[\|\cdot\|_L]$ is a Banach space.
Moreover, its unit ball is compact under the topology of local uniform convergence, according to Ascoli's theorem, therefore, also $\sigma(\rmLip_0(\Rm),\rmspan X)$-compact, where $X = \rmLip_0(\Rm)^* \cap \{ \rmev_x : x \in \Rm \}$.
Here, $\rmev_x$ is the evaluation at $x$ defined by $\la u , \rmev_x \ra = u(x)$.
It ensues from Kaijser's theorem \cite{KAI.77} that $\rmLip_0(\Rm)$ is a dual space.
Next, we proceed to describing a predual of $\rmLip_0(\Rm)$.
From now on, we use the alternative symbol $\lseg x \rseg$ to denote the evaluation $\rmev_x$, $x \in \Rm$.
\begin{enumerate}
\item[(A)] {\it The map $\bdelta : \Rm \to \rmLip_0(\Rm)^* : x \mapsto \lseg x \rseg$ is a (non-linear) isometry.}
\end{enumerate}
\par 
{\it Proof.}
Indeed, one trivially checks that $\|\lseg x \rseg  - \lseg y \rseg\|_{\rmLip_0}^* \leq |x-y|_2$ and the reverse inequality is a consequence of $\la u_y , \lseg x \rseg - \lseg y \rseg \ra = 1$, where $u_y(x) = |x-y|_2 - |y|_2$, since $\|u_y\|_L = 1$.\cqfd
\par 
We define $\calF(\Rm) = \rmclos_{\rmLip_0^*}(\rmspan \rmim \bdelta)$ and call it the {\em Lipschitz-free space of $\Rm$}.
It is readily a separable Banach space.
For brevity we shall let $\|\cdot\|_\calF$ be its norm, \ie $\|\cdot\|_\calF = \|\cdot\|_{\rmLip_0}^*$.
\begin{enumerate}
\item[(B)] {\it $\calF(\Rm)^* \cong \rmLip_0(\Rm)$.}
\end{enumerate}
\par 
{\it Proof.}
We associate with $u \in \rmLip_0(\Rm)$ a functional $\beta(u) \in \calF(\Rm)^*$ by declaring that $\la F , \beta(u) \ra = \la u , F \ra$, $F \in \calF(\Rm)$.
Note that $\beta : \rmLip_0(\Rm) \to \calF(\Rm)^*$ is a bounded linear operator and $\|\beta(u)\|_{\calF}^* \leq \|u\|_L$ for all $u$.
Furthermore, if $x,y \in \Rm$ are distinct we observe that $\|F\|_{\calF} = 1$, where $F = |x-y|_2^{-1} \cdot (\lseg x \rseg - \lseg y \rseg)$, since $\bdelta$ is an isometry, therefore, $\|\beta(u)\|_{\calF}^* \geq \la u , F \ra = |x-y|^{-1}_2 \cdot |u(x)-u(y)|$.
Since $x$ and $y$ are arbitrary, it ensues that $\|\beta(u)\|_{\calF}^* \geq \|u\|_L$, whence, $\beta$ is a linear isometry.
It remains to show that show that $\beta$ is surjective.
If $F^* \in \calF(\Rm)^*$ then $u := F^* \circ \bdelta \in \rmLip_0(\Rm)$ and we claim that $\beta(u) = F^*$.
Indeed, for all $F \in \calF(\Rm)$ we have $\la F , \beta(u) \ra = \la F^* \circ \bdelta , F \ra = \la F , F^* \ra$.
The latter equation holds for each $F = \lseg x \rseg$, $x \in \Rm$, by definition of $\bdelta$, therefore, also for each linear combination of such $\lseg x \rseg$, since both sides of the equation are linear in $F$ and, therefore, also for each member $F$ of $\calF(\Rm)$, since both sides of the equation are continuous in $F$.\cqfd
\par 
The following easy lemma serves the purpose to interpret the results of this subsection in terms of distributions but does not extend to the case $\Omega \neq \Rm$.
\begin{enumerate}
\item[(C)] {\it If $u \in \rmLip_0(\Rm)$, $r > 0$, and $u_r = (\Phi_r * u) - (\Phi_r * u)(0)$ then:
\begin{enumerate}
\item[(a)] $u_r \in \rmLip_0(\Rm) \cap C^\infty(\Rm)$ and $\|\nabla u_r\|_\infty = \|u_r\|_L \leq \|u\|_L$;
\item[(b)] $u_r \to u$ uniformly
as $r \to 0^+$;
\item[(c)] if $\sigma > 0$, $\chi \in \calD(\Rm)$, $\ind_{B(0,\sigma)} \leq \chi \leq \ind_{B(0,2 \cdot \sigma)}$, and $\|\nabla \chi \|_\infty \leq \frac{2}{\sigma}$ then $\chi \cdot u_r \in \rmLip_0(\Rm) \cap \calD(\Rm)$ and $\|\chi \cdot u_r\|_L \leq 5 \cdot \|u\|_L$.
\end{enumerate}
}
\end{enumerate}
\par 
{\it Proof.}
{\bf (i)} 
Conclusion (a) is obvious.
\par 
{\bf (ii)}
We have $|u(x) - (\Phi_r*u)(x)| \leq \|u\|_L \cdot \int_{\Rm} |y|_2 \cdot \Phi_r(y) \, d\ssfL^m(y) \leq r \cdot \|u\|_L$, thus, $|u(x)-u_r(x)| \leq 2 \cdot r \cdot \|u\|_L$ for all $x \in \Rm$.
This proves (b).
\par 
{\bf (iii)}
It is obvious that $\chi \cdot u_r \in \rmLip_0(\Rm) \cap \calD(\Rm)$.
For all $x \in \Rm$ we have $| \nabla (\chi \cdot u_r)(x) |_2 \leq |\chi(x)| \cdot |(\nabla u_r)(x)|_2 + |(\nabla \chi)(x)|_2 \cdot |u_r(x)| \leq \|\nabla u_r\|_\infty + \frac{2}{\sigma} \cdot \|u_r\|_L \cdot |x|_2 \cdot \ind_{B(0,2 \cdot \sigma)}(x) \leq 5 \cdot \|u_r\|_L \leq 5 \cdot \|u\|_L$, by (a).\cqfd
\begin{enumerate}
\item[(D)] {\it If $F \in \calF(\Rm)$ and $\la \vphi , F \ra = 0$ for all $\vphi \in  \calD(\Rm) \cap \rmLip_0(\Rm)$ then $F = 0$.}
\end{enumerate}
\par 
{\it Proof.}
Let $\veps > 0$ and choose $\hat{F} \in \rmspan \rmim \bdelta$ such that $\| F - \hat{F} \|_{\rmLip_0}^* < \veps$.
Write $\hat{F} = \sum_{x \in S} \theta_x \cdot \lseg x \rseg$, where $S \subset \Rm$ is finite and $\{ \theta_x : x \in S \} \subset \R$.
Let $u \in \rmLip_0(\Rm)$ be such that $\|u\|_L \leq 1$ and define $u_r$ as in (C) above.
It follows from (C)(b) and the special form of $\hat{F}$ that there exists $r$ such that $|\la u -u_r , \hat{F} \ra| < \veps$.
Let $\sigma > 0$ be such that $S \subset B(0,\sigma)$ and choose $\chi$ as in (C)(c).
Thus, $| \la \chi \cdot u_r , \hat{F} - F \ra| \leq \| \chi \cdot u_r\|_L \cdot \|\hat{F} - F \|_{\rmLip_0}^* < 5 \cdot \veps$.
As $\la \chi \cdot u_r , F \ra = 0$, by assumption, we infer that $| \la u , \hat{F} \ra| < \veps + |\la u_r , \hat{F} \ra| = \veps + | \la \chi \cdot u_f , \hat{F} \ra| = \veps + | \la \chi \cdot u_r , \hat{F} - F \ra| < 6 \cdot \veps$.
Since $u$ is arbitrary, it follows that $\| \hat{F} \|_{\rmLip_0}^* \leq 6 \cdot \veps$ and, in turn, $\|F\|_{\rmLip_0}^* \leq \|F-\hat{F}\|_{\rmLip_0}^* + \|\hat{F}\|_{\rmLip_0}^* < 7 \cdot \veps$.\cqfd
\begin{enumerate}
\item[(E)] {\it Let $\mu$ be a compactly supported signed Borel measure on $\Rm$. Then the linear functional
$
\rmLip_0(\Rm) \to \R : u \mapsto \int_{\Rm} u  \, d\mu
$
is a member of $\calF(\Rm)$ and 
$
\left| \int_{\Rm} u  \, d\mu \right| \leq \|u\|_L \cdot \|\mu\|_{\calF}
$,
where we have identified $\mu$ with the corresponding member of $\calF(\Rm)$.
}
\end{enumerate}
\par 
{\it Proof.}
{\bf (i)}
Notice that the integral makes sense for every pair $(\mu,u)$, since $\mu$ is compactly supported and $u$ is locally bounded.
Let $F_\mu$ be the corresponding linear functional defined on $\rmLip_0(\Rm)$.
If $u \in \rmLip_0(\Rm)$ then 
\begin{equation*}
| \la u , F_\mu \ra | = \left| \int_{\Rm} (u(x) - u(0))  \,d\mu(x) \right| \leq \|u\|_L \cdot \int_{\Rm} |x|_2  \,d|\mu|(x).
\end{equation*}
Thus, $F_\mu \in \rmLip_0(\Rm)^*$.
\par 
{\bf (ii)}
Let $\veps > 0$.
Referring to the compactness of $\rmspt \mu$, we note there exists a finite partition $\calB$ of $\rmspt \mu$ consisting of Borel-measurable sets whose diameter is less than $\veps$.
For each $B \in \calB$ pick $x_B \in B$.
Define $\theta_B = \mu(B)$ and $F = \sum_{B \in \calB} \theta_B \cdot \lseg x_B \rseg$.
For each $u \in \rmLip_0(\Rm)$ note that
\begin{equation*}
\left| \la u , F \ra - \la u , F_\mu \ra \right| = \left| \sum_{B \in \calB} \int_B \big( u(x_B)  - u(x) \big) d\mu(x) \right| 
 < \veps \cdot \|u\|_L \cdot |\mu|(\Rm).
\end{equation*}
Taking the supremum over all $u$ in the unit ball of $\rmLip_0(\Rm)$, we infer that $\| F - F_\mu\|_{\rmLip_0}^* < \veps \cdot |\mu|(\Rm)$.
Since $F \in \calF(\Rm)$ and $\veps$ and arbitrary, we conclude that $F_\mu \in \calF(\Rm)$.
\par 
{\bf (iii)}
The claimed inequality is simply stating that $| \la u , F_\mu \ra| \leq \|u\|_L \cdot \|F_\mu\|_\calF$, which holds by definition of the norm $\|\cdot\|_\calF$.\cqfd
\end{Empty}

\begin{Empty}[A gradient operator]
\label{3.1.2}
Each $u \in \rmLip_0(\Rm)$ is differentiable a.e., according to Rade\-ma\-cher's theorem.
We let $\nabla u = (\partial_1 u ,\ldots, \partial_m u)$.
Furthermore, the partial derivatives a.e. of $u$ represent its distributional partial derivatives, \ie $\int_{\Rm} (\nabla u) \ip v = - \int_{\Rm} u \cdot \rmdiv v \, d\ssfL^m$ for all $v \in C^\infty_c(\Rm;\Rm)$.
\begin{enumerate}
\item[(A)] {\it $\|\nabla u \|_{L_\infty} = \rmLip u$.}
\end{enumerate}
\par 
{\it Proof.}
The inequality $\leq$ follows from the definition of gradient and Rademacher's theorem.
The reverse inequality ensues from smoothing and the lower semicontinuity of $\rmLip(\cdot)$ under pointwise convergence.\cqfd
\begin{enumerate}
\item[(B)] {\it $\bnabla : \rmLip_0(\Rm) \to L_\infty(\Rm;\Rm) : u \mapsto \nabla u$ is a linear isometry.}
\end{enumerate}
\par 
{\it Proof.}
Immediate consequence of (A).\cqfd
\par 
If $m=1$ this is an isometric isomorphism: with $g \in L_\infty(\R;\R)$ we associate $u(x) = \int_{0}^x g \, d\ssfL^1$.
If $m \geq 2$ this operator is not surjective, since members $v$ of its image satisfy $\partial_i v_j = \partial_j v_i$ distributionally for all $i,j=1,\ldots,m$ but not all members of $L_\infty(\Rm;\Rm)$ do.
\end{Empty}

\begin{Empty}[A divergence operator]
\label{3.1.3}
Let $v \in L_1(\Rm;\Rm)$.
For each $u \in \rmLip_0(\Rm)$ note that $(\nabla u) \ip v$ is Lebesgue-summable.
Inspired by \ref{2.3} we define
\begin{equation*}
\bdiv v : \rmLip_0(\Rm) \to \R : u \mapsto - \int_{\Rm} (\nabla u) \ip v \, d\ssfL^m. 
\end{equation*}
\begin{enumerate}
\item[(A)] {\it $\bdiv v \in \rmLip_0(\Rm)^*$ and $\|\bdiv v\|_{\rmLip_0}^* \leq \|v\|_{L_1}$.}
\end{enumerate}
\par 
{\it Proof.}
It suffices to observe that $| \la u , \bdiv v \ra | \leq \| \nabla u \|_{L_\infty} \cdot \|v\|_{L_1} = \|u\|_L \cdot \|v\|_{L_1}$ for all $u$.\cqfd
\begin{enumerate}
\item[(B)] {\it $\bdiv v \in \calF(\Rm)$.}
\end{enumerate}
\par 
{\it Proof.}
If $v \in C^\infty_c(\Rm;\Rm)$ then this follows from \ref{3.1.1}(E) applied with $\mu = \ssfL^m \hel \rmdiv v$ upon recalling that $\la u , \bdiv v \ra = \int_{\Rm} u \cdot \rmdiv v \, d\ssfL^m$ in this case.
In the general case let $\veps > 0$ and choose $w \in C^\infty_c(\Rm;\Rm)$ such that $\|v-w\|_{L_1} < \veps$.
Thus, $\| \bdiv v - \bdiv w \|_{\rmLip_0}^* < \veps$, by (A).
Since $\bdiv w \in \calF(\Rm)$ and $\veps$ is arbitrary, we conclude that $\bdiv v \in \calF(\Rm)$.\cqfd
\par 
It follows that the linear map
\begin{equation*}
\bdiv : L_1(\Rm;\Rm) \to \calF(\Rm) : v \mapsto \bdiv v
\end{equation*}
is well-defined.
Moreover, it is a bounded operator and $\|\bdiv v\|_{\calF} \leq \|v\|_{L_1}$.
\end{Empty}

Conclusion (A) of the following theorem states that in the diagram below the map at the top is adjoint to that at the bottom:
\begin{equation*}
\begin{CD}
L_\infty(\Rm;\Rm) @<{-\bnabla = \bdiv^*}<< \rmLip_0(\Rm) \\
L_1(\Rm;\Rm) @>{\bdiv}>> \calF(\Rm).
\end{CD}
\end{equation*}
Conclusions (B), (C), and particularly conclusion (D) are due to C\'uth-Kalenda-Kaplick\'y \cite{CUT.KAL.KAP.17} and to Godefroy-Lerner \cite{GOD.LER.18} (the former proved it with $\Rm$ replaced by an open convex subset of $\Rm$), albeit with different proofs, for instance, \cite{CUT.KAL.KAP.17} consider the operator $\bnabla^* : L_1^{**}(\Rm;\Rm) \to \rmLip_0^*(\Rm)$.
In subsection \ref{subsec.CHARLF} we will replace the domain $\Rm$ by open sets $\Omega$ and characterize those such that conclusion (B) of following theorem still holds.

\begin{Theorem}
\label{3.1.4}
The following hold.
\begin{enumerate}
\item[(A)] $\bdiv^* = - \bnabla$.
\item[(B)] $\bdiv : L_1(\Rm;\Rm) \to \calF(\Rm)$ is surjective.
\item[(C)] $\ker (\bdiv) = L_1(\Rm;\Rm) \cap Z_m(\Rm)$.
\item[(D)] $\calF(\Rm) \cong \frac{L_1(\Rm;\Rm)}{L_1 \cap Z_m(\Rm)}$.
\end{enumerate}
\end{Theorem}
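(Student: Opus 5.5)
We will deduce everything from the abstract facts in \ref{2.2}, after identifying $\calF(\Rm)^*$ with $\rmLip_0(\Rm)$ via the isometry $\beta$ of \ref{3.1.1}(B) and $L_1(\Rm;\Rm)^*$ with $L_\infty(\Rm;\Rm)$.

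\emph{Step (A).} Fix $u \in \rmLip_0(\Rm)$, regarded as $\beta(u) \in \calF(\Rm)^*$. For every $v \in L_1(\Rm;\Rm)$ we have, by definition of $\beta$ and of $\bdiv$ in \ref{3.1.3},
\begin{equation*}
\la v , \bdiv^*(\beta(u)) \ra = \la \bdiv v , \beta(u) \ra = \la u , \bdiv v \ra = - \int_{\Rm} (\nabla u) \ip v \, d\ssfL^m = \la v , -\bnabla u \ra.
\end{equation*}
Hence $\bdiv^* \circ \beta = -\bnabla$, and this is the content of (A). No approximation is needed, since $\bdiv v$ is defined directly on all of $\rmLip_0(\Rm)$.

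\emph{Steps (B) and (D).} By \ref{3.1.2}(B), $-\bnabla$ is a linear isometry, and $\beta$ is an isometric isomorphism. Therefore $\bdiv^*$ is an isometry $\calF(\Rm)^* \to L_\infty(\Rm;\Rm)$. Criterion \ref{2.2}(A)(a) then holds with $\bc = 1$, so $\bdiv$ is surjective, which is (B). Moreover \ref{2.2}(B) applies and gives $L_1(\Rm;\Rm)/\ker(\bdiv) \cong \calF(\Rm)$. Combined with (C), this yields (D).

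\emph{Step (C).} This is the only point where some care is needed, since we must compare vanishing on all of $\rmLip_0(\Rm)$ with vanishing on test functions. If $\bdiv v = 0$, then in particular $\int (\nabla \vphi) \ip v = 0$ for every $\vphi \in \calD(\Rm)$ with $\vphi(0) = 0$. For a general $\vphi \in \calD(\Rm)$, we choose $\psi \in \calD(\Rm)$ with $\psi(0) = 1$ and apply this to $\vphi - \vphi(0)\psi$. It then remains to check that $\int (\nabla \psi) \ip v = 0$ for one such $\psi$.

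There are two ways to obtain this. The direct way is to take $\psi(x) = \chi(x/R)$ and let $R \to \infty$. Then $|\nabla \psi| \leq c/R$, and dominated convergence with $v \in L_1$ gives $\int (\nabla \psi) \ip v \to 0$. Since $\vphi - \vphi(0)\psi$ has vanishing integral against $v$ for every $R$, we get $\int (\nabla \vphi) \ip v = \vphi(0) \cdot \int (\nabla \psi) \ip v \to 0$. Thus $\int (\nabla \vphi) \ip v = 0$ and $v$ is solenoidal. The alternative route is to invoke \ref{3.1.1}(D), which handles exactly the class $\calD(\Rm) \cap \rmLip_0(\Rm)$.

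Conversely, suppose $v \in L_1 \cap Z_m(\Rm)$. Then $\la \vphi , \bdiv v \ra = 0$ for all $\vphi \in \calD(\Rm) \cap \rmLip_0(\Rm)$. Since $\bdiv v \in \calF(\Rm)$ by \ref{3.1.3}(B), lemma \ref{3.1.1}(D) gives $\bdiv v = 0$. I expect this converse direction to be the main obstacle. It relies on the mollification and cutoff argument already packaged in \ref{3.1.1}(D), which is exactly why the membership $\bdiv v \in \calF(\Rm)$ rather than merely $\bdiv v \in \rmLip_0(\Rm)^*$ matters.
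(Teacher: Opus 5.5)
Your proposal is correct and follows the paper's proof essentially step by step. It obtains (A) by unwinding the adjoint, (B) and (D) from the isometry of $-\bnabla$ via \ref{2.2}(A) and \ref{2.2}(B), and the inclusion $L_1 \cap Z_m(\Rm) \subset \ker(\bdiv)$ from \ref{3.1.1}(D) applied to $\bdiv v \in \calF(\Rm)$. For the inclusion $\ker(\bdiv) \subset Z_m(\Rm)$, your cutoff-and-rescaling argument works but is unnecessary, and the aside about \ref{3.1.1}(D) is irrelevant in that direction: members of $\rmLip_0(\Rm)$ need not have compact support, so $\vphi - \vphi(0)\cdot\ind_{\Rm}$ already lies in $\rmLip_0(\Rm)$ with gradient $\nabla\vphi$, and that is exactly what the paper uses.
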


\begin{proof}
{\bf (i)}
Given $u \in \rmLip_0(\Rm)$ we ought to determine $\bdiv^*(u)$
which is a $w \in L_\infty(\Rm;\Rm)$ such that for all $v \in L_1(\Rm;\Rm)$ we have
\begin{equation*}
\int_{\Rm} w \ip v \,d\ssfL^m = \la v , \bdiv^*(u) \ra = \la u , \bdiv(v) \ra = - \int_{\Rm} (\nabla u) \ip v \,d\ssfL^m.
\end{equation*}
This proves (A).
\par 
{\bf (ii)}
It follows from \ref{3.1.2}(B) and conclusion (A) that $\bdiv^*$ is an isometry. 
Therefore, $\rmim(\bdiv) = \calF(\Rm)$, by \ref{2.2}(A).
This proves (B).
\par 
{\bf (iii)}
If $v \in \ker(\bdiv)$ and $\vphi \in \calD(\Rm)$ then $u := \vphi - \vphi(0) \in \rmLip_0(\Rm)$ and $\nabla u = \nabla \vphi$ so that $- \int_{\Rm} (\nabla \vphi) \ip v \,d\ssfL^m = \la u , \bdiv v \ra = 0$.
Since $\vphi$ is arbitrary, $v$ is solenoidal.
Conversely, assuming that $v$ is Lebesgue-summable and solenoidal we ought to show that $\int_{\Rm} (\nabla u) \ip v \,d\ssfL^m = 0$ for all $u \in \rmLip_0(\Rm)$.
This immediately follows from \ref{3.1.1}(D).
\par 
{\bf (iv)}
Recall that $\bdiv^*$ is an isometry, as a consequence of (A) and \ref{3.1.2}(B).
Therefore, conclusion (D) follows from \ref{2.2}(B) and the present conclusion (C).
\end{proof}

\subsection{Continuous vector fields vanishing at infinity and the space \texorpdfstring{$\SCH_0(\Rm)$}{SCH-0(Rm)}}
\label{subsec.CVFVI}

\begin{Empty}[Vanishing strong charges]
\label{3.2.2}
Recall \ref{2.11} that $BV_{1^*}(\Rm)[\|\cdot\|_{BV_{1^*}}]$ is a Banach space, where $\|u\|_{BV_{1^*}} = \|Du\|(\Rm)$.
\begin{enumerate}
\item[(A)] {\it If $f \in L_m(\Rm)$ and $\bLambda_m(f) : \bvs(\Rm) \to \R : u \mapsto \int_{\Rm} u \cdot f \, d\ssfL^m$ then $\bLambda_m(f) \in \bvs(\Rm)^*$ and $\|\bLambda_m(f)\|_{\bvs}^* \leq \kappa_m \cdot \|f\|_{L_m}$. This defines a bounded linear operator $$\bLambda_m : L_m(\Rm) \to \bvs(\Rm)^*.$$}
\end{enumerate}
\par 
{\it Proof.}
Since $m$ and $1^*$ are H\"older conjugate exponents, $\bLambda_m(f)$ is well-defined and $| \la u , \bLambda_m(f) \ra | \leq \|u\|_{L_{1^*}} \cdot \|f\|_{L_m} \leq \kappa_m \cdot \|f\|_{L_m} \cdot \|u\|_{\bvs}$.\cqfd
\par 
Abbreviating $X = \rmim \bLambda_m$ we infer from the reflexivity of $L_{1^*}(\Rm)$, the Gagliardo-Nirenberg-Sobolev inequality, Banach-Alaoglu's theorem, and the lower semi-continuity of $\|\cdot\|_{\bvs}$ with respect to distributional convergence that the unit ball of $\bvs(\Rm)$ is $\sigma(\bvs(\Rm),X)$-compact.
It ensues from a result of Kaijser \cite{KAI.77} that $\bvs(\Rm)$ is a dual space and we now proceed to describing a predual of $\bvs(\Rm)$.
We define $\SCH_0(\Rm) = \rmclos_{\bvs^*} (\rmim \bLambda_m)$ and we call its members the {\em vanishing strong charges}.
For brevity we shall let $\|\cdot\|_{\SCH_0}$ be its norm, \ie $\|\cdot\|_{\SCH_0} = \|\cdot\|_{\bvs}^*$.
Since $L_m(\Rm)[\|\cdot\|_{L_m}]$ is separable, it follows from (A) that $\SCH_0(\Rm)[\|\cdot\|_{\SCH_0}]$ is separable as well.
\begin{enumerate}
\item[(B)] {\it If $v \in C^\infty_c(\Rm;\Rm)$ then $\|\bLambda_m(\rmdiv v)\|_{\SCH_0} \leq \|v\|_\infty$.}
\end{enumerate}
\par 
{\it Proof.}
Notice that, indeed, $\rmdiv v \in L_m(\Rm)$.
The inequality is a consequence of the definition of $\|u\|_{\bvs}$.\cqfd
\begin{enumerate}
\item[(C)] {\it $\SCH_0(\Rm)^* \cong \bvs(\Rm)$.}
\end{enumerate}
\par 
{\it Proof.}
{\bf (i)}
We consider $\beta : \bvs(\Rm) \to \SCH_0(\Rm)^*$ defined by $\la F , \beta(u) \ra = \la u , F \ra$ for all $(u,F) \in \bvs(\Rm) \times \SCH_0(\Rm)$.
It is readily well-defined, linear, and $\|\beta(u)\|_{\SCH_0}^* \leq \|u\|_{\bvs}$ for all $u$.
\par 
{\bf (ii)}
Here, we establish that $\beta$ is injective.
Indeed, if $\beta(u)=0$ then, in particular, $0 = \la \bLambda_m(f) , \beta(u) \ra = \la u , \bLambda_m(f) \ra = \int_{\Rm} u \cdot f \,d\ssfL^m$ for all $f \in L_m(\Rm)$ and we conclude that $u=0$.
\par 
{\bf (iii)}
Here, we show that $\beta$ is surjective.
If $F^* \in \SCH_0(\Rm)^*$ then $F^* \circ \bLambda_m \in L_m(\Rm)^*$, thus, there exists $u \in L_{1^*}(\Rm)$ such that $\la f , F^* \circ \bLambda_m \ra = \int_{\Rm} u \cdot f \,d\ssfL^m$ for all $f \in L_m(\Rm)$.
Since $\int_{\Rm} u \cdot \rmdiv v \, d\ssfL^m = \la \rmdiv v , F^* \circ \bLambda_m \ra = \la \bLambda( \rmdiv v) , F^* \ra \leq \|\bLambda(\rmdiv v)\|_{\SCH_0} \cdot \|F^*\|_{\SCH_0}^* \leq \|v\|_\infty \cdot \|F^*\|_{\SCH_0}^*$, by (B), for all $v \in C^\infty_c(\Rm;\Rm)$, we infer that $\|u\|_{\bvs} \leq  \|F^*\|_{\SCH_0}^*$, in particular, $u \in \bvs(\Rm)$.
It remains to show that $\beta(u) = F^*$, \ie $\la F , \beta(u) \ra = \la F , F^* \ra$ for all $F \in \SCH_0(\Rm)$.
By density of $\rmim \bLambda_m$ in $\SCH_0(\Rm)$, it is enough to check this holds for $F = \bLambda_m(f)$, $f \in L_m(\Rm)$, namely $\la \bLambda_m(f),u \ra = \la u , \bLambda_m(f) \ra = \int_{\Rm} u \cdot f \, d\ssfL^m = \la f , F^* \circ \bLambda_m \ra = \la \bLambda_m(f) , F^* \ra$.
\par 
{\bf (iv)}
It remains to show that $\beta$ is an isometry.
If $u \in \bvs(\Rm)$ then letting $F^* = \beta(u)$ we infer from {\bf (ii)} and {\bf (iii)} that $\| u \|_{\bvs} \leq \| \beta(u) \|_{\SCH_0}^*$.
The reverse inequality was noted in {\bf (i)}. \cqfd
\begin{enumerate}
\item[(D)] {\it If $u \in \bvs(\Rm)$, $r > 0$, and $u_r = \Phi_r * u$ then:
\begin{enumerate}
\item[(a)] $u_r \in \bvs(\Rm)$ and $\int_{\Rm} |\nabla u_r|_2\,d\ssfL^m = \|u_r\|_{\bvs} \leq \|u\|_{\bvs}$;
\item[(b)] $\|u_r - u\|_{L_{1^*}} \to 0$ as $r \to 0^+$;
\item[(c)] if $\sigma > 0$, $\chi \in \calD(\Rm)$, $\ind_{B(0,\sigma)} \leq \chi \leq \ind_{B(0,2 \cdot \sigma)}$, and $\|\nabla \chi \|_\infty \leq \frac{2}{\sigma}$ then $\chi \cdot u_r \in \bvs(\Rm) \cap \calD(\Rm)$ and $\|\chi \cdot u_r\|_{\bvs} \leq \left( 1 + \frac{4}{m}\right) \cdot \|u\|_{\bvs}$.
\end{enumerate}
}
\end{enumerate}
\par 
{\it Proof.}
{\bf (i)} 
It classically follows from Young's inequality that $u_r \in L_{1^*}(\Rm)$.
Moreover, since $u_r \in C^\infty(\Rm)$, it easily follows from the definition of $\|u_r\|_{\bvs}$ and proper choices of test vector fields $v$  that $\|u_r\|_{\bvs} = \int_{\Rm} |\nabla u_r|_2\, d\ssfL^m$.
If $v \in C^\infty_c(\Rm;\Rm)$ and $x \in \Rm$ then $|(\Phi_r * v)(x)|_2^2 \leq \|v\|_\infty^2$, as follows from Jensen's inequality applied to $\ssfL^m \hel \Phi_r$, hence, $\sum_{i=1}^m \int_{\Rm} v_i \, d(D_iu_r) = \sum_{i=1}^m \int_{\Rm} v_i \cdot \partial_i(\Phi_r * u) \,d\ssfL^m = - \sum_{i=1}^m \int_{\Rm} \partial_i(\Phi_r * v_i) \cdot u \, d\ssfL^m \leq \|\Phi_r * v\|_\infty \cdot \|u\|_{\bvs} \leq \|v\|_\infty \cdot \|u\|_{\bvs}$.
Since $v$ is arbitrary, we conclude that $\|u_r\|_{\bvs} \leq \|u\|_{\bvs}$.
This proves (a).
\par 
{\bf (ii)}
Conclusion (b) is standard.
\par 
{\bf (iii)}
It is obvious that $\chi \cdot u_r \in \calD(\Rm) \subset BV_{1^*}(\Rm)$.
Next, we observe that $\|\nabla \chi\|_{L_m}^m = \int_{\Rm} |\nabla \chi|_2^m \, d\ssfL^m \leq \|\nabla \chi\|_\infty^m \cdot \ssfL^m[B(0,2\cdot \sigma) \setminus B(0,\sigma)] \leq \left( \frac{2}{\sigma} \right)^m \cdot \balpha(m) \cdot \sigma^m \cdot (2^m-1) \leq 4^m \cdot \balpha(m)$.
Therefore, $\|\nabla \chi\|_{L_m} \leq 4 \cdot \balpha(m)^\frac{1}{m}$.
Since $|\nabla (\chi \cdot u_r)|_2 \leq |\chi| \cdot |\nabla u_r|_2 + |u_r| \cdot |\nabla \chi|_2$ everywhere, we have $\int_{\Rm} |\nabla (\chi \cdot u_r)|_2 \, d\ssfL^m \leq \int_{\Rm} |\nabla u_r|_2 \,d\ssfL^m + \|u_r\|_{L_{1^*}} \cdot \|\nabla \chi\|_{L_m} \leq (1 + 4 \cdot \balpha(m)^\frac{1}{m} \cdot \kappa_m) \cdot \|u_r\|_{BV_{1^*}} \leq \left(1 + \frac{4}{m} \right) \cdot \|u\|_{BV_{1^*}}$, by (a).
Conclusion (c) now easily follows from the definition of $\|\chi \cdot u_r\|_{\bvs}$ and the fact that $\chi \cdot u_r$ is smooth.\cqfd
\begin{enumerate}
\item[(E)] {\it If $F \in \SCH_0(\Rm)$ and $\la \vphi , F \ra = 0$ for all $\vphi \in \calD(\Rm)$ then $F = 0$.} 
\end{enumerate}
\par 
{\it Proof.}
Let $\veps > 0$ and choose $f \in L_m(\Rm)$ such that $\|F - \bLambda_m(f)\|_{\bvs}^* < \veps$.
Let $u \in \bvs(\Rm)$ with $\|u\|_{\bvs} \leq 1$.
Letting $u_r$ be as in (D) above, we have $|\la u_r -u , \bLambda_m(f) \ra| \leq \|u_r-u\|_{L_{1^*}} \cdot \|f\|_{L_m}$ and, therefore, we may choose $r$ small enough for $|\la u_r -u , \bLambda_m(f) \ra| < \veps$.
Moreover, there is $\sigma$ large enough for $\|f\|_{L_m}^{1^*} \cdot \int_{\Rm \setminus B(0,\sigma)} |u_r|^{1^*} \, d\ssfL^m < \veps^{1^*}$, hence, if $\chi$ is associated with $\sigma$ as in (D)(c) then $\|f\|_{L_m} \cdot \|\chi \cdot u_r - u_r\|_{L_{1^*}} < \veps$.
Accordingly, $|\la \chi \cdot u_r - u , \bLambda_m(f) \ra| < 2 \cdot \veps$.
Furthermore, $|\la \chi \cdot u_r , F - \bLambda_m(f) \ra| \leq \| \chi \cdot u_r\|_{\bvs} \cdot \|F - \bLambda_m(f)\|_{\bvs}^* < 5 \cdot \veps$, by (D)(c).
As $\la \chi \cdot u_r , F \ra = 0$, by assumption, we infer that $| \la u , \bLambda_m(f) \ra| < 2 \cdot \veps + |\la \chi \cdot u_r , \bLambda_m(f) \ra| = 2 \cdot \veps + | \la \chi \cdot u_r , \bLambda_m(f) - F \ra| < 7 \cdot \veps$.
Since $u$ is arbitrary, it follows that $\|\bLambda_m(f)\|_{\bvs}^* \leq 7 \cdot \veps$ and, in turn, $\|F\|_{\bvs}^* \leq \|F - \bLambda_m(f)\|_{\bvs}^* + \|\bLambda_m(f)\|_{\bvs}^* < 8 \cdot \veps$.\cqfd
\end{Empty}

\begin{Empty}[A gradient operator]
\label{3.2.3}
Let $u \in \bvs(\Rm)$ and define $Du = (D_1u,\ldots,D_mu) \in M(\Rm;\Rm)$. Observe that 
\begin{equation*}
\bnabla : \bvs(\Rm) \to M(\Rm;\Rm) : u \mapsto Du
\end{equation*}
is a well-defined and linear.
\begin{enumerate}
\item[(A)] {\it $\bnabla$ is an isometry.}
\end{enumerate}
\par 
{\it Proof.}
That is to say that $\|u\|_{\bvs} = \|(D_1u,\ldots,D_mu)\|_M$, which immediately follows from the definition of the first norm.\cqfd
\end{Empty}

\begin{Empty}[A divergence operator]
\label{3.2.4}
Let $v \in C_0(\Rm;\Rm)$.
For each $u \in \bvs(\Rm)$ note that each $v_i \in L_1(\Rm,D_iu)$, since $v_i$ is Borel-measurable and bounded.
Thus, we may define
\begin{equation*}
\bdiv v : \bvs(\Rm) \to \R : u \mapsto - \sum_{i=1}^m \int_{\Rm} v_i \, d(D_iu). 
\end{equation*}
This is clearly a linear functional.
\begin{enumerate}
\item[(A)] {\it $\bdiv v \in \bvs(\Rm)^*$ and $\|\bdiv v\|_{\bvs}^* \leq \|v\|_\infty$.}
\end{enumerate}
\par 
{\it Proof.}
It simply follows from the chosen definition of norm of $M(\Rm;\Rm)$ (recall \ref{3.2.1}) that $|\la u , \bdiv v \ra| \leq \|v\|_\infty \cdot \|Du\|_M$.\cqfd
\begin{enumerate}
\item[(B)] {\it $\bdiv v \in \SCH_0(\Rm)$.}
\end{enumerate}
\par 
{\it Proof.}
Let $\veps > 0$.
Choose $w \in C^\infty_c(\Rm;\Rm)$ such that $\|v-w\|_\infty < \veps$.
Thus, $\|\bdiv v - \bdiv w\|_{\bvs}^* < \veps$, according to (A).
Since $\bdiv w = \bLambda_m(\rmdiv w) \in \SCH_0(\Rm)$, by \ref{3.2.2}(B), and $\veps$ is arbitrary, we conclude that $\bdiv v \in \SCH_0(\Rm)$.\cqfd
\par 
It follows that the linear map
\begin{equation*}
\bdiv : C_0(\Rm;\Rm) \to \SCH_0(\Rm) : v \mapsto \bdiv v
\end{equation*}
is well-defined.
Furthermore, it is a bounded operator and $\|\bdiv v\|_{\SCH_0} \leq \|v\|_\infty$.
\end{Empty}

Conclusion (A) of the following theorem states that in the diagram below the map at the top is adjoint to that at the bottom:
\begin{equation*}
\begin{CD}
M(\Rm;\Rm) @<{-\bnabla = \bdiv^*}<< \bvs(\Rm) \\
C_0(\Rm;\Rm) @>{\bdiv}>> \SCH_0(\Rm).
\end{CD}
\end{equation*}
Conclusion (B) is due to Torres and the present author \cite{DEP.TOR.09}, albeit with a very different proof.

\begin{Theorem}
\label{3.2.5}
The following hold.
\begin{enumerate}
\item[(A)] $\bdiv^* = - \bnabla$.
\item[(B)] $\bdiv : C_0(\Rm;\Rm) \to \SCH_0(\Rm)$ is surjective.
\item[(C)] $\ker (\bdiv) = C_0(\Rm;\Rm) \cap Z_m(\Rm)$.
\item[(D)] $\SCH_0(\Rm) \cong \frac{C_0(\Rm;\Rm)}{C_0 \cap Z_m(\Rm)}$.
\end{enumerate}
\end{Theorem}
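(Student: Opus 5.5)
The proof will follow the proof of theorem \ref{3.1.4} step by step, with $L_1$, $\rmLip_0$ and $\calF$ replaced by $C_0$, $\bvs$ and $\SCH_0$.

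For (A), the paper identifies $\SCH_0(\Rm)^*$ with $\bvs(\Rm)$ via \ref{3.2.2}(C) and $C_0(\Rm;\Rm)^*$ with $M(\Rm;\Rm)$ via \ref{3.2.1}. Given $u \in \bvs(\Rm)$, I must find the $\mu \in M(\Rm;\Rm)$ such that $\sum_i \int v_i \,d\mu_i = \la v , \bdiv^*(u)\ra = \la u , \bdiv v\ra = -\sum_i \int v_i \, d(D_iu)$ for all $v \in C_0(\Rm;\Rm)$. By the definition in \ref{3.2.4}, this $\mu$ is $-Du$. One point needs care: the pairing $\la \beta(u), F\ra = \la u , F\ra$ in \ref{3.2.2}(C) has to be the one used for the adjoint. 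It is, since $\bdiv v \in \SCH_0(\Rm) \subset \bvs(\Rm)^*$.

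For (B), I combine (A) with \ref{3.2.3}(A). These show that $\bdiv^* = -\bnabla$ is an isometry, so condition (a) of \ref{2.2}(A) holds with $\bc = 1$. Hence $\bdiv$ is surjective.

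For (C), first take $v \in \ker(\bdiv)$ and $\vphi \in \calD(\Rm)$. Then $\vphi \in \bvs(\Rm)$, and $D\vphi = \nabla\vphi \cdot \ssfL^m$, so $0 = \la \vphi , \bdiv v\ra = -\int (\nabla \vphi)\ip v \,d\ssfL^m$. Thus $v$ is solenoidal; it is locally summable because it is continuous and bounded.

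For the converse, let $v \in C_0 \cap Z_m(\Rm)$. Then $F := \bdiv v \in \SCH_0(\Rm)$ by \ref{3.2.4}(B), and $\la \vphi , F\ra = -\int (\nabla\vphi)\ip v \,d\ssfL^m = 0$ for every $\vphi \in \calD(\Rm)$. By \ref{3.2.2}(E), $F = 0$. This density step is the only nontrivial point, and it is already supplied by \ref{3.2.2}(E), exactly as \ref{3.1.1}(D) was used in theorem \ref{3.1.4}.

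For (D), I apply \ref{2.2}(B) to $T = \bdiv$. Its adjoint is an isometry by (A) and \ref{3.2.3}(A), so $C_0(\Rm;\Rm)/\ker(\bdiv) \cong \SCH_0(\Rm)$. By (C), $\ker(\bdiv) = C_0 \cap Z_m(\Rm)$, which gives the stated isomorphism.
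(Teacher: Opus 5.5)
Your proposal is correct and follows the paper's proof step by step. You identify the adjoint via \ref{3.2.2}(C) and \ref{3.2.1}, and you get surjectivity from the isometry property together with \ref{2.2}(A). For the kernel you use $\calD(\Rm)\subset\bvs(\Rm)$ in one direction and the density lemma \ref{3.2.2}(E) in the other, and the quotient isometry comes from \ref{2.2}(B).
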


\begin{proof}
{\bf (i)}
Given $u \in \bvs(\Rm)$ we ought to determine $\bdiv^*(u)$.
This is some $(\mu_1,\ldots,\mu_m) \in M(\Rm;\Rm)$ such that for all $v \in C_0(\Rm;\Rm)$ we have
\begin{equation*}
\sum_{i=1}^m \int_{\Rm} v_i \, d\mu_i = \la v , \bdiv^*(u) \ra = \la u , \bdiv(v) \ra = - \sum_{i=1}^m \int_{\Rm} v_i \, d(D_iu).
\end{equation*}
From the arbitrariness of $v$ ensues that $(\mu_1,\ldots,\mu_l) = -Du$.
This proves (A).
\par 
{\bf (ii)}
It follows from \ref{3.2.3}(A) and conclusion (A) that $\bdiv^*$ is an isometry. 
Therefore, $\rmim(\bdiv) = \SCH_0(\Rm)$, by \ref{2.2}(A).
This proves (B).
\par 
{\bf (iii)}
If $v \in \ker(\bdiv)$ then $v$ is solenoidal, since $\calD(\Rm) \subset \bvs(\Rm)$.
Conversely, if $v \in C_0(\Rm;\Rm)$ is solenoidal we ought to show that $\la u , \bdiv v \ra = 0$ for all $u \in \bvs(\Rm)$.
This immediately follows from \ref{3.2.2}(E).
\par 
{\bf (iv)}
Recall that $\bdiv^*$ is an isometry, as a consequence of (A) and \ref{3.2.3}(A).
Therefore, conclusion (D) follows from \ref{2.2}(B) and (C).
\end{proof}

\section{The Lipschitz-free space \texorpdfstring{$\calF(\Omega)$}{F(Omega)}}
\label{sec.LFS}

Throughout this section we assume that $\Omega \subset \Rm$ is open and $0 \in \Omega$.
Sometimes we shall also assume that $\Omega$ is admissible (see \ref{4.2.1} for a definition) and/or that $\Omega$ is connected by $\Lambda$-pencils or uniformly connected by pencils (see \ref{4.2.4} for definitions).
These assumptions will be explicitly stated whenever they are in effect.

\subsection{The localized topology \texorpdfstring{$\calP_{\rmLip}$}{P-Lip}}
\label{subsec.LF}

\begin{Empty}[$\rmLip_0(\Omega)$ and $\calF(\Omega)$]
\label{4.1.1}
The Lipschitz-free space as defined in \ref{3.1.1} makes sense when replacing $(\Rm,0)$ by any pointed metric space, \ie $(X,o_X)$ where $X$ is a metric space and $o_X \in X$, see \eg \cite{GOD.15}.
Here, we restrict to a couple $(\Omega,0)$, where $\Omega \subset \Rm$ is open and $0 \in \Omega$.
$\rmLip_0(\Omega)$ is the vector space consisting of Lipschitz functions $\Omega \to \R$ that vanish at $0$.
We let $\|u\|_L = \rmLip u$, $u \in \rmLip_0(\Omega)$, so that $\rmLip_0(\Omega)[\|\cdot\|_L]$ is a Banach space.
Reasoning as in \ref{3.1.1}(A), we observe that $\bdelta : \Omega \to \rmLip_0(\Omega)^* : x \mapsto \lseg x \rseg$ is an isometry.
As before, we define the Lipschitz-free space $\calF(\Omega)$ to be the closure in $\rmLip_0(\Omega)^*$ of the span of the range of $\bdelta$ and we abbreviate $\|\cdot\|_\calF = \|\cdot\|_{\rmLip_0}^*$.
As in \ref{3.1.1}(B), $\calF(\Omega)^* \cong \rmLip_0(\Omega)$.
\end{Empty}

\begin{Empty}[The localized topology $\calP_{\rmLip}$] 
\label{4.1.2}
We refer to \cite{DEP.26c} for the terminology.
We consider on $\rmLip_0(\Omega)$ the locally convex topology $\calS$ of pointwise convergence.
It is generated by the (separating) family of seminorms $\la |\lseg x \rseg| \ra_{x \in \Omega}$.
As we will, in fact, need a {\it filtering} family of seminorms generating $\calS$, we introduce the following notation.
We let $\rmFin(\Omega) = \calP(\Omega) \cap \{ E : E \text{ is finite}\}$.
For $E \in \rmFin(\Omega)$ and $u \in \rmLip_0(\Omega)$ we also define $p_E(u) = \max\{ |u(x)| : x \in E \}$.
Thus, $\calS$ is generated by the filtering family $\la p_E \ra_{E \in \rmFin(\Omega)}$ of seminorms.
\par 
Next, we observe that the norm $\|\cdot\|_L$ is lower-semicontinuous with respect to $\calS$.
We abbreviate $C_k = \rmLip_0(\Omega) \cap \{ u : \|u\|_L \leq k\}$, $k \in \N$, and $\calC = \{ C_k : k \in \N \}$.
One readily checks that $\calC$ is a localizing family on $\rmLip_0(\Omega)$ (see \cite[1.2]{DEP.26c} for the definition). 
We denote by $\calP_{\rmLip}$ the localized locally convex topology on $\rmLip_0(\Omega)$ associated with $\calS$ and $\calC$ (\ie $X = \rmLip_0(\Omega)$ and $\calT = \calS$).
We now gather the properties of $\calP_{\rmLip}$ that ensue from \cite{DEP.26c} to which we refer for the notation.
\begin{enumerate}
\item[(A)] {\it Let $\la u_j \ra_j$ be a sequence in, and $u$ a member of, $\rmLip_0(\Omega)$. The following are equivalent.
\begin{enumerate}
\item[(a)] $\lim_j u_j = u$ with respect to $\calP_{\rmLip}$.
\item[(b)] $\lim_j u_j = u$ with respect to $\calS$ (\ie pointwise) and $\sup_j \|u_j\|_L < \infty$.
\end{enumerate}
}
\end{enumerate}
\par 
{\it Proof.}
This is direct consequence of \cite[3.1(A)]{DEP.26c}.\cqfd
\begin{enumerate}
\item[(B)] {\it A set $B \subset \rmLip_0(\Omega)$ is $\calP_{\rmLip}$-bounded if and only if $\sup \{ \|u\|_L : u \in B \} < \infty$. In particular, $\la C_k \ra_k$ is a fundamental system of $\calP_{\rmLip}$-bounded sets in $\rmLip_0(\Omega)$.}
\end{enumerate}
\par 
{\it Proof.}
The first conclusion follows from \cite[3.1(B)]{DEP.26c} upon noticing that if $\Gamma = \sup \{ \|u\|_L : u \in B \}$ and $u \in B$ then $|u(x)| \leq \Gamma \cdot |x|_2$ for all $u \in B$, since $u(0)=0$.
The second conclusion trivially follows from the first.\cqfd
\begin{enumerate}
\item[(C)] {\it The topological spaces $C_k[\calS \hel C_k]$ are metrizable and compact, $k \in \N$.}
\end{enumerate}
\par 
{\it Proof.}
{\bf (i)}
We abbreviate $\calS_k = \calS_k \hel C_k$.
Let $\la x_i \ra_i$ be a dense sequence in $\Omega$ and define $d : C_k \times C_k \to \R^+$ by the formula $d(u_1,u_2) = \sum_i \frac{|u_1(x_i)-u_2(x_i)|}{2^i \cdot (1 + |u_1(x_i)-u_2(x_i)|)}$.
Each term of this normally convergent series is readily $\calS_k \times \calS_k$-continuous, therefore, so is $d$.
Accordingly, the identity of $C_k$ is $(\calS_k,\calT_{k,d})$-continuous, where $\calT_{k,d}$ is the topology of $C_k$ associated with $d$.
It remains to show that it is also $(\calT_{k,d},\calS_k)$-continuous. 
If $u,u_1,u_2,\ldots$ are members of $C_k$ such that $d(u_j,u) \to 0$ as $j \to \infty$ then $|u_j(x_i)-u(x_i)| \to 0$ as $j \to \infty$ for all $i$, thus, given $x \in \Omega$ and $\eta > 0$ there exists $i$ such that $|x-x_i|_2 < \eta$, hence, $\limsup_j |u_j(x)-u(x)| \leq \eta \cdot (\rmLip u_j + \rmLip u) \leq 2 \cdot \eta \cdot k$. 
As $\eta$ is arbitrary, we conclude that $\la u_j \ra_j$ converges pointwise to $u$. 
It follows from (A) that $\la u_j \ra_j$ converges to $u$ with respect to $\calP_{\rmLip}$, whence, also with respect to $\calS_k$.
This completes the proof that $\calS_k$ is metrizable.
\par 
{\bf (ii)}
It remains to show that $\calS_k$ is compact.
In view of {\bf (i)}, it suffices to show that it is sequentially compact.
Let $\la K_i \ra_i$ be an increasing sequence of compact sets whose union is $\Omega$.
Given a sequence $\la u_j \ra_j$ in $C_k$, Ascoli's theorem applies to any subsequence of $\la u_j|_{K_i} \ra_j$ to yield a subsequence converging uniformly on $K_i$ to a function $K_i \to \R$ whose Lipschitz constant does not exceed $k$.
By induction on $i$ and a diagonal argument, we obtain a subsequence of $\la u_j \ra_j$ that converges uniformly on each $K_i$ (hence, also pointwise) to $u : \Omega \to \R$ such that $\rmLip u \leq k$ and, clearly, $u(0)=0$.
By (B), this subsequence converges to $u$ with respect to $\calS_k$.\cqfd
\begin{enumerate}
\item[(D)] {\it $\rmLip_0(\Omega)[\calP_{\rmLip}]$ is sequential.}
\end{enumerate}
\par 
{\it Proof.}
This is an immediate consequence of \cite[4.4(A)]{DEP.26c} and (C) above.\cqfd
\begin{enumerate}
\item[(E)] {\it $\rmLip_0(\Omega)[\calP_{\rmLip}]$ is none of Fr\'echet-Urysohn, barrelled, and bornological.}
\end{enumerate}
\par 
{\it Proof.}
Note that \cite[3.8]{DEP.26c} applies, according to (C) above.
Observe that $\|\cdot\|_L$ is not sequentially $\calS$-continuous (choose a null-sequence $\la r_j \ra_j$ of positive real numbers and define and $u_j(x) = \max \{ r_j - \rmdist(x,\Rm \setminus B(0,r_j),0\}$).
It then follows from \cite[3.8(A)]{DEP.26c} that $\rmLip_0(\Omega)[\calP_{\rmLip}]$ is neither Fr\'echet-Urysohn nor bornological.
In turn, we infer from \cite[3.8(B)]{DEP.26c} that $\rmLip_0(\Omega)[\calP_{\rmLip}]$ is not barrelled.\cqfd
\begin{enumerate}
\item[(F)] {\it Let $F : \rmLip_0(\Omega) \to \R$ be linear. The following are equivalent.
\begin{enumerate}
\item[(a)] $F$ is $\calP_{\rmLip}$-continuous.
\item[(b)] $(\forall \veps > 0)(\exists E \in \rmFin(\Omega))(\exists \theta > 0)(\forall u \in \rmLip_0(\Omega)): | \la u,F \ra| \leq \theta \cdot p_E(u) + \veps \cdot \|u\|_L$.
\end{enumerate}
}
\end{enumerate}
\par 
{\it Proof.}
It suffices to observe that \cite[3.6]{DEP.26c} applies.\cqfd
\begin{enumerate}
\item[(G)] {\it $\rmLip_0(\Omega)[\calP_{\rmLip}]^*$ equipped with its strong topology is a Banach space whose norm is $\vvvert F \vvvert = \sup \{ |\la u,F \ra| : \|u\|_L \leq 1 \}$.}
\end{enumerate}
\par 
{\it Proof.}
The strong topology on $\rmLip_0(\Omega)[\calP_{\rmLip}]^*$ being that of uniform convergence on $\calP_{\rmLip}$-bounded sets it follows from (B) above and the inclusions $C_k \subset k \cdot C_1$ for all positive integers $k$ that the strong topology is normed as stated.
The completeness follows as in \cite[5.4, 5.3]{DEP.26c}.\cqfd
\begin{enumerate}
\item[(H)] {\it $\rmLip_0(\Omega)[\calP_{\rmLip}]$ is semireflexive.}
\end{enumerate}
\par 
{\it Proof.}
According to (B) and (C) above, this follows from \cite[7.4]{DEP.26c}.\cqfd
\end{Empty}

\begin{Theorem}
\label{4.1.3}
$\calF(\Omega) = \rmLip_0(\Omega)[\calP_{\rmLip}]^*$.
\end{Theorem}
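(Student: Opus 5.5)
We will prove the two inclusions separately, regarding both spaces as subspaces of $\rmLip_0(\Omega)^*$. Every $\calP_{\rmLip}$-continuous linear form is bounded on $C_1$ by \ref{4.1.2}(B), hence belongs to $\rmLip_0(\Omega)^*$. By \ref{4.1.2}(G), the strong topology of $\rmLip_0(\Omega)[\calP_{\rmLip}]^*$ is the norm $\vvvert\cdot\vvvert = \|\cdot\|_{\rmLip_0}^*$. So the statement amounts to an equality of subsets of $\rmLip_0(\Omega)^*$ with the same norm.

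\emph{Inclusion $\calF(\Omega)\subset \rmLip_0(\Omega)[\calP_{\rmLip}]^*$.} Each $\lseg x\rseg$ is $\calS$-continuous, hence $\calP_{\rmLip}$-continuous, since $\calP_{\rmLip}$ is finer than $\calS$. Alternatively, criterion \ref{4.1.2}(F)(b) holds with $E=\{x\}$, $\theta=1$. By linearity, $\rmspan\rmim\bdelta$ is contained in the dual. Next, the dual is $\vvvert\cdot\vvvert$-closed in $\rmLip_0(\Omega)^*$. Indeed, let $F$ be a norm limit of $\calP_{\rmLip}$-continuous $F_j$. Given $\veps>0$, choose $j$ with $\|F-F_j\|^*\le\veps/2$. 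Apply (F)(b) to $F_j$ with $\veps/2$ to obtain $E$ and $\theta$. Then $|\la u,F\ra|\le \theta p_E(u)+\veps\|u\|_L$, so $F$ satisfies (F)(b). This closedness is also the completeness asserted in (G). Hence the norm closure $\calF(\Omega)$ of the span lies in the dual.

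\emph{Reverse inclusion.} We argue by Hahn--Banach. Suppose $F\in \rmLip_0(\Omega)[\calP_{\rmLip}]^*$ does not belong to the closed subspace $\calF(\Omega)$ of the Banach space $\rmLip_0(\Omega)[\calP_{\rmLip}]^*$. Then there is a norm-continuous functional $\Phi$ on $\rmLip_0(\Omega)[\calP_{\rmLip}]^*$ vanishing on $\calF(\Omega)$ with $\Phi(F)\neq 0$. This $\Phi$ is continuous for the strong topology. By semireflexivity, \ref{4.1.2}(H), it is given by evaluation at some $u\in\rmLip_0(\Omega)$, i.e.\ $\Phi(G)=\la u,G\ra$ for every $G$ in the dual. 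Since $\Phi$ vanishes on each $\lseg x\rseg$, we get $u(x)=0$ for all $x\in\Omega$, so $u=0$. This contradicts $\Phi(F)=\la u,F\ra\neq0$.

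The main obstacle is the step identifying strong-continuous functionals on the dual with elements of $\rmLip_0(\Omega)$. This is exactly what semireflexivity in the sense of \cite{DEP.26c} provides. One only has to make sure that the strong topology used there coincides with the norm topology of (G). That follows because the $C_k$ form a fundamental system of bounded sets, by \ref{4.1.2}(B).

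A more hands-on route would avoid semireflexivity. Given $F$ satisfying (F)(b) with a small $\veps$, approximate it in norm by a combination of $\lseg x\rseg$, $x\in E$. This requires extending functionals from the finite set $E$ by McShane extensions, which is essentially the semireflexivity argument in concrete form. We would fall back on this route only if the abstract route raised difficulties.
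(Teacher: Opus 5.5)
Your proof is correct and is essentially the paper's. For the forward inclusion you use criterion \ref{4.1.2}(F)(b), split into ``the span lies in the dual'' and ``the dual is $\|\cdot\|_{\rmLip_0}^*$-closed'', whereas the paper applies the same estimate directly to an approximant $G\in\rmspan\rmim\bdelta$; the reverse inclusion is exactly the paper's Hahn--Banach density argument combined with the semireflexivity \ref{4.1.2}(H), with the strong topology normed by $\vvvert\cdot\vvvert=\|\cdot\|_{\rmLip_0}^*$ via \ref{4.1.2}(B),(G).

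Your fallback route needs neither McShane extensions nor semireflexivity. With $E,\theta$ from (F)(b), $F$ has norm at most $\veps$ on $K=\{u : u|_E=0\}$. A Hahn--Banach extension $H$ of $F|_K$ with $\|H\|_{\rmLip_0}^*\le\veps$ makes $F-H$ vanish on $K$. Hence $F-H\in\rmspan\{\lseg x\rseg : x\in E\}$, and this element lies within $\veps$ of $F$.
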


\begin{proof}
{\bf (i)}
Recall that members of $\calF(\Omega)$ are linear forms $F : \rmLip_0(\Omega) \to \R$ that are $\|\cdot\|_L$-continuous and lie in the $\|\cdot\|_L^*$-closure of the span of $\bdelta$.
Here, we show that such $F$ is $\calP_{\rmLip}$-continuous, by means of \ref{4.1.2}(F).
Let $\veps > 0$.
There exists a member $G$ of the span of $\bdelta$ such that $\|F-G\|_L^* < \veps$. 
There is $E \in \rmFin(\Omega)$ and for each $x \in E$ there is $\theta_x \in \R$ such that $G = \sum_{x \in E} \theta_x \cdot \lseg x \rseg$.
Thus, for all $u \in \rmLip_0(\Omega)$ we have
\begin{equation*}
| \la u , F \ra| \leq |\la u , G \ra| + \|F-G\|_L^* \cdot \|u\|_L \leq \sum_{x \in E} |\theta_x| \cdot |u(x)| + \veps \cdot \|u\|_L \leq \theta \cdot p_E(u) + \veps \cdot \|u\|_L,
\end{equation*}
where $\theta = 1 + \sum_{x \in E} |\theta_x|$.
\par 
{\bf (ii)}
It ensues from {\bf (i)} that we may consider the identity $\calF(\Omega) \to \rmLip_0(\Omega)[\calP_{\rmLip}]^*$.
Moreover, the identity is an isometry, as follows from the definition of the norms $\|\cdot\|_L^*$ in $\calF(\Omega)$ and $\vvvert\cdot\vvvert$ on $\rmLip_0(\Omega)[\calP_{\rmLip}]^*$ (recall \ref{4.1.2}(G)).
In particular, the range $W$ of the identity in $\rmLip_0(\Omega)[\calP_{\rmLip}]^*$ is closed and in order to complete the proof it remains only to show that $W$ is dense.
To this end, we apply Hahn-Banach's theorem as in \cite[3.5]{RUDIN}: we ought to show that if $\alpha \in \rmLip_0(\Omega)[\calP_{\rmLip}]^*[\vvvert\cdot\vvvert]^*$ vanishes on $W$ then it vanishes identically.
By \ref{4.1.2}(H), there exists $u \in \rmLip_0(\Omega)$ such that $\alpha = \rmev_u$.
For each $x \in \Omega$, $\lseg x \rseg \in \calF(\Omega) = W$, thus, $u(x) = \la \lseg x \rseg, \rmev_u \ra = 0$.
Therefore, $\alpha = 0$ and the proof is complete.

\end{proof}

\begin{Remark}
Note that $\calF(\Omega)^* \cong \rmLip_0(\Omega) \cong \rmLip_0(\Omega)[\calP_{\rmLip}]^{**}$.
The former holds, by construction of the Lipschitz-free space (recall \ref{4.1.1}), and the latter holds, by \ref{4.1.2}(H).
However, it is unknown in general whether a predual of $\rmLip_0(\Omega)$ is unique and the special cases considered in \cite{WEA.18} do not apply to proving theorem \ref{4.1.3}.
\end{Remark}

\subsection{Characterization of those \texorpdfstring{$\Omega$}{Omega} such that \texorpdfstring{$\calF(\Omega) \simeq L_1(\Omega;\Rm)/(\ker \bdiv)$}{F(Omega) = L-1(Omega;Rm)/(ker div)}}
\label{subsec.CHARLF}

\begin{Empty}[Assumptions on \texorpdfstring{$\Omega$}{Omega}]
\label{4.2.1}
Our goal in this subsection is to eventually relate $\calF(\Omega)$ to $L_1(\Omega;\Rm)/(\ker \bdiv)$, similarly to what is done in subsection \ref{subsec.LSVF}.
We note that for an arbitrary subset $\Omega$ of $\Rm$ the definition of $L_1(\Omega;\Rm)$ depends only on $\Omega$ up to a Lebesgue-null set.
In view of this remark, it is natural to require in this subsection, as we do, that $\ssfL^m(\rmbdry \Omega) = 0$.
We call such $\Omega$ an {\em admissible} open set.
\end{Empty}

\begin{Empty}[Curves]
\label{4.2.2}
A {\em curve} in $\Rm$ is a Lipschitz mapping $\gamma : [0,1] \to \Rm$.
A curve is an {\em arc} if it is injective.
If $\gamma$ is an arc then $\int_{\rmim \gamma} f(x) \, d\ssfH^1(x) = \int_0^1 f(\gamma(t)) \cdot |\gamma'(t)|_2 \, d\ssfL^1(t)$ for all Borel-measurable $f : \Rm \to \R$, where $\ssfH^1$ is the 1-dimensional Hausdorff measure in $\Rm$, see \eg \cite[3.2.6]{GMT}.
In particular, $\ssfH^1(\rmim \gamma) = \int_0^1 |\gamma'(t)|_2 \, d\ssfL^1(t)$.
\end{Empty}

\begin{Empty}[Quasi-convexity]
\label{4.2.3}
Let $\Lambda > 0$.
We say that a set $C \subset \Rm$ is {\em $\Lambda$-quasi-convex} if the following holds.
For all $a,b \in C$ there exists an arc $\gamma :[0,1] \to \Rm$ such that $\rmim \gamma \subset C$, $\gamma(0)=a$, $\gamma(1)=b$, and $\int_0^1 |\gamma'(t)|_2 d\ssfL^1(t) \leq \Lambda \cdot |a-b|_2$.
\par 
If $m=1$ then $C$ is quasi-convex if and only if $C$ is an interval if and only if $C$ is convex.
Assume that $m \geq 2$, let $S_j = \rmbdry B(0,2^j)$, $j \in \N$, let $H \subset \Rm$ be a hyperplane (thus, containing $0$), and define $E = H \cup \left( \cup_{j \in \N} S_j \right)$.
One checks that if $C = \Rm \cap \{x : \rmdist(x,E) \leq \eta \}$, $\eta \geq 0$, then $C$ is quasi-convex.
\end{Empty}

\begin{Empty}[Pencils of curves]
\label{4.2.4}
This paragraph sets forth notions from \cite{PAO.STE.12}.
We let $\Gamma$ consist of all curves in $\Rm$ and we equip $\Gamma$ with a semi-metric $d$ defined as follows: $$d(\gamma_1,\gamma_2) = \inf\{ \| \gamma_1 - \gamma_2 \circ \phi\|_\infty : \phi : [0,1] \to [0,1] \text{ is an increasing bijection}\}.$$
Two curves $\gamma_1$ and $\gamma_2$ are called {\em equivalent} if $d(\gamma_1,\gamma_2) = 0$; this is equivalent to saying that $\gamma_1 \circ \phi_1 = \gamma_2 \circ \phi_2$ for some surjective, continuous, non-decreasing functions $\phi_i : [0,1] \to [0,1]$, $i=1,2$.
The set of equivalence classes of curves is denoted $\bGamma$ and equipped with the distance induced by $d$ in the obvious way.
This defines, in particular, a Borel structure on $\bGamma$.
It seems relevant to the results of \cite{SMI.93} and \cite{PAO.STE.12} to record the following useful fact: If $\mu$ is a finite Borel measure on $\bGamma$ and $A \subset \Rm$ is Borel-measurable then the function $\bGamma \to \R : \bgamma \mapsto \int_{\Rm} \rmcard(\gamma^{-1}(x) \cap A) d\ssfH^1(x)$ is $\mu$-measurable.
\par 
Let $\gamma \in \bgamma \in \bGamma$.
It is easy to see that the endpoints $\gamma(0)$, $\gamma(1)$ 
depend only on $\bgamma$ and not on the choice of a representative $\gamma$.
Therefore, $\bGamma_{a,b} = \bGamma \cap \{ \bgamma : \gamma(0)=a \text{ and } \gamma(1)=b \}$ is well-defined.
\par 
Let $\Lambda > 0$ and $a, b \in \Omega$.
A {\em $\Lambda$-pencil of curves in $\Omega$ from $a$ to $b$} is a Borel probability measure $\mu$ on $\bGamma$ satisfying the following properties:
\begin{enumerate}
\item[(1)] $\mu$-almost every member of $\bGamma$ belongs to $\bGamma_{a,b}$ and admits an arc as a representative;
\item[(2)] The Borel measure $\nu : \calB(\Rm) \to [0,\infty]$ defined by $\nu(A) = \int_{\bGamma} \ssfH^1(A \cap \rmim \gamma)d\mu(\bgamma)$ satisfies:
\begin{enumerate}
\item[(i)] $\nu \ll \ssfL^m$;
\item[(ii)] $\rmspt \nu \subset \rmclos \Omega$;
\item[(iii)] $\nu(\Rm) \leq \Lambda \cdot |b-a|_2$. 
\end{enumerate}
\end{enumerate}
\par 
We say that $\Omega$ is {\em connected by $\Lambda$-pencils} if for all distinct $a,b \in \Omega$ there exists a $\Lambda$-pencil of curves in $\Omega$ from $a$ to $b$.
We also say that $\Omega$ is {\em uniformly connected by pencils} if there exists $\Lambda > 0$ such that $\Omega$ is connected by $\Lambda$-pencils.
\par
A similar notion was introduced by Semmes in \cite{SEM.96}.
The following holds in case the measure $\ssfL^m \hel \Omega$ is doubling and $\Omega$ is admissible: $\Omega$ is uniformly connected by pencils if and only if $\rmclos \Omega$ supports a weak 1-Poincar\'e inequality, see \cite{FAS.ORP.19} and \cite{DUR-CAR.ERI-BIQ.KOR.SHA.21}.
\par 
It is not hard to infer from Chebyshev's inequality and Ascoli's theorem that if $\Omega$ is connected by $\Lambda$-pencils then $\Omega$ is $\Lambda$-quasi-convex.

\end{Empty}

\begin{Empty}[A gradient operator]
\label{4.2.5}
Given $u \in \rmLip_0(\Omega)$ we recall that $\nabla u = (\partial_1u,\ldots,\partial_mu)$ is defined a.e. and belongs to $L_\infty(\Omega;\Rm)$ whose norm $\|\cdot\|_{L_\infty(\Omega)}$ is abbreviated as $\|\cdot\|_{L_\infty}$.
\begin{enumerate}
\item[(A)] {\it $\|\nabla u \|_{L_\infty} \leq \rmLip u$ for all $u \in \rmLip_0(\Omega)$.}
\end{enumerate}
\par 
{\it Proof.}
Notice that if $u$ is differentiable at $x \in \Omega$ then $|\nabla u(x)|_2 \leq \rmLip u$ is a consequence of the fact that $\Omega$ is open and of the definition of derivative applied in the direction of $(\nabla u)(x)$.
Thus, $\|\nabla u\|_{L_\infty} \leq \rmLip u$, by Rademacher's theorem.\cqfd
\begin{enumerate}
\item[(B)] {\it $\bnabla : \rmLip_0(\Omega) \to L_\infty(\Omega;\Rm) : u \mapsto \nabla u$ is linear.
If $\Omega$ is admissible and if there is $\Lambda > 0$ such that $\Omega$ is connected by $\Lambda$-pencils then $\rmLip u \leq \Lambda \cdot \|\nabla u\|_{L_\infty}$ for all $u \in \rmLip_0(\Omega)$.}
\end{enumerate}
\par 
{\it Proof.}
{\bf (i)}
That $\bnabla$ be well-defined follows from (A) and its linearity is obvious.
\par 
{\bf (ii)} 
Fix $u \in \rmLip_0(\Omega)$.
We ought to show the following: For all distinct $a,b \in \Omega$ we have $|u(b)-u(a)| \leq \Lambda \cdot \|\nabla u\|_{L_\infty(\Omega)} \cdot |b-a|_2$.
To this end, we fix $a,b \in \Omega$, we abbreviate $C = \rmclos \Omega$, and we let $\mu$ be a $\Lambda$-pencil of curves in $C$ from $a$ to $b$ whose existence ensues from the hypothesis.
\par 
{\bf (iii)}
Here, we show that $\rmim \gamma \subset C$ for $\mu$-a.e. $\bgamma \in \bGamma$ and $\gamma \in \bgamma$.
By \ref{4.2.4}(2)(ii) we have $0 = \nu(\Rm \setminus C) = \int_{\bGamma} \ssfH^1[\rmim(\gamma) \setminus C] \,d\mu(\gamma)$.
Hence, $\ssfH^1[\rmim(\gamma) \setminus C] = 0$ for $\mu$-a.e. $\gamma$ and, by \ref{4.2.4}(1), we may also assume that $\gamma$ is an arc.
For such $\gamma$ the set $O = \gamma^{-1}(E)$ is open in $[0,1]$, where $E = \rmim(\gamma) \setminus C$.
If it were non-empty then $\gamma'(t) \neq 0$ for a.e. $t \in O$, since $\gamma$ is an arc, whence $0 < \int_O |\gamma'(t)|_2\,d\ssfL^1(t) = \ssfH^1[\rmim(\gamma) \setminus C]$, a contradiction.
Accordingly, $O = \emptyset$, hence, $E = \emptyset$, \ie $\rmim \gamma \subset C$.
\par 
{\bf (iv)} 
Here, we let $\hat{u}$ be a Lipschitz extension of $u$ to $\Rm$, we define a Borel-measurable set 
\begin{equation*}
Z = C \cap \{ x : \hat{u} \text{ is not differentiable at } x\},
\end{equation*}
and we claim that $\ssfH^1(Z \cap \rmim \gamma) = 0$ for $\mu$-a.e. $\bgamma \in \bGamma$ and $\gamma \in \bgamma$.
Indeed, it ensues from Rademacher's theorem that $\ssfL^m(Z)=0$ and, in turn, from \ref{4.2.4}(2)(i) that $0 = \nu(Z) = \int_{\bGamma} \ssfH^1(Z \cap \rmim \gamma) \,d\mu(\gamma)$.
\par 
{\bf (v)} 
We infer from \ref{4.2.4}(1), {\bf (iii)}, and {\bf (iv)} that $\mu$-a.e. $\bgamma \in \bGamma$ admits a representative $\gamma \in \bgamma$ which is an arc such that $\gamma(0)=a$, $\gamma(1)=b$, $\rmim \gamma \subset C$, and $\ssfH^1(Z \cap \rmim \gamma) = 0$.
We notice that if $\gamma$ is differentiable at $t \in [0,1]$ and $\hat{u}$ is differentiable at $\gamma(t)$ then $(\hat{u} \circ \gamma)'(t) = (\nabla \hat{u})(\gamma(t)) \ip \gamma'(t)$.
Observe that this condition occurs at $\ssfL^1$-a.e. $t$, since $\ssfH^1(Z \cap \rmim \gamma ) = 0$ and $\gamma$ is an arc.
Accordingly,
\begin{multline*}
|u(b)-u(a)|  = | \hat{u}(b) - \hat{u}(a)| 
 = \left| \int_{\bGamma} \hat{u}(\gamma(1)) - \hat{u}(\gamma(0))\, d\mu(\gamma)  \right| \\
 = \left| \int_{\bGamma} d\mu(\gamma) \int_0^1 \frac{d}{dt}\, \hat{u}(\gamma(t)) \,d\ssfL^1(t)\right|
 \leq \int_{\bGamma} d\mu(\gamma) \int_0^1 | (\nabla \hat{u})(\gamma(t)) \ip \gamma'(t) | \,d\ssfL^1(t) \\
 \leq \int_{\bGamma} d\mu(\gamma) \int_{\Rm} | \nabla \hat{u} |_2 \, d\ssfH^1 \hel \rmim \gamma 
 = \int_{C} | \nabla \hat{u} |_2 \,d\nu 
 \leq \| \nabla \hat{u} \|_{L_\infty(C)} \cdot \nu(C) \\
 \leq \| \nabla \hat{u} \|_{L_\infty(C)} \cdot \Lambda \cdot |b-a|_2.
\end{multline*}
Recalling that $\Omega$ is an admissible domain \ref{4.2.1}, we see that $\|\nabla \hat{u}\|_{L_\infty(C)} = \|\nabla \hat{u}\|_{L_\infty(\Omega)} = \|\nabla u\|_{L_\infty(\Omega)}$.
Since $a$ and $b$ are arbitrary, the proof is complete.\cqfd
\end{Empty}

\begin{Empty}[A {\it partial} divergence operator]
\label{4.2.6}
Let $v \in L_1(\Omega;\Rm)$.
For each $u \in \rmLip_0(\Omega)$ the function $(\nabla u) \ip v$ is Lebesgue-summable in $\Omega$.
Thus, the following is a well-defined linear form:
\begin{equation*}
\bdiv v : \rmLip_0(\Omega) \to \R : u \mapsto - \int_{\Omega} (\nabla u) \ip v \,d\ssfL^m.
\end{equation*}
We call this a {\it partial} divergence of $v$ because it does not account for the boundary values of $v$.
If $\Omega$ is, say, a bounded open set of finite perimeter, $u \in \rmLip(\Omega)$, and $v : \Omega \to \Rm$ is Lipschitz then $- \int_{\Omega} (\nabla u) \ip v \,d\ssfL^m = \int_\Omega u \cdot \rmdiv v \, d\ssfL^m - \int_{\partial_* \Omega} u \cdot ( v \ip \vec{n}_\Omega) \,d\ssfH^{m-1}$.
However, as we merely assume that $v \in L_1(\Omega;\Rm)$ in general, boundary values of $v$ do not make sense at first sight and we absorb the corresponding term in the definition of $\bdiv$.
\begin{enumerate}
\item[(A)] {\it $| \la u , \bdiv v \ra| \leq \|u\|_L \cdot \|v\|_{L_1(\Omega)}$ for all $u$.}
\end{enumerate}
\par 
{\it Proof.}
Trivial, in view of \ref{4.2.5}(A).\cqfd
\begin{enumerate}
\item[(B)] {\it $\bdiv v$ is $\calP_{\rmLip}$-continuous and $\vvvert \bdiv v \vvvert \leq \|v\|_{L_1(\Omega)}$.}
\end{enumerate}
\par 
{\it Proof.}
{\bf (i)}
There are at least two ways to show that $\bdiv v$ is $\calP_{\rmLip}$-continuous.
One can argue as in the proof of \ref{3.1.3}(B) to show that $\bdiv v \in \calF(\Omega)$ and then refer to theorem \ref{4.1.3}.
Instead, we shall check that $\bdiv$ satisfies condition \ref{4.1.2}(F)(b)\footnote{Of course, the two methods are essentially the same.}.
Let $\veps > 0$.
Choose a compact $K \subset \Omega$ such that $\int_{\Omega \setminus K} |v|_2\,d\ssfL^m < \frac{\veps}{4}$.
Next, choose $r > 0$ sufficiently small for $\{ x : \rmdist(x,K) \leq r \} \subset \Omega$ and $\|w - v \cdot \ind_K \|_{L_1(\Omega)} < \frac{\veps}{4}$, where $w = \phi_r * (v \cdot \ind_K)$.
Note that $w \in C_c^\infty(\Omega;\Rm)$ and $\|w-v\|_{L_1(\Omega)} < \frac{\veps}{2}$.
Accordingly,
\begin{equation*}
| \la u , \bdiv v \ra| \leq \left| \int_\Omega (\nabla u) \ip w \, d\ssfL^m \right| + \left| \int_\Omega (\nabla u) \ip (w-v) \, d\ssfL^m \right| = \rmI + \rmII
\end{equation*}
for all $u \in \rmLip_0(\Omega)$.
We treat the terms $\rmI$ and $\rmII$ separately.
\par 
{\bf (ii)}
We estimate $\rmI$ from above. 
Choose $\eta > 0$ small enough for $\eta \cdot \|\rmdiv w\|_{L_1(\Omega)} < \frac{\veps}{2}$.
Next, referring to the compactness of $\rmspt w$, we choose a partition $\calB$ of $\rmspt (\rmdiv w)$ consisting of Borel-measurable sets whose diameter does not exceed $\eta$.
For each $B \in \calB$ we pick arbitrarily $x_B \in B$ and we abbreviate $E = \{ x_B : B \in \calB\} \in \rmFin(\Omega)$ and $\theta = 1 + \int_\Omega |\rmdiv w |\,d\ssfL^m$. 
We then observe that 
\begin{multline*}
\rmI  = \left| \int_\Omega u \cdot \rmdiv w \, d\ssfL^m \right| 
= \left| \sum_{B \in \calB} \int_B  u \cdot \rmdiv w \, d\ssfL^m \right| \\
\leq \sum_{B \in \calB} \int_B |u(x) - u(x_B)| \cdot |(\rmdiv w)(x)| \, d\ssfL^m(x) + \sum_{B \in \calB} |u(x_B)| \cdot \int_B |(\rmdiv w)(x)| \, d\ssfL^m(x) \\
\leq \|u\|_L \cdot \eta \cdot \|\rmdiv w\|_{L_1(\Omega)} + p_E(u) \cdot \|\rmdiv w \|_{L_1(\Omega)} \leq \frac{\veps}{2} \cdot \|u\|_L + \theta \cdot p_E(u).
\end{multline*}
\par 
{\bf (iii)}
Next, we estimate $\rmII$ from above.
Recalling \ref{4.2.5}(A), we have
\begin{equation*}
\rmII \leq \|\nabla u\|_{L_\infty} \cdot \|w-v\|_{L_1(\Omega)} \leq \frac{\veps}{2} \cdot \|u\|_L.
\end{equation*}
\par 
{\bf (iv)}
Finally, we infer from {\bf (i)}, {\bf (ii)}, and {\bf (iii)} that
\begin{equation*}
| \la u , \bdiv v \ra| \leq \rmI + \rmII \leq \theta \cdot p_E(u) + \veps \cdot \|u\|_L.
\end{equation*}
The proof that $\bdiv v$ is $\calP_{\rmLip}$-continuous is complete. 
\par 
{\bf (v)}
The inequality $\vvvert \bdiv v\vvvert \leq \|v \|_{L_1(\Omega)}$ is an immediate consequence of the definition of $\vvvert \cdot \vvvert$ and of (A) above.\cqfd
\begin{enumerate}
\item[(C)] {\it $\bdiv^* = - \bnabla$.}
\end{enumerate}
\par 
{\it Proof.}
Here, we consider $\bdiv : L_1(\Omega;\Rm) \to \rmLip_0(\Omega)[\calP_{\rmLip}]^*$, thus $\bdiv^* : \rmLip_0(\Omega) \to L_\infty(\Omega;\Rm)$, by \ref{4.1.2}(H).
Hence, given $u \in \rmLip_0(\Omega)$, $\bdiv^* u = w \in L_\infty(\Omega;\Rm)$ is characterized by $\la v , w \ra = \la v , \bdiv u \ra = - \int_\Omega (\nabla u) \ip v \, d\ssfL^m$ for all $v \in L_1(\Omega;\Rm)$.
Therefore, $\bdiv^* u = - \nabla u$.\cqfd
\end{Empty}

\begin{Theorem}
\label{4.2.7}
Let $\Omega \subset \Rm$ be an admissible open set containing 0 and consider the bounded linear operator $\bdiv : L_1(\Omega;\Rm) \to \calF(\Omega)$.
The following are equivalent.
\begin{enumerate}
\item[(A)] $\bdiv$ is surjective, \ie $\calF(\Omega) \simeq \frac{L_1(\Omega;\Rm)}{\ker \bdiv}$.
\item[(B)] $\Omega$ is uniformly connected by pencils.
\end{enumerate}
Moreover, assuming that (A) and (B) are satisfied the following hold:
\begin{enumerate}
\item[(C)] If we define $\lambda^*$ and $\Lambda_*$ by the formul\ae
\begin{equation*}
\lambda^* = \sup \{ \lambda > 0 : \lambda \cdot \|u\|_L \leq \|\nabla u \|_{L_\infty} \text{ for all } u \in \rmLip_0(\Omega) \}
\end{equation*}
and 
\begin{equation*}
\Lambda_* = \inf \{ \Lambda \geq 1 : \Omega \text{ is connected by $\Lambda$-pencils}\}
\end{equation*}
then $\lambda^* = \Lambda_*^{-1}$.
\item[(D)] For every $\veps > 0$ there exists $\bI : \calF(\Omega) \to L_1(\Omega;\Rm)$ (non-linear) satisfying the following properties.
\begin{enumerate}
\item[(a)] $(\bdiv \circ\, \bI)(F) = F$ for all $F \in \calF(\Omega)$.
\item[(b)] $\bI$ is positively homogeneous of degree 1, \ie $\bI(t \cdot F) = t \cdot \bI(F)$ for all $F \in \calF(\Omega)$ and $t > 0$.
\item[(c)] $\bI$ is continuous.
\item[(d)] For all $F \in \calF(\Omega)$ we have $\|\bI(F)\|_{L_1(\Omega)} \leq (1 + \veps) \cdot \Lambda_* \cdot \|F\|_{\calF}$.
\end{enumerate}
\end{enumerate}
\end{Theorem}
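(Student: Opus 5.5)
The plan is to work throughout with the identification $\calF(\Omega) = \rmLip_0(\Omega)[\calP_{\rmLip}]^*$ of theorem \ref{4.1.3}, so that by \ref{4.2.6}(B,C) the operator $\bdiv : L_1(\Omega;\Rm) \to \calF(\Omega)$ is bounded with adjoint $\bdiv^* = -\bnabla : \rmLip_0(\Omega) \to L_\infty(\Omega;\Rm)$. Here $\rmLip_0(\Omega)$ is identified with $\calF(\Omega)^*$ via semireflexivity \ref{4.1.2}(H).

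For (B)$\Rightarrow$(A), assume $\Omega$ is connected by $\Lambda$-pencils. Then \ref{4.2.5}(B) gives $\Lambda^{-1}\cdot\|u\|_L \leq \|\nabla u\|_{L_\infty} = \|\bdiv^* u\|$ for all $u$. By \ref{2.2}(A), applied with $X = L_1(\Omega;\Rm)$ and $Y = \calF(\Omega)$, $\bdiv$ is surjective. The same computation shows $\lambda^* \geq \Lambda^{-1}$ for every admissible $\Lambda$, hence $\lambda^* \geq \Lambda_*^{-1}$.

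For (A)$\Rightarrow$(B), \ref{2.2}(A) yields some $\lambda > 0$ with $\lambda\|u\|_L \leq \|\nabla u\|_{L_\infty}$, so $\lambda^* > 0$. Fix $\lambda < \lambda^*$, $\veps > 0$, and distinct $a,b \in \Omega$. I would apply the quantitative open-mapping statement of \cite[8.1(M)]{DEP.26c}: if $\bdiv^*$ is bounded below by $\lambda$, then every $F$ admits a preimage $v$ with $\|v\|_{L_1} \leq (1+\veps)\lambda^{-1}\|F\|_\calF$. Alternatively, Hahn–Banach gives directly that the quotient norm of $F$ in $L_1/\ker\bdiv$ is at most $\lambda^{-1}\|F\|$. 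Taking $F = \lseg b \rseg - \lseg a \rseg$, whose norm is $|b-a|_2$, gives $v \in L_1(\Omega;\Rm)$ with $\bdiv v = \lseg b\rseg - \lseg a\rseg$ and $\|v\|_{L_1} \leq (1+\veps)\lambda^{-1}|b-a|_2$.

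Next, extend $v$ by zero and regard it as a $1$-current $T = (\ssfL^m \hel \Omega)\wedge v$ in $\Rm$. Testing with $\vphi \in \calD(\Rm)$, and noting that $\vphi|_\Omega - \vphi(0)$ lies in $\rmLip_0(\Omega)$, we get $\partial T = \bdelta_b - \bdelta_a$ (up to sign convention). Here admissibility is used to identify $\nabla$ of the restriction with the restriction of $\nabla\vphi$ a.e. on $\rmclos\Omega$. So $T$ is a normal current with boundary a dipole.

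The key step, which I expect to be the main obstacle, is to apply the Smirnov decomposition \cite{SMI.93} in the form of Paolini–Stepanov \cite{PAO.STE.12}. This decomposes $T$ into a cycle part plus $\int \lseg\gamma\rseg\,d\eta(\gamma)$ over arcs $\gamma$ from $a$ to $b$, with the decomposition acyclic in mass: $\|T\| = \|C\| + \int \ssfH^1\hel\rmim\gamma\,d\eta$ and $\eta(\bGamma) = 1$ because the boundary mass is $2$. Normalizing $\eta$ to a probability measure $\mu$, the associated $\nu$ satisfies $\nu \leq \|T\| = |v|_2\,\ssfL^m\hel\Omega$. Hence $\nu \ll \ssfL^m$, $\rmspt \nu \subset \rmclos\Omega$, and $\nu(\Rm) \leq (1+\veps)\lambda^{-1}|b-a|_2$. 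The care needed is in checking that the decomposition gives curves exactly from $a$ to $b$ with total weight one, and that the measurability conventions of \ref{4.2.4} are met. The conclusion is that $\Omega$ is connected by $(1+\veps)\lambda^{-1}$-pencils, proving (B). Letting $\veps \to 0$ and $\lambda \to \lambda^*$ gives $\Lambda_* \leq (\lambda^*)^{-1}$, which together with the earlier inequality proves (C).

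For (D), I would invoke the abstract selection result \cite[8.3(M,N)]{DEP.26c}, which is presumably a Bartle–Graves/Michael-type continuous, positively homogeneous right inverse. It applies to the surjection $\bdiv$ between Banach spaces whose adjoint is bounded below by $\lambda^*$. It produces $\bI$ with properties (a)–(c) and the bound $\|\bI(F)\| \leq (1+\veps)(\lambda^*)^{-1}\|F\| = (1+\veps)\Lambda_*\|F\|$ by (C). Failing that reference, I would run the Bartle–Graves construction by hand: use Michael selection on the lower semicontinuous map $F \mapsto \bdiv^{-1}(F) \cap B(0,(1+\veps)\Lambda_*\|F\|)$ on the unit sphere, and then extend it homogeneously.
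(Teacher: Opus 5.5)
Your proposal is correct and follows the paper's proof in all essentials. You use \ref{4.2.5}(B) and the closed range theorem for (B)$\Rightarrow$(A) and $\lambda^*\geq\Lambda_*^{-1}$. For (A)$\Rightarrow$(B) and $\Lambda_*\leq(\lambda^*)^{-1}$, you take a norm-controlled solution of $\bdiv v=\lseg b\rseg-\lseg a\rseg$ with $\|v\|_{L_1}\leq(1+\veps)\lambda^{-1}|b-a|_2$ and decompose it via Smirnov/Paolini--Stepanov into arcs from $a$ to $b$, with the boundary mass forcing total weight $1$. For (D) you use Michael's selection theorem. The differences are cosmetic: you extend $v$ by zero and use a classical normal current in $\Rm$, getting $\rmspt\nu\subset\rmclos\Omega$ from $\nu\leq|v|_2\,\ssfL^m\hel\Omega$, where the paper builds an Ambrosio--Kirchheim metric current on $\rmclos\Omega$; and you use \ref{2.2}(A) with a Hahn--Banach quotient-norm bound where the paper cites \cite[8.1(L),(M)]{DEP.26c}.
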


\begin{Remark}
\label{4.2.8}
Several comments are in order.
\begin{enumerate}
\item[(1)] By theorem \ref{4.1.3} we may (and from now on, we will) replace the occurrences of $\calF(\Omega)$ by $\rmLip_0(\Omega)[\calP_{\rmLip}]^*$.
\item[(2)] Hypothesis (A) is equivalent to the existence of $\lambda > 0$ such that $\lambda \cdot \|u\|_L \leq \|\nabla u\|_{L_\infty(\Omega)}$ for all $u \in \rmLip_0(\Omega)$, according to \ref{2.2}(A) and \ref{4.2.6}(C).
\item[(3)] The proof we shall present is an application of the abstract existence theorem \cite[8.1]{DEP.26c}. 
Most of the setup for this theorem (\ie its hypotheses) are independent of whether $\Omega$ is uniformly connected by pencils or not. 
In fact, only hypothesis (H) (there) is related to the geometry of $\Omega$. 
Here, we check that all other hypotheses are satisfied for any open set $\Omega \subset \Rm$ containing $0$.
Below, the letters (A),...,(I) refer to the hypotheses of \cite[8.1]{DEP.26c}.
\begin{enumerate}
\item[(A)] $\bE$ (there) is the Banach space $L_1(\Omega;\Rm)$ and all seminorms $\bq_k$ (there) a the single norm $\|\cdot\|_{L_1(\Omega)}$.
\item[(B)] $X$ (there) is $\rmLip_0(\Omega)$ and its topology $\calT$ (there) is the topology $\calS$ of pointwise convergence. It is generated by the filtering family of seminorms $\la p_E \ra_{E \in \rmFin(\Omega)}$ (recall \ref{4.1.2}) corresponding to the notation $\la \|\cdot\|_i \ra_{i \in I}$ (there). Furthermore, all $X_k$ (there) coincide with $X$.
\item[(C)] The norm $\lno \cdot \rno$ (there) is $\|\cdot\|_L$.
\item[(D)] The sets $C_k$ (there) are those defined in \ref{4.1.2} and they are, indeed, $\calS \hel C_k$ compact, \ref{4.1.2}(C).
\item[(E)] $\bD$ (there) is $\bdiv$.
\item[(F)] This is the $\calP_{\rmLip}$-continuity of $\bdiv v$, see \ref{4.2.6}(B) and \ref{4.1.2}(F).
\item[(G)] This holds with $\bc_{(G)}(k)=1$ for all $k$, by \ref{4.2.6}(A).
\item[(H)] This is the only hypothesis that depends on the geometry of $\Omega$, see the proof below.
\item[(I)] This is an empty hypothesis since $X=X_k$ for all $k$.
\end{enumerate}
\item[(4)] Contrary to theorem \ref{3.1.4}(C) we do not claim in the conclusion of theorem \ref{4.2.7} that $\ker \bdiv = L_1 \cap Z_m(\Omega)$. This is related to the failure of \ref{3.1.1}(C) when $\Omega \neq \Rm$.
\end{enumerate}
\end{Remark}

\begin{proof}[Proof of $(B) \Rightarrow (A)$, $(D)$, and $\lambda^* \geq \Lambda_*^{-1}$.]
{\bf (i)}
Let $\Lambda > 0$ and assume that $\Omega$ is connected by $\Lambda$-pencils.
It follows from \ref{4.2.5}(B) that $\|u\|_L \leq \Lambda \cdot \|\nabla u\|_{L_\infty(\Omega)}$ for all $u \in \rmLip_0(\Omega)$.
Since $\|\nabla u\|_{L_\infty(\Omega)} = \sup \{ |\la u , \bdiv u \ra| : v \in L_1(\Omega;\Rm) \text{ and } \|v\|_{L_1} \leq 1 \}$, this corresponds exactly to hypothesis (H) of \cite[8.1]{DEP.26c} with $\bc_{(H)}(k)=\Lambda$ for all $k$.
The surjectivity of $\bdiv$ is conclusion (L) of \cite[8.1]{DEP.26c}.
This completes the proof that $(B) \Rightarrow (A)$.
\par 
{\bf (ii)}
Conclusion (D) is an application of Michael's selection theorem and conclusion (M) of \cite[8.1]{DEP.26c} as in the proof of \cite[8.3(N)]{DEP.26c}.
\par 
{\bf (iii)}
Let $\eta > 0$ and $\Lambda > 0$ such that $\Lambda < \eta + \Lambda_*$ and $\Omega$ is connected by $\Lambda$-pencils.
Then $\Lambda^{-1} \cdot \|u\|_L \leq \|\nabla u \|_{L_\infty(\Omega)}$, by \ref{4.2.5}(B).
Thus, $(\eta + \Lambda_*)^{-1} < \Lambda^{-1} \leq \lambda^*$.
We conclude that $\Lambda_*^{-1} \leq \lambda^*$, by the arbitrariness of $\eta$.
\end{proof}

\begin{proof}[Proof of $(A) \Rightarrow (B)$ and $\lambda^* \leq \Lambda_*^{-1}$]
{\bf (i)}
Let $a, b \in \Omega$ be distinct.
We ought to prove that there exists a $\Lambda$-pencil of curves in $\Omega$ from $a$ to $b$, for some $\Lambda > 0$ independent of $a$ and $b$.
As we assume that $\bdiv$ is surjective, there exists $\lambda > 0$ such that $\lambda \cdot \|u\|_L \leq \|\nabla u\|_{L_\infty(\Omega)}$ for all $u \in \rmLip_0(\Omega)$, recall remark \ref{4.2.8}(2).
Let $\veps > 0$.
We choose $\lambda$ such that $\lambda^* - \veps < \lambda$ and $\lambda \cdot \|u\|_L \leq \|\nabla u\|_{L_\infty(\Omega)}$ for all $u \in \rmLip_0(\Omega)$.
In particular, hypothesis (H) of \cite[8.1]{DEP.26c} is satisfied with $\bc_{(H)}(k) = \lambda^{-1}$ for all $k$ (same observation as in {\bf (i)} of the proof that $(B) \Rightarrow (A)$).
By remark \ref{4.2.8}(3), we conclude that \cite[8.1]{DEP.26c} applies.
\par 
{\bf (ii)}
Given distinct $a,b \in \Omega$, define $F = \lseg b \rseg - \lseg a \rseg \in \calF(\Omega)$.
Recall that $\|F\|_\calF = |b-a|_2$ (\eg proof of \ref{3.1.1}(A)).
By conclusion (M) of \cite[8.1]{DEP.26c} there exists $v \in L_1(\Omega;\Rm)$ such that $F = \bdiv v$ and $\|v\|_{L_1(\Omega)} \leq (1+\veps) \cdot \lambda^{-1} \cdot \|F\|_{\calF}$
\par 
{\bf (iii)}
Here, we will associate with $v$ a $1$-dimensional metric current $T_v$ in the complete metric space $E = \rmclos \Omega$, in the sense of Ambrosio-Kirchheim \cite{AMB.KIR.00}.
We refer to \cite[B1]{PAO.STE.12} for the definition and notation.
With a pair $(f,\pi) \in \rmLip_b(E) \times \rmLip(E)$ we associate $T_v(f,\pi) = - \int_\Omega f \cdot (\nabla \pi) \ip v \, d\ssfL^m$.
We now check that $T_v$ satisfies the four axioms of $1$-dimensional metric currents. 
\par 
It is most obvious that $T_v : \rmLip_b(E) \times \rmLip(E) \to \R$ is {\it bilinear}.
\par 
The {\it continuity} axiom asks that $\lim_j T_v(f,\pi_j) = T_v(f,\pi)$ for every $f \in \rmLip_b(E)$ and every sequence $\la \pi_j \ra_j$ that converges pointwise to $\pi$ and has $\sup_j \rmLip \pi_j < \infty$.
To see that this is, indeed, the case we notice that $f \cdot v \in L_1(\Omega;\Rm)$, since $f$ is bounded, and we associate $u_\pi \in \rmLip_0(\Omega)$ with $\pi \in \rmLip(E)$ by letting $u_\pi(x) = \pi(x)-\pi(0)$, $x \in \Omega$, so that $T_v(f,\pi) = \la u_\pi , \bdiv(f \cdot v) \ra$ for all $(f,\pi) \in \rmLip_b(E) \times \rmLip(E)$, since $\nabla \pi = \nabla u_\pi$.
Observe that the way of convergence of $\la \pi_j \ra_j$ to $\pi$ in the next to last sentence implies that $\lim_j u_{\pi_j} = u_\pi$ in the sense of $\calP_{\rmLip}$ (recall \ref{4.1.2}(A)).
Since $\bdiv f \cdot v$ is $\calP_{\rmLip}$-continuous, \ref{4.2.6}(B), we conclude that $\lim_j T_v(f,\pi_j) = \lim_j \la u_{\pi_j} , \bdiv f\cdot v \ra = \la u_\pi , \bdiv f \cdot v \ra = T_v(f,\pi)$.
\par 
The axiom of {\it locality} is trivially satisfied.
It asks that $T_v(f,\pi)=0$ if $f$ is a constant in a neighborhood of the $\rmclos \{ f \neq 0 \}$.
Under this assumption, it easily follows from Rademacher's theorem that $f \cdot \nabla \pi$ vanishes $\ssfL^m$-a.e. in $\Omega$, thus, $T_v(f,\pi) = 0$.
\par Finally, the axiom of {\it finite mass} is also trivially satisfied.
Indeed, we have $|T_v(f,\pi)| \leq (\rmLip \pi) \int_\Omega |f|d(\ssfL^m \hel |v|_2)$ for all $(f,\pi)$.
Therefore, by definition of the measure $\|T_v\|$ (denoted $\mu_{T_v}$ in \cite{PAO.STE.12}) we have $\|T_v\| \leq \ssfL^m \hel |v|_2$.
\par 
{\bf (iv)}
Here, we notice that $\partial T_v = \lseg b \rseg - \lseg a \rseg$, where the 0-dimensional metric currents $\lseg a \rseg , \lseg b \rseg : \rmLip_b(E) \to \R$ are defined to be the evaluation at, respectively, $a$ and $b$.
Indeed, $(\partial T_v)(f) = T_v(1,f) = \la u_f , \bdiv v \ra = \la u_f , F \ra = \la u_f , \lseg b \rseg - \lseg a \rseg \ra = u_f(b) - u_f(a) = f(b) - f(a)$ for all $f \in \rmLip_b(E)$.
It follows from \cite[proposition 3.8]{PAO.STE.12} that $T_v$ admits an acyclic $1$-dimensional metric subcurrent $S$ in $E$.
In particular, $\|S\|(E) \leq \|T_v\|(E) \leq (\ssfL^m \hel |v|_2)(E) = \|v\|_{L_1}$ (recall \cite[definition 3.1]{PAO.STE.12}) and $\partial S = \partial T_v = \lseg b \rseg - \lseg a \rseg$.
Thus, $S$ is a $1$-dimensional normal metric current.
Moreover, $\|\partial S \| = \delta_b + \delta_a$, since $a \neq b$\footnote{If $a=b$ then $\|\partial S\|=0$; this is the place where we use that $a \neq b$.}.
\par 
{\bf (v)}
We come to the main step of this proof.
To this end, we let $\bGamma(E)$ consist of those members of $\bGamma$ whose range is contained in $E$ and given $\gamma \in \bgamma \in \bGamma(E)$ we define the $1$-dimensional normal metric current $\lseg \bgamma \rseg$ in $E$ by the formula $\lseg \bgamma \rseg(f,\pi) = \int_0^1 f(\gamma(t)) \cdot (\pi \circ \gamma)'(t) \, d\ssfL^1(t)$.
We will use the fact that if $\gamma$ is an arc then $\bM(\lseg \bgamma \rseg) = \| \lseg \bgamma \rseg \|(E) = \ssfH^1(\rmim \gamma)$.
\par
We are in a position to apply \cite[theorem 5.1]{PAO.STE.12} to $S$ and we conclude that there exists a finite Borel measure\footnote{In \cite{PAO.STE.12} this is called a {\it transport}.} $\eta$ on $\bGamma(E)$ such that $\eta$-a.e. $\bgamma$ is represented by an arc $\gamma$,
\begin{equation}
\label{eq.20.1}
S(f,\pi) = \int_{\bGamma(E)} \lseg \bgamma \rseg (f,\pi) \, d\eta(\bgamma)
\end{equation}
for all $(f,\pi) \in \rmLip_b(E) \times \rmLip(E)$ (this is expressing that $S$ equals $T_\eta$ in the notation of \cite{PAO.STE.12} and (4.1) there),
\begin{equation}
\label{eq.20.2}
\|S\|(A) = \int_{\bGamma(E)} \| \lseg \bgamma \rseg \|(A) \, d\eta(\bgamma)
\end{equation}
for all Borel-measurable set $A \subset E$ (this is (4.4) and lemma 4.17 in \cite{PAO.STE.12}), and 
\begin{equation}
\label{eq.20.3}
\|\partial S \| = \int_{\bGamma(E)} (\delta_{\gamma(0)} + \delta_{\gamma(1)}) \, d\eta(\bgamma).
\end{equation}
(this is (4.4) and the last line of p.3366 in \cite{PAO.STE.12}).
\par 
Upon noticing that $\bGamma(E)$ is closed in $\bGamma$, we note that $\eta$ may be extended to $\bGamma$ to a finite Borel measure $\mu$ such that $\rmspt \mu \subset \bGamma(E)$ and $\mu(A)=\eta(A)$ for all Borel-measurable $A \subset \bGamma(E)$.
We ought to show that $\mu$ is a pencil of curves in $\Omega$ from $a$ to $b$.
The proof of this is in {\bf (vi)} and {\bf (vii)} below.
\par 
{\bf (vi)}
Here, we will prove that $\mu$ is a probability measure and that it satisfies condition (1) of \ref{4.2.4}.
First, we observe that \eqref{eq.20.3} implies that
\begin{equation*}
\int_{\bGamma} \big( \delta_{\gamma(0)}(A) + \delta_{\gamma(1)}(A) \big)\, d\mu(\bgamma) = \|\partial S \|(A) = \delta_a(A) + \delta_b(A)
\end{equation*}
for every Borel-measurable set $A \subset E$.
In particular, letting $A = E$ we infer that $\mu(\bGamma)=\mu(\bGamma(E))=1$.
\par 
Next, for $i \in \{0,1,2\}$ we define $\bGamma_i = \bGamma \cap \big\{ \bgamma : \rmcard [ \{ \gamma(0),\gamma(1) \} \cap \{a,b\} ] = i \big\}$.
Notice that each $\bGamma_i$ is well-defined and Borel-measurable.
Moreover, if $A = \{a,b\}$ then $\delta_{\gamma(0)}(A) + \delta_{\gamma(1)}(A) = i$ for all $\bgamma \in \bGamma_i$.
Since $\{\bGamma_0,\bGamma_1,\bGamma_2\}$ is a Borel partition of $\bGamma$, we infer that
\begin{multline*}
2 \cdot \mu(\bGamma_0) + 2 \cdot \mu(\bGamma_1) + 2 \cdot \mu(\bGamma_2) = 2 =  \delta_a(A) + \delta_b(A) = \int_{\bGamma} \big( \delta_{\gamma(0)}(A) + \delta_{\gamma(1)}(A) \big)\, d\mu(\bgamma) \\
= \sum_{i=0}^2 \int_{\bGamma_i} \big( \delta_{\gamma(0)}(A) + \delta_{\gamma(1)}(A) \big)\, d\mu(\bgamma) = 0 \cdot \mu(\bGamma_0) + 1 \cdot \mu(\bGamma_1) + 2 \cdot \mu(\bGamma_2).
\end{multline*}
Thus, $2 \cdot \mu(\bGamma_0) + 1 \cdot \mu(\bGamma_1) = 0$ and we conclude that $\mu$-a.e. $\bgamma$ belongs to $\bGamma_2$.
Observe that $\bGamma_2 = \bGamma_{a,b} \cup \bGamma_{b,a}$ and that $\bGamma_{a,b} \cap \bGamma_{b,a} = \emptyset$.
Recalling that $(\partial \lseg \bgamma \rseg)(f) = f(\gamma(1)) - f(\gamma(0))$ for every $f \in \rmLip_b(E)$ and $\gamma \in \bgamma \in \bGamma(E)$, we infer from \eqref{eq.20.1} that
\begin{multline*}
f(b) - f(a) = (\partial T_v)(f) = (\partial S)(f) = S(1,f) = \int_{\bGamma} \lseg \bgamma \rseg(1,f) \,d\mu(\bgamma)  \\ = \int_{\bGamma_{a,b}} \lseg \bgamma \rseg(1,f) \,d\mu(\bgamma) + \int_{\bGamma_{b,a}} \lseg \bgamma \rseg(1,f) \,d\mu(\bgamma) \\ = (f(b)-f(a)) \cdot \mu(\bGamma_{a,b}) - (f(b)-f(a)) \cdot \mu(\bGamma_{b,a}).
\end{multline*}
Choosing $f$ such that $f(b) \neq f(a)$, we have $1 = \mu(\bGamma_{a,b}) - \mu(\bGamma_{b,a})$.
Hence, $\mu(\bGamma_{a,b})=1$.
Since $\mu$-a.e. $\bgamma$ is represented by an arc, by definition of {\it transport} in \cite{PAO.STE.12}, condition (1) of \ref{4.2.4} is satisfied.
\par 
{\bf (vii)}
We turn to showing that condition (2) of \ref{4.2.4} is satisfied as well.
Let $\nu$ be defined by $\nu(A) = \int_{\Rm} \ssfH^1(A \cap \rmim \gamma)\,d\mu(\bgamma) = \int_{\bGamma(E)} \ssfH^1(A \cap \rmim \gamma)\, d\eta(\bgamma)$, $A \in \calB(\Rm)$.
Since $\rmim \gamma \subset E$ for all $\bgamma \in \bGamma(E)$, it follows that $\nu(\Rm \setminus E)=0$.
In particular, $\rmspt \nu \subset \rmclos \Omega$, which is \ref{4.2.4}(2)(ii).
We now turn to proving \ref{4.2.4}(2)(i), \ie $\nu \ll \ssfL^m$.
It suffices to show that $\nu(A)=0$ if $\ssfL^m(A)=0$ and $A$ is a Borel-measurable subset of $E$.
In that case, we infer from \eqref{eq.20.1} that
\begin{multline*}
\nu(A) = \int_{\bGamma(E)} \ssfH^1(A \cap \rmim \gamma)\, d\eta(\bgamma) = \int_{\bGamma(E)} \|\lseg \bgamma \rseg \|(A) \,d\eta(\bgamma) = \|S\|(A) \leq \|T_v\|(A) \\ \leq (\ssfL^m \hel |v|_2)(A) = 0,
\end{multline*}
where the inequality $\|S\| \leq \|T_v\|$ is a consequence of the fact that $S$ is a subcurrent of $T_v$ and (3.1) in \cite{PAO.STE.12}.
Finally, applying the above inequalities to $A=E$ we have
\begin{equation*}
\nu(\Rm) = \nu(E) \leq (\ssfL^m \hel |v|_2)(E) = \|v\|_{L_1} \leq (1+\veps) \cdot \lambda^{-1} \cdot \|F\|_{\calF} = (1+\veps) \cdot \lambda^{-1} \cdot |b-a|_2,
\end{equation*}
by {\bf (ii)}.
Accordingly, condition \ref{4.2.4}(2)(iii) is satisfied.
This completes the proof that $\mu$ is a $(1+\veps) \cdot \lambda^{-1}$-pencil of curves in $\Omega$ from $a$ to $b$.
Since $a$ and $b$ are arbitrary, conclusion (A) ensues.
\par 
{\bf (viii)}
We have $\Lambda_* \leq (1+\veps) \cdot \lambda^{-1} \leq (1+ \veps) \cdot (\lambda^* - \veps)^{-1}$.
Since $\veps$ is arbitrary, it follows that $\Lambda_* \leq (\lambda^*)^{-1}$.
This completes the proof of conclusion (C).
\end{proof}

\section{The spaces \texorpdfstring{$\SCH_{0,\calM,p}(\Omega)$}{SCH-0-M-p} and \texorpdfstring{$\SCH_{0,\calW,p}(\Omega)$}{SCH-0-W-p}}
\label{sec.SCH}

\subsection{The localized topology \texorpdfstring{$\calM_{p,TV}$}{M-p-TV}}
\label{subsec.BVEXT}
In this subsection, unless stated otherwise, $\Omega$ is an arbitrary non-empty open subset of $\Rm$

\begin{Empty}[The localized topology $\calM_{p,TV}$]
\label{5.1.3}
As usual, $1 \leq p \leq 1^*$.
We let $BV_{p,\rmcst}(\Omega)$ denote the quotient of $BV_p(\Omega)$ by its subspace $BV_p(\Omega) \cap \{ u : u \text{ is constant a.e.}\}$.
Notice that this subspace is non-trivial if and only if $\ssfL^m(\Omega) < \infty$ in which case it is isomorphic to $\R$.
Thus, if $\ssfL^m(\Omega) = \infty$ then $BV_{p,\rmcst}(\Omega) = BV_p(\Omega)$.
\par 
Members of $BV_{p,\rmcst}(\Omega)$ will be denoted as $[u]$, where $u \in BV_p(\Omega)$.
Note that $BV_{p,\rmcst}(\Omega) \subset L_{p,\rmcst}(\Omega)$ and recall the notation $(u)_\Omega$ and $\|[u]\|_{p}$ from \ref{2.5}.
Note also that for $[u] \in BV_{p,\rmcst}(\Omega)$ the real number $\|u - (u)_\Omega \cdot \ind_\Omega\|_{L_p(\Omega)}$ is independent upon the choice of a representative of $[u]$ and this defines on $BV_{p,\rmcst}(\Omega)$ a norm equivalent to $\|[u]\|_p$, according to \ref{2.5}(A).
%
%
The corresponding locally convex topology on $BV_{\rmcst}(p,\Omega)$ is denoted $\calM_p$.
\par 
Moreover, note that $\lno [u] \rno = \|Du\|(\Omega)$ well-defines a seminorm on $BV_{p,\rmcst}(\Omega)$ and that this seminorm is a norm in case $\Omega$ is connected or satisfies the $(p,1)$-Poincar\'e inequality, according to \ref{2.6}(D).
Based on \ref{2.5}(A) and the lower semicontinuity of $u \mapsto \|Du\|(\Omega)$ with respect to distributional convergence, it is not hard to check that $\lno \cdot \rno$ is lower semicontinuous with respect to $\calM_p$.
Thus, abbreviating $C_{p,k} = BV_{p,\rmcst}(\Omega) \cap \{ [u] : \lno [u] \rno \leq k \}$, $k \in \N$, we infer that $\calC_p = \{ C_{p,k} : k \in \N \}$ is a localizing family on $BV_{p,\rmcst}(\Omega)$ consisting of an increasing sequence of $\calM_p$-closed sets.
We denote by $\calM_{p,TV}$ the localized locally convex topology on $BV_{p,\rmcst}(\Omega)$ associated with $\calM_p$ and $\calC_p$.
We gather below the properties of $\calM_{p,TV}$ that ensue from \cite{DEP.26c}.
\begin{enumerate}
\item[(A)] {\it Let $\la [u_j]\ra_j$ be a sequence in, and $[u]$ a member of, $BV_{p,\rmcst}(\Omega)$. The following are equivalent.
\begin{enumerate}
\item[(a)] $\lim_j [u_j] = [u]$ with respect to $\calM_{p,TV}$.
\item[(b)] $\lim_j \| u_j - (u_j)_\Omega \|_{L_p(\Omega)} = 0$ and $\sup_j \|Du_j\|(\Omega) < \infty$.
\end{enumerate}
}
\end{enumerate}
\par 
{\it Proof.}
This is a direct consequence of \cite[3.1(A)]{DEP.26c} and \ref{2.5}(A).\cqfd
\begin{enumerate}
\item[(B)] {\it A set $B \subset BV_{p,\rmcst}(\Omega)$ is $\calM_{p,TV}$-bounded if and only if it satisfies both conditions (a) and (b) below:
\begin{enumerate}
\item[(a)] $\sup \{ \|u - (u)_\Omega\|_{L_p(\Omega)} : [u] \in B \} < \infty$;
\item[(b)] $\sup \{ \|Du\|(\Omega) : [u] \in B \} < \infty$.
\end{enumerate}
Moreover, if $\Omega$ satisfies the $(p,1)$-Poincar\'e inequality then $B$ is $\calM_{p,TV}$-bounded if and only if it satisfies condition (b) above and in that case $\la C_{p,k} \ra_k$ is a fundamental system of $\calM_{p,TV}$-bounded sets in $BV_{p,\rmcst}(\Omega)$.}
\end{enumerate}
\par 
{\it Proof.}
In the general case, this is a direct consequence of \cite[3.1(B)]{DEP.26c}.
If $\Omega$ satisfies the $(p,1)$-Poincar\'e inequality then condition (a) holds whenever (b) does and in that case the last conclusion trivially follows.\cqfd
\begin{enumerate}
\item[(C)] {\it The following hold for every $k \in \N$.
\begin{enumerate}
\item[(a)] $C_{p,k}[\calM_p \hel C_{p,k}]$ is metrizable. 
\item[(b)] If $\Omega$ is bounded, connected, and a $BV$-extension set then $C_{1,k}[\calM_1 \hel C_{1,k}]$ is compact.
\end{enumerate}
}
\end{enumerate}
\par 
{\it Proof.}
Conclusion (a) is trivial and conclusion (b) is a consequence of \ref{2.6}(A) and \ref{bv.ext}(A)(a) in the following way.
Let $\la [u_j] \ra_j$ be a sequence in $C_{1,k}$.
Since $\Omega$ satisfies the $(1,1)$-Poincar\'e inequality, by \ref{bv.ext}(A)(b), there are $y_j \in \R$ such that, upon defining $\tilde{u}_j = u_j - y_j \cdot \ind_\Omega$, we have $\| \tilde{u}_j \|_{L_1(\Omega)} \leq \bc_{1}(\Omega) \cdot \|D\tilde{u}_j \|(\Omega)$, whence, $\|\tilde{u}_j\|_{BV(\Omega)} \leq k \cdot \left( 1 + \bc_1(\Omega) \right)$.
By compactness \ref{bv.ext}(A)(a), a subsequence still denoted $\la \tilde{u}_j \ra_j$ converges with respect to $\|\cdot\|_{L_1(\Omega)}$ to $\tilde{u} \in BV(\Omega)$. 
Accordingly, $\| [ \tilde{u}_j - \tilde{u}]\|_1 \leq \| \tilde{u}_j - \tilde{u}\|_{L_1(\Omega)} \to 0$ as $j \to \infty$.\cqfd
\begin{enumerate}
\item[(D)] {\it If $\Omega$ is bounded, connected, and a $BV$-extension set then $BV_{1,\rmcst}(\Omega)[\calM_{1,TV}]$ is sequential.}
\end{enumerate}
\par 
{\it Proof.}
In view of (C) above, this is a consequence of \cite[4.4(A)]{DEP.26c}.\cqfd
\begin{enumerate}
\item[(E)] {\it If $1 \leq p < 1^*$ then $BV_{p,\rmcst}(\Omega)[\calM_{p,TV}]$ is none of Fr\'echet-Urysohn, barrelled, and bornological.}
\end{enumerate}
\par 
{\it Proof.}
Note that \cite[3.8]{DEP.26c} applies, according to (C)(a) above.
Observe that $\lno \cdot \rno$ is not sequentially $\calM_p$-continuous (consider a decreasing null-sequence $\la r_j \ra_j$ of positive real numbers, $a \in \Omega$ such that $B(a,r_0) \subset \Omega$, and $u_j = r_j^{-(m-1)} \cdot \ind_{B(a,r_j)}$, and observe that $\|Du_j\|(\Omega)=1$ for all $j$ and $\|[u_j]\|_p \leq \|u_j\|_{L_p(\Omega)} = \balpha(m)^\frac{1}{p} \cdot r_j^{-(m-1) + \frac{m}{p}} \to 0$ as $j \to \infty$, since the exponent of $r_j$ is positive when $p < 1^*$).
It now ensues from \cite[3.8(A)]{DEP.26c} that $BV_{p,\rmcst}(\Omega)[\calM_{p,TV}]$ is neither Fr\'echet-Urysohn nor bornological and from \cite[3.8(B)]{DEP.26c} that it is not barrelled.\cqfd
\begin{enumerate}
\item[(F)] {\it Let $F : BV_{p,\rmcst}(\Omega) \to \R$ be linear. The following are equivalent.
\begin{enumerate}
\item[(a)] $F$ is $\calM_{p,TV}$-continuous.
\item[(b)] $(\forall \veps > 0)(\exists \theta > 0)(\forall [u] \in BV_{p,\rmcst}(\Omega)): | \la [u] , F \ra | \leq \theta \cdot \|u-(u)_\Omega\|_{L_p(\Omega)} + \veps \cdot \|Du\|(\Omega)$.
\end{enumerate}
}
\end{enumerate}
\par 
{\it Proof.}
It suffices to recall that $\|u - (u)_\Omega\|_{L_p(\Omega)}$ is equivalent to $\|[u]\|_p$ and to observe that \cite[3.6]{DEP.26c} applies.\cqfd
\begin{enumerate}
\item[(G)] {\it $BV_{p,\rmcst}(\Omega)[\calM_{p,TV}]^*$ equipped with its strong topology is a Banach space normed by
\begin{enumerate}
\item[(a)] $\vvvert F \vvvert = \sup \big\{ |\la [u] , F \ra| : \|u-(u)_\Omega\|_{L_p(\Omega)} + \|Du\|(\Omega) \leq 1 \big\}$
\item[(b)] If we moreover assume that $\Omega$ satisfies the $(p,1)$-Poincar\'e inequality then we may take $\vvvert F \vvvert = \sup \big\{ | \la [u] , F \ra | : \|Du\|(\Omega) \leq 1 \big\}$.
\end{enumerate}
}
\end{enumerate}
{\it Proof.}
{\bf (i)} 
Upon letting $D_{p,k} = BV_{p,\rmcst}(\Omega) \cap \{ [u] : \|u-(u)_\Omega\|_{L_p(\Omega)} + \|Du\|(\Omega) \leq k \}$, we observe that $\la D_{p,k} \ra_k$ is a fundamental system of $\calM_{p,TV}$-bounded sets, according to (B) above.
Since $\la D_{p,k} \ra_k$ is increasing and $D_{p,k} \subset k \cdot D_{p,1}$ for all $k \geq 1$, we infer that the strong topology of $BV_{p,\rmcst}(\Omega)[\calM_{p,TV}]^*$ is normable by $\vvvert F \vvvert = \sup \{ | \la [u] , F \ra | : [u] \in D_{p,1} \}$.
\par 
{\bf (ii)}
If $\la F_j \ra_j$ is $\vvvert \cdot \vvvert$-Cauchy in the strong dual of $BV_{p,\rmcst}(\Omega)[\calM_{p,TV}]$ then it readily converges pointwise to some linear functional $F : BV_{p,\rmcst}(\Omega) \to \R$.
One classically shows that $\la F_j \ra_j$ converges to $F$ uniformly on $D_{p,1}$.
We now establish the $\calM_{p,TV}$-continuity of $F$ by means of (F) above.
Let $\veps > 0$, choose $j$ such that $\vvvert F - F_j \vvvert < \frac{\veps}{2}$, and let $\theta_j$ be associated with $F_j$ and $\frac{\veps}{2}$ in (F)(b).
If $[u] \in BV_{p,\rmcst}(\Omega)$ then $| \la [u] , F \ra | \leq |\la [u] , F - F_j \ra| + | \la [u] , F_j \ra| \leq \frac{\veps}{2} \cdot \left( \|u-(u)_\Omega\|_{L_p(\Omega)} + \|Du\|(\Omega) \right) + \theta_j \cdot \|u-(u)_\Omega\|_{L_p(\Omega)} + \frac{\veps}{2} \cdot \|Du\|(\Omega)$.
This completes the proof that the strong dual of $BV_{p,\rmcst}(\Omega)[\calM_{p,TV}]$ is a Banach space.
\par 
{\bf (iii)}
Conclusion (b) follows from (B) above.\cqfd
\begin{enumerate}
\item[(H)] {\it If $\Omega$ is a bounded, connected $BV$-extension set then $BV_{1,\rmcst}(\Omega)[\calM_{1,TV}]$ is semireflexive.}
\end{enumerate}
\par 
{\it Proof.}
According to (B) and (C) above, this follows from \cite[7.4]{DEP.26c}.\cqfd
\end{Empty}

\begin{Empty}[$\SCH_{0,\calM,p}(\Omega)$]
\label{def.sch.strong}
We abbreviate $\SCH_{0,\calM,p}(\Omega) = BV_p(\Omega)[\calM_{p,TV}]^*$.
This is a Banach space whose norm $\vvvert \cdot \vvvert$ is described in \ref{5.1.3}(G) above.
\end{Empty}

\begin{Empty}[A divergence operator]
\label{5.1.4}
Let $v \in C_0(\Omega;\Rm)$.
For each $[u] \in BV_{p,\rmcst}(\Omega)$ and each $i=1,\ldots,m$ the function $v_i$, being Borel-measurable and bounded, is summable with respect to the signed measure $D_iu$.
Furthermore, the measures $D_iu$ do not depend upon the choice of a representative of $[u]$.
Accordingly, the following is a well-defined linear form:
\begin{equation*}
\bdiv v : BV_{p,\rmcst}(\Omega) \to \R: [u] \mapsto - \sum_{i=1}^m \int_\Omega v_i \, d(D_iu).
\end{equation*}
\begin{enumerate}
\item[(A)] {\it $| \la [u] , \bdiv v \ra| \leq \lno [u] \rno \cdot \|v\|_\infty$ for all $[u]$.}
\end{enumerate}
\par 
{\it Proof.}
Trivial.\cqfd
\begin{enumerate}
\item[(B)] {\it $\bdiv v$ is $\calM_{p,TV}$-continuous and $\vvvert \bdiv v \vvvert \leq \|v\|_\infty$.}
\end{enumerate}
\par 
{\it Proof.}
Let $\veps > 0$ and choose a compact $K \subset \Omega$ such that $|v(x)|_2 < \veps$ for all $x \in \Omega \setminus K$.
Select $r > 0$ such that $\rmspt w \subset \Omega$ and $\|w - v \cdot \ind_K \|_\infty < \veps$, where $w = \Phi_r *(v \cdot \ind_K)$.
Let $[u] \in BV_{p,\rmcst}(\Omega)$, abbreviate $\tilde{u} = u - (u)_\Omega$, and observe that
\begin{multline*}
\la [u] , \bdiv v \ra = - \sum_{i=1}^m \int_\Omega w_i \,d(D_i\tilde{u}) - \sum_{i=1}^m \int_\Omega (v_i-w_i) \, d(D_iu) \\ = \int_\Omega \tilde{u} \cdot \rmdiv w \, d\ssfL^m - \sum_{i=1}^m  \int_\Omega (v_i-w_i) \, d(D_iu)
\leq \|\rmdiv w \|_{L_q(\Omega)} \cdot \|u - (u)_\Omega\|_{L_p(\Omega)} + \veps \cdot \|Du\|(\Omega).
\end{multline*}
The $\calM_{p,TV}$-continuity of $\bdiv v$ follows from \ref{5.1.3}(F).
The second conclusion is a consequence of (A) and \ref{5.1.3}(G).\cqfd
\end{Empty}

\begin{Empty}[$\bLambda_q$]
\label{5.1.5}
Let $f \in L_{q,\#}(\Omega)$ and recall \ref{2.5} that $\int_\Omega u \cdot f \,d\ssfL^m$ is defined and independent of the choice of a representative of $[u] \in BV_{p,\rmcst}(\Omega)$.
This defines a linear form $\bLambda_q(f) : BV_{p,\rmcst}(\Omega) \to \R : [u] \mapsto \int_\Omega u \cdot f \, d\ssfL^m$.
\begin{enumerate}
\item[(A)] {\it $\bLambda_q(f)$ is $\calM_{p,TV}$-continuous.}
\end{enumerate}
\par 
{\it Proof.}
This immediately follows from \ref{5.1.3}(F), since 
\begin{equation*}
|\la [u] , \bLambda_q(f) \ra | = \left| \int_\Omega (u-(u)_\Omega\cdot \ind_\Omega) \cdot f \, d\ssfL^m \right| \leq \|u - (u)_\Omega \cdot \ind_\Omega \|_{L_p(\Omega)} \cdot \|f\|_{L_q(\Omega)}.\cqfd
\end{equation*}
\begin{enumerate}
\item[(B)] {\it $\vvvert \bLambda_q(f) \vvvert \leq \|f\|_{L_q(\Omega)}$.}
\end{enumerate}
\par 
{\it Proof.}
This is a direct consequence of \ref{5.1.3}(G)(a) and the inequality in the proof of (A) above.\cqfd
\end{Empty}

\begin{Theorem}
\label{5.1.6}
If the bounded linear operator
\begin{equation*}
\bdiv : C_0(\Omega;\Rm) \to BV_{p,\rmcst}(\Omega)[\calM_{p,TV}]^*
\end{equation*}
is surjective then $\Omega$ satisfies the $(p,1)$-Poincar\'e inequality.
\end{Theorem}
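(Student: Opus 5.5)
The plan is to avoid duality on the (non-normable) localized side altogether and to use the open mapping theorem directly on the Banach-space level. By \ref{5.1.4}(B), $\bdiv : C_0(\Omega;\Rm) \to BV_{p,\rmcst}(\Omega)[\calM_{p,TV}]^*$ is a bounded linear operator into the strong dual. By \ref{5.1.3}(G)(a), the strong dual is a Banach space normed by $\vvvert\cdot\vvvert$. If $\bdiv$ is surjective, the open mapping theorem gives a constant $\bc > 0$ such that every $F$ in the target can be written $F = \bdiv v$ with $\|v\|_\infty \leq \bc \cdot \vvvert F \vvvert$. This is the form of (b)$\Rightarrow$(a) in \ref{2.2}(A) that we actually need, without having to identify the bidual.

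Next, we test this on the specific functionals $F = \bLambda_q(f)$ with $f \in L_{q,\#}(\Omega)$. By \ref{5.1.5}(A) these lie in the target, and by \ref{5.1.5}(B) they satisfy $\vvvert \bLambda_q(f) \vvvert \leq \|f\|_{L_q(\Omega)}$. Pick $v$ with $\bdiv v = \bLambda_q(f)$ and $\|v\|_\infty \leq \bc \cdot \|f\|_{L_q(\Omega)}$. Then for every $u \in BV_p(\Omega)$, \ref{5.1.4}(A) gives
\begin{equation*}
\left| \int_\Omega u \cdot f \, d\ssfL^m \right| = |\la [u] , \bdiv v \ra| \leq \|v\|_\infty \cdot \|Du\|(\Omega) \leq \bc \cdot \|f\|_{L_q(\Omega)} \cdot \|Du\|(\Omega).
\end{equation*}
Taking the supremum over $f$ in the unit ball of $L_{q,\#}(\Omega)$ and invoking the isometric duality $L_{p,\rmcst}(\Omega)^* \cong L_{q,\#}(\Omega)$ of \ref{2.5}(B), the left side becomes $\|[u]\|_p$. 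Hence $\|[u]\|_p \leq \bc \cdot \|Du\|(\Omega)$ for all $u \in BV_p(\Omega)$.

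Finally, we pass to the formulation \eqref{eq.pi}. Let $u \in C^\infty(\Omega) \cap L_p(\Omega)$. If $\|\nabla u\|_{L_1(\Omega)} = \infty$ there is nothing to prove. Otherwise $u \in BV_p(\Omega)$ with $\|Du\|(\Omega) = \|\nabla u\|_{L_1(\Omega)}$, and the bound above is exactly the $(p,1)$-Poincar\'e inequality with $\bc_p(\Omega) = \bc$; this is the remark following \eqref{eq.pi}, or equivalently \ref{2.6}(B).

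The only point needing care is the endpoint $p = 1$, where $q = \infty$. There one must check that \ref{2.5}(B) still holds, i.e.\ $L_{1,\rmcst}(\Omega)^* \cong L_{\infty,\#}(\Omega)$, which follows from the Riesz representation for $L_1$ on the $\sigma$-finite space $\Omega$. One should also note that the norming supremum over $f$ is attained in the sense needed, which is simply Hahn--Banach in $L_{1,\rmcst}(\Omega)$. Beyond this the argument is routine, so I expect no genuine obstacle.
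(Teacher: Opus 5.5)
Your proof is correct. It is essentially the paper's argument written on the primal side rather than the dual side. The paper invokes \ref{2.2}(A), which itself rests on the open mapping and closed range theorems, and applies it to $y^* = \rmev([u])$. It then computes $\bdiv^*(\rmev([u])) = -Du$ and bounds $\|\rmev([u])\|$ from below by testing against the same functionals $\bLambda_q(f)$, together with \ref{2.5}(B). You instead use the open mapping theorem directly to obtain preimages of $\bLambda_q(f)$ with $\|v\|_\infty \leq \bc \cdot \|f\|_{L_q(\Omega)}$. This spares the adjoint computation and the closed range theorem, while using exactly the same ingredients (\ref{5.1.4}(A), \ref{5.1.5}, \ref{2.5}(B)).
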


\begin{proof}
{\bf (i)}
Recall that the range $Y = BV_{p,\rmcst}(\Omega)[\calM_{p,TV}]^*[\vvvert \cdot \vvvert]$ is Banach \ref{5.1.3}(G).
Therefore, by \ref{2.2}(A), there exists $\bc > 0$ such that $\bc \cdot \|y^*\|_Y^* \leq \|\bdiv^*(y^*)\|_{M^m}$ for all $y^* \in Y^*$.
Notice that there is no restriction to assume that $0 \leq \bc < 1$.
\par 
{\bf (ii)}
We consider the evaluation map $\rmev : BV_{p,\rmcst}(\Omega) \to Y^*$ and we shall apply {\bf (i)} to $y^* = \rmev([u])$ for all $[u] \in BV_{p,\rmcst}(\Omega)$.
To this end, we recall \ref{5.1.5} and we note that 
\begin{equation*}
\begin{split}
\| \rmev([u]) \|_Y^* & = \sup \left\{ | \la [u] , F \ra| : F \in BV_{p,\rmcst}(\Omega)[\calM_{p,TV}]^* \text{ and }\vvvert F \vvvert \leq 1 \right\} \\
& \geq \sup \left\{ | \la [u] , \bLambda_q(f) \ra| : f \in L_{q,\#}(\Omega) \text{ and } \|f\|_{L_q(\Omega)} \leq 1 \right\} \\
& = \|[u]\|_p,
\end{split}
\end{equation*}
where the last inequality follows from \ref{2.5}(B).
\par 
Moreover, we claim that $\bdiv^*(\rmev([u])) = - (D_1u,\ldots,D_mu)$.
Indeed, if $\bdiv^*(\rmev([u])) = (\mu_1,\ldots,\mu_m)$ then for every $v \in C_0(\Omega;\Rm)$ we have
\begin{equation*}
\sum_{i=1}^m \int_\Omega v_i \, d\mu_i = \la v , \bdiv^*(\rmev([u])) \ra = \la \bdiv v , \rmev([u]) \ra = \la [u] , \bdiv v \ra = - \sum_{i=1}^m \int_\Omega v_i \, d(D_iu)
\end{equation*}
and the claim readily ensues.
\par 
{\bf (iii)} 
For each $u \in BV_p(\Omega)$ we infer from {\bf (i)} and {\bf (ii)} that
\begin{equation*}
\bc \cdot  \|[u]\|_p \leq \bc \cdot \| \rmev([u]) \|_Y^* \leq \|\bdiv^*(\rmev([u]))\|_{M^m} = \|(D_1u,\ldots,D_mu)\|_{M^m} = \|Du\|(\Omega).
\end{equation*}
Since $u$ is arbitrary, $\Omega$ satisfies the $(p,1)$-Poincar\'e inequality with $\bc_p(\Omega) = \bc^{-1}$.
\end{proof}

\begin{Theorem}
\label{5.1.7}
Assume that $\Omega$ is a bounded, connected $BV$-extension set.
Then the bounded linear operator 
\begin{equation*}
\bdiv : C_0(\Omega;\Rm) \to \SCH_{0,\calM,1}(\Omega)
\end{equation*}
is surjective.
Furthermore, for every $\veps > 0$ there exists a (non-linear) map
\begin{equation*}
\bI : \SCH_{0,\calM,1}(\Omega) \to C_0(\Omega;\Rm)
\end{equation*}
such that
\begin{enumerate}
\item[(a)] $(\bdiv \circ\, \bI)(F) = F$ for all $F \in \SCH_{0,\calM,1}(\Omega)$.
\item[(b)] $\bI$ is positively homogeneous of degree 1.
\item[(c)] $\bI$ is continuous.
\item[(d)] For all $F \in \SCH_{0,\calM,1}(\Omega)$ we have $\|\bI(F)\|_\infty \leq (1+\veps) \cdot \vvvert F \vvvert$.
\end{enumerate}
\end{Theorem}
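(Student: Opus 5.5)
The proof will apply the abstract existence theorem \cite[8.1]{DEP.26c}, following the same route as the proof of $(B) \Rightarrow (A)$ and (D) in theorem \ref{4.2.7}. We take $p=1$ throughout. The first task is to check the hypotheses (A)--(I) of \cite[8.1]{DEP.26c} in the present setting, parallel to remark \ref{4.2.8}(3).
\begin{enumerate}
\item[(A)] $\bE = C_0(\Omega;\Rm)$ with the single norm $\|\cdot\|_\infty$.
\item[(B)] $X = BV_{1,\rmcst}(\Omega)$, and $\calT = \calM_1$ is the normed topology of $\|u-(u)_\Omega\|_{L_1(\Omega)}$. A single norm is trivially a filtering family, and all $X_k = X$.
\item[(C)] $\lno [u] \rno = \|Du\|(\Omega)$. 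This is a norm because $\Omega$ is connected, or alternatively because it satisfies the $(1,1)$-Poincar\'e inequality by \ref{bv.ext}(A)(b) together with \ref{2.6}(D).
\item[(D)] The sets $C_{1,k}$ are $\calM_1 \hel C_{1,k}$-compact by \ref{5.1.3}(C)(b).
\item[(E)] $\bD = \bdiv$ as in \ref{5.1.4}.
\item[(F)] This is the $\calM_{1,TV}$-continuity of $\bdiv v$, which is \ref{5.1.4}(B).
\item[(G)] It holds with constant $1$, by \ref{5.1.4}(A).
\item[(I)] This hypothesis is vacuous.
\end{enumerate}
Semireflexivity, \ref{5.1.3}(H), identifies $\SCH_{0,\calM,1}(\Omega)^{*}$ with $BV_{1,\rmcst}(\Omega)$. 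Under this identification $\bdiv^* = -\bnabla$, computed exactly as in \ref{4.2.6}(C) or step (ii) of the proof of \ref{5.1.6}.

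The geometric hypothesis (H) is the bound $\lno [u] \rno \leq \bc_{(H)} \cdot \sup\{ |\la [u], \bdiv v\ra| : \|v\|_\infty \leq 1\}$. Here the supremum equals $\|Du\|(\Omega)$ by the very definition of the total variation. Indeed, by \eqref{eq.5.1} and \ref{2.10} it suffices to test against $v \in C^\infty_c(\Omega;\Rm) \subset C_0(\Omega;\Rm)$. Hence (H) holds with $\bc_{(H)}(k) = 1$ for all $k$, and the gradient is an isometry, just as in item (2) of the introduction. Conclusion (L) of \cite[8.1]{DEP.26c} then gives surjectivity of $\bdiv$ onto $BV_{1,\rmcst}(\Omega)[\calM_{1,TV}]^* = \SCH_{0,\calM,1}(\Omega)$. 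We must take care that the strong-dual norm used in \cite[8.1]{DEP.26c} is the one in \ref{5.1.3}(G)(b), which is legitimate because of the $(1,1)$-Poincar\'e inequality.

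Conclusion (M) of \cite[8.1]{DEP.26c} gives, for each $F$ and each $\veps>0$, a solution $v$ of $\bdiv v = F$ with $\|v\|_\infty \leq (1+\veps)\cdot \vvvert F\vvvert$, since $\bc_{(H)}=1$. To obtain $\bI$ we follow \cite[8.3(N)]{DEP.26c}. We consider the set-valued map $F \mapsto \{ v : \bdiv v = F,\ \|v\|_\infty < (1+\veps)\vvvert F \vvvert\}$ on the unit sphere. Its values are convex and nonempty, and it is lower semicontinuous by the open mapping theorem applied to the surjection $\bdiv$. Since it takes values in the Banach space $C_0(\Omega;\Rm)$, Michael's selection theorem provides a continuous selection on the sphere. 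We first pass to closures of the values, or use the open-value variant of Michael's theorem. Finally we extend the selection positively homogeneously, setting $\bI(0)=0$; continuity at $0$ follows from (d).

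The main obstacle I expect is the lower semicontinuity and nonemptiness bookkeeping in the selection step, namely keeping the strict factor $(1+\veps)$ uniformly while remaining within reach of Michael's theorem. I would handle this exactly as in \cite[8.3(N)]{DEP.26c}, shrinking $\veps$ at the start. Everything else reduces to the verifications listed above.
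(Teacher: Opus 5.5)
Your proposal is correct and is essentially the paper's proof. The paper also verifies hypotheses (A)--(I) of \cite[8.1]{DEP.26c} with the same choices: (D) comes from \ref{5.1.3}(C)(b), (G) from \ref{5.1.4}(A), and (H) holds with constant $1$ by the definition of total variation. It then obtains surjectivity from conclusion (L), and obtains $\bI$ from conclusion (M) together with Michael's selection theorem, as in \cite[8.3(N)]{DEP.26c}. Your additional remarks go beyond what the paper writes out but are consistent with it: semireflexivity and $\bdiv^* = -\bnabla$, the observation that the relevant norm is the one in \ref{5.1.3}(G)(b) thanks to the $(1,1)$-Poincar\'e inequality, and your sketch of lower semicontinuity via the open mapping theorem in the selection step.
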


\begin{proof}
This is an application of \cite[8.1]{DEP.26c}.
We refer to the notation there in order to check that all hypotheses are satisfied.
\begin{enumerate}
\item[(A)] Hypothesis (A) of \cite[8.1]{DEP.26c} is with the Banach space $\bE = C_0(\Omega;\Rm)$ equipped with its norm $\bq_1 = \|\cdot\|_\infty$.
\item[(B)] Hypothesis (B) is with $X[\calT] = BV_{1,\rmcst}(\Omega)[\calM_1]$ whose topology $\calM_1$ is generated by the norm $\|[\cdot]\|_1$.
Moreover $X_k = X$ for all $k$.
\item[(C)] Hypothesis (C) is with $\lno \cdot \rno$ defined in \ref{5.1.3}, \ie $\lno [u] \rno = \|Du\|(\Omega)$.
\item[(D)] Hypothesis (D) is satisfied, by \ref{5.1.3}(C)(b), thanks to our assumptions regarding $\Omega$.
\item[(E)] Hypothesis (E) is with $\bD = \bdiv$.
\item[(F)] Hypothesis (F) is satisfied, by \ref{5.1.4}(B).
\item[(G)] Hypothesis (G) is satisfied with $\bc_{(G)}(k)=1$ for all $k$, by \ref{5.1.4}(A).
\item[(H)] Hypothesis (H) is satisfied with $\bc_{(H)}(k)=1$ for all $k$, by definition of total variation:
\begin{equation*}
\begin{split}
\lno [u] \rno & = \|Du\|(\Omega) \\
& = \sup \left\{ \left| \int_\Omega u \cdot \rmdiv v \, d\ssfL^m \right| : v \in C_c^1(\Omega;\Rm) \text{ and } \|v\|_\infty \leq 1 \right\} \\
& \leq \sup \left\{ | \la [u] , \bdiv v \ra | : v \in C_0(\Omega,\Rm) \text{ and } \|v\|_\infty \leq 1 \right\}.
\end{split}
\end{equation*}
\item[(I)] Hypothesis (I) is void, since $X_k = X$ for all $k$.
\end{enumerate}
\par 
The surjectivity of $\bdiv$ is \cite[8.1(L)]{DEP.26c} and the remaining conclusions are an application of Michael's selection theorem and of \cite[8.1(M)]{DEP.26c} as in the proof of \cite[8.3(N)]{DEP.26c}.
\end{proof}

\subsection{The localized topology \texorpdfstring{$\calW_{p,TV}$}{W-p-TV}} 
\label{subsec.PCI}

Throughout this subsection we make the following two assumptions:
\begin{enumerate}
\item[(1)] $1 < p \leq 1^*$.
\item[(2)] $\Omega$ satisfies the $(p,1)$-Poincar\'e inequality.
\end{enumerate}
Notice carefully that $p \neq 1$.

\begin{Empty}[The localized topology $\calW_{p,TV}$]
\label{5.2.1}
Recall the definition of $BV_{p,\rmcst}(\Omega)$ in \ref{5.1.3}.
We let $\calW_p$ be the restriction to $BV_{p,\rmcst}(\Omega)$ of the weak topology of $L_{p,\rmcst}(\Omega)$, \ie $\sigma(L_{p,\rmcst},L_{q,\#})$.
It follows from \ref{2.5}(B) that $\calW_p$ is generated by the family of seminorms $\la p_f \ra_{f \in \wh{\Omega}_q}$, where $\wh{\Omega}_q = B_{L_{q,\#}(\Omega)} = L_{q,\#}(\Omega) \cap \{ f : \|f\|_{L_q(\Omega)} = 1 \}$ and $p_f : BV_{p,\rmcst}(\Omega) \to \R : u \mapsto \left| \int_\Omega u \cdot f \,d\ssfL^m \right|$.
Notice that $\la p_f \ra_{f \in \hat{\Omega}_q}$ is not filtering and that this can be overcome as in \ref{4.1.2}: For $E \subset \rmFin(\hat{\Omega}_q)$ letting $p_E(u) = \max \{ p_f(u) : f \in E \}$, the family of seminorms $\la p_E \ra_{E \in \rmFin(\wh{\Omega}_q)}$ is filtering and generates the topology $\calW_p$.
As before, members of $BV_{p,\rmcst}(\Omega)$ are denoted $[u]$, corresponding to $u \in BV_p(\Omega)$.
\par 
Exactly as in \ref{5.1.3} we let $\lno \cdot \rno$ be the seminorm defined on $BV_{p,\rmcst}(\Omega)$ by the formula $\lno [u] \rno = \|Du\|(\Omega)$.
It is a norm, by \ref{2.6}(D).
It is also lower semicontinuous with respect to $\calW_p$. 
Indeed, this is equivalent to the set $BV_{p,\rmcst}(\Omega) \cap \{ [u] : \lno [u] \rno > 1 \}$ being $\calW_p$-open.
If $[u]$ belongs to this set then there are $\veps > 0$ such that $\|Du\|(\Omega) = \lno [u] \rno > 1 + \veps$ and $v \in C^1_c(\Omega;\Rm)$ such that $\|v\|_\infty \leq 1$ and $\int_\Omega u \cdot \rmdiv v \, d\ssfL^m > 1 + \veps$.
Observing that $f = \rmdiv v - (\rmdiv v)_\Omega \in L_{q,\#}(\Omega) \setminus \{0\}$ and letting $\tilde{f} = \|f\|_{L_q(\Omega)}^{-1} \cdot f \in \wh{\Omega}_q$ we infer that $\|f\|_{L_q(\Omega)} \cdot p_{\tilde{f}}([u]) = \left| \int_\Omega u \cdot \rmdiv v \, d\ssfL^m \right| > 1 + \veps$.
Accordingly, if $p_{\tilde{f}}([u] - [u']) < \frac{\veps}{2} \cdot \|f\|_{L_q(\Omega)}^{-1}$ then $\lno [u'] \rno = \|Du'\|(\Omega) \geq \left| \int_\Omega u' \cdot \rmdiv v \,d\ssfL^m \right| = \|f\|_{L_q(\Omega)} \cdot p_{\tilde{f}}([u']) > - \frac{\veps}{2} + \|f\|_{L_q(\Omega)} \cdot p_{\tilde{f}}([u]) > 1 + \frac{\veps}{2}$.
\par
As in \ref{5.1.3}, $C_{p,k} = BV_{p,\rmcst}(\Omega) \cap \{ [u] : \lno [u] \rno \leq k \}$, $k \in \N$.
Thus, $\calC_p = \{ C_{p,k} : k \in \N\}$ is a localizing family on $BV_{p,\rmcst}(\Omega)$ consisting of an increasing sequence of $\calW_p$-closed sets.
We denote by $\calW_{p,TV}$ the localized locally convex topology on $BV_{p,\rmcst}(\Omega)$ associated with $\calW_p$ and $\calC_p$.
We gather below the properties of $\calW_{p,TV}$ that ensue from \cite{DEP.26c}.
\begin{enumerate}
\item[(A)] {\it Let $\la [u_j]\ra_j$ be a sequence in, and $[u]$ a member of, $BV_{p,\rmcst}(\Omega)$. The following are equivalent.
\begin{enumerate}
\item[(a)] $\lim_j [u_j] = [u]$ with respect to $\calW_{p,TV}$.
\item[(b)] $\lim_j \left| \int_\Omega (u_j - (u_j)_\Omega) \cdot f \, d\ssfL^m \right|=0$ for every $f \in \wh{\Omega}_q$ and $\sup_j \|Du_j\|(\Omega) < \infty$.
\end{enumerate}
}
\end{enumerate}
\par 
{\it Proof.}
This is a direct consequence of \cite[3.1(A)]{DEP.26c}.\cqfd
\par 
Notice that the first condition in (b) can equivalently rephrased for every $f \in L_{q,\#}(\Omega)$ and that the term $(u_j)_\Omega$ in the integral can be replaced by $y_j \cdot \ind_\Omega$ for any $y_j \in \R$.
\begin{enumerate}
\item[(B)] {\it A set $B \subset BV_{p,\rmcst}(\Omega)$ is $\calW_{p,TV}$-bounded if and only if $\sup \{ \|Du\|(\Omega) : [u] \in B \} < \infty$.}
\end{enumerate}
\par 
{\it Proof.}
In view of \cite[3.1(B)]{DEP.26c}, it suffices to show that if $\sup \{ \|Du\|(\Omega) : [u] \in B \} < \infty$ then $B$ is $\calW_p$-bounded.
Call $\Gamma$ the supremum above.
Given $[u] \in B$, there exists $y \in \R$ such that $\|u - y \cdot \ind_\Omega \|_{L_p(\Omega)} \leq \bc(\Omega) \cdot \Gamma$, since $\Omega$ satisfies the $(p,1)$-Poincar\'e inequality.
If $f \in \wh{\Omega}_q$ then, by H\"older's inequality, $p_f([u]) = \left| \int_\Omega (u - y \cdot \ind_\Omega) \cdot f \, d\ssfL^m \right| \leq \|u - y \cdot \ind_\Omega \|_{L_p(\Omega)} \cdot \|f\|_{L_q(\Omega)} \leq \bc_p(\Omega) \cdot \Gamma$.
Accordingly, $\sup \{ p_f([u]) : [u] \in B \} < \infty$ for all $f \in \wh{\Omega}$ and the proof is complete.\cqfd
\begin{enumerate}
\item[(C)] {\it The topological spaces $C_{p,k}[\calW_p \hel C_{p,k}]$ are metrizable and compact, $k \in \N$.}
\end{enumerate}
\par 
{\it Proof.}
Recall that $\calW_p$ is the restriction to $BV_{p,\rmcst}(\Omega)$ of the weak topology $\sigma(L_{p,\rmcst},L_{q,\#})$.
Since $L_{p,\rmcst}(\Omega)$ is reflexive, by \ref{2.5}(C), this is also the weak* topology of $L_{p,\rmcst}(\Omega)$ inasmuch as $L_{p,\rmcst}(\Omega) \cong L_{q,\#}(\Omega)^*$, recall \ref{2.5}(B).
Arguing as in the proof of (B) above, one easily checks that $C_{p,k}$ is (strongly) bounded in $L_{p,\rmcst}(\Omega)$, in fact, $\sup \{ \|[u]\|_p : [u] \in C_k \} \leq k \cdot \bc_p(\Omega)$.
Therefore, $C_{p,k}[\sigma(L_{p,\rmcst},L_{q,\#}) \hel C_{p,k}]$ is relatively compact, by the Banach-Alaoglu theorem \cite[4.3(c)]{RUDIN}, and metrizable \cite[3.16]{RUDIN}, since $L_{q,\#}(\Omega)$ is separable.
It remains to observe that $C_{p,k}$ is $\sigma(L_{p,\rmcst},L_{q,\#})$-closed in $L_{p,\rmcst}(\Omega)$, according to the lower semicontinuity of $\lno \cdot \rno$ with respect to $\sigma(L_{p,\rmcst},L_{q,\#})$ (recall the paragraph before (A) above).\cqfd
\begin{enumerate}
\item[(D)] {\it $BV_{p,\rmcst}(\Omega)[\calW_{p,TV}]$ is sequential.}
\end{enumerate}
\par 
{\it Proof.}
In view of (C) above, this is a consequence of \cite[4.4(A)]{DEP.26c}.\cqfd
\begin{enumerate}
\item[(E)] {\it If $1 \leq p < 1^*$ then $BV_{p,\rmcst}(\Omega)[\calW_{p,TV}]$ is none of Fr\'echet-Urysohn, barrelled, and bornological.}
\end{enumerate}
\par 
{\it Proof.}
Note that \cite[3.8]{DEP.26c} applies, according to (C) above.
Observe that $\lno \cdot \rno$ is not sequentially $\calW_p$-continuous, by the same example given in \ref{5.1.3}(E).
It now ensues from \cite[3.8(A)]{DEP.26c} that $BV_{p,\rmcst}(\Omega)[\calW_{p,TV}]$ is neither Fr\'echet-Urysohn nor bornological and from \cite[3.8(B)]{DEP.26c} that it is not barrelled.\cqfd
\begin{enumerate}
\item[(F)] {\it Let $F : BV_{p,\rmcst}(\Omega) \to \R$ be linear. The following are equivalent.
\begin{enumerate}
\item[(a)] $F$ is $\calW_{p,TV}$-continuous.
\item[(b)] $(\forall \veps > 0)(\exists E \in \rmFin(\wh{\Omega}_q))(\exists \theta > 0)(\forall [u] \in BV_{p,\rmcst}(\Omega)):$
\begin{equation*}
 | \la [u] , F \ra | \leq \theta \cdot \max \left\{ \left| \int_\Omega u \cdot f \, d\ssfL^m \right| : f \in E \right\} + \veps \cdot \|Du\|(\Omega).
\end{equation*}
\end{enumerate}
}
\end{enumerate}
\par 
{\it Proof.}
It suffices to observe that \cite[3.6]{DEP.26c} applies and that $\la p_E \ra_{E \in \rmFin(\wh{\Omega}_q)}$ is filtering.\cqfd
\begin{enumerate}
\item[(G)] {\it $BV_{p,\rmcst}(\Omega)[\calW_{p,TV}]^*$ equipped with its strong topology is a Banach  space normed by $\vvvert F \vvvert = \sup \big\{ | \la [u] , F \ra | : \|Du\|(\Omega) \leq 1 \big\}$.}
\end{enumerate}
{\it Proof.}
The strong topology on $BV_{p,\rmcst}(\Omega)[\calW_{p,TV}]^*$ being that of uniform convergence of $\calW_{p,TV}$-bounded sets it follows from (B) above and the inclusions $C_k \subset k \cdot C_1$ that the strong topology is normed as stated. 
Completeness follows as in \cite[5.4]{DEP.26c}.\cqfd
\begin{enumerate}
\item[(H)] {\it $BV_{p,\rmcst}(\Omega)[\calW_{p,TV}]$ is semireflexive.}
\end{enumerate}
\par 
{\it Proof.}
According to (B) and (C) above, this is a consequence of \cite[7.4]{DEP.26c}.\cqfd
\begin{enumerate}
\item[(I)] {\it If $\Omega$ is connected, bounded, and a $BV$-extension set then $BV_{1,\rmcst}(\Omega) = BV_{p,\rmcst}(\Omega)$ and the topologies $\calM_{1,TV}$ and $\calW_{p,TV}$ coincide.}
\end{enumerate}
\par 
{\it Proof.}
{\bf (i)} 
It follows from \ref{bv.ext}(A)(c) that $BV(\Omega) = BV_p(\Omega)$, hence, $BV_{1,\rmcst}(\Omega) = BV_{p,\rmcst}(\Omega)$ which we shall simply denote as $BV_\rmcst(\Omega)$ for the remainder of this proof.
We shall also write $C_k$ instead of $C_{1,k}=C_{p,k}$.
Note also that the topologies $\calW_p$ and $\calW_{p,TV}$ are defined.
It remains to prove that $\calW_{p,TV} = \calM_{1,TV}$.
\par 
{\bf (ii)}
It suffices to establish the the identity map $\rmid : BV_{\rmcst}(\Omega)[\calM_{1,TV}] \to BV_{\rmcst}(\Omega)[\calW_{p,TV}]$ is a homeomorphism.
To this end, it is enough to show that $\rmid$ and $\rmid^{-1}$ are sequentially continuous (recall \cite[A.1(E)]{DEP.26c}, since both $\calM_{1,TV}$ and $\calW_{p,TV}$ are sequential (by \ref{5.1.3}(D) and (D) above).
In other words we ought to show that $(\calM_{1,TV}) \lim_j [u_j] = 0$ if and only if $(\calW_{p,TV}) \lim_j [u_j] = 0$ for every sequence $\la [u_j] \ra_j$ in $BV_{\rmcst}(\Omega)$.
\par 
{\bf (iii)}
Assume that $(\calM_{1,TV}) \lim_j [u_j] = 0$.
By \ref{5.1.3}(A), $\la [u_j] \ra_j$ is contained in $C_k$ for some $k \in \N$.
Since $C_k[\calW_p \hel C_k]$ is metrizable (recall (C) above), it suffices to show that each subsequence of $\la [u_j] \ra_j$ contains a subsequence converging to 0 with respect to $\calW_{p}$.
We do not change the notation when passing to a subsequence.
There are $y_j \in \R$ such that $\| u_j - y_j \cdot \ind_\Omega \|_{L_{p}(\Omega)} \leq k \cdot \bc_p(\Omega)$.
By Banach-Alaoglu's theorem applied in $L_{p}(\Omega) \cong L_q(\Omega)^*$, there exists $u \in L_{p}(\Omega)$ and a subsequence such that $u_j - y_j \cdot \ind_\Omega -u \to 0$ as $j \to \infty$ with respect to $\sigma(L_{p},L_q)$.
As $\ssfL^m(\Omega) < \infty$, we have $L_\infty(\Omega) \subset L_q(\Omega)$, thus, $u_j - y_j \cdot \ind_\Omega -u \to 0$ as $j \to \infty$ with respect to $\sigma(L_1,L_\infty)$ as well.
Moreover, $\lim_j \|[u_j]\|_1 = 0$, by assumption, \ie there exist $y'_j \in \R$ such that $\lim_j \|u_j - y'_j \cdot \ind_\Omega\|_{L_1(\Omega)}=0$.
Therefore, $\lim_j (u_j - y'_j \cdot \ind_\Omega) = 0$ with respect to $\sigma(L_1,L_\infty)$ as well and, in turn, $\lim_j (y_j-y'_j) \cdot \ind_\Omega = u$ with respect to $\sigma(L_1,L_\infty)$.
This readily implies that the sequence $\la y_j-y'_j \ra_j$ is convergent in $\R$, whence, $[u] = 0$.
\par 
{\bf (iv)}
Finally, it remains to show that $\rmid : C_k[\calM_{1,TV} \hel C_k] \to C_k[\calW_{p,TV} \hel C_k]$ is a homeomorphism.
By {\bf (iii)} and metrizability, this map is continuous. 
Since both domain and range are compact (by \ref{5.1.3}(C)(b) and (C) above), the conclusion follows.\cqfd
\end{Empty}

\begin{Empty}[$\SCH_{0,\calW,p}(\Omega)$]
\label{def.sch.weak}
We abbreviate $\SCH_{0,\calW,p}(\Omega) = BV_p(\Omega)[\calW_{p,TV}]^*$.
This is a Banach space whose norm $\vvvert \cdot \vvvert$ is described in \ref{5.2.1}(G) above.
\end{Empty}

\begin{Empty}[A divergence operator]
\label{5.2.2}
The operator $\bdiv : BV_{p,\rmcst}(\Omega) \to \R$ is defined as in \ref{5.1.4} and we notice that \ref{5.1.4}(A) is unchanged.
\begin{enumerate}
\item[(B)] {\it $\bdiv v$ is $\calW_{p,TV}$-continuous and $\vvvert \bdiv v \vvvert \leq \|v\|_\infty$.}
\end{enumerate}
\par 
{\it Proof.}
We modify slightly the proof of \ref{5.1.4}(B) of which we retain all notation.
Without loss of generality we assume that $\rmdiv w \neq 0$.
We associate with the vector field $w$ the function $f = \rmdiv w - (\rmdiv w)_\Omega$.
Notice that, as $w$ has compact support, $f \in L_{q,\#}(\Omega) \setminus \{0\}$ and define $\tilde{f} = \|f\|_{L_q(\Omega)}^{-1} \cdot f \in \wh{\Omega}_q$.
Furthermore, $\int_\Omega \tilde{u} \cdot \rmdiv w \, d\ssfL^m = \int_\Omega \tilde{u} \cdot f \, d\ssfL^m$, since $\int_\Omega \tilde{u} \,d\ssfL^m = 0$.
Accordingly,
\begin{multline*}
\la [u] , \bdiv v \ra = - \sum_{i=1}^m \int_\Omega w_i \,d(D_i\tilde{u}) - \sum_{i=1}^m \int_\Omega (v_i-w_i) \, d(D_iu) \\ = \int_\Omega \tilde{u} \cdot f \, d\ssfL^m - \sum_{i=1}^m  \int_\Omega (v_i-w_i) \, d(D_iu)
\leq \|f\|_{L_q(\Omega)} \cdot p_{\tilde{f}}([u]) + \veps \cdot \|Du\|(\Omega).
\end{multline*}
The $\calW_{p,TV}$-continuity of $\bdiv v$ follows from \ref{5.2.1}(F).\cqfd
\end{Empty}

\begin{Theorem}
\label{5.2.3}
Assume that $\Omega$ satisfies the $(p,1)$-Poincar\'e inequality for some $1 < p \leq 1^*$.
Then the bounded linear operator 
\begin{equation*}
\bdiv : C_0(\Omega;\Rm) \to \SCH_{0,\calW,p}(\Omega)
\end{equation*}
is surjective.
Furthermore, for every $\veps > 0$ there exists a (non-linear) map
\begin{equation*}
\bI : \SCH_{0,\calW,p}(\Omega) \to C_0(\Omega;\Rm)
\end{equation*}
such that
\begin{enumerate}
\item[(a)] $(\bdiv \circ\, \bI)(F) = F$ for all $F \in \SCH_{0,\calW,p}(\Omega)$.
\item[(b)] $\bI$ is positively homogeneous of degree 1.
\item[(c)] $\bI$ is continuous.
\item[(d)] For all $F \in \SCH_{0,\calW,p}(\Omega)$ we have $\|\bI(F)\|_\infty \leq (1+\veps) \cdot \vvvert F \vvvert$.
\end{enumerate}
\end{Theorem}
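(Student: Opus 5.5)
The proof will follow the same template as theorem \ref{5.1.7}: apply the abstract existence theorem \cite[8.1]{DEP.26c} and check its hypotheses (A)--(I) one at a time, now with the weak topology $\calW_p$ in place of the norm topology $\calM_1$. Each hypothesis can be read off from results stated earlier in this subsection.
\begin{enumerate}
\item[(A)] $\bE = C_0(\Omega;\Rm)$, with the single seminorm $\bq_1 = \|\cdot\|_\infty$.
\item[(B)] $X[\calT] = BV_{p,\rmcst}(\Omega)[\calW_p]$. The topology $\calW_p$ is generated by the filtering family $\la p_E \ra_{E \in \rmFin(\wh{\Omega}_q)}$ introduced in \ref{5.2.1}, and we take $X_k = X$ for all $k$.
\item[(C)] $\lno [u] \rno = \|Du\|(\Omega)$. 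This is a norm by \ref{2.6}(D), and it is $\calW_p$-lower semicontinuous, as shown in \ref{5.2.1}.
\item[(D)] The $\calW_p$-compactness (and metrizability) of the sets $C_{p,k}$ is exactly \ref{5.2.1}(C).
\item[(E)] $\bD = \bdiv$.
\item[(F)] The $\calW_{p,TV}$-continuity of each $\bdiv v$ is \ref{5.2.2}(B).
\item[(G)] This holds with $\bc_{(G)}(k) = 1$, by \ref{5.1.4}(A).
\item[(H)] This holds with $\bc_{(H)}(k) = 1$, by the definition of total variation, exactly as displayed in the proof of \ref{5.1.7}(H). This step does not use the geometry of $\Omega$.
\item[(I)] This hypothesis is void, since $X_k = X$ for all $k$.
\end{enumerate}

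Once the hypotheses are verified, \cite[8.1(L)]{DEP.26c} gives surjectivity onto $BV_{p,\rmcst}(\Omega)[\calW_{p,TV}]^* = \SCH_{0,\calW,p}(\Omega)$, with the norm $\vvvert\cdot\vvvert$ of \ref{5.2.1}(G). Next, \cite[8.1(M)]{DEP.26c} gives, for every $F$ and $\veps>0$, some $v$ with $\bdiv v = F$ and $\|v\|_\infty \leq (1+\veps)\vvvert F\vvvert$, because $\bc_{(H)}=1$. Conclusions (a)--(d) then come from Michael's selection theorem, applied to the lower semicontinuous convex-valued multifunction $F \mapsto \{v : \bdiv v = F,\ \|v\|_\infty < (1+\veps)\vvvert F\vvvert\}$ as in \cite[8.3(N)]{DEP.26c}. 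Positive homogeneity is obtained by first defining $\bI$ on the unit sphere and then extending it radially, with $\bI(0)=0$. Continuity at $0$ follows from the norm bound.

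The main point needing care is (D), together with the role of the Poincaré inequality. The compactness in \ref{5.2.1}(C) needs $C_{p,k}$ to be bounded in the reflexive space $L_{p,\rmcst}(\Omega)$, which is where the $(p,1)$-Poincaré inequality and $p>1$ enter. It also needs $C_{p,k}$ to be weakly closed, which is the lower semicontinuity argument in \ref{5.2.1}. Both are already proved, so the main remaining check is that the abstract framework accepts a topology generated by seminorms indexed over $\rmFin(\wh{\Omega}_q)$ rather than by a norm. I expect \cite[8.1]{DEP.26c} to allow this, as in the $\calP_{\rmLip}$ case of theorem \ref{4.2.7}.
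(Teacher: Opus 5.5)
Your proposal is correct and is essentially the paper's own proof. It verifies hypotheses (A)--(I) of \cite[8.1]{DEP.26c} with the same data ($\bE = C_0(\Omega;\Rm)$, $X[\calT] = BV_{p,\rmcst}(\Omega)[\calW_p]$ with the filtering seminorms $p_E$, $\lno [u] \rno = \|Du\|(\Omega)$, $\bc_{(G)} = \bc_{(H)} = 1$), and then invokes \cite[8.1(L),(M)]{DEP.26c} and Michael's selection theorem as in \cite[8.3(N)]{DEP.26c}. Incidentally, your citation of \ref{5.2.2}(B) for hypothesis (F) is the right one; the paper's text points to \ref{5.2.1}(B), which is about boundedness, not continuity.
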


\begin{proof}
This is yet another application of \cite[8.1]{DEP.26c}.
We refer to the notation there in order to check that all hypotheses are satisfied.
\begin{enumerate}
\item[(A)] Hypothesis (A) of \cite[8.1]{DEP.26c} is with the Banach space $\bE = C_0(\Omega;\Rm)$ equipped with its norm $\bq_1 = \|\cdot\|_\infty$.
\item[(B)] Hypothesis (B) is with $X[\calT] = BV_{p,\rmcst}(\Omega)[\calW_p]$ whose topology $\calW_p$ is generated by filtering family of seminorms $\la p_E \ra_{E \in \rmFin(\wh{\Omega}_q)}$.
Moreover $X_k = X$ for all $k$.
\item[(C)] Hypothesis (C) is with $\lno \cdot \rno$ defined in \ref{5.2.1}, \ie $\lno [u] \rno = \|Du\|(\Omega)$.
\item[(D)] Hypothesis (D) is satisfied, by \ref{5.2.1}(C).
\item[(E)] Hypothesis (E) is with $\bD = \bdiv$.
\item[(F)] Hypothesis (F) is satisfied, by \ref{5.2.1}(B).
\item[(G)] Hypothesis (G) is satisfied with $\bc_{(G)}(k)=1$ for all $k$, by \ref{5.1.4}(A).
\item[(H)] Hypothesis (H) is satisfied with $\bc_{(H)}(k)=1$ for all $k$, by definition of total variation:
\begin{equation*}
\begin{split}
\lno [u] \rno & = \|Du\|(\Omega) \\
& = \sup \left\{ \left| \int_\Omega u \cdot \rmdiv v \, d\ssfL^m \right| : v \in C_c^1(\Omega;\Rm) \text{ and } \|v\|_\infty \leq 1 \right\} \\
& \leq \sup \left\{ | \la [u] , \bdiv v \ra | : v \in C_0(\Omega,\Rm) \text{ and } \|v\|_\infty \leq 1 \right\}.
\end{split}
\end{equation*}
\item[(I)] Hypothesis (I) is void, since $X_k = X$ for all $k$.
\end{enumerate}
\par 
The surjectivity of $\bdiv$ is \cite[8.1(L)]{DEP.26c} and the remaining conclusions are an application of Michael's selection theorem and of \cite[8.1(M)]{DEP.26c} as in the proof of \cite[8.3(N)]{DEP.26c}.
\end{proof}

The following is the analog of theorem \ref{4.1.3}.

\begin{Theorem}
\label{5.2.4}
$\SCH_0(\Rm) = \SCH_{0,\calW,1^*}(\Rm)$.
\end{Theorem}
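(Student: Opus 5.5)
The plan is to mimic the proof of theorem \ref{4.1.3}. First note the setting. Here $\Omega = \Rm$ and $p = 1^*$, so $\ssfL^m(\Rm)=\infty$. Hence $BV_{1^*,\rmcst}(\Rm) = \bvs(\Rm)$, $L_{m,\#}(\Rm) = L_m(\Rm)$, and $\Rm$ satisfies the $(1^*,1)$-Poincar\'e inequality by \ref{2.6}(1). By \ref{5.2.1}(G), the norm on $\SCH_{0,\calW,1^*}(\Rm)$ is $\vvvert F \vvvert = \sup\{|\la u,F\ra| : \|Du\|(\Rm)\leq 1\}$. This coincides with $\|F\|_{\bvs}^*$, so $\SCH_{0,\calW,1^*}(\Rm)$ sits isometrically inside $\bvs(\Rm)^*$. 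It therefore suffices to prove the inclusion $\SCH_0(\Rm) \subset \SCH_{0,\calW,1^*}(\Rm)$ together with the density of $\SCH_0(\Rm)$ in $\SCH_{0,\calW,1^*}(\Rm)$. Indeed, $\SCH_0(\Rm)$ is complete, so it is closed, and a closed dense subspace is everything.

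\textbf{Step 1: the inclusion.} Let $F \in \SCH_0(\Rm)$ and $\veps>0$. Choose $f \in L_m(\Rm)$ with $\|F - \bLambda_m(f)\|_{\bvs}^* < \veps$. If $f = 0$ the estimate is trivial. Otherwise let $\tilde f = \|f\|_{L_m}^{-1} f \in \wh{\Omega}_m$, $E = \{\tilde f\}$ and $\theta = \|f\|_{L_m}$. Then
\begin{equation*}
|\la u,F\ra| \leq \theta \cdot \left|\int_{\Rm} u \cdot \tilde f \,d\ssfL^m\right| + \veps \cdot \|Du\|(\Rm)
\end{equation*}
for all $u$. By \ref{5.2.1}(F), $F$ is $\calW_{1^*,TV}$-continuous, and the identity map $\SCH_0(\Rm) \to \SCH_{0,\calW,1^*}(\Rm)$ is an isometry.

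\textbf{Step 2: density.} By Hahn--Banach it is enough to show that a functional $\alpha \in \SCH_{0,\calW,1^*}(\Rm)^*$ vanishing on $\SCH_0(\Rm)$ must be zero. Semireflexivity \ref{5.2.1}(H) gives $u \in \bvs(\Rm)$ with $\alpha = \rmev_u$. Then $\int_{\Rm} u \cdot f \,d\ssfL^m = \la \bLambda_m(f), \rmev_u\ra = 0$ for every $f \in L_m(\Rm)$. This forces $u = 0$, hence $\alpha = 0$.

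The only point needing care is the identification of the strong dual norm with $\|\cdot\|_{\bvs}^*$ on both sides, together with the fact that $\bLambda_m(f)$ is genuinely a member of the localized dual (Step 1 with $F = \bLambda_m(f)$, which is \ref{5.1.5} in spirit). I expect both to be routine, so no serious obstacle arises. The real work, semireflexivity, is already contained in \ref{5.2.1}(H).
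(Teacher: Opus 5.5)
Your proof is correct and follows the paper's argument essentially step for step. Members of $\SCH_0(\Rm)$ are shown to be $\calW_{1^*,TV}$-continuous via \ref{5.2.1}(F) after approximating by some $\bLambda_m(g)$; the inclusion is an isometry, so its range is closed; and density follows from Hahn--Banach combined with the semireflexivity \ref{5.2.1}(H). You also note explicitly that $\ssfL^m(\Rm)=\infty$ gives $BV_{1^*,\rmcst}(\Rm)=\bvs(\Rm)$ and that $\Rm$ satisfies the $(1^*,1)$-Poincar\'e inequality, so subsection \ref{subsec.PCI} applies; these are details the paper leaves implicit.
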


\begin{proof}
{\bf (i)}
Recall that members of $\SCH_0(\Rm)$ are linear forms $F : \bvs(\Rm) \to \R$ that are $\|\cdot\|_{\bvs}$-continuous and lie in the $\|\cdot\|_{\bvs}^*$-closure of the range of $\bLambda_m$, see \ref{3.2.2}.
Here, we show that such $F$ is $\calW_{1^*,TV}$-continuous, by means of \ref{5.2.1}(F).
Let $\veps > 0$.
There exists $g \in L_m(\Rm)$ such that $\|F-\bLambda_m(g)\|_{\bvs}^* < \veps$. 
Thus, for all $u \in \bvs(\Rm)$ we have
\begin{multline*}
| \la u , F \ra| \leq |\la u , \bLambda_m(g) \ra| + |\la u , \bLambda_m(g) -F \ra| \leq 
\left| \int_{\Rm} u \cdot g \,d\ssfL^m \right| + \|\bLambda_m(g)-F\|_{\bvs}^* \cdot \|u\|_{\bvs}
\\
\leq \theta \cdot \left| \int_{\Rm} u \cdot f \,d\ssfL^m \right| + \veps \cdot \|Du\|(\Omega),
\end{multline*}
where\footnote{If $g = 0$ then choose $f \in \wh{\Omega}_m$ and $\theta > 0$ arbitrarily.} $\theta = \|g\|_{L_m}$ and $f = \theta^{-1} \cdot g \in \wh{\Omega}_m$.
\par 
{\bf (ii)}
According to {\bf (i)}, we may consider the identity $\SCH_0(\Rm) \to \SCH_{0,\calW,1^*}(\Rm)$.
Moreover, the identity is an isometry; a direct consequence of the definition of the norms $\|\cdot\|_{\bvs}^*$ in $\SCH_0(\Rm)$ and $\vvvert\cdot\vvvert$ on $\SCH_{0,\calW,1^*}(\Rm)$ (recall \ref{5.2.1}(G)).
In particular, the range $W$ of the identity in $\SCH_{0,\calW,1^*}(\Rm)$ is closed and in order to complete the proof it remains only to show that $W$ is dense.
To this end, we apply Hahn-Banach's theorem: we ought to show that if $\alpha \in \SCH_{0,\calW,1^*}(\Rm)[\vvvert\cdot\vvvert]^*$ vanishes on $W$ then it vanishes identically.
By \ref{5.2.1}(H), there exists $u \in \bvs(\Rm)$ such that $\alpha = \rmev_u$.
For each $f \in L_m(\Rm)$, $\bLambda_m(f) \in SCH_0(\Rm) = W$, therefore, $\int_{\Rm} u \cdot f \, d\ssfL^m = \la \bLambda_m(f) , \rmev_u \ra = 0$.
Since $f$ is arbitrary, $u= 0$ and, in turn, $\alpha = 0$.
\end{proof}

\section{Examples}
\label{sec.EXAMPLES}

\subsection{Examples of members of \texorpdfstring{$\calF(\Omega)$}{F(Omega)}}
\label{subsec.EX.F.RM}

Here, we transfer the results of subsection \ref{subsec.LSVF} to the language of distributions and 1-dimensional currents.

\begin{Empty}[$\calG(\Rm)$]
\label{EX.1.1}
In this subsection, we sometimes use the notation $\calD^0(\Rm)$ for the space of test functions (also denoted $\calD(\Rm)$) and we let $\calD_0(\Rm)$ be the space of distributions (seen as 0-dimensional currents).
Similarly, $\calD^1(\Rm)$ is the space of $C^\infty$ differential forms of degree 1, $\Rm \to \bigwedge^1\Rm$ with compact support and $\calD_1(\Rm)$ is its topological dual, \ie the space of 1-dimensional currents.
Recall that the boundary operator $\partial : \calD_1(\Rm) \to \calD_0(\Rm)$ is defined to be the adjoint of the exterior derivative $d : \calD^0(\Rm) \to \calD^1(\Rm)$.
The {\em mass} of a current $T \in \calD_i(\Rm)$, $i=0,1$, is defined by
\begin{equation*}
\bM(T) = \sup \{ \la \omega , T \ra : \omega \in \calD^i(\Rm) \text{ and } \|\omega\|_\infty \leq 1 \} \in [0,\infty].
\end{equation*}
Given $a \in \Rm$, we let $\lseg a \rseg \in \calD_0(\Rm)$ be defined by $\la \vphi , \lseg a \rseg \ra = \vphi(a)$.
Given $a,b \in \Rm$, we let $\lseg a,b \rseg \in \calD_1(\Rm)$ be defined by $\la \omega , \lseg a ,b \rseg \ra = |b-a|_2^{-1} \cdot  \int_0^1 \la b-a , \omega(a+ t\cdot (b-a)) \ra \, dt$ if $a \neq b$ and $\lseg a,b \rseg = 0$ otherwise.
It is easy to check that $\bM(\lseg a \rseg) = 1$ and $\bM(\lseg a,b \rseg) = |b-a|_2$.
\par 
With a distribution $T \in \calD_0(\Rm)$ we associate 
\begin{equation*}
\bG(T) = \inf \{ \bM(S) : S \in \calD_1(\Rm) \text{ and } T = \partial S \} \in [0,\infty].
\end{equation*}
In particular, $\bG(T) = \infty$ if $T$ is not the boundary of any 1-dimensional current.
\begin{enumerate}
\item[(A)] {\it $\bG(T) = \sup \{ \la \vphi , T \ra : \vphi \in \calD^0(\Rm) \text{ and } \|d\vphi\|_\infty \leq 1 \}$.}
\item[(B)] {\it If $\bG(T) < \infty$ then there exists $S \in \calD_1(\Rm)$ such that $\bG(T) = \bM(S)$.}
\end{enumerate}
\par 
Both claims are based on an application of Hahn-Banach's theorem, see \cite[3.2]{DEP.26b}.
Notice that $\|d\vphi\|_\infty = \|\nabla \vphi\|_\infty = \rmLip \vphi$.
It follows from (A) that the restriction of $\bG$ to $\calD_0(\Rm) \cap \{ T : \bG(T) < \infty \}$ is a norm and it is not hard to show that it gives a Banach space.
Let $E \subset \Rm$ be a finite set and let $\theta : E \to \R$ be such that $\sum_{x \in E} \theta_x = 0$.
In that case, the current $T_{E,\theta} = \sum_{x \in E} \theta(x) \cdot \lseg x \rseg \in \calD_0(\Rm)$ is a boundary, for instance $T_{E,\theta} = \partial S$, where $S = \sum_{x \in E} \theta(x) \cdot \lseg 0, x \rseg$.
In particular, $\bG(T_{E,\theta}) \leq \sum_{x \in E} |\theta(x)| \cdot |x|_2 < \infty$.
We let $\calP_{0,\#}(\Rm)$ consist of the 0-dimensional currents $T_{E,\theta}$ corresponding to all choices of $E$ and $\theta$ as above.
Next, we define $\calG(\Rm)$ to be the $\bG$-completion of $\calP_{0,\#}(\Rm)$ in $\calD_0(\Rm)$.
\par
We will need the following lemma.
\begin{enumerate}
\item[(C)] {\it If $u \in \rmLip_0(\Rm)$, $R > 0$, and $\veps > 0$ then there exists $\vphi_u \in \calD(\Rm)$ such that $\|d\vphi_u\|_\infty \leq \veps + \|u\|_L$ and $|u(x) - [ \vphi_u(x) - \vphi_u(0)]| < \veps$ for all $x \in B(0,R)$.} 
\end{enumerate}
\par 
{\it Proof.}
Define $\tilde{u} \in \rmLip_0(\Rm)$ by $\tilde{u}(x) = \max \{ R \cdot \|u\|_L , \min \{ -R \cdot \|u\|_L , u(x) \} \}$ so that $\|\tilde{u}\|_\infty \leq R \cdot \|u\|_L$ and $\tilde{u}|_{B(0,R)} = u|_{B(0,R)}$, since $u(0)=0$.
Choose $r > 0$ such that $\|\tilde{u}_r - u \|_\infty < \frac{\veps}{2}$, where $\tilde{u}_r = \Phi_r * u$, and $\chi_k \in \calD(\Rm)$ such that $\ind_{B(0,R)} \leq \chi_k \leq \ind_{B(0,R+k)}$ and $\| \nabla \chi_k \|_\infty \leq \frac{2}{k}$, where $k \in \N$ is large enough for $\veps \cdot k > 2 \cdot R \cdot \|u\|_L$.
Put $\vphi_u = \chi_k \cdot \tilde{u}_r$.
It follows from the chain rule that $\|d\vphi_u\|_\infty = \| \nabla (\chi_k \cdot \tilde{u}_r) \|_\infty \leq \frac{2 \cdot R \cdot \|u\|_L}{k} + \|u\|_L < \veps + \|u\|_L$.
If $x \in B(0,R)$ then $\chi_k(x)=1$, hence, $\vphi_u(x) = \tilde{u}_r(x)$, thus, $|u(x) - [ \vphi_u(x) - \vphi_u(0)]| \leq |u(x) - \tilde{u}_r(x)| + |\tilde{u}_r(0) - u(0)| < \veps$.\cqfd
\begin{enumerate}
\item[(D)] {\it There exists an isometric isomorphism $\Upsilon : \calF(\Rm) \to \calG(\Rm)$ such that $\la \vphi , \Upsilon(F) \ra = \la \vphi - \vphi(0) \cdot \ind_{\Rm} , F \ra$ for all $(\vphi,F) \in \calD(\Rm) \times \calF(\Rm)$.}
\end{enumerate}
\par
{\it Proof.}
{\bf (i)}
Recalling the notation of \ref{3.1.1} we first define a linear map
\begin{equation*}
\Upsilon : \rmspan (\rmim \bdelta) \to \calP_{0,\#}(\Rm) : \sum_{x \in E} \theta(x) \cdot \lseg x \rseg \mapsto \sum_{x \in E} \theta(x) \cdot \lseg x \rseg - \left( \sum_{x \in E} \theta(x)\right) \cdot \lseg 0 \rseg.
\end{equation*}
If $\vphi \in \calD(\Rm)$ then we abbreviate $u_\vphi = \vphi - \vphi(0) \cdot \ind_{\Rm} \in \rmLip_0(\Rm)$ and we trivially check that $\la \vphi , \Upsilon(F) \ra = \la u_\vphi , F \ra$ for all $(\vphi,F) \in \calD(\Rm) \times \rmspan(\rmim \bdelta)$
Moreover, as $\|u_\vphi\|_L = \|d\vphi\|_\infty$, we have $\bG(\Upsilon(F)) \leq \|F\|_L^*$ for every $F \in \rmspan(\rmim \bdelta)$.
\par 
{\bf (ii)}
In order to prove the reverse inequality, given $F \in \rmspan(\rmim \bdelta)$ and $\veps > 0$ we select $u \in \rmLip_0(\Rm)$ such that $\|u\|_L \leq 1$ and $\|F\|_L^* < \veps + \la u , F \ra$.
Write $F = \sum_{x \in E} \theta(x) \cdot \lseg x \rseg$.
Let $R = \rmdiam(\{0\} \cup E)$.
Consider $\vphi_u$ associated with $u$, $R$, and $\veps$ in (C) above.
Observe that $|\la u,F \ra - \la \vphi_u , \Upsilon(F) \ra| = | \la u,F \ra - \la \vphi_u - \vphi_u(0) \cdot \ind_{\Rm} , F \ra | = \left| \sum_{x \in E} \theta(x) \cdot ( u(x) - [ \vphi_u(x) - \vphi_u(0]) \right| < \veps \cdot \sum_{x \in E} |\theta(x)|$.
Moreover, $| \la \vphi_u , \Upsilon(F) \ra| \leq \bG(\Upsilon(F)) \cdot \|d\vphi_u\|_\infty < \bG(\Upsilon(F)) \cdot (1+\veps)$.
Accordingly, $\|F\|_L^* < \veps \cdot \left( 1 + \sum_{x \in E} |\theta(x)| \right) + \bG(\Upsilon(F)) \cdot (1+\veps)$.
Since $\veps$ is arbitrary, we conclude that $\|F\|_L^* \leq \bG(\Upsilon(F))$.
\par 
{\bf (iii)}
We infer from {\bf (i)} and {\bf (ii)} that $\Upsilon$ defined in {\bf (i)} is an isometry.
Note also that that it is surjective, \ie $\Upsilon(\rmspan(\rmim \bdelta)) = \calP_{0,\#}(\Rm)$.
It then follows from the definition of $\calF(\Rm)$ and $\calG(\Rm)$ that $\Upsilon$ extends to a bijective linear isometry still denoted $\Upsilon : \calF(\Rm) \to \calG(\Rm)$. 
\par 
{\bf (iv)}
It remains to establish that $\la \vphi , \Upsilon(F) \ra = \la \vphi - \vphi(0) \cdot \ind_{\Rm} , F \ra$ for all $(\vphi,F) \in \calD(\Rm) \times \calF(\Rm)$.
By {\bf (i)}, this holds for all $(\vphi,F) \in \calD(\Rm) \times \rmspan(\rmim \bdelta)$.
Fix $(\vphi,F) \in \calD(\Rm) \times \calF(\Rm)$ and choose $\la F_j \ra_j$ in $\rmspan(\rmim \bdelta)$ such that $\lim_j \|F-F_j\|_L^* = 0$, whence, also $\lim_j \bG(\Upsilon(F) - \Upsilon(F_j))=0$.
Thus, $\la F_j \ra_j$ and $\la \Upsilon(F_j) \ra_j$ converge weakly* to, respectively, $F$ and $\Upsilon(F)$ and the conclusion follows.\cqfd
\par 
The following requires \cite[5.6 and 5.7]{DEP.26c}.
\begin{enumerate}
\item[(E)] {\it Let $T : \calD(\Rm) \to \R$ be a linear functional. The following are equivalent.
\begin{enumerate}
\item[(a)] $T \in \calG(\Rm)$.
\item[(b)] $(\forall \veps > 0)(\exists E \in \rmFin(\Rm))(\exists \theta > 0)(\forall \vphi \in \calD(\Rm)):$
\begin{equation*}
| \la \vphi , T \ra| \leq \theta \cdot \max \{ |\vphi(x) - \vphi(0)| : x \in E \} + \veps \cdot \|d\vphi\|_\infty.
\end{equation*}
\end{enumerate}
}
\end{enumerate}
\par 
{\it Proof.}
{\bf (i)}
Assume $T \in \calG(\Rm)$ and let $F = \Upsilon^{-1}(T)$.
It follows from \ref{4.1.3} that $F$ is $\calP_{\rmLip}$-continuous.
Given $\veps > 0$, apply \ref{4.1.2}(F) to $F$ to obtain $E \in \rmFin(\Rm)$ and $\theta > 0$ such that for every $\vphi \in \calD(\Rm)$ we have $|\la \vphi , T \ra | = | \la \vphi , \Upsilon(F) \ra | = | \la u_\vphi , F \ra | \leq \theta \cdot p_E(u_\vphi) + \veps \cdot \|u_\vphi\|_L = \theta \cdot \max \{ |\vphi(x)-\vphi(0)| : x\in E\} + \veps \cdot \|d\vphi\|_\infty$, where $u_\vphi = \vphi - \vphi(0) \cdot \ind_{\Rm}$ as in (D).
This completes the proof that $(a) \Rightarrow (b)$.
\par 
{\bf (ii)}
Assume that $T$ satisfies (b).
Consider the vector space $Y = \rmLip_0(\Rm) \cap \{ \vphi - \vphi(0) \cdot \ind_{\Rm} : \vphi \in \calD(\Rm) \}$.
We need to observe that $Y$ is uniformly sequentially $\calP_{\rmLip}$-dense in $\rmLip_0(\Rm)$ in the sense of \cite[5.6]{DEP.26c}.
Let $u \in \rmLip_0(\Rm) \setminus \{0\}$.
For each $j \in \N$, apply (C) with $R=j$ and $\veps = \min \{ j^{-1},\|u\|_L\}$ to obtain $\vphi_j \in \calD(\Rm)$ such that $\|d\vphi_j\|_\infty < \veps + \|u\|_L \leq 2 \cdot \|u\|_L$ and $\|u - (\vphi_j - \vphi_j(0) \cdot \ind_{\Rm})\|_{\infty,B(0,j)} < j^{-1}$.
Define $u_j = \vphi_j - \vphi_j(0) \cdot \ind_{\Rm} \in Y$ and observe that $u_j \to u$ with respect to $\calS$ (\ie pointwise) and $\|u_j\|_L \leq 2 \cdot \|u\|_L$ for all $j$.
This shows that $Y$ is uniformly sequentially $\calP_{\rmLip}$-dense in $\rmLip_0(\Rm)$.
\par 
Next, we define $F : Y \to \R$ by the formula $\la \vphi - \vphi(0) \cdot \ind_{\Rm} , F \ra = \la \vphi , T \ra$.
Notice that is well-defined, since to each member $u \in Y$ corresponds a unique $\vphi$ such that $u=u_\vphi$.
It follows from hypothesis (b) that for every $\veps > 0$ there are a finite set $E \subset \Rm$ and $\theta > 0$ such that for all $u_\vphi \in Y$ we have $|\la u_\vphi, F \ra| = | \la \vphi , T \ra | \leq \theta \cdot \max \{ |\vphi(x)-\vphi(0)| : x \in E \} + \veps \cdot \|d\vphi\|_\infty = \theta \cdot \max \{ |u_\vphi(x)| : x \in E \} + \veps \cdot \|u_\vphi\|_\infty$.
It follows from \cite[5.7]{DEP.26c} that $F$ extends to a $\calP_{\rmLip}$-continuous linear functional $\hat{F} : \rmLip_0(\Rm) \to \R$.
We infer from \ref{4.1.3} that $\hat{F} \in \calF(\Rm)$.
Since readily $T = \Upsilon(\hat{F})$, we conclude that $T \in \calG(\Rm)$.\cqfd
\end{Empty}

\begin{Empty}[Distributional divergence]
\label{EX.1.2}
We consider the composition of $\Upsilon$ defined \ref{EX.1.1}(D) and $\bdiv$ defined in \ref{4.2.6} (with $\Omega = \Rm$):
\begin{equation*}
\begin{CD}
L_1(\Rm;\Rm) @>{\bdiv}>> \calF(\Rm) @>{\Upsilon}>> \calG(\Rm) \subset \calD_0(\Rm).
\end{CD}
\end{equation*}
We claim that $\Upsilon \circ \bdiv$ is the distributional divergence defined in \ref{2.3}.
Indeed, if $v \in L_1(\Rm;\Rm)$, $\vphi \in \calD(\Rm)$, and $u_\vphi = \vphi - \vphi(0) \cdot \ind_{\Rm}$ then
\begin{equation*}
\la \vphi , (\Upsilon \circ \bdiv)(v) \ra = \la u_\vphi , \bdiv v \ra = - \int_{\Rm} (\nabla u_\vphi) \ip v \, d\ssfL^m = - \int_{\Rm} (\nabla \vphi) \ip v \,d\ssfL^m.
\end{equation*}
\end{Empty}

\begin{Theorem}
\label{EX.1.3}
Let $T : \calD(\Rm) \to \R$ be linear.
The following are equivalent.
\begin{enumerate}
\item[(A)] There exists $v \in L_1(\Rm;\Rm)$ such that $\bdiv v = T$ in the sense of distributions.
\item[(B)] For every $\veps > 0$ there are a finite set $E \subset \Rm$ and $\theta > 0$ such that for every $\vphi \in \calD(\Rm)$ we have
\begin{equation*}
|\la \vphi , T \ra| \leq \theta \cdot \max \{ | \vphi(x) - \vphi(0) | : x \in E \} + \veps \cdot \|d\vphi\|_\infty.
\end{equation*}
\item[(C)] For every sequence $\la \vphi_j \ra_j$ in $\calD(\Rm)$ that converges to $0$ pointwise and satisfies $\sup_j \|d\vphi_j\|_\infty < \infty$ we have $\lim_j \la \vphi_j , T \ra = 0$.
\item[(D)] $T \in \calG(\Rm)$.
\end{enumerate}
Furthermore, for every $\veps > 0$ there exists $\bI : \calG(\Rm) \to L_1(\Rm;\Rm)$ such that
\begin{enumerate}
\item[(a)] $(\bdiv \circ \bI)(T) = T$ for all $T \in \calG(\Rm)$.
\item[(b)] $\bI$ is positively homogeneous of degree 1.
\item[(c)] $\bI$ is continuous.
\item[(d)] For all $T \in \calG(\Rm)$ we have $\|\bI(T)\|_{L_1} \leq (1+\veps) \cdot \bG(T)$. 
\end{enumerate}
\end{Theorem}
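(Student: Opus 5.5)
The plan is to transport theorem \ref{3.1.4} and theorem \ref{4.2.7}(D) through the isometric isomorphism $\Upsilon : \calF(\Rm) \to \calG(\Rm)$ of \ref{EX.1.1}(D). The identity $\Upsilon \circ \bdiv = $ distributional divergence from \ref{EX.1.2} is the bridge between the abstract and the distributional statements. We prove $(A)\Leftrightarrow(D)$, $(B)\Leftrightarrow(D)$, and then $(C)\Leftrightarrow(B)$.

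\textbf{$(A)\Leftrightarrow(D)$ and $(B)\Leftrightarrow(D)$.} For $(D) \Rightarrow (A)$, let $T \in \calG(\Rm)$ and put $F = \Upsilon^{-1}(T) \in \calF(\Rm)$. By theorem \ref{3.1.4}(B) there is $v \in L_1(\Rm;\Rm)$ with $\bdiv v = F$, and by \ref{EX.1.2} the distributional divergence of $v$ is $\Upsilon(F) = T$. For $(A) \Rightarrow (D)$, if $T$ is the distributional divergence of $v \in L_1$, then $T = (\Upsilon \circ \bdiv)(v) \in \calG(\Rm)$ by \ref{3.1.3}(B) and \ref{EX.1.2}. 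The equivalence $(B)\Leftrightarrow(D)$ is exactly \ref{EX.1.1}(E).

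\textbf{$(C)\Leftrightarrow(B)$.} For $(B)\Rightarrow(C)$, take $\la \vphi_j \ra_j$ with $\sup_j \|d\vphi_j\|_\infty \leq \Gamma$ converging pointwise to $0$. Given $\veps$, choose $E,\theta$ as in (B). Then $\limsup_j |\la \vphi_j,T\ra| \leq \theta \cdot \limsup_j \max_{x\in E}|\vphi_j(x)-\vphi_j(0)| + \veps\Gamma = \veps\Gamma$, and $\veps$ is arbitrary. For $(C)\Rightarrow(B)$, we transfer to $\rmLip_0(\Rm)$. Consider $Y = \{u_\vphi = \vphi - \vphi(0)\cdot\ind_{\Rm} : \vphi \in \calD(\Rm)\}$ and $F(u_\vphi) = \la \vphi, T\ra$. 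This is well defined because $u_\vphi$ determines $\vphi$: $\vphi$ has compact support, so the constant $\vphi(0)$ is recovered at infinity. Hypothesis (C) says $F$ is sequentially continuous on $Y$ for sequences that are $\|\cdot\|_L$-bounded and pointwise convergent to $0$; note that $u_{\vphi_j}\to 0$ pointwise forces $\vphi_j \to 0$ pointwise modulo constants.

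The main obstacle is passing from this sequential continuity on $Y$ to the $\veps$–$\theta$ estimate (B). I would argue by contradiction. Suppose (B) fails for some $\veps$. Then for each finite $E_j$ from a dense sequence, with $E_j$ increasing, and $\theta = j$, there is $\vphi_j$ with $\|d\vphi_j\|_\infty = 1$ (after normalization) and $|\la\vphi_j,T\ra| > j \max_{E_j}|u_{\vphi_j}| + \veps$. Boundedness of $\la \vphi_j , T\ra$ on the Lipschitz-unit ball is needed to conclude $\max_{E_j}|u_{\vphi_j}|\to 0$. That boundedness itself follows from (C) by a standard scaling argument: an unbounded sequence $\psi_j$ rescaled by $|\la\psi_j,T\ra|^{-1/2}$ still converges pointwise to $0$ with bounded Lipschitz constant, so its pairing with $T$ would be unbounded, contradicting (C). Then $u_{\vphi_j} \to 0$ on the dense set with $\|\cdot\|_L \leq 1$, hence pointwise everywhere. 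By (C) applied to $\vphi_j - \vphi_j(0)\chi_j$, with cutoffs $\chi_j$ chosen as in \ref{EX.1.1}(C) to equal 1 on a large ball and to have small gradient, we get $\la\vphi_j,T\ra \to 0$. This contradicts the lower bound $\veps$. The constant shift must be handled carefully: I expect to need $\la \chi_j , T\ra \cdot \vphi_j(0)$ to be negligible, which again follows from (C) applied to $\chi_j$ with slowly growing support. Alternatively, one can invoke the sequential/closure machinery \cite[4.4, 5.7]{DEP.26c} as in \ref{EX.1.1}(E)(ii), using that $\calP_{\rmLip}$ is sequential (\ref{4.1.2}(D)).

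\textbf{The map $\bI$.} Let $\bI_0$ be the map of theorem \ref{4.2.7}(D) for $\Omega=\Rm$. Since $\Rm$ is admissible and connected by $1$-pencils of segments, $\Lambda_*=1$; alternatively use $\lambda^*=1$, which follows from \ref{3.1.2}(B). Set $\bI = \bI_0 \circ \Upsilon^{-1}$. Properties (a)–(c) follow from \ref{EX.1.2} and the corresponding properties of $\bI_0$, and (d) follows from the fact that $\Upsilon$ is an isometry, so $\bG(T) = \|\Upsilon^{-1}T\|_\calF$.
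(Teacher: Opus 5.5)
\textbf{The implication $(C)\Rightarrow(B)$ has a genuine gap, and with (C) read literally it cannot be closed.} The error is your sentence ``Hypothesis (C) says $F$ is sequentially continuous on $Y$''. Hypothesis (C) only controls sequences with $\vphi_j\to0$ pointwise. Sequential $\calP_{\rmLip}$-continuity of $F$ on $Y$ concerns sequences with $u_{\vphi_j}=\vphi_j-\vphi_j(0)\cdot\ind_{\Rm}\to0$ pointwise. That class is strictly larger: it contains cutoffs $\chi_j$ equal to $1$ on $B(0,j)$ with $\|d\chi_j\|_\infty\to0$.

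What is lost is exactly the constant mode. $T=\lseg 0\rseg$ satisfies (C) trivially, since $\la\vphi_j,T\ra=\vphi_j(0)\to0$. Yet it violates (B): test with $\vphi=1$ on a ball of radius $R$ containing $E$ and $\|d\vphi\|_\infty\leq 2/R$. It also violates (A): for $v\in L_1$ one has $|\la\chi_R,\bdiv v\ra|=|\int_{\Rm}\nabla\chi_R\ip v\,d\ssfL^m|\leq 2R^{-1}\|v\|_{L_1}\to0$, while $\la\chi_R,T\ra=1$.

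Both of your repair steps fail on this example. The scaling argument for boundedness fails because $\{\|d\psi\|_\infty\leq1\}$ is not pointwise bounded in $\calD(\Rm)$. Radial $\psi_j$ with $\psi_j(0)=j^2$ and $\|d\psi_j\|_\infty\leq1$ give $\psi_j/|\la\psi_j,T\ra|^{1/2}=\psi_j/j$, which equals $j$ at the origin. And (C) says nothing about $\chi_j$, which tends to $1$, not $0$; indeed $\la\chi_j,T\ra=1$ for all $j$.

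The paper's own proof has the same defect. It takes $\calT$ to be pointwise convergence on $\calD(\Rm)$, whose seminorms are $\max_{x\in E}|\vphi(x)|$. The characterization of $\calT_\calC$-continuous linear forms then gives the estimate with $\max_{x\in E}|\vphi(x)|$, not $\max_{x\in E}|\vphi(x)-\vphi(0)|$. These are not equivalent: $\lseg0\rseg$ satisfies the first and not the second.

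The repair is to let $\calT$ be generated by the seminorms $|\vphi(x)-\vphi(0)|$, and to read (C) as ``$\vphi_j-\vphi_j(0)\to0$ pointwise''. With that (C), your contradiction argument is correct and more elementary than the paper's appeal to the localized-topology machinery. Boundedness of $T$ on $\{\|d\vphi\|_\infty\leq1\}$ then follows from your scaling argument, because $|\vphi(x)-\vphi(0)|\leq|x|_2\cdot\|d\vphi\|_\infty$. The final contradiction comes from applying the corrected (C) directly to $\vphi_j$, with no cutoffs at all.

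The remaining parts match the paper: $(A)\Leftrightarrow(D)$, $(B)\Leftrightarrow(D)$, $(B)\Rightarrow(C)$, and $\bI=\bI_0\circ\Upsilon^{-1}$. One correction there: $\Rm$ is not connected by $1$-pencils of segments, since a single segment gives $\nu=\ssfH^1\hel[a,b]$, which is not $\ll\ssfL^m$. Use your alternative instead: $\lambda^*=1$ from \ref{3.1.2}(B), together with \ref{4.2.7}(C).
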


\begin{proof}
{\bf (i)}
The equivalence of (B) and (D) has been established in \ref{EX.1.1}(E).
The equivalence of (D) with (A) follows from the fact that $\Upsilon$ is an isomorphism, \ref{EX.1.2}, and \ref{3.1.4}(B).
It is obvious that (B) implies (C).
\par 
{\bf (ii)}
Here, we show that (C) implies (B) which seems to necessitate some argument.
We shall consider a localized locally convex topology $\calT_\calC$ on $\calD(\Rm)$.
The topology $\calT$ on $\calD(\Rm)$ is that of pointwise convergence.
Furthermore, $\calC = \{ C_k : k \in \N \}$, where $C_k =  \calD(\Rm) \cap \{ \vphi : \|d\vphi\|_\infty \leq k \}$.
Clearly, each $C_k$ is convex and $\calT$-closed and $\calC$ is a localizing family.
Observe also that each $C_k$ equipped with the topology $\calT \hel C_k$ of pointwise convergence is metrizable, hence sequential, by Ascoli's theorem, as in the proof of \ref{4.1.2}(C).
Therefore, \cite[3.2(B)]{DEP.26b} applies, \ie a linear functional $T : \calD(\Rm) \to \R$ is $\calT_\calC$-continuous if and only if it is sequentially $\calT_\calC$-continuous\footnote{Note that the argument at this step is somewhat delicate. In particular, I do not know whether $\calD(\Rm)[\calT_\calC]$ itself is sequential, as \cite[4.4(A)]{DEP.26b} does not apply, for a lack of $\calT$-compactness of the $C_k$. In other words, the linearity of $T$ is essential for \cite[3.2(B)]{DEP.26b} to apply.}.
On account of \cite[3.6]{DEP.26b}, condition (B) is equivalent to the $\calT_\calC$-continuity of $T$.
Owing to \cite[3.1(A)]{DEP.26b}, condition (C) is equivalent to the sequential $\calT_\calC$-continuity of $T$.
\par 
{\bf (iii)}
The existence of $\bI$ is a consequence of \ref{4.2.7} (applied with $\Omega = \Rm$).
\end{proof}

We now proceed to giving three classes of distributions to which theorem \ref{EX.1.3} applies.

\begin{Empty}[Members of $\calP_{0,\#}(\Rm)$]
\label{EX.1.4}
By definition of $\calG(\Rm)$, all members of $\calP_{0,\#}(\Rm)$ belong to $\calG(\Rm)$.
The easiest non-trivial example being $T = \theta \cdot (\lseg b \rseg - \lseg a \rseg) = \theta \cdot \partial \lseg a,b \rseg$ (which is used extensively in the proof of theorem \ref{4.2.7}).
Note that $\bG(\theta \cdot \partial \lseg a,b \rseg) = |\theta| \cdot |b-a|_2$.
More generally, if $a_1,\ldots,a_Q,b_1,\ldots,b_Q \in \Rm$ and $P_a = \sum_{i=1}^Q \lseg a_i \rseg$, $P_b = \sum_{i=1} \lseg b_i \rseg$, then $\bG(P_a-P_b)$ is equivalent to the $\calG$-distance between the $Q$-multi-points $P_a$ and $P_b$, see \eg \cite[p.767]{BOU.DEP.GOB}
\par 
It is useful to observe that members of $\calP_{0,\#}(\Rm)$ satisfy the continuity condition (B) of theorem \ref{EX.1.3} without ``$\veps$-term'' and (therefore) with a unique $\theta$ and a unique $E$.
\end{Empty}

\begin{Empty}[Signed Borel measures on $\Rm$]
\label{EX.1.5}
Let $\mu$ is a signed Borel measure on $\Rm$.
Note that a linear functional $T_\mu : \calD(\Rm) \to \R$ is well-defined by the formula $\la \vphi , T_\mu \ra = \int_{\Rm} \vphi \, d\mu$.
We claim that if
\begin{equation*}
\mu(\Rm) = 0 \quad \text{ and } \quad \int_{\Rm} |x|_2 \,d|\mu|(x) < \infty
\end{equation*}
then $T_\mu$ satisfies condition (B) in \ref{EX.1.3}.
\par 
We may, of course, assume that $\mu \neq 0$.
Let $\veps > 0$.
Choose a compact $K \subset \Rm$ such that $\int_{\Rm \setminus K} |x|_2 \,d|\mu|(x) < \veps$.
Next, choose a finite Borel partition $\calB$ of $K$ whose members have diameter less than $\veps$.
For each $B \in \calB$ pick $x_B \in B$.
Fix $\vphi \in \calD(\Rm)$.
Since $\int_{\Rm} \vphi(0) \,d\mu = 0$ we have 
\begin{multline*}
| \la \vphi , T_\mu \ra| = \left| \int_{\Rm} \vphi \, d\mu - \int_{\Rm} \vphi(0) \,d\mu \right| \\
= \left|\sum_{B \in \calB} \int_B (\vphi(x) - \vphi(0) \, d\mu(x)  \right|
+ \left| \int_{\Rm \setminus K} (\vphi(x) - \vphi(0) \,d\mu(x) \right|
\\ \leq \left|\sum_{B \in \calB} \int_B (\vphi(x) - \vphi(x_B) \, d\mu(x)  \right|
+
\left|\sum_{B \in \calB} \int_B (\vphi(x_B) - \vphi(0) \, d\mu(x)  \right| 
+  \|d\vphi\|_\infty \cdot \int_{\Rm \setminus K} |x|_2 \,d|\mu|(x) \\
\leq \sum_{B \in \calB} (\rmLip \vphi) \cdot (\rmdiam B) \cdot |\mu|(B) + \sum_{B \in \calB} |\vphi(x_B) - \vphi(0)| \cdot |\mu|(B) \\
< \veps \cdot \left( 1 + |\mu|(\Rm) \right) \cdot \|d\vphi\|_\infty + |\mu|(\Rm) \cdot \max \{ | \vphi(x_B) - \vphi(0)| : B \in \calB \}.
\end{multline*}
\par 
It is interesting to note that $\theta$ can be chosen uniformly (independent of $\veps$) in this case, in fact $\theta = |\mu|(\Rm)$.
Moreover, given $\veps > 0$ the choice of $E = \{x_B : B \in \calB\}$ depends on $\mu$ only inasmuch as it depends on $K$. 
In view of \cite[9.1]{DEP.26c}, these observations imply that for every $\kappa > 0$ and every null-sequence of positive real numbers $\la \veps_j \ra_j$ the set
\begin{equation*}
\calG(\Rm) \cap \left\{ T_\mu : \mu(\Rm) = 0 ,\, |\mu|(\Rm) \leq \kappa \text{ and } \int_{\Rm \setminus B(0,j)} |x|_2 \,d|\mu|(x) \leq \veps_j \text{ for all } j \in \N \right\}
\end{equation*}
is $\bG$-compact.
\par 
If $\mu$ is a signed Borel measure on $\Rm$ and $T_\mu \in \calG(\Rm)$ then it is easy to see (based on \ref{EX.1.3}(B)) that necessarily $\mu(\Rm)=0$.
However, I do not know whether necessarily $\int_{\Rm} |x|_2 \,d|\mu|(x) < \infty$.
\end{Empty}

\begin{Empty}[Flat chains]
\label{EX.1.6}
Here, we give examples of $T \in \calG(\Rm)$ so that, in condition \ref{EX.1.3}(B), $\theta(\veps) \to \infty$ as $\veps \to 0$ contrary to the case of signed measure presented in \ref{EX.1.5}.
We consider two null-sequence $\la a_j \ra_j$ and $\la b_j \ra_j$ in $\Rm$ as well as a sequence $\la \theta_j \ra_j$ of real numbers such that $\sum_{j=1}^\infty |\theta_j| \cdot |b_j-a_j|_2 < \infty$.
Define $P_j = \theta_j \cdot \partial \lseg a_j , b_j \rseg \in \calG(\Rm)$ and recall that $\bG(P_j) = |\theta_j| \cdot |b_j-a_j|_2$, $j \in \N$.
Abbreviate $S_k = \sum_{j=1}^k P_j$ and observe that $\la S_k \ra_k$ is $\bG$-Cauchy.
Let $T = (\bG) \lim_k S_k \in \calG(\Rm)$.
\par 
We shall also assume that no $a_j$ coincides with any $a_k$ ($j\neq k$) nor with any $b_k$ (all $k$).
Under this assumption, $\bM(T) = 2 \cdot \sum_{j=1}^\infty |\theta_j|$.
Thus, we shall assume that $\sum_{j=1}^\infty |\theta_j| = \infty$ so that $T$ is not associated with a signed measure as in \ref{EX.1.5}.
\par 
We now try to understand condition \ref{EX.1.3}(B) for this $T$.
Let $\veps > 0$.
There is an integer $j_\veps$ such that $\sum_{j > j_\veps} |\theta_j| \cdot |b_j-a_j|_2 < \veps$.
Upon letting $E = \{ a_j : j=1,\ldots,j_\veps\} \cup \{ b_j : j=1,\ldots,j_\veps\}$ and $\theta = 2 \cdot \sum_{j \leq j_\veps} |\theta_j|$ we obtain
\begin{multline*}
| \la \vphi , T \ra| = \left| \sum_{j \leq j_\veps} \theta_j \cdot (\vphi(b_j) - \vphi(0) + \vphi(0) - \vphi(a_j)) \right| + \left| \sum_{j > j_\veps} \theta_j \cdot (\vphi(b_j) - \vphi(a_j)) \right| \\
\leq \sum_{j \leq j_\veps} |\theta_j| \cdot \big( |\vphi(b_j) - \vphi(0)| + |\vphi(a_j) - \vphi(0)| \big) + \|d\vphi\|_\infty \cdot \sum_{j > j_\veps} |\theta_j| \cdot |b_j-a_j|_2 \\
\leq \theta \cdot \max \{ | \vphi(x)-\vphi(0)| : x \in E \} + \veps \cdot \|d\vphi\|_\infty.
\end{multline*}
for all $\vphi \in \calD(\Rm)$.
\par 
In the above example, $\rmspt T$ is countable.
Of course, this is not necessary and there are many variants of this construction.
For instance, one could replace each $P_j$ by a mollified version $\Phi_{r_j} * P_j$.
A different way of modifying $P_j$ is as follows.
Let $A_j$ and $B_j$ be compact sets in $\Rm$ and $f_j : A_j \to B_j$ a homeomorphism such that $0 < | f_j(x)-x|_2 \leq \delta_j$ for all $x \in A_j$. 
Let also $\nu_j$ be a Borel probability measure on $A_j$.
One can then define a 1-dimensional current $P_j = \theta_j \int_{A_j} \lseg x,f_j(x) \rseg \,d\nu_j(x)$ and one can check that $\bM(P_j) \leq |\theta_j| \cdot \delta_j$ and $\partial P_j = \theta_j \int_{A_j} (\lseg f_j(x) \rseg - \lseg x \rseg ) \,d\nu_j(x)$.
\par 
All these examples are flat chains.
In fact, $\calG(\Rm)$ exactly consists in the 0-dimensional flat chains $T$ in $\Rm$ that satisfy the condition ``$\la \ind_{\Rm}, T \ra = 0$'' (it is possible to make sense of this for flat chains).
See \cite{DEP.26b} for more on this topic.
\end{Empty}

\begin{Empty}[The case $\Omega \neq \Rm$]
\label{EX.1.7}
When $\Omega \neq \Rm$, distributions, \ie members of $\calD_0(\Omega)$, seem impractical to describe members of $\calF(\Omega)$.
The problem is that \ref{EX.1.1}(C) does not extend to this case.
If $a \in \Omega$, $u = |\cdot-a|_2 \in \rmLip_0(\Omega)$, $b \in \rmbdry \Omega$, and $\la \vphi_j \ra_j$ is a sequence in $\calD(\Omega)$ that converges pointwise to $u$ and such that $\sup_j \|d\vphi_j\|_\infty  < \infty$ then $\la \vphi_j \ra_j$ converges to $u$ locally uniformly, by Ascoli's theorem, therefore, $d\vphi_j$ is unbounded (with respect to $j$) in any neighborhood of $b$, since $u(b) \neq 0$, a contradiction.
Notwithstanding, the three classes of examples presented above survive in this context provided they are interpreted appropriately (transferred to $\calF(\Rm)$ via $\Upsilon^{-1}$ first and then repeated verbatim in the context of $\Omega$ for domains that satisfy the hypotheses of theorem \ref{4.2.7}).
\par 
Another possible option is to consider distributions in $\Rm$ that are supported in $\rmclos \Omega$.
We have not traveled this route here, for inspiration in this respect see \eg \cite{DEP.26b}.
\end{Empty}

\subsection{Examples of members of \texorpdfstring{$\SCH_{0,\calW,p}(\Omega)$}{SCH-0-W-p(Omega)}}
\label{subsec.EX.SCH.RM}

As in subsection \ref{subsec.PCI}, throughout this subsection, $\Omega \subset \Rm$ is a non-empty connected open set and we make the following two assumptions:
\begin{enumerate}
\item[(1)] $1 < p \leq 1^*$.
\item[(2)] $\Omega$ satisfies the $(p,1)$-Poincar\'e inequality.
\end{enumerate}

%
%
%

\begin{Empty}[Members of $\SCH_0(\Rm)$ as distributions]
\label{EX.2.2}
As was done in \ref{EX.1.1}(E) for members of $\calF(\Rm)$ we now interpret the members of $\SCH_0(\Rm)$ as distributions.
The situation is slightly simpler, since there is no need to introduce an analog of $\Upsilon$.
Upon noticing that $\calD(\Rm) \subset \bvs(\Rm)$ we may consider the restriction map
\begin{equation*}
\rmrestr : \SCH_0(\Rm) \to \calD_0(\Rm) : F \mapsto F|_{\calD(\Rm)}
\end{equation*}
($\rmrestr F$ is, indeed, a distribution, owing the to the $\|\cdot\|_{\bvs}$ continuity of $F$).
\par
We will need the following lemma, analog of \ref{EX.1.1}(C).
\begin{enumerate}
\item[(A)] {\it If $u \in \bvs(\Rm)$ and $\veps > 0$ then there exists $\vphi_u \in \calD(\Rm)$ such that $\|D\vphi_u\|(\Rm) < \veps + \|Du\|(\Rm)$ and $\left| \int_{\Rm} (\vphi_u - u) \cdot f \,d\ssfL^m \right| < \veps$ for all $f \in L_m(\Rm)$ such that $\|f\|_{L_m}=1$.} 
\end{enumerate}
\par 
{\it Proof.}
This is very similar to \ref{3.2.2}(D).
Choose $r > 0$ small enough for $\|u_r - u \|_{L_{1^*}} < \veps$, where $u_r = \Phi_r * u$.
Note that $\|Du_r\|(\Rm) \leq \|Du\|(\Rm)$.
Next, choose $R > 0$ large enough for $\| u \cdot \ind_{\Rm \setminus B(0,R)}\|_{L_{1^*}} < \veps$.
Let $\chi \in \calD(\Rm)$ be such that $\ind_{B(0,R)} \leq \chi \leq \ind_{B(0,2 \cdot R)}$ and $\|\nabla \chi \|_{\infty} \leq 2$.
Put $\vphi_u = \chi \cdot u_r$.
By the triangle inequality, $\left| \int_{\Rm} (\vphi_u - u) \cdot f \,d\ssfL^m \right| < 2 \cdot \veps \cdot \|f\|_{L_m}$ for all $f \in L_m(\Rm)$.
Moreover, $\|D\vphi_u\|(\Rm) \leq \|u_r \cdot \nabla \chi\|_{L_1} + \|\chi \cdot \nabla u_r \|_{L_1} \leq \| u_r \cdot \ind_{\Rm \setminus B(0,R)} \|_{L^{1^*}} \cdot \|\nabla \chi\|_{L_m} + \|\nabla \chi \|_\infty \cdot \|\nabla u_r\|_{L_1} \leq \veps \cdot 4 \cdot \balpha(m)^\frac{1}{m} + \|Du\|(\Rm)$.\cqfd
\begin{enumerate}
\item[(B)] {\it Let $T : \calD(\Rm) \to \R$ be a linear functional. The following are equivalent.
\begin{enumerate}
\item[(a)] $T = \rmrestr F$ for some $F \in \SCH_0(\Rm)$.
\item[(b)] $(\forall \veps > 0)(\exists f \in L_m(\Rm))(\forall \vphi \in \calD(\Rm))$:
\begin{equation*}
| \la \vphi , T \ra | \leq  \left| \int_{\Rm} \vphi \cdot f \, d\ssfL^m \right| + \veps \cdot  \| \nabla \vphi \|_{L_1(\Rm)}.
\end{equation*}
\item[(c)] $(\forall \veps > 0)(\exists E \in \rmFin[(\wh{\Rm})_m])\footnote{Recall that $(\wh{\Rm})_m$ is the unit sphere of $L_m(\Rm)$, \ie consists of those $f \in L_m(\Rm)$ such that $\|f\|_{L_m}=1$.}(\exists \theta > 0)(\forall \vphi \in \calD(\Rm))$:
\begin{equation*}
| \la \vphi , T \ra | \leq  \theta \cdot \max \left\{ \left| \int_{\Rm} \vphi \cdot f \, d\ssfL^m \right| : f \in E \right\} + \veps \cdot  \| \nabla \vphi \|_{L_1(\Rm)}.
\end{equation*}
\item[(d)] For every sequence $\la \vphi_j \ra_j$ in $\calD(\Rm)$ such that $\lim_j \int_{\Rm} \vphi_j \cdot f \,d\ssfL^m = 0$ for all $f \in L_m(\Rm)$ and $\sup_j \|\nabla \vphi_j\|_{L_1(\Rm)} < \infty$ we have $\lim_j \la \vphi_j , T \ra = 0$.
\end{enumerate}
}
\end{enumerate}
\par 
{\it proof.}
{\bf (i)}
Assume that $T = \rmrestr F$, $F \in \SCH_0(\Rm)$.
Recalling the definition of $\SCH_0(\Rm)$ \ref{3.2.2}, given $\veps > 0$ there exists $f \in L_m(\Rm)$ such that $\|\bLambda(f) - F\|_{\bvs}^* < \veps$.
As in the proof of theorem \ref{5.2.4}{\bf (i)}, we infer that condition (b) holds, since $\|D\vphi\|(\Rm) = \|\nabla \vphi\|_{L_1(\Rm)}$.
Thus, $(a) \Rightarrow (b)$.
\par 
{\bf (ii)}
One trivially checks that $(b) \Rightarrow (c)$, by taking $\theta = \|f\|_{L_m}$ and $E = \{ \theta^{-1} \cdot f\}$ in case $f \neq 0$ and choosing $E$ and $\theta$ arbitrarily otherwise.
\par 
{\bf (iii)}
The proof that $(c) \Rightarrow (a)$ is application of \cite[5.7]{DEP.26b}.
In order for this to apply we ought to check that $\calD(\Rm)$ is uniformly sequentially $\calW_{1^*,TV}$-dense in $\bvs(\Rm)$ (see \cite[5.6]{DEP.26b} for the definition).
This is a direct consequence of (A) above.\cqfd
\par 
{\bf (iv)}
It is obvious that $(c) \Rightarrow (d)$.
The proof that $(d) \Rightarrow (c)$ is similar to that of $(C) \Rightarrow (D)$ in theorem \ref{EX.1.3}.
We equip $\calD(\Rm)$ with the localized topology $\calT_\calC$ associated with the topology $\calT$ being the restriction to $\calD(\Rm)$ of the weak* topology $\sigma(L_{1^*},L_m)$ of $L_{1^*}(\Rm)$ and $\calC = \{ C_k : k \in \N \}$, where $C_k = \calD(\Rm) \cap \{ \vphi : \|\nabla \vphi\|_{L_1(\Rm)} \leq k \}$.
Clearly, each $C_k$ is convex and $\calT$-closed and $\calC$ is a localizing family.
Observe also that each $C_k$ equipped with the topology $\calT \hel C_k$ of pointwise convergence is metrizable, hence sequential, by separability of $L_m(\Rm)$, as in the proof of \ref{5.2.1}(C).
Therefore, \cite[3.2(B)]{DEP.26b} applies, \ie a linear functional $T : \calD(\Rm) \to \R$ is $\calT_\calC$-continuous if and only if it is sequentially $\calT_\calC$-continuous.
In view of \cite[3.6]{DEP.26b}, condition (B) is equivalent to the $\calT_\calC$-continuity of $T$.
In the light of \cite[3.1(A)]{DEP.26b}, condition (C) is equivalent to the sequential $\calT_\calC$-continuity of $T$.\cqfd
\begin{enumerate}
\item[(C)] {\it Let $T : \calD(\Rm) \to \R$ be a linear functional. There exists $v \in C_0(\Rm;\Rm)$ such that $T = \bdiv v$ in the sense of distributions if and only if $T$ satisfies the equivalent condition (b), (c), and (d) in (B) above.}
\end{enumerate}
\par 
{\it Proof.}
This is an immediate consequence of (B) above and theorems \ref{5.2.3} and \ref{5.2.4}.\cqfd
\end{Empty}

\begin{Empty}[The case $\Omega \neq \Rm$]
\label{EX.2.3}
For a similar reason as explained in \ref{EX.1.7}, when $\Omega \neq \Rm$ it is impractical to attempt to characterize members of $\SCH_{0,\calW,p}(\Omega)$ as distributions.
The problem is that \ref{EX.2.2}(A) does not extend\footnote{Specifically, there is not enough room near $\rmbdry \Omega$ to define $\chi$, as in the proof of \ref{EX.2.2}(A), with a uniform bound on $\|\nabla \chi\|_\infty$.}
Nevertheless, below we give a version of theorem \ref{5.2.3} by explicitly stating the continuity condition.
\par 
It is probably worth noting that, since $\calD(\Rm)$ naturally embeds in $BV_{p,\rmcst}(\Omega)$, the solutions of the equation $\bdiv v = F$ considered below are, in particular, solutions in the sense of distributions.
\end{Empty}

\begin{Theorem}
\label{EX.2.4}
Assume that $1 < p \leq 1^*$ and $\Omega$ satisfies the $(p,1)$-Poincar\'e inequality.
Let $F : BV_{p,\rmcst}(\Omega) \to \R$ be linear.
The following are equivalent.
\begin{enumerate}
\item[(A)] There exists $v \in C_0(\Omega;\Rm)$ such that $\bdiv v = F$.
\item[(B)] For every $\veps > 0$ there is a finite set $E$ in the unit ball of $L_{q,\#}(\Omega)$ and there is $\theta > 0$ such that for every $u \in BV_{p,\rmcst}(\Omega)$ we have
\begin{equation*}
|\la [u] , F \ra| \leq \theta \cdot \max \left\{ \left| \int_\Omega u \cdot f \, d\ssfL^m \right| : f \in E \right\} + \veps \cdot \|Du\|(\Omega).
\end{equation*}
\item[(C)] For every sequence $\la [u_j] \ra_j$ in $BV_{p,\rmcst}(\Omega)$ such that $\lim_j \int_\Omega u_j \cdot f \,d\ssfL^m = 0$ for all $f \in L_{q,\#}(\Omega)$ and $\sup_j \|Du_j\|(\Omega) < \infty$ we have $\lim_j \la u_j , F \ra = 0$.
\end{enumerate}
Furthermore, in this case, given $\veps > 0$, $v$ can be chosen so that (A) holds and $\|v\|_\infty \leq (1+\veps) \cdot \vvvert F \vvvert$, where $\vvvert F \vvvert = \sup \{ |\la u,F \ra| : u \in BV_{p,\rmcst}(\Omega) \text{ and }\|Du\|(\Omega) \leq 1 \}$.
\end{Theorem}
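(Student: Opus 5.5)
The plan is to read Theorem \ref{EX.2.4} as Theorem \ref{5.2.3} restated, with the continuity criterion \ref{5.2.1}(F) made explicit. The space $\SCH_{0,\calW,p}(\Omega) = BV_{p,\rmcst}(\Omega)[\calW_{p,TV}]^*$ is the common hub. I would prove (A)$\Rightarrow$(B)$\Rightarrow$(A) and (B)$\Leftrightarrow$(C).

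\textbf{(A)$\Rightarrow$(B).} If $F = \bdiv v$ with $v \in C_0(\Omega;\Rm)$, then $F$ is $\calW_{p,TV}$-continuous by \ref{5.2.2}(B). Applying \ref{5.2.1}(F) then gives (B). The set $E$ produced there lies in $\wh{\Omega}_q$, which is contained in the unit ball of $L_{q,\#}(\Omega)$. The expression $\int_\Omega u \cdot f\,d\ssfL^m$ does not depend on the representative of $[u]$, since $f$ has vanishing average.

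\textbf{(B)$\Rightarrow$(A).} Let $E$ be a finite subset of the unit ball. I would first discard any $f=0$ in $E$, since it contributes nothing. I would then normalize each remaining $f$ to $\tilde f = \|f\|_{L_q(\Omega)}^{-1} f \in \wh{\Omega}_q$. Because $\|f\|_{L_q(\Omega)}\le 1$, the bound in (B) holds with the same $\theta$ and $E$ replaced by $\{\tilde f\}$. By \ref{5.2.1}(F), $F$ is then $\calW_{p,TV}$-continuous, so $F \in \SCH_{0,\calW,p}(\Omega)$. Theorem \ref{5.2.3} now gives $v = \bI(F) \in C_0(\Omega;\Rm)$ with $\bdiv v = F$ and $\|v\|_\infty \le (1+\veps)\vvvert F\vvvert$. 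By \ref{5.2.1}(G), $\vvvert\cdot\vvvert$ is exactly the norm appearing in the statement, which yields the final assertion.

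\textbf{(B)$\Rightarrow$(C).} This is immediate. Take a sequence as in (C) with $\sup_j \|Du_j\|(\Omega) \le k$ and fix $\veps$. The max term in (B) tends to $0$, since $E$ is finite and each $f\in E$ lies in $L_{q,\#}$. The remaining term is at most $\veps k$, and $\veps$ is arbitrary.

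\textbf{(C)$\Rightarrow$(B).} This is the only step needing an argument. Hypothesis (C) says precisely that $F$ is sequentially $\calW_{p,TV}$-continuous, by the description of convergent sequences in \ref{5.2.1}(A) together with the remark following it (one may test against all $f\in L_{q,\#}(\Omega)$, and constants may be dropped). Since $BV_{p,\rmcst}(\Omega)[\calW_{p,TV}]$ is sequential by \ref{5.2.1}(D), a sequentially continuous linear map on it is continuous. Alternatively, \cite[3.2(B)]{DEP.26c} gives this for linear functionals directly, because the $C_{p,k}$ are metrizable by \ref{5.2.1}(C). Then \ref{5.2.1}(F) yields (B).

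I expect the main care to be needed in this last implication: one must verify that the convergence in (C) matches $\calW_{p,TV}$-convergence exactly. The potential mismatch is that $u_j$ versus $u_j-(u_j)_\Omega$ is irrelevant against $f \in L_{q,\#}(\Omega)$. The rest is bookkeeping of normalizations.
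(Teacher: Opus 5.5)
Your proposal is correct and follows essentially the same route as the paper. The paper gets (A)$\Leftrightarrow$(B) from \ref{5.2.2}(B), theorem \ref{5.2.3} and the continuity criterion \ref{5.2.1}(F), gets the equivalence with (C) from the sequentiality \ref{5.2.1}(D) and the description \ref{5.2.1}(A) of $\calW_{p,TV}$-convergent sequences, and takes the norm bound from \ref{5.2.3}; your normalization of elements of the unit ball of $L_{q,\#}(\Omega)$ to $\wh{\Omega}_q$, and your remark that constants are invisible against $f \in L_{q,\#}(\Omega)$, fill in details the paper leaves implicit.
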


\begin{proof}
{\bf (i)}
According to \ref{5.2.2}(B) and theorem \ref{5.2.3}, the linear forms $F$ as in (A) are characterized as being those that are $\calW_{p,TV}$-continuous.
By \ref{5.2.1}(F), condition (B) characterizes $\calW_{p,TV}$-continuity as well.
Thus, $(A) \iff (B)$.
\par 
{\bf (ii)}
In view of \ref{5.2.1}(D), $F$ is $\calW_{p,TV}$-continuous if and only if it sequentially $\calW_{p,TV}$-continuous.
The equivalence of (A), (B), (C) now follows from the characterization of $\calW_{p,TV}$-convergent sequences as in \cite[3.1(A)]{DEP.26b}.
\par 
{\bf (iii)}
The ``Furthermore'' conclusion here follows from like-named conclusion of theorem \ref{5.2.3}.
\end{proof}

We now proceed to giving two classes of examples to which theorem \ref{EX.2.4} applies.

\begin{Empty}[Members of $L_{q,\#}(\Omega)$]
\label{EX.2.5}
Let $0 \neq f \in L_{q,\#}(\Omega)$ and observe that the formula $\bLambda_q(f) : BV_{p,\rmcst}(\Omega) \to \R : u \mapsto \int_{\Omega} u \cdot f \, d\ssfL^m$ well-defines a linear form.
It most obviously satisfies condition (B) of theorem \ref{EX.2.4} with $\theta = \|f\|_{L_q(\Omega)}$ and $E = \{ \theta^{-1} \cdot f \}$.
In particular, $\theta$ is independent of $\veps$ in this case.
Note the analogy with \ref{EX.1.4}.
In case $p=1^*$ (hence, $q=m$) and $\Omega = \ts ]0,2\cdot\pi[^m$, this was obtained in \cite[\S3 proof of proposition 1]{BOU.BRE.03}.
\end{Empty}

\begin{Empty}[Members of the closure of $L_{q,\#}(\Omega)$ in $L_{q,\infty}(\Omega)$]
\label{EX.2.6}
We recall the definition of the weak Lebesgue space $L_{q,\infty}(\Omega)$.
To this end, given a Borel-measurable function $f : \Omega \to \R$ and $y > 0$ we let $A(f,y) = \Omega \cap \{ x : |f(x)| > y \}$ and $d(f,y) = \ssfL^m(A(f,y)) \in [0,\infty]$.
Thus, $d(f,\cdot)$ is the distribution function of $f$.
Then $L_{q,\infty}(\Omega)$ consists of all those $f$ such that for all $y > 0$ we have $d(f,y) \leq \frac{\bc^q}{y^q}$, for some $\bc > 0$.
In that case, the smallest such $\bc$ is denoted $\|f\|_{L_{q,\infty}}$ and this defines a quasi-norm on $L_{q,\infty}(\Omega)$ such that $\|f\|_{L_{q,\infty}} \leq \|f\|_{L_q}$, see \eg \cite[p.5-6]{GRAFAKOS.1}.
The corresponding structure of a topological vector space is, in fact, completely normable, see \eg \cite[exercise 1.1.12 and theorem 1.4.11]{GRAFAKOS.1}.
Additionally, it is easy to observe that if $\ssfL^m(\Omega) < \infty$ then $L_{q,\infty}(\Omega) \subset L_1(\Omega)$, see \eg \cite[exercise 1.1.11(b)]{GRAFAKOS.1}.
\par 
Furthermore, we associate with any $f \in L_0(\Omega,\calB(\Omega))$ a function $$\bveps_q(f,\cdot) : \R_{> 0} \to [0,\infty] : y \mapsto d(f,y) \cdot y^q.$$
It is easy to characterize membership to either $L_q(\Omega)$ or $L_{q,\infty}(\Omega)$ by means of $\bveps_q$, namely: $f \in L_q(\Omega)$ iff $\int_0^\infty \frac{\bveps_q(f,y)}{y}\,dy < \infty$ and $f \in L_{q,\infty}(\Omega)$ iff $\|\bveps_q(f,\cdot)\|_\infty < \infty$.
We introduce the intermediate space
\begin{equation}
\label{eq.new.weak}
L_{q,0}(\Omega) = L_0(\Omega,\calB(\Omega)) \cap \left\{ f : \lim_{y \to 0^+} \bveps_q(f,y) = 0 = \lim_{y \to \infty} \bveps_q(f,y) \right\}.
\end{equation}
It is useful to observe that $d(f,y) < \infty$ for all $y > 0$ whenever $f \in L_{q,0}(\Omega)$ and, in fact, that $L_{q,0}(\Omega) \subset L_{q,\infty}(\Omega)$.
\begin{enumerate}
\item[(A)] {\it $L_{q,0}(\Omega) = \rmclos_{L_{q,\infty}}[L_q(\Omega)]$.}
\end{enumerate}
\par 
{\it Proof.}
{\bf (i)}
Let $f \in \rmclos_{L_{q,\infty}}[L_q(\Omega)]$.
There exists a sequence $\la f_j \ra_j$ in $L_q(\Omega)$ such that $\lim_j\|f-f_j\|_{L_{q,\infty}}=0$.
For each $j$ there exists $t_j > 0$ such that $\int_{\Omega \cap \{ |f_j| > t_j \}} |f_j|^q \,d\ssfL^m < j^{-q}$ and, upon letting $g_j = f_j \cdot \ind_{\Omega \cap \{ |f_j| \leq t_j \}}$, there exists $r_j > 0$ such that $\int_{\Omega \setminus B(0,r_j)} |g_j|^q \,d\ssfL^m < j^{-q}$.
Thus, if $h_j = g_j \cdot \ind_{\Omega \cap B(0,r_j)}$ then $\|f_j - h_j\|_{L_{q,\infty}} \leq \|f_j - h_j\|_{L_q} < 2 \cdot j^{-1}$ and $h_j \in L_{q,0}(\Omega)$, since $d(h_j,y) = 0$ for all $y > t_j$ and $d(h_j,y) \leq \ssfL^m(\Omega \cap B(0,r_j))$ for all $y > 0$.
\par 
For all $j$ and $y$ we have $d(f,y) \leq d(h_j,y/2) + d(f-h_j,y/2) \leq 2^{-q} [ y^{-q} \cdot \veps_q(h_j,y/2) + y^{-q} \cdot \|f-h_j\|^q_{L_{q,\infty}}]$ and, multiplying by $y^q$ both sides, we obtain 
\begin{equation*}
\bveps_q(f,y) \leq 2^q \left( \bveps_q \left( h_j , \frac{y}{2}\right) + \|f-h_j\|^q_{L_{q,\infty}} \right).
\end{equation*}
Since $y$ is arbitrary we infer that, for every $j$,
\begin{equation*}
\max \left\{ \limsup_{y \to 0^+} \bveps_q(f,y) , \limsup_{y \to \infty} \bveps_q(f,y) \right\} \leq 2^q \cdot \|f-h_j\|^q_{L_{q,\infty}}.
\end{equation*}
Upon letting $j \to \infty$ we conclude that $f \in L_{q,0}(\Omega)$.
\par 
{\bf (ii)}
Let $f \in L_{q,0}(\Omega)$.
For each $j$ define a Borel-measurable $\tilde{f}_j : \Omega \to \R$ by $\tilde{f}_j(x) = f(x)$ if $|f(x)| \leq j$ and $\tilde{f}_j(x) = \rmsign(f(x)) \cdot j$ otherwise.
Next, define $f_j = \tilde{f}_j \cdot \ind_{\Omega \cap B(0,j)}$.
Given $x \in \Omega$ observe that if $x \in B(0,j)$ then $|(f-f_j)(x)| = |f(x)|-j$ if $x \in A(f,j)$ and $|(f-f_j)(x)| = 0$ if $x \not \in A(f,j)$ whereas, if $x \not \in B(0,j)$ then $|(f-f_j)(x)| = |f(x)|$.
Accordingly,
\begin{multline*}
A(f-f_j,y) = [A(f-f_j,y) \cap B(0,j)] \cup [A(f-f_j,y) \setminus B(0,j)] \\
\subset [A(f,y+j) \cap B(0,j)] \cup [A(f,y) \setminus B(0,j)] 
\end{multline*}
and, in turn,
\begin{equation}
\label{eq.EX.1}
d(f-f_j,y) \leq d(f,y+j) + \ssfL^m[ A(f,y) \setminus B(0,j)] =: \rmI_j(y) + \rmII_j(y).
\end{equation}
Let $\eta > 0$.
Since $f \in L_{q,0}(\Omega)$ there exists $y_* > 0$ such that $\bveps_q(f,y) < \eta$ for all $0 < y < y_*$ and there exists $y^* > y_*$ such that $\bveps_q(f,y) < \eta$ for all $y > y^*$.
Choose an integer $j_0 > y^*$.
Choose also a positive integer $j_1$ such that $\ssfL^m [ A(f,y_*) \setminus B(0,j)] < \frac{\eta}{(y^*)^q}$ whenever $j \geq j_1$ (recall that $d(f,y_*) < \infty$).
Define $j_* = \max \{j_0,j_1 \}$ and, for the remaining part of this proof, assume that $j \geq j_*$.
\par 
Here, we find an upper bound for $\rmI_j(y)$.
Notice that for all $y > 0$ we have $y + j \geq y + j_0 > y^*$, so that
\begin{equation*}
\rmI_j(y) = \frac{\bveps_q(f,y+j)}{(y+j)^q} < \frac{\eta}{(y+j)^q} < \frac{\eta}{y^q}.
\end{equation*}
\par 
Next, we find an upper bound for $\rmII_j(y)$ by distinguishing between the following cases.
If $0 < y < y_*$ or $y^* < y$ then 
\begin{equation*}
\rmII_j(y)  \leq \ssfL^m[A(f,y)] = d(f,y) = \frac{\bveps_q(f,y)}{y^q} < \frac{\eta}{y^q}.
\end{equation*}
If $y_* \leq y \leq y^*$ then 
\begin{equation*}
\rmII_j(y) = \ssfL^m[ A(f,y) \setminus B(0,j) ] \leq \ssfL^m[ A(f,y_*) \setminus B(0,j) ] < \frac{\eta}{(y^*)^q} < \frac{\eta}{y^q}
\end{equation*}
by the choice of $j_1$.
In any case, we have $\rmII_j(t) < \frac{\eta}{y^q}$ for all $y > 0$.
\par 
It now follows from \eqref{eq.EX.1} that $d(f-f_j,y) \leq \frac{2 \cdot \eta}{y^q}$ for all $y > 0$, hence, $\|f-f_j\|_{L_{q,\infty}} \leq \sqrt[q]{2 \cdot \eta}$ for all $j \geq j_*$.
As $\eta$ is arbitrary, we conclude that $f \in \rmclos_{L_{q,\infty}}[L_q(\Omega)]$.\cqfd
\begin{enumerate}
\item[(B)] {\it $L_q(\Rm) \subsetneq L_{q,0}(\Rm) \subsetneq L_{q,\infty}(\Rm)$}.
\end{enumerate}
\par 
{\it Proof.}
It is not hard to check that $f(x) = \frac{1}{|x|_2^\frac{m}{q} \cdot \sqrt[q]{1 + |\log |x|_2|}}$, $x \neq 0$, defines a member of $L_{q,0}(\Rm) \setminus L_q(\Rm)$ and that $g(x) = \frac{1}{|x|_2^\frac{m}{q}}$, $x \neq 0$, defines a member of $L_{q,\infty}(\Rm) \setminus L_{q,0}(\Rm)$.\cqfd
\par 
Of course, one can replace in the definition of $f$ (in the previous proof) the logarithm by iterated logarithms or functions whose decay is even slower.
We now introduce the following spaces of functions:
\begin{equation*}
\begin{split}
L_{q,\infty,\#}(\Omega) & = \begin{cases}
L_{q,\infty}(\Omega) \cap \left\{ f : \int_\Omega f \, d\ssfL^m = 0 \right\} & \text{if } \ssfL^m(\Omega) < \infty \\
L_{q,\infty}(\Omega) & \text{if } \ssfL^m(\Omega) = \infty
\end{cases} \\
L_{q,0,\#}(\Omega) & = L_{q,0}(\Omega) \cap L_{q,\infty,\#}(\Omega).
\end{split}
\end{equation*}
These definitions make sense, since $L_{q,\infty}(\Omega) \subset L_1(\Omega)$ whenever $\ssfL^m(\Omega) < \infty$.
The following is a useful observation.
\begin{enumerate}
\item[(C)] {\it $L_{q,0,\#}(\Omega) = \rmclos_{L_{q,\infty}}[L_{q,\#}(\Omega)]$.}
\end{enumerate}
\par 
{\it Proof.}
If $\ssfL^m(\Omega)=\infty$ there is nothing to prove, since the present claim is a rephrasing of (A) in that case.
We henceforth assume that $\ssfL^m(\Omega) < \infty$ and observe that $\left| \int_\Omega h \,d\ssfL^m \right| \leq \int_\Omega |h| \,d\ssfL^m \leq \bc \cdot \|h\|_{L_{q,\infty}}$ for all $h \in L_{q,\infty}(\Omega)$, where $\bc = \frac{q}{q-1} \cdot \ssfL^m(\Omega)^\frac{q-1}{q}$, see \eg \cite[exercise 1.1.11(a)]{GRAFAKOS.1}.
If $f \in \rmclos_{L_{q,\infty}}[L_{q,\#}(\Omega)]$, given $\la f_j \ra_j$ in $L_{q,\#}(\Omega)$ that $\|\cdot\|_{L_{q,\infty}}$-converges to $f$, we infer that $\left| \int_\Omega f \,d\ssfL^m \right| = \left| \int_\Omega (f-f_j)\,d\ssfL^m \right| \leq \bc \cdot \|f-f_j\|_{L_{q,\infty}} \to 0$ as $j \to \infty$, hence, $\int_\Omega f \, d\ssfL^m = 0$.
Conversely, if $f \in L_{q,0,\#}(\Omega)$ then there exists $\la f_j \ra_j$ in $L_q(\Omega)$ such that $\lim_j \|f-f_j\|_{L_{q,\infty}}=0$, by (A).
Letting $y_j = (f_j)_\Omega$ we note that $\ssfL^m(\Omega) \cdot |y_j| = \left| \int_\Omega f_j \,d\ssfL^m \right| = \left| \int_\Omega (f_j-f) \,d\ssfL^m \right| \leq \bc \cdot \|f-f_j\|_{L_{q,\infty}}$.
Then $g_j = f_j - (f_j)_{\Omega} \cdot \ind_\Omega \in L_{q,\#}(\Omega)$ and $\|f - g_j\|_{L_{q,\infty}} \leq 2 \cdot \left( \|f-f_j\|_{L_{q,\infty}} + |y_j| \cdot \|\ind_\Omega\|_{L_{q,\infty}}\right) \to 0$ as $j \to \infty$.\cqfd
\par 
From now on we specialize to $p=1^*$ and $q=m$.
The next result is inspired by the argument in \cite[theorem 2.2]{COH.DEV.TAD.24} where the case $\Omega = \Rm$ is covered. 
\begin{enumerate}
\item[(D)] {\it Assume that either  $\Omega = \Rm$ or $\Omega$ is a bounded, connected $BV$-extension set. 
If $[u] \in BV_{1^*,\rmcst}(\Omega)$ and $f \in L_{m,\infty,\#}(\Omega)$ then $u \cdot f \in L_1(\Omega)$. 
Moreover, the linear form
$
BV_{1^*,\rmcst}(\Omega) \to \R : [u] \mapsto \int_\Omega u \cdot f \, d\ssfL^m
$
is well-defined and continuous. 
In fact, 
\begin{equation*}
\left| \int_\Omega u \cdot f \, d\ssfL^m \right| \leq \bc(\Omega) \cdot \kappa_m \cdot \frac{m }{m-1} \cdot \|f\|_{L_{m,\infty}} \cdot \|Du\|(\Omega)
\end{equation*}
where
\begin{equation*}
\bc(\Omega) = \begin{cases}
1 & \text{if } \Omega = \Rm \\
\bc_{\rmext}(\Omega) \cdot \left( 1 + \bc_{1^*}(\Omega) \cdot \ssfL^m(\Omega)^\frac{1}{m}\right) & \text{if } \Omega \neq \Rm.
\end{cases}
\end{equation*}
}
\end{enumerate}
\par 
{\it Proof.}
{\bf (i)}
Note that the summability of $u \cdot f$ will be proved upon showing that the claimed estimate holds with $\int_\Omega |u| \cdot |f| \,d\ssfL^m$ in place of $\left| \int_\Omega u \cdot f \, d\ssfL^m \right|$.
This is achieved in the following way:
\begin{equation*}
\int_\Omega |u| \cdot |f| \,d\ssfL^m = \int_\Omega |f| \left( \int_0^\infty \ind_{A(u,y)}\, d\ssfL^1(y)\right) d\ssfL^m = \int_0^\infty \left( \int_{A(u,y)} |f| \,d\ssfL^m \right)d\ssfL^1(y),
\end{equation*}
and observing that
\begin{multline*}
\int_{A(u,y)} |f| \,d\ssfL^m = \int_0^\infty \ssfL^m(A(u,y) \cap A(f,t))\,d\ssfL^1(t) \leq \int_0^\infty \min \left\{ \ssfL^m(A(u,y)) , d(f,t) \right\} \,d\ssfL^1(t) \\
\leq \int_0^\infty \min \left\{ \ssfL^m(A(u,y)) , \frac{\|f\|_{L_{m,\infty}}^m}{t^m} \right\} \,d\ssfL^1(t)
= \frac{m}{m-1} \cdot \|f\|_{L_{m,\infty}} \cdot \ssfL^m(A(u,y))^\frac{m-1}{m} \\
\leq \frac{m}{m-1} \cdot \|f\|_{L_{m,\infty}} \cdot \ssfL^m(A(\hat{u},y))^\frac{m-1}{m}
\leq \kappa_m \cdot \frac{m}{m-1} \cdot \|f\|_{L_{m,\infty}} \cdot \|D \ind_{A(\hat{u},y)}\|(\Rm),
\end{multline*}
by the isoperimetric inequality \cite[4.5.9(31)]{GMT}, where $\hat{u} = u$ in case $\Omega = \Rm$ and $\hat{u}$ is a $BV$ extension of $u$ to $\Rm$ as in \ref{bv.ext} otherwise. 
We then apply the coarea formula \cite[theorem 5.9(i)]{EVANS.GARIEPY.2} to infer that
\begin{multline*}
\int_\Omega |u| \cdot |f| \,d\ssfL^m \leq \kappa_m \cdot \frac{m}{m-1} \cdot \|f\|_{L_{m,\infty}} \cdot \int_0^\infty \|D \ind_{A(\hat{u},y)}\|(\Rm)\,d\ssfL^1(y)
\\ = \kappa_m \cdot \frac{m}{m-1} \cdot \|f\|_{L_{m,\infty}} \cdot \|D\hat{u}\|(\Rm).
\end{multline*}
This establishes the integrability of $u \cdot f$ and in case $\Omega = \Rm$ the proof is complete.
\par 
{\bf (ii)}
Assume now that $\Omega$ is a bounded, connected $BV$-extension set.
The integral $\int_\Omega u \cdot f \, d\ssfL^m$ does not depend upon the choice of a representative of $[u]$, by definition of $L_{m,\infty,\#}(\Omega)$, as $\ssfL^m(\Omega) < \infty$.
Replacing $u$ with $u - (u)\Omega \cdot \ind_\Omega$ we infer from the definition of $BV$-extension set and from \ref{bv.ext}(A)(b) that
\begin{multline*}
\|D\hat{u}\|(\Rm) \leq \bc_\rmext(\Omega) \cdot \left[ \|u-(u)_\Omega \cdot \ind_\Omega\|_{L_1(\Omega)} + \|Du\|(\Omega) \right]\\
\leq \bc_\rmext(\Omega) \cdot \left[ \ssfL^m(\Omega)^\frac{1}{m} \cdot \bc_{1^*}(\Omega) \cdot \|Du\|(\Omega) + \|Du\|(\Omega)\right].\cqfd
\end{multline*}
\par 
The above result yields a bounded linear operator
\begin{equation*}
\bLambda_{m,\infty,\#} : L_{m,\infty,\#}(\Omega) \to BV_{1^*,\rmcst}(\Omega)^* : f \mapsto \left( [u] \mapsto \int_\Omega u \cdot f \,d\ssfL^m \right),
\end{equation*}
where $BV_{1^*,\rmcst}(\Omega)$ is equipped with the norm $\|[u|\| = \|Du\|(\Omega)$.
We are now ready to provide a new class of examples of right members $F$ is the equation $\bdiv F = v$ to which theorem \ref{EX.2.5} applies.
\begin{enumerate}
\item[(E)] {\it Assume that either $\Omega = \Rm$ or $\Omega$ is a bounded, connected $BV$-extension set. 
If $f \in L_{m,0,\#}(\Omega)$ then $\bLambda_{m,\infty,\#}(f)$ is $\calW_{1^*,TV}$-continuous and $\vvvert \bLambda_{m,\infty,\#}(f) \vvvert \leq \bc(\Omega) \cdot \|f\|_{L_{m,\infty}}$, where $\bc(\Omega)$ is the constant in (D) above.
In particular, for every $\veps > 0$ there exists $v \in C_0(\Omega;\Rm)$ such that $\bdiv v = \bLambda_{m,\infty,\#}(f)$ and $\|v\|_\infty \leq (1+\veps) \cdot \bc(\Omega) \cdot \|f\|_{L_{m,\infty}}$. 
}
\end{enumerate}
\par 
Notice that $v$ above is, in particular, a solution of $\rmdiv v = f$ in the sense of distributions on $\Omega$, since $\calD(\Omega) \subset BV_{1^*}(\Omega)$.
\par 
{\it Proof.}
It suffices to show that $F = \bLambda_{m,\infty,\#}(f)$ satisfies condition (B) in theorem \ref{EX.2.4}.
Let $\veps > 0$ and abbreviate $\hat{\veps} = \veps \cdot ( \bc(\Omega) \cdot \kappa_m \cdot \frac{m}{m-1})^{-1}$, where $\bc(\Omega)$ is as in (D).
According to (C) above, there exists $g \in L_{m,\#}(\Omega)$ such that $\|f-g\|_{L_{m,\infty}} < \hat{\veps}$.
It immediately follows from (D) that
\begin{multline*}
| \la [u] , \bLambda_{m,\infty,\#}(f) \ra | \leq | \la [u] , \bLambda_{m,\infty,\#}(g) \ra | + | \la [u] , \bLambda_{m,\infty,\#}(f-g) \ra | \\
\leq \left| \int_\Omega u \cdot g \, d\ssfL^m \right| + \bc(\Omega) \cdot \kappa_m \cdot \frac{m}{m-1} \cdot \|f-g\|_{L_{m,\infty}} \cdot \|Du\|(\Omega 
\leq \theta \left| \int_\Omega u \cdot \tilde{g} \, d\ssfL^m \right| + \veps \cdot \|Du\|(\Omega),
\end{multline*}
where $\theta = \|g\|_{L_m}$ and $\tilde{g} = \theta^{-1} \cdot g$\cqfd
\end{Empty}

\subsection{Examples of members of \texorpdfstring{$\SCH_{0,\calM,p}(\Omega)$}{SCH-0-M-p(Omega)}}
\label{subsec.EX.SCH.OMEGA.1}

\begin{Empty}[A variant of Whitney partitions of unity]
\label{WHITNEY} 
Here, we simply observe that Whitney's coverings of $\Omega$ can be chosen to consist of balls whose radius is as small as we wish to stipulate with respect to the distance from their center to the boundary of $\Omega$.
There will be a dimensional constant $\bc_{\theTheorem}(m)$ appearing in upper bounds in (D), (F), and (G) below whose exact value is irrelevant and can be taken equal to $18 \cdot (392)^m$.
\par 
In this whole number, $\emptyset \neq \Omega \subsetneq \Rm$ is open and $0 < \tau \leq 1$.
We abbreviate $C = \Rm \setminus \Omega$ and we define $\delta : \Omega \to \R_{>0}$ by $\delta(x) = \rmdist(x,C)$.
\par 
With each $a \in \Omega$ we associate the open balls
$
\check{B}_a  = U \left( a , \frac{\tau}{8} \cdot \delta(a)\right)
$,
$
B_a  = U \left( a , \frac{\tau}{2} \cdot \delta(a)\right)
$, and
$
\hat{B}_a  = U \left( a , \frac{3 \tau}{4} \cdot \delta(a)\right)
$.
Notice that 
$
\check{B}_a \subset B_a \subset \hat{B}_a  \subset \Omega
$.
\par 
We say that $D \subset \Omega$ is scattered if the following holds:
$
(\forall a \in D)(\forall b \in D) : a \neq b \Rightarrow \|a-b\| \geq \frac{\tau}{4} \cdot \max \left\{ \delta(a) ,  \delta(b) \right\}
$.
We say that a scattered set $D$ is maximal if every scattered set containing $D$ coincides with $D$. 
It follows from Zorn's theorem that there exists a non-empty maximal scattered set $D \subset \Omega$ and we fix such $D$ for the remainder of this number.
\begin{enumerate}
\item[(A)] {\it The family $\la \check{B}_a \ra_{a \in D}$ is disjointed.}
\end{enumerate}
\par 
{\it Proof.}
For otherwise there are distinct $a,b \in D$ and $x \in \check{B}_a \cap \check{B}_b$.
Hence, $|a-b|_2 \leq |a-x|_2 + |x-b|_2 < \frac{\tau}{8} \cdot [\delta(a) +  \delta(b) ] \leq \frac{\tau}{4} \cdot \max \{ \delta(a) , \delta(b) \}$, a contradiction.\cqfd
\begin{enumerate}
\item[(B)] {\it $\ind_\Omega \leq \sum_{a \in D} \ind_{B_a}$.}
\end{enumerate}
\par 
{\it Proof.}
Assume if possible that there exists $x \in \Omega \setminus \cup_{a \in D} B_a$.
Notice that $D^* = D \cup \{x\}$ properly contains $D$, whence, the proof will be complete upon showing that $D^*$ is scattered.
\par 
For all $a \in D$ we have $|x - a |_2 \geq \frac{\tau}{2} \cdot \delta(a) \geq \frac{\tau}{4} \cdot \delta(a)$, since $x \not \in B_a$.
It remains to establish that for all $a \in D$ we have $|x-a|_2 \geq \frac{\tau}{4} \cdot \delta(x)$ as well.
Assume instead that there exists $a \in D$ such that $|x-a|_2 < \frac{\tau}{4} \cdot \delta(x)$.
Then $\frac{\tau}{2} \cdot \delta(a) \leq |x-a|_2 \leq \frac{\tau}{4} \cdot \delta(x)$, thus, $\delta(a) \leq \frac{1}{2} \cdot \delta(x)$.
Moreover, $\delta(x) \leq \delta(a) + |x-a|_2 \leq \delta(a) + \frac{\tau}{4} \cdot \delta(x) \leq \left( \frac{1}{2} + \frac{\tau}{4}\right) \cdot \delta(x)$, a contradiction.\cqfd
\begin{enumerate}
\item[(C)] {\it If $a,b \in D$ and $\hat{B}_a \cap \hat{B}_b \neq \emptyset$ then $\delta(a) > \frac{1}{7} \cdot \delta(b)$.}
\end{enumerate}
\par 
{\it Proof.}
Let $\xi \in C$ be such that $|a - \xi|_2=\delta(a)$.
Choose $x \in \hat{B}_a \cap \hat{B}_b$.
Assuming that $\delta(a) \leq \frac{1}{7} \cdot \delta(b)$ we infer that 
$
\delta(b) \leq |\xi - b |_2 \leq |\xi - a |_2 + |a-x|_2 + |x-b|_2 < \delta(a) + \frac{3\tau}{4} \cdot \delta(a) + \frac{3\tau}{4} \cdot \delta(b) 
\leq \left( \frac{7}{4} \cdot \frac{1}{7} + \frac{3}{4} \right) \cdot \delta(b),
$
a contradiction.\cqfd
\begin{enumerate}
\item[(D)] {\it If $a \in D$ and $D_a = D \cap \{ b : \hat{B}_a \cap \hat{B}_b \neq \emptyset\}$ then $\rmcard D_a \leq \bc_{\theTheorem}(m)$.}
\end{enumerate}
\par 
{\it Proof.}
{\bf (i)}
If $b \in D_a$ then $|a-b|_2 \leq 6  \tau \cdot \delta(a)$.
Indeed, picking $x \in \hat{B}_a \cap \hat{B}_b$ and referring to (C) we see that 
$
|a-b|_2 \leq |a-x|_2 + |x-b|_2 \leq \frac{3\tau}{4} \cdot \delta(a) + \frac{3\tau}{4} \cdot \delta(b) \leq \frac{3\tau}{4} \cdot \delta(a) + \frac{3\tau}{4} \cdot 7 \cdot \delta(a)
$.
\par 
{\bf (ii)}
If $b \in D_a$ then $\check{B}_b \subset B(a,7\tau \cdot \delta(a))$.
This, indeed, follows from {\bf (i)} and (C):
$
B \left(b , \frac{\tau}{8} \cdot \delta(b) \right)  \subset B \left( a, |a-b|_2 +  \frac{\tau}{8} \cdot \delta(b) \right) 
 \subset B \left(a , 6 \tau\cdot \delta(a) + \frac{7\tau}{8} \cdot \delta(a) \right)
$.
\par 
{\bf (iii)}
We infer from (C) (applied with $a$ and $b$ swapped), from (A), and from {\bf (ii)} that
$
\balpha(m) \cdot \left( \frac{\tau}{8}\right)^m (\rmcard D_a) \cdot \left( \frac{1}{7}\right)^m \cdot \delta(a)^m  \leq
\balpha(m) \cdot \left( \frac{\tau}{8}\right)^m \sum_{b \in D_a} \delta(b)^m 
=\sum_{b \in D_a} \ssfL^m \left( \check{B}_b \right) 
 = \ssfL^m \left( \bigcup_{b \in D_a} \check{B}_b \right) 
 \leq \ssfL^m \left(  B \left(a , 7 \tau \cdot \delta(a) \right) \right)
 = \balpha(m) \cdot (7\tau)^m\cdot \delta(a)^m
$.\cqfd
%
%
\par 
From now on, we choose and fix $\vphi \in \calD(\R)$ such that $\ind_{\left[ - \frac{1}{2} , \frac{1}{2} \right]} \leq \vphi \leq \ind_{\left[ - \frac{3}{4} , \frac{3}{4} \right]}$, $\rmspt \vphi \subset \left] - \frac{3}{4} , \frac{3}{4} \right[$, and $\|\vphi'\|_\infty \leq 5$.
With the maximal scattered set $D$ we associate a family $\la \phi_a \ra_{a \in D}$ of members of $\calD(\Rm)$ as follows: $\phi_a(x) = \vphi \left( \frac{|x-a|_2}{\tau \cdot \delta(a)}\right)$, $x \in \Rm$.
\begin{enumerate}
\item[(E)] {\it For all $a \in D$ we have $\ind_{B_a} \leq \phi_a$, $\rmspt \phi_a \subset \hat{B_a}$, and $|\nabla \phi_a(x)|_2 \leq \frac{9}{\tau \cdot \delta(x)} \cdot \ind_{\hat{B}_a}(x)$ for all $x \in \Omega$.}
\end{enumerate}
\par 
{\it Proof.}
The first two conclusions follow at once from the properties of $\vphi$ and the definitions of $B_a$ and $\hat{B}_a$.
With respect to the third conclusion we first notice that
$
\nabla \phi_a(x) = \frac{x-a}{\tau \cdot \delta(a) \cdot |x-a|_2} \cdot \vphi' \left( \frac{|x-a|_2}{\tau \cdot \delta(a)} \right)
$,
so that 
$
| \nabla \phi_a(x) |_2 \leq \frac{\|\vphi'\|_\infty}{\tau \cdot \delta(a)} \leq \frac{5}{\tau \cdot \delta(a)}
$.
Moreover, if $x \in \hat{B}_a$ then $\delta(x) \leq |x-a|_2 + \delta(a) \leq \frac{3\tau}{4} \cdot \delta(a) + \delta(a)$ so that $\frac{1}{\delta(a)} \leq \frac{7}{4} \cdot \frac{1}{\delta(x)}$ and the proof is complete in that case. 
If $x \not \in \hat{B}_a$ then $x \not \in \rmspt \phi_a$, therefore, $\nabla \phi_a(x) = 0$.\cqfd
\begin{enumerate}
\item[(F)] {\it The formula $\phi(x) = \sum_{a \in D} \phi_a(x)$, $x \in \Omega$, defines a function $\phi : \Omega \to \R$ with the following properties:
\begin{enumerate}
\item[(a)] $\phi \in C^\infty(\Omega)$;
\item[(b)] $\ind_\Omega \leq \phi \leq \bc_1(m) \cdot \ind_\Omega$;
\item[(c)] $|\nabla \phi(x)|_2 \leq \frac{\bc_{\theTheorem}(m)}{\tau \cdot \delta(x)}$ for all $x \in \Omega$.
\end{enumerate}
}
\end{enumerate}
\par 
{\it Proof.}
{\bf (i)}
Fix $x \in \Omega$.
There exists $a(x) \in D$ such that $x \in B_{a(x)} \subset \hat{B}_{a(x)}$, by (B).
If $y \in \hat{B}_{a(x)}$ and $a \in D \setminus D_{a(x)}$ (recall the definition of $D_{a(x)}$ from (D)) then $\rmspt(\phi_a) \cap \hat{B}_{a(x)} \subset \hat{B}_a \cap \hat{B}_{a(x)} = \emptyset$, by (E), thus, $\phi_a(y) = 0$.
Accordingly, 
\begin{equation}
\label{eq.1}
(\forall y \in \hat{B}_{a(x)}) : \phi(y) = \sum_{a \in D_{a(x)}} \phi_a(y).
\end{equation}
As $\hat{B}_{a(x)}$ is a neighborhood of $x$, $D_{a(x)}$ is finite (by (D)), and each $\phi_a \in C^\infty(\Rm)$, we conclude that $\phi|_{\hat{B}_{a(x)}} \in C^\infty(\hat{B}_{a(x)})$.
As $x \in \Omega$ is arbitrary, the proof of (a) is complete.
\par 
{\bf (ii)}
It follows from (B) and (E) that $\phi = \sum_{a \in D} \phi_a \geq \sum_{a \in D} \ind_{B_a} \geq \ind_\Omega$.
This proves the first inequality in (b).
With respect to the second inequality, we fix $x \in \Omega$, apply \eqref{eq.1} to $y=x$, and we refer to (D) to conclude that
$
\phi(x) = \sum_{a \in D_{a(x)}} \phi_a(x) \leq \rmcard D_{a(x)} \leq \bc(m)
$,
since $\phi_a(x) \leq 1$ for all $a \in D$ and all $x \in \Omega$.
\par 
{\bf (iii)}
For $x \in \Omega$ it follows from \eqref{eq.1} that 
$
\nabla \phi(x) = \sum_{a \in D_{a(x)}} \nabla \phi_a(x)
$.
Thus,
$
|\nabla \phi(x)|_2 \leq \sum_{a \in D_{a(x)}} |\nabla \phi_a(x)|_2 \leq (\rmcard D_{a(x)}) \cdot \frac{9}{\tau \cdot \delta(x)} \leq \frac{9 \cdot \bc(m)}{\tau \cdot \delta(x)}
$,
by (D) and (E).
This proves (c).\cqfd
\begin{enumerate}
\item[(G)] {\it Given $a \in D$ the formula
\begin{equation*}
\chi_a(x) = \begin{cases}
\frac{\phi_a(x)}{\phi(x)} & \text{if } x \in \Omega \\
0 & \text{if } x \not \in \Omega
\end{cases}
\end{equation*}
defines a function $\chi_a : \Rm \to \R$.
The family $\la \chi_a \ra_{a \in D}$ satisfies the following properties:
\begin{enumerate}
\item[(a)] $(\forall a \in D): \phi_a \in C^\infty(\Rm)$, $0 \leq \chi_a \leq 1$, and $\rmspt \chi_a \subset \hat{B}_a$;
\item[(b)] $\sum_{a \in D} \chi_a = \ind_\Omega$;
\item[(c)] $(\forall a \in D)(\forall x \in \Omega): |\nabla \chi_a(x)|_2 \leq \frac{\bc_{\theTheorem}(m)}{\tau \cdot \delta(x)}$.
\end{enumerate}
}
\end{enumerate}
\par 
{\it Proof.}
{\bf (i)}
Conclusion (a) readily follows from (F).
\par 
{\bf (ii)}
Conclusion (b) follows from \eqref{eq.1}:
$
\sum_{a \in D} \chi_a(x) = \frac{\sum_{a \in D} \phi_a(x)}{\phi(x)} = 1
$
for all $x \in \Omega$.
\par 
{\bf (iii)}
Fix $a \in D$ and $x \in \Omega$.
We have
$
\nabla \chi_a(x) = \frac{\phi(x) \cdot \nabla \phi_a(x) - \phi_a(x) \cdot \nabla \phi(x)}{\phi(x)^2}
$.
We then infer from (E) and (F) that
$
| \nabla \chi_a(x) |_2 \leq |\phi(x)| \cdot |\nabla \phi_a(x)|_2 + |\phi_a(x)| \cdot |\nabla \phi(x)|_2 \leq \bc_1(m) \cdot \frac{9}{\tau \cdot \delta(x)} + 1 \cdot \frac{9 \cdot \bc(m)}{\tau \cdot \delta(x)}
$.
This completes the proof of (c).\cqfd
\end{Empty}

\begin{Empty}[Members of $\MZ_{0,\#}(\Omega)$]
\label{EX.2.7}
We start by introducing a space of signed Borel measures $\MZ(\Rm) \subset M(\Rm)$ named after \cite{MEY.ZIE.77}\footnote{Meyers and Ziemer did not study signed measures but rather Radon measures.}.
With each $\mu \in M(\Rm)$ we associate 
\begin{equation*}
\|\mu\|_{\MZ} = \sup \left\{ \frac{|\mu|(B(x,r))}{r^{m-1}} : x \in \Rm \text{ and } r > 0 \right\} \in [0,\infty].
\end{equation*}
We also define
\begin{equation*}
\MZ(\Rm) = M(\Rm) \cap \{ \mu : \|\mu\|_{\MZ} < \infty \}.
\end{equation*}
Clearly, $\MZ(\Rm)$ is a vector space and $\|\cdot\|_{\MZ}$ is a norm.
It is useful to notice that if $\mu \in \MZ(\Rm)$ then $\mu^-,\mu^+ \in \MZ(\Rm)$.
\par 
For $\mu \in M(\Omega)$ and $0 < \tau \leq 1$ we define
\begin{equation*}
\boldeta_\Omega(\mu,\tau) = \sup \left\{ \frac{1}{\delta(x)} \cdot \frac{|\mu|(B(x, \tau \cdot \delta(x))}{(\tau \cdot \delta(x))^{m-1}} : x \in \Omega \right\}\in [0,\infty],
\end{equation*}
where, as in \ref{WHITNEY}, $\delta(x) = \rmdist(x,\Rm \setminus \Omega)$.
We also define the extension of $\mu$, denoted $\hat{\mu} \in M(\Rm)$, by the formula $\hat{\mu}(B) = \mu(\Omega \cap B)$, $B \in \calB(\Rm)$.
Finally we introduce the following space
\begin{equation*}
\MZ_0(\Omega) = M(\Omega) \cap \left\{ \mu : \hat{\mu} \in \MZ(\Rm) \text{ and } \lim_{\tau \to 0^+} \boldeta_\Omega(\mu,\tau) = 0 \right\}.
\end{equation*}
The following criterion for membership to $\MZ_0(\Omega)$ -- akin a vanishing Ahlfors upper regularity condition -- is useful to provide examples.
\begin{enumerate}
\item[(A)] {\it Let $\mu \in \MZ(\Rm)$ and $\veps : \ts]0,\rmdiam \Omega] \to \R_{>0}$ be nondecreasing and such that $\lim_{r \to 0^+} \veps(r) = 0$. Assume that
\begin{equation*}
(\forall x \in \rmspt \mu)(\forall 0 < r \leq \rmdiam \Omega): \frac{|\mu|(B(x,r))}{r^{m-1}} \leq \veps(r).
\end{equation*}
Then the following hold.
\begin{enumerate}
\item[(a)] If $\rmspt \mu$ is a compact subset of $\Omega$ then $\mu|_{\calB(\Omega)} \in \MZ_0(\Omega)$.
\item[(b)] If $h : \Omega \to \R$ is Borel-measurable, $\bc > 0$, and $|h| \leq \bc \cdot \rmdist(\cdot,\Rm \setminus \Omega)$ then $(\mu|_{\calB(\Omega)}) \hel h \in \MZ_0(\Omega)$.
\end{enumerate}
 }
\end{enumerate}
\par 
{\it Proof.}
Abbreviate $\nu = \mu|_{\calB(\Omega)}$.
Let $x \in \Omega$ and $0 < \tau \leq \frac{1}{2}$ and abbreviate $\rho = \tau \cdot \delta(x) \leq \frac{1}{2} \cdot \rmdiam \Omega$.
If $B(x,\rho) \cap (\rmspt \nu) = \emptyset$ then $|\nu|(B(x,\rho))=0$.
Otherwise, there exists $\xi \in B(x,\rho) \cap (\rmspt \nu)$, thus, $B(x,\rho) \subset B(\xi,\rho+|x-\xi|_2)$ so that 
$
\frac{|\nu|(B(x,\rho))}{\rho^{m-1}} \leq \left( 1 + \frac{|x-\xi|_2}{\rho}\right)^{m-1} \cdot \frac{|\mu|(B(\xi,\rho+|x-\xi|_2))}{(\rho+|x-\xi|_2)^{m-1}} \leq 2^{m-1} \cdot \veps(2\rho)
$.
Therefore, if $h$ is as in (b) then 
$
\frac{|\nu \hel h|(B(x,\rho))}{\rho^{m-1}} \leq \|h|_{B(x,\rho)}\|_\infty \cdot \frac{|\nu|(B(x,\rho))}{\rho^{m-1}} \leq \bc \cdot 2^m \cdot \delta(x) \cdot \veps(2\rho)
$.
Thus, $\boldeta_\Omega(\mu,\tau) \leq \bc \cdot 2^m \cdot \veps(\tau \cdot \rmdiam \Omega)$.
As $\tau$ is arbitrary, this completes the proof of conclusion (b).
Conclusion (a) follows from (b) applied with $h = \delta_0^{-1} \cdot \ind_{\rmspt \mu}$, where $\delta_0 = \min \{ \rmdist(x,\Rm\setminus \Omega) : x \in \rmspt \mu \}$.\cqfd
\par 
We adopt the following abbreviations for a Borel-measurable set $A \subset \Rm$ and $x \in \Rm$: considering the ratios $\frac{\ssfL^m(A \cap B(x,r))}{\balpha(m) r^m}$ wet let $\uTheta^m(A,x)$ and $\Theta^m(A,x)$ be, respectively, their superior limit and limit if exists as $r \to 0^+$.
\begin{enumerate}
\item[(B)] {\it There exists $\bc_{\theTheorem}(m) > 0$ with the following property. Let $A \subset \Rm$ be Borel-measurable and assume that for every $x \in A$ we have $\uTheta^m(A,x) > \frac{1}{4}$. Then there exists a sequence $\la B(x_j,r_j) \ra_j$ of closed balls such that $A \subset \cup_j B(x_j,r_j)$, $x_j \in A$, $\sum_j r_j^{m-1} \leq \bc_{\theTheorem}(m) \cdot \|D\ind_A\|(\Rm)$, and $\frac{\ssfL^m(A \cap B(x_j,r_j))}{\balpha(m)r_j} = \frac{1}{4}$ for all $j$.}
\end{enumerate}
\par 
{\it Proof.}
This is the boxing inequality initially proved in a different context by Gustin \cite{GUS.60}, see \cite[5.9.3]{ZIEMER} and \cite[4.5.4]{GMT}.
That the radii $r_j$ can be chosen to satisfy the last stated property follows \eg from inspecting the proof of \cite[5.9.3]{ZIEMER}; I learned this from \cite[remark 2.12]{PHU.TOR.08}.\cqfd
\par 
The proof of the following is inspired by \cite[4.7]{MEY.ZIE.77}, see also \cite[5.2.14]{ZIEMER}.
We let $\rmclos_e A = \Rm \cap \{ x : \uTheta^m(A,x) > 0\}$, $\rmint_e A = \Rm \cap \{ x : \lTheta^m(A,x)=1\}$, as well as $\rmbdry_e A = (\rmclos_e A) \setminus (\rmint_a A)$.
We also let $\rmbdry_r A$ be the reduced boundary of $A$, see \cite[3.54]{AMBROSIO.FUSCO.PALLARA} or \cite[5.5.1]{ZIEMER}.
Finally, for $\mu \in M(\Rm)$ we introduce 
\begin{equation*}
\boldeta^A(\mu,\rho) = \sup \left\{ \frac{|\mu|(B(x,r))}{r^{m-1}} : x \in A \text{ and } 0 < r \leq \rho \right\} ,
\end{equation*}
where we allow $\rho = \infty$.
Note that $\boldeta^A(\mu,\rho) \leq \|\mu\|_{\MZ}$.
\begin{enumerate}
\item[(C)] {\it Assume $A \subset \Rm$ is Borel-measurable and contained in a closed ball $B$ of radius $0 < \rho \leq \infty$ and that $\mu \in \MZ(\Rm)$. Then $|\mu|(A) \leq \bc_{\theTheorem}(m) \cdot \boldeta^B(\mu,2\rho) \cdot \|D\ind_A\|(\Rm)$.}
\end{enumerate}
\par 
{\it Proof.}
{\bf (i)}
We may as well assume that $\|D\ind_A\|(\Rm) < \infty$ for otherwise there is nothing to prove.
Define $C = A \cap \{ x : \uTheta^m(A,x) \geq \frac{1}{2} \}$.
Since $\rmbdry_r A \subset C$, by \cite[3.61]{AMBROSIO.FUSCO.PALLARA}, we infer that $A \setminus C \subset (\rmbdry_e A) \setminus (\rmbdry_r A)$.
Therefore, $\ssfH^{m-1}(A \setminus C) = 0$, according to \cite[3.61]{AMBROSIO.FUSCO.PALLARA}.
In turn, $|\mu|(A) = |\mu|(C)$.
\par 
{\bf (ii)}
Since $\ssfL^m(A \setminus C)=0$, (B) applies to $C$ (in place of $A$).
Letting $\la B(x_j,r_j) \ra_j$ be the corresponding sequence of closed balls we observe that for every $j$ we have $\balpha(m) \cdot r_j^m \leq 4 \cdot \ssfL^m(A) \leq 4 \cdot \balpha(m) \cdot \rho^m$.
Thus, $r_j \leq 2 \cdot \rho$ so that $|\mu|(C) \leq \sum_j |\mu|(B(x_j,r_j)) \leq \boldeta^B(\mu,2\rho) \cdot \sum_j r_j^{m-1}$ and the conclusion ensues.\cqfd
\par 
Below $BV_\#(\Omega)$ consists of all those $u \in BV(\Omega)$ such that $\int_\Omega u \,d\ssfL^m = 0$.
\begin{enumerate}
\item[(D)] {\it Let $\Omega$ be a connected, bounded $BV$-extension set.
\begin{enumerate}
\item[(a)] If $\mu \in M(\Omega)$ and $\hat{\mu} \in \MZ(\Rm)$ then for every $u \in BV_\#(\Omega)$ the precise representative $u^*$ is $\mu$-integrable and the formula 
\begin{equation*}
BV_\#(\Omega) \to \R : u \mapsto \int_\Omega u^* \,d\mu
\end{equation*}
defines a linear form such that
\begin{equation*}
\left| \int_\Omega u^* \,d\mu \right| \leq \bc_{\theTheorem}(m) \cdot \bc_\rmext(\Omega) \cdot (1 + \bc_{1^*}(\Omega)) \cdot \|\hat{\mu}\|_{\MZ} \cdot \|Du\|(\Omega).
\end{equation*}
\item[(b)] If, moreover, $\mu \in \MZ_0(Z)$ then for every $\veps > 0$ there exists $\theta > 0$ such that for all $u \in BV_\#(\Omega)$ we have 
\begin{equation*}
\left| \int_\Omega u^* \,d\mu \right| \leq \theta \cdot \|u\|_{L_1(\Omega)} + \veps \cdot \|Du\|(\Omega).
\end{equation*}
\end{enumerate}
}
\end{enumerate}
\par 
{\it Proof.}
{\bf (i)}
We recall that if $u \in BV(\Omega)$ then for $\ssfH^{m-1}$-a.e. $x \in \Omega$ the limit of averages $\lim_{r \to 0^+} \dashint_{B(x,r)} u \,d\ssfL^m$ exists, see \eg \cite[theorem 5.20]{EVANS.GARIEPY.2}, and we then define $u^*(x)$ to coincide with this limit when it exists and to be equal to 0 otherwise.
\par 
{\bf (ii)}
Fix $u \in BV_\#(\Omega)$, let $\hat{u} \in BV(\Rm)$ be an extension of $u$ as in \ref{bv.ext} and note that $(\hat{u})^*|_\Omega = u^*$ everywhere on $\Omega$.
Our first goal is to establish that $(\hat{u})^*$ is $|\hat{\mu}|$-summable, as this will immediately imply that $u^*$ is $|\mu|$-summable and $\int_\Omega u^* \, d\mu = \int_{\Rm} (\hat{u})^* \, d\hat{\mu}$.
\par 
For each $y > 0$, abbreviate $A_y = \Rm \cap \{ x : |(\hat{u})^*(x)| > y \}$ and observe that $|\hat{\mu}|(A_y) \leq \bc_{\theTheorem}(m) \cdot \|\hat{\mu}\|_{\MZ} \cdot \|D\ind_{A_y}\|(\Rm)$, by (C). 
Therefore, it ensues from the coarea formula that
\begin{multline*}
\int_{\Rm} |(\hat{u})^*| \, d|\hat{\mu}| = \int_0^\infty |\hat{\mu}|(A_y) \, dy \leq
\bc(m) \cdot \|\hat{\mu}\|_{\MZ} \cdot \int_0^\infty  \|D\ind_{A_y}\|(\Rm) \,dy \\
= \bc_{\theTheorem}(m) \cdot \|\hat{\mu}\|_{\MZ} \cdot \|D\hat{u}\|(\Rm)
\leq \bc_{\theTheorem}(m) \cdot \|\hat{\mu}\|_{\MZ} \cdot \bc_\rmext(\Omega) \cdot (1 + \bc_{1^*}(\Omega)) \cdot \|Du\|(\Omega).
\end{multline*}
\par 
{\bf (iii)}
We have shown in {\bf (ii)} that the integral $\int_\Omega u^* \, d\mu$ is convergent for each $u \in BV_\#(\Omega)$.
Note that $(u_1+u_2)^* = u_1^* + u_2^*$ and $(t \cdot u)^* = t \cdot u^*$ both $\ssfH^{m-1}$-a.e. in $\Omega$.
Therefore, the linearity of $u \mapsto \int_\Omega u^* \, d\mu$ is a consequence of the observation that $|\hat{\mu}| \ll \ssfH^{m-1}$, \ie $|\hat{\mu}|(Z)=0$ whenever $\ssfH^{m-1}(Z)=0$ and $Z$ is Borel-measurable.
To see this, let $\veps > 0$ and choose a sequence $\la B_j \ra_j$ of closed balls such that $Z \subset \cup_j B_j$ and $\sum_j (\rmdiam B_j)^{m-1} < \veps$.
Thus $|\hat{\mu}|(Z) \leq \sum_j |\hat{\mu}|(B_j) \leq 2^{m-1} \cdot \|\mu\|_{\MZ} \cdot \veps$.
This completes the proof of conclusion (a).
\par 
{\bf (iv)}
We turn to proving conclusion (b).
For each $0 < \tau \leq 1$ we abbreviate $\bar{\boldeta}_\Omega(\mu,\tau) = \sup \{ \boldeta_\Omega(\mu,t) : 0 < t \leq \tau \}$.
Let $\veps > 0$.
Since $\mu \in \MZ_0(\Omega)$, there exists $0 < \tau \leq \frac{1}{6}$ such that $\bar{\boldeta}_\Omega(\mu,6\tau) < \frac{\veps}{2 \cdot \bc_{\theTheorem}(m) \cdot (\rmdiam \Omega)} =: \hat{\veps}$.
We apply \ref{WHITNEY} to $\Omega$ and this $\tau$.
We choose a numbering $\la a_j \ra_j$ of the ensuing scattered subset $D$ of $\Omega$ and we abbreviate $\chi_j = \chi_{a_j}$ and $\hat{B}_j = \hat{B}_{a_j}$. 
Thus, $\la \chi_j \ra_j$ is a partition of unity of $\Omega$ satisfying the estimates in \ref{WHITNEY}(G), $\rmspt \chi_j \subset \hat{B}_j$, and $\la \hat{B}_j \ra_j$ is a locally finite covering of $\Omega$ satisfying the uniform intersection bound in \ref{WHITNEY}(D). 
\par 
{\bf (v)}
Fix $u \in BV_\#(\Omega)$.
We claim that
\begin{equation*}
\int_\Omega u^* \,d\mu = \sum_{j \in \N} \int_\Omega \chi_j \cdot u^* \,d\mu.
\end{equation*}
This follows from an application of the dominated convergence theorem, since for all $n \in \N$ we have $\left| \sum_{j=1}^n \chi_j \cdot u^* \right| \leq \left( \sum_{j \in \N} \chi_j \right)  \cdot |u^*| = |u^*|$ and $|u^*| \in L_1(\Omega,|\mu|)$, by {\bf (i)}.
\par 
{\bf (vi)}
We now fix $j \in \N$.
Define $A_{j,y} = \Omega \cap \{ x : \chi_j(x) \cdot |u^*(x)| > y \}$, $y > 0$.
Notice that $A_{j,y} \subset \rmspt \chi_j \subset \hat{B}_j \subset \Omega$ and let $\rho_j = \frac{3\tau}{4}\cdot \delta(a_j)$ be the radius of $\hat{B}_j$.
It follows from (C) that $|\hat{\mu}|(A_{j,y}) \leq \bc_{\theTheorem}(m) \cdot \boldeta^{\hat{B}_j}(\hat{\mu},2\rho_j) \cdot \|D\ind_{A_{j,y}}\|(\Rm)$.
In this expression we may replace the two occurrences of $\hat{\mu}$ by $\mu$ and we may replace $\Rm$ by $\Omega$.
The first claim follows from the fact that $A_{y,j} \subset \Omega$ and that, as we shall see below, if $x \in \hat{B}_j$ and $0 < r r2\rho_j$ then $B(x,r) \subset \omega)$.
The second claim follows from the fact that $\rmspt \|D\ind_{A_{j,y}}\| \subset \Omega$.
Thus,
\begin{equation*}
|\mu|(A_{j,y}) \leq \bc_{\theTheorem}(m) \cdot \boldeta^{\hat{B}_j}(\mu,2\rho_j) \cdot \|D\ind_{A_{j,y}}\|(\Omega).
\end{equation*}
\par 
{\bf (vii)}
Here, $j$ is still fixed and we ought to bound $\boldeta^{\hat{B}_j}(\mu,2\rho_j)$ from above.
Let $x \in \hat{B}_j$ and $0 < r \leq 2 \rho_j = \frac{3\tau}{2} \cdot \delta(a_j)$.
Notice that $|\delta(x) - \delta(a_j)| \leq |x-a|_2 < \rho_j \leq \frac{3}{4} \cdot \delta(a_j)$.
In particular, $\frac{1}{4} \cdot \delta(a_j) \leq \delta(x) \leq 2 \cdot \delta(a_j)$.
Let $t > 0$ be such that $t \cdot \delta(x) = r$.
We have $ t = \frac{r}{\delta(x)} \leq \frac{\frac{3\tau}{2} \cdot \delta(a_j)}{\frac{1}{4} \cdot \delta(a_j)} = 6 \tau \leq 1$.
It follows that
\begin{equation*}
\frac{|\mu|(B(x,r))}{r^{m-1}} = \frac{|\mu|(B(x,t\cdot \delta(x)))}{(t \cdot \delta(x))^{m-1}} \leq \boldeta_\Omega(\mu,t) \cdot \delta(x) \leq \bar{\boldeta}_\Omega(\mu,6\tau) \cdot  2\cdot \delta(a_j)
\leq \hat{\veps} \cdot 2 \cdot \delta(a_j).
\end{equation*}
From the arbitrariness of $x$ and $r$ we conclude that
$
\boldeta^{\hat{B}_j}(\mu,2\rho_j) \leq \hat{\veps} \cdot 2 \cdot \delta(a_j)
$.
It subsequently follows from {\bf (vi)} that
\begin{equation*}
|\mu|(A_{j,y}) \leq \hat{\veps} \cdot 2 \cdot \bc_{\theTheorem}(m) \cdot \delta(a_j) \cdot \|D\ind_{A_{j,y}}\|(\Omega).
\end{equation*}
\par 
{\bf (viii)}
Here $j$ is still fixed.
We refer to {\bf (vii)} and the coarea formula to infer that 
\begin{equation*}
\begin{split}
\int_\Omega \chi_j \cdot |u^*| \,d\mu = \int_0^\infty |\mu|(A_{j,y}) \,dy &
\leq \hat{\veps} \cdot 2 \cdot \bc_{\theTheorem}(m) \cdot \delta(a_j) \cdot \int_0^\infty \|D\ind_{A_{j,y}}\|(\Omega)\,dy \\
& = \hat{\veps} \cdot 2 \cdot \bc_{\theTheorem}(m) \cdot \delta(a_j) \cdot \|D(\chi_j \cdot u)\|(\Omega).
\end{split}
\end{equation*}
It follows from \cite[5.3.2]{ZIEMER} that 
\begin{equation*}
\|D(\chi_j \cdot u)\|(\Omega) \leq \int_\Omega |\nabla \chi_j|_2 \cdot |u| \,d\ssfL^m + \int_\Omega \chi_j \, d\|Du\|.
\end{equation*}
We bound from above the first term of the right-hand side by means of \ref{WHITNEY}(G)(c):
\begin{multline*}
\int_\Omega|\nabla \chi_j|_2 \cdot |u| \,d\ssfL^m = \int_{\hat{B}_j} |\nabla \chi_j|_2 \cdot |u| \,d\ssfL^m \leq \int_{\hat{B}_j} \frac{\bc_{\ref{WHITNEY}(m)}}{\tau \cdot \delta(x)} \cdot |u(x)| \, d\ssfL^m(x) \\
\leq \frac{4 \cdot \bc_{\ref{WHITNEY}}(m)}{\tau \cdot \delta(a_j)} \cdot \int_{\hat{B}_j} |u| \,d\ssfL^m.
\end{multline*}
Finally,
\begin{multline*}
\int_\Omega \chi_j \cdot |u^*| \, d\ssfL^m  \\  \leq \left( \frac{\hat{\veps}}{\tau} \right) \cdot 8 \cdot \bc_{\ref{WHITNEY}(m)} \cdot \bc_{\theTheorem}(m) \cdot \int_{\hat{B}_j} |u| \,d\ssfL^m
+ \hat{\veps} \cdot 2 \cdot \bc_{\theTheorem}(m) \cdot (\rmdiam \Omega) \cdot \int_{\Omega} \chi_j  \,d\|Du\|.
\end{multline*}
\par 
{\bf (ix)}
Referring to {\bf (viii)}, summing over $j$ as in {\bf (v)}, applying two more times the dominated convergence theorem, and referring to \ref{WHITNEY}(D) yields
\begin{equation*}
\begin{split}
\left| \int_\Omega u^* \,d\mu \right| & \leq \sum_{j \in \N} \int_\Omega \chi_j \cdot |u^*| \,d|\mu| \\
& \leq
\left( \frac{\hat{\veps}}{\tau} \right) \cdot 8 \cdot \bc_{\ref{WHITNEY}(m)} \cdot \bc_{\theTheorem}(m) \cdot \int_{\Omega} \left( \sum_{j \in \N} \chi_j \right) \cdot |u| \,d\ssfL^m \\
& \qquad \qquad + \hat{\veps} \cdot 2 \cdot \bc_{\theTheorem}(m) \cdot (\rmdiam \Omega) \cdot \int_{\Omega} \left( \sum_{j \in \N}\chi_j \right)  \,d\|Du\| \\
& \leq \theta \cdot \int_\Omega |u| \,d\ssfL^m + \veps \cdot \|Du\|(\Omega),
\end{split}
\end{equation*}
where $\theta = \left( \frac{\veps}{\tau}\right)\cdot \left( \frac{4 \cdot \bc_{\ref{WHITNEY}}(m)^2}{\rmdiam \Omega} \right)$.
The proof is complete.\cqfd
\par 
We are now ready to introduce the class of examples:
\begin{equation*}
\MZ_{0,\#}(\Omega) = \MZ_0(\Omega) \cap \{ \mu : \mu(\Omega) = 0 \}.
\end{equation*}
\begin{enumerate}
\item[(E)] {\it If $\Omega$ is a bounded, connected $BV$-extension set and $\mu \in \MZ_{0,\#}(\Omega)$ then there exists $v \in C_0(\Omega;\Rm)$ such that $\bdiv v = F$ and $\|v\|_\infty \leq 2 \cdot \bc_{\theTheorem}(m) \cdot \bc_\rmext(\Omega) \cdot (1 + \bc_{1^*}(\Omega)) \cdot \|\hat{\mu}\|_{\MZ}$. }
\end{enumerate}
\par 
{\it Proof.}
{\bf (i)}
Let $u \in BV(\Omega)$.
Observe that for all $y \in \R$ the function $u^*-y\cdot \ind_\Omega$ is $\mu$-integrable, since this is the case for $y = (u)_\Omega$, by (D)(a), and, $\mu$ being a signed measure, any constant is $\mu$-integrable.
Furthermore, $\int_\Omega (u - (u)_\Omega \cdot \ind_\Omega) \, d\mu$ is independent of $y$, since $\mu(\Omega) = 0$.
Thus, the formula $\la [u] , F_\mu \ra = \int_\Omega (u - (u)_\Omega \cdot \ind_\Omega) \, d\mu$ well-defines a linear functional $BV_{\rmcst}(\Omega) \to \R$.
\par 
{\bf (ii)}
It follows from (D)(b) and \ref{5.1.3}(F) that $F_\mu$ is $\calM_{TV}$-continuous.
\par 
{\bf (iii)}
The conclusion follows from the $\calM_{TV}$-continuity of $F_\mu$ in conjunction with theorem \ref{5.1.7}.
The upper bound on $\|v\|_\infty$ is a consequence of the bound on $\vvvert F_\mu \vvvert$ that ensues from (D)(a).\cqfd
\par 
We end this number with applications of (E).
Let $m-1 < d < m$ and let $C \subset \Omega$ be a compact set which is upper Ahlfors-regular of dimension $d$, \ie there exists $\bc > 0$ such that $\ssfH^d(C \cap B(x,r)) \leq \bc \cdot r^d$ for all $x \in C$ and all $0 < r \leq \rmdiam C$.
Choose a Borel-measurable set $A \subset C$ such that $\ssfH^d(A) = \frac{1}{2} \cdot \ssfH^d(C)$ and define $\mu = \ssfH^d \hel \left( \ind_A - \ind_{C \setminus A}\right)$.
It follows from (A) that $\mu \in \MZ_{0,\#}(\Omega)$ so that (E) applies to $\mu$.
Of course, we could have let $\mu = \ssfH^d \hel h$ for any $h \in L_1(\ssfH^d \hel C)$ such that $\int_C h \,d\ssfH^d = 0$.
In this case, the existence of $v \in C(\Omega;\Rm)$ such that $\rmdiv v = F$ in the sense of distributions can be obtained using tools of classical harmonic analysis
Namely, let $v = K * \mu$, where $K$ is the Riesz kernel, and show that $v$ is H\"older continuous based on $\boldeta_\Omega(\mu,r) \lesssim r^{d+1-m}$.
\par 
The interest of (E) is precisely in the case when it becomes problematic to establish the continuity of $K * \mu$, namely when $\boldeta_\Omega(\mu,r) \lesssim \veps(r) \cdot r^{1-m}$ for a positive function $\veps$ whose decay can be stipulated as slow as one wishes.
One can construct such $\mu$ supported in sets $C$ that are either Cantor topologically or a variant of the Koch curve so that the Hausdorff dimension of $C$ is $m-1$ but $\ssfH^{m-1} \hel C$ misses (as barely as we wish) to be $\sigma$-finite. 
Specifically, if $m=2$ and $\veps : \R_{>0} \to \R_{>0}$ is increasing and $\lim_{r \to 0^+} \veps(r) = 0$ then exists a compact set $C \subset \R^2$ which is homeomorphic to $\{0,1\}^{\N}$ (resp. homeomorphic to $[0,1]$) and $\bc^{-1} \cdot \veps(r) \cdot r \leq \ssfH^1(C \cap B(x,r)) \leq \bc \cdot \veps(r) \cdot r$ for all $x \in C$ and $0 < r \leq \rmdiam C$, where $\bc \geq 1$.
\end{Empty}


\printbibliography





\end{document}